\definecolor{deepgreen}{cmyk}{1,0,1,0.5}
\def\ub{{\underline{u}}}
\def\Lb{\underline{L}}
\def\Cb{{\underline{C}}}
\def\Eb{\underline{E}}
\def\Fb{\underline{F}}
\def\gslash{\mbox{$g \mkern -8mu /$ \!}}
\def\nablaslash{\mbox{$\nabla \mkern -13mu /$ \!}}
\def\laplacianslash{\mbox{$\Delta \mkern -13mu /$ \!}}
\newtheorem*{MainTheorem}{Main Theorem}
\newtheorem{theorem}{Theorem}[section]
\newtheorem{lemma}[theorem]{Lemma}
\newtheorem{proposition}[theorem]{Proposition}
\newtheorem{corollary}[theorem]{Corollary}
\newtheorem{definition}[theorem]{Definition}
\newtheorem{remark}[theorem]{Remark}
\newtheorem*{Main Theorem}{Main Theorem}
\numberwithin{equation}{section}
\def\Et{\widetilde{E}}
\def\Ebt{\widetilde{\underline{E}}}
\def\lot{{\text{l.o.t.}}}
\def\Ntop{{N_{\text{top}}}}
\def\Nmu{N_{\mu}}
\def\Ninfty{N_{\infty}}
\def\gt{{\widetilde{g}}}
\def\ds{{\slashed{d}}}
\def\gslash{{\slashed{g}}}
\def\ub{\underline{u}}
\def\Lb{\underline{L}}
\def\Cb{\underline{C}}
\def\Eb{\underline{E}}
\def\Fb{\underline{F}}
\def\tr{{\textrm{tr}}}
\def\dmug{{d \mu_{\slashed{g}}}}
\def\alphab{\underline{\alpha}}
\def\etab{\underline{\eta}}
\def\zetab{\underline{\zeta}}
\def\O{\mathcal{O}}
\def\slashedLhat{{\widehat{\slashed{\mathcal{L}}}}}
\def\slashedL{{\slashed{\mathcal{L}}}}
\def\slashedLRi{{\slashed{\mathcal{L}}_{R_i}}}
\def\chib{\underline{\chi}}
\def\chibh{\widehat{\underline{\chi}}}
\def\chit{\widetilde{\chi}}
\def\chibt{\widetilde{\underline{\chi}}}
\def\gslash{\mbox{$g \mkern -8mu /$ \!}}
\def\divslash{\slashed{\textrm{div}}}
\def\nablaslash{\mbox{$\nabla \mkern -13mu /$ \!}}
\def\laplacianslash{\mbox{$\Delta \mkern -13mu /$ \!}}
\def\nablat{{\widetilde{\nabla}}}
\def\Zb{{\underline{Z}}}
\def\Th{\widehat{T}}
\def\Et{{\widetilde{E}}}
\def\Ebt{{\underline{\widetilde{E}}}}
\def\Fbt{{\underline{\widetilde{F}}}}
\newcommand{\leftexp}[2]{{\vphantom{#2}}^{#1}{#2}}
\begin{document}
\title[Shock QLW]{On the formation of shocks for quasilinear wave equations}
\author[Shuang Miao]{Shuang Miao}
\thanks{Department of Mathematics, The University of Michigan,
Ann Arbor MI, U.S.A. shmiao@umich.edu}

\author[Pin Yu]{Pin Yu}
\thanks{Department of Mathematics and Yau Mathematical Sciences Center, Tsinghua University, Beijing, China. pin@math.tsinghua.edu.cn}

\begin{abstract}
The paper is devoted to the study of shock formation of the 3-dimensional quasilinear wave equation
\begin{equation}\label{Main Equation}
 - \big(1+3G^{\prime\prime}(0) (\partial_t\phi)^2\big)\partial^2_t \phi +\Delta\phi=0,\tag{\textbf{$\star$}}
\end{equation}
where $G^{\prime\prime}(0)$ is a non-zero constant. We will exhibit a family of smooth initial data and show that the foliation of the incoming characteristic hypersurfaces collapses. Similar to 1-dimensional conservational laws, we refer this specific type breakdown of smooth solutions as shock formation. Since $(\star)$ satisfies the classical null condition, it admits global smooth solutions for small data. Therefore, we will work with large data (in energy norm). Moreover, no symmetry condition is imposed on the initial datum.

We emphasize the geometric perspectives of shock formations in the proof. More specifically, the key idea is to study the interplay between the following two objects:

(1) the energy estimates of the linearized equations of $(\star)$;

(2) the differential geometry of the Lorentzian metric $g=-\dfrac{1}{\left(1+3G^{\prime\prime}(0) (\partial_t\phi)^2\right)} d t^2+dx_1^2+dx_2^2+dx_3^2$. Indeed, the study of the characteristic hypersurfaces (implies shock formation) is the study of the null hypersurfaces of $g$.

The techniques in the proof are inspired by the work \cite{Ch-Shocks} in which the formation of shocks for $3$-dimensional relativistic compressible Euler equations with small initial data is established. We also use the short pulse method which is introduced in the study of formation of black holes in general relativity in \cite{Ch-BlackHoles} and generalized in \cite{K-R-09}. 
\end{abstract}
\maketitle

\section{Introduction}

This paper is devoted to the study of the following quasilinear wave equation
\begin{equation}\label{main eq}
 - \big(1+3G^{\prime\prime}(0) (\partial_t\phi)^2\big)\partial^2_t \phi +\Delta\phi=0,
\end{equation}
where $G''(0)$ is a \emph{nonzero} constant and $\phi \in C^\infty(\mathbb{R}_t\times\mathbb{R}_{x}^3;\mathbb{R})$ is a smooth solution. We propose a geometric mechanism for shock formations, i.e. how the smoothness of $\phi$ breaks down. We remark that if $G''(0) = 0$, the equation is linear so that no shock is expected. We will see that $(\star)$ can be regarded as the simplest quasilinear wave equations that can be derived from the least action principle. The equation can also be regarded as a model equation for the nonlinear version of Maxwell equations in nonlinear electromagnetic theory, in which the shocks can be observed experimentally. The shock formation in nonlinear electromagnetic theory will be the subject of a forthcoming paper by the authors.

\medskip

The breakdown mechanism is a central object in the theory of quasilinear hyperbolic equations. We give a brief account on the results related to the current work. In \cite{Alinhac_Ann}, Alinhac proved a conjecture of H\"{o}rmander concerning upper bounds of the lifespan for the solutions of $-\partial_t^2 \phi +\Delta \phi = \partial_t \phi \, \partial_t^2 \phi$ on $\mathbb{R}^{2+1}$.
This equation was first introduced by F. John (see the survey paper \cite{John-90} and the references therein). He \cite{John-85} studied the rotationally symmetric cases and obtained upper bounds for the lifespan of the solutions. In \cite{Alinhac_Ann} and \cite{Alinhac_Acta}, without any symmetry assumptions, Alinhac not only shows the solution blows up but also gives a very precise description of the solution near the blow-up point.  Despite the slight different forms of the equations, Alinhac's results are fundamentally different from the current work in the following aspects: (1) He deals with small data problem. We will (and have to) deal with large data problem. (2) He uses Nash-Moser method to recover the loss of derivatives. Based on the variational nature of $(\star)$, we can close the energy estimates with finite many derivatives. (3) Just as Alihnac's work, we can give a detailed account on the behaviors of the solutions near blow-up
points. Moreover, we show that the singularities formed are indeed shocks. (4) We can give a pure geometric interpretation of the shock formation (in terms of certain curvature tensors) and show that the blow-up behavior indeed can be read off from the initial data directly.

\medskip

A major breakthrough in understanding the shock formations for the Euler equations has been made by D. Christodoulou in his monograph \cite{Ch-Shocks}. He considers the relativistic Euler equations for a perfect irrotational fluid with an arbitrary equation of state. Provided certain smallness assumptions on the initial data, he obtained a complete picture of shock formations in \emph{three} dimensions. A similar result for classical Euler's equations has also been obtained by Christodoulou and S. Miao in \cite{Ch-Miao}. The approaches are based on differential geometric methods originally introduced by Christodoulou and Klainerman in their monumental proof \cite{Ch-K} of the nonlinear stability of the Minkowski spacetime in general relativity.  Most recently, based on similar ideas, G. Holzegel, S. Klainerman, J. Speck and W. Wong have obtained remarkable results in understanding the stable mechanism for shock formations for certain types of quasilinear wave equations with small data in three dimensions, 
see their overview paper \cite{HKSW} and Speck's detailed proof \cite{Sp}. We remark that one of the key ideas in \cite{Ch-Shocks} and \cite{Ch-Miao} is to explore the variational structure of Euler's equations. This idea also plays a key r\^{o}le in the current work. We emphasize that \cite{Ch-Shocks} and \cite{Ch-Miao} obtained sharp lower and upper bounds for the lifespan of smooth solutions associated to the given data \emph{without} any symmetry conditions. Prior to \cite{Ch-Shocks} and \cite{Ch-Miao}, most of works on shock waves in fluid are limited to the simplified case of with spherical symmetry assumptions, i.e. essentially the one space dimension case. As an example, we mention \cite{Alinhac_Inven} of Alihnac which studies the singularity formations for the compressible Euler equations on $\mathbb{R}^2$ with rotational symmetry.

\medskip

All the aforementioned  works have the common feature that the initial data are assumed to be small. However, since the nonlinearity in $(\star)$ is cubic. By the classical result of Klainerman \cite{K-80}, for small smooth initial data, the solutions of $(\star)$ are globally regular. In particular, we do not expect shock formation. We will use a special family of large data, so called \emph{short pulse data}, in the current work. It was firstly introduced by D. Christodoulou in a milestone work \cite{Ch-BlackHoles} in understanding the formation of black holes in general relativity. By identifying an open set of initial data without any symmetry assumptions (the short pulse ansatz!), he shows that a trapped surface can form, even in vacuum space-time, from completely dispersed initial configurations and by means of the focusing effect of gravitational waves. Although the data are no longer close to Minkowski data, in other words, the data are no longer small, he is still able to prove a long time existence 
result for these data. This establishes the first result on the long time dynamics in general relativity and paves the way for many new developments on dynamical problems related to black holes. Shortly after Christodoulou's work, Klainerman and Rodnianski extends and significantly simplifies Christodoulou's work, see \cite{K-R-09}. From a pure PDE perspective, the data appeared in the above works are carefully chosen large profiles which can be preserved by the Einstein equations along the evolution. The data in the current work are inspired by these ideas, in particular the idea in \cite{Ch-BlackHoles}. The initial profiles are designed in such a way that the shape of the data will be preserved along the evolution of $(\star)$.

\medskip

As a summary, the data used in the paper are motivated by Christodoulou's work \cite{Ch-BlackHoles} and Klainerman-Rodnianski \cite{K-R-09} on the formation of black holes in general relativity. The ideas of the proof are motivated by Christodoulou's work \cite{Ch-Shocks} and Christodoulou-Miao \cite{Ch-Miao}. We have to overcome all the technical difficulties in the works mentioned above, in particular those in Christodoulou's works. At the same time, we would like to present a clearer geometric picture of the underlying shock formation mechanism.

\subsection{The heuristics for shock formations}
We rewrite $(\star)$ in the so called \emph{geometric form}:
\begin{equation*}
 -\frac{1}{c^2} \partial^2_t \phi +\Delta\phi=0 \quad \cdots \cdots (\star_g),
\end{equation*}
where $c =\big(1+3G^{\prime\prime}(0) (\partial_t\phi)^2\big)^{-\frac{1}{2}}$. Recall that, if $c$ was a constant, $(\star_g)$ would describe the propagation of light in Minkowski space and $c$ was the speed of light. In the current situation, we still regard $c$ (which is \textbf{not} a constant) as the speed of light. But the speed of light  depends on the position $(t,x)$ in spacetime and the solution $\phi$. This is of course the quasilinear nature of the equation. We now briefly review on the basics of shock formations for the inviscid Burgers' equation. The idea is to get a heuristic argument for the main equation $(\star)$ and to motivate the main theorem.

\medskip

The inviscid Burgers' equation can be written as
\begin{equation*}
\partial_t u + u\partial_x u =0 \quad \cdots \cdots (*).
\end{equation*}
We assume that $u \in C^\infty(\mathbb{R}_t\times \mathbb{R}_x; \mathbb{R})$ is a smooth solution. Given smooth initial datum $u(0,x)$(non-zero everywhere for simplicity), $(*)$ can be solved by the method of characteristics. A characteristic is a curve in $\mathbb{R}^2_{t,x}$ defined by the solution $u$. In the case of Burgers' equation, a characteristic is a straight line and it is determined by the initial datum $u(0,x)$ as follows: it is the unique line passing through $(0,x)$ with slope $\frac{1}{u(0,x)}$. The method of characteristics says that $u$ is constant along each of the characteristics.

\smallskip

To make connections to the geometric form $(\star_g)$ of the main equation $(\star)$, we also propose a \emph{geometric form} of the Burgers' equation (we assume that $u\neq 0$ to make the following computation legitimate):
\begin{equation*}
\frac{1}{c}\partial_t u + \partial_x u =0 \quad \cdots \cdots (*_{g}),
\end{equation*}
where $c = u$. Then $c$ is the speed of the characteristics and it depends on the solution $u$.

\smallskip

We now consider two specific characteristics passing through $x_1$ and $x_2$ ($x_1<x_2$). If we choose datum in such a way that $u(0,x_1) > u(0,x_2) >0$, both the characteristics travel towards the right. Moreover, the characteristic on the left (noted as $C_1$) travels with speed $c_1 = u(0,x_1)$ and the characteristic on the right (noted as $C_2$) travels with speed $c_2 = u(0,x_2)$. Since $C_1$ travels faster than $C_2$, $C_1$ will eventually catch up with $C_2$. The collision of two characteristics causes the breakdown on the smoothness of the solution. In summary, we have a geometric perspective on shock formation: a ``faster" characteristic catches up a ``slower" one so that it causes a collapse of characteristics.

\smallskip

The above discussion can also be read off easily from the following picture:

\includegraphics[width = 4.7 in]{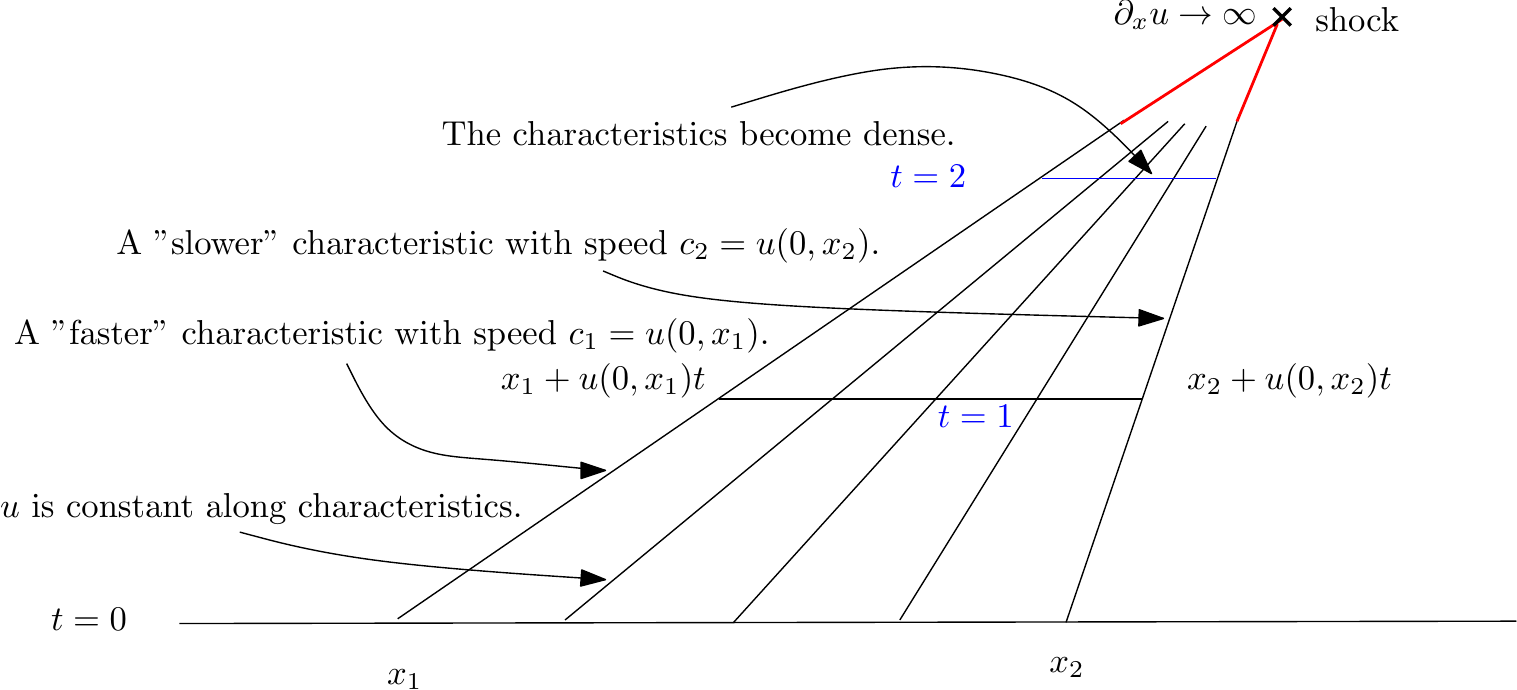}

In reality, in stead of showing that characteristics collapse (which is on the heuristic level),  we show that $|\partial_x u|$  blows up. In stead of being na\"{i}vely a derivative, $|\partial_x u|$ have an important \emph{geometric interpretation}. Recall that the level sets of $u$ are exactly the characteristics and the $(t,x)$-plane is foliated by the characteristics (see the above picture). Therefore, $|\partial_x u|$ is the \emph{density of the foliation by the characteristics}. As a consequence,  we can regard the shock formation as the following geometric picture: \emph{the foliation of characteristics becomes infinitely dense}.

\smallskip

We also recall a standard way to prove the blow-up of $|\partial_x u|$. The remarkable feature of this standard proof is that in three dimensions similar phenomenon happens for the main equation $(\star)$. Let $\underline{L} = \partial_t + u \partial_x$ be the generator vectorfield of the characteristics (for $(\star)$, the corresponding vectorfield are generators of null geodesics on the characteristic hypersurfaces). Therefore, by taking $\partial_x$ derivatives, we obtain
\begin{equation*}
\underline{L}  \partial_x u + (\partial_x u)^2 = 0.
\end{equation*}
This is  a Riccati equation for $\partial_x u$ and the blow-up theory for $\partial_x u$ is standard. However, we would like to understand the blow-up in another way (which is intimately tied to the shock formation for $(\star)$). We define the \emph{inverse density function} $\mu = - (\partial_x u)^{-1}$, therefore, along each characteristic curve, $\mu$ satisfies the following equation:
\begin{equation*}
\underline{L} \mu(t,x) = -1,
\end{equation*}
i.e. $\underline{L} \mu$ is  constant along each characteristic so that it is determined by its initial value. Therefore, $\mu$ will eventually become $0$ which implies that the foliation becomes infinitely dense (For $(\star)$, we will also define an inverse density function $\mu$ for the foliation of characteristic hypersurfaces and show that $\underline{L} \mu(t,x)$ is almost a constant along each generating geodesic of the characteristic hypersurfaces).

\medskip

We return to the main equation in the geometric form $-\frac{1}{c^2} \partial^2_t \phi +\Delta\phi=0$ with $c =\big(1+3G^{\prime\prime}(0) (\partial_t\phi)^2\big)^{-\frac{1}{2}}$. We prescribe initial data $(\phi(-2, \cdot), \partial_t \phi(-2, \cdot))$ on the time slice $\Sigma_{-2}$ defined by $t=-2$. We use $S_r$ to denote the sphere of radius $r$ centered at the origin on $\Sigma_{-2}$ and use $B_2$ to denote the ball of radius $2$ with boundary $S_2$. Therefore, the region enclosed by $S_2$ and $S_{2+\delta}$ (where $\delta$ is a small positive number) is foliated by the $S_r$'s for $2\leq r \leq 2+ \delta$. The following picture may help to illustrate the process.

\ \ \ \ \ \ \ \ \includegraphics[width = 5 in]{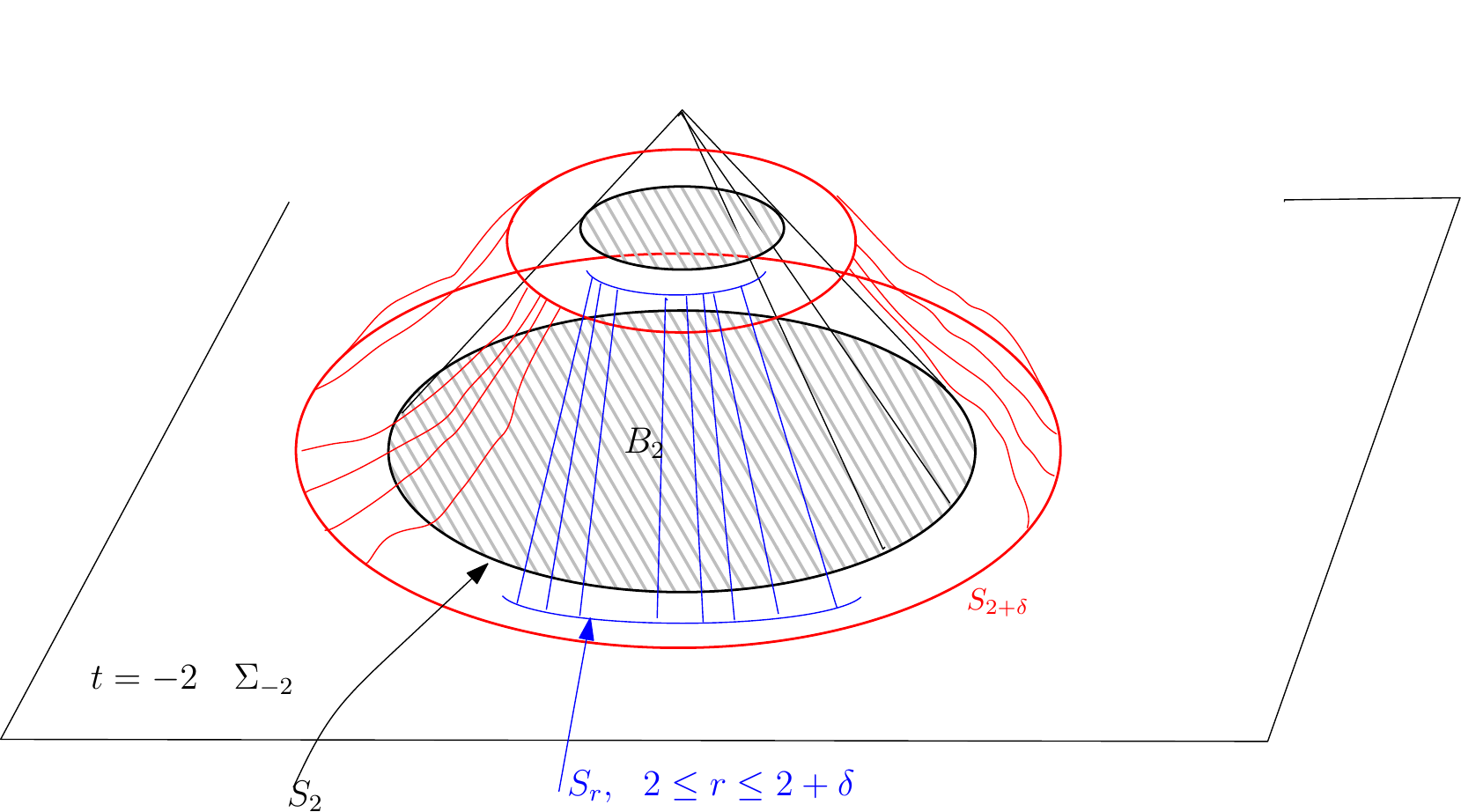}

For each leaf $S_r$ in the foliation, there is a unique incoming characteristic hypersurface, which will be defined more precisely in the next section, emanated from $S_r$. In the picture, we use a blue surface to denote it. The incoming characteristic hypersurfaces emanated from $S_2$ and $S_{2+\delta}$ are drawn in black and red respectively.

\smallskip

(1) Data inside $B_{2}$. We take trivial initial data $\phi(-2, x)\equiv 0$ and $\partial_t \phi(-2, x)\equiv 0$ inside $B_2$.

\smallskip

In view of the Huygens' principle, in the backward solid light cone with bottom $B_2$ (colored in black in the picture),  the solution $\phi$ is identically zero. In particular, for the incoming characteristic hypersurface, which is the cone in black in the picture emanated from the leaf $S_{2}$, since $c =\big(1+3G^{\prime\prime}(0) (\partial_t\phi)^2\big)^{-\frac{1}{2}}$, the incoming speed of this hypersurface, which is noted as $c_1$, can be computed as
\begin{equation*}
c_1= 1.
\end{equation*}

\smallskip

(2) Data on the annulus region between $S_{2}$ and $S_{2+\delta}$. This is the region between the black circle and the red circle in the picture. We require that the size of $\partial_t\phi$ is approximately $\delta^\frac{1}{2}$ on the outermost circle (which is red in the picture) $S_{2+\delta}$, i.e.   $|\partial_t \phi| \sim \delta^{\frac{1}{2}}$ at $S_{2+\delta}$ .

\smallskip

By Taylor expansion, we can compute the speed $c_2$ of the outermost incoming characteristics hypersurface emanated from $S_{2+\delta}$ (which is red in the picture) as follows
\begin{equation*}
c_2= 1 -\frac{3}{2}G''(0)\delta + O(\delta^2).
\end{equation*}

We are now in a situation that resembles the Burgers' picture. The initial distance between the inner most characteristic hypersurface (which is black in the picture) and the outer most characteristic hypersurface (which is red in the picture) is $\delta$. Both hypersurfaces travel towards the center. The difference of the speeds of two characteristic hypersurfaces is $c_1-c_2 \sim \delta$. We also expect the "faster"(outer) characteristic hypersurface catching up the "slower"(inner) one. This catching up process needs approximately $\frac{\text{distance}}{\text{speed}}=\frac{\delta}{c_1-c_2} \sim 1$ amount of time. We also regard the collision of characteristic hypersurfaces as shock formation, we hope that shocks form around $t=-1$.

\smallskip

We would like to point out a serious gap in the above heuristic argument. There is one assumption which seems to be very unreasonable: by the choice of the data, we can make sure that the speed $c_2$ of the outer most incoming characteristic hypersurfaces is of size $1 -\frac{3}{2}G''(0)\delta + O(\delta^2)$, but there is no clear reason that we should believe the speed $c_2$ remaining the same later on. Therefore, the difference of the two speeds $c_1-c_2$ may vary a lot so that the outer most characteristic hypersurface never catch up with the inner one.

\smallskip

The whole point of the paper is to identify a set of initial data so that the profile of the data propagates, i.e. the profile remains almost unchanged. In particular, we can prove that the speeds of the characteristic hypersurfaces remain almost unchanged for later time. Another way to understand this is through energy estimates: we can find a specific set of data so that we can obtain a priori energy estimates. Once we showed that the energy (and its higher order analogue) is almost conserved, we can use Sobolev inequality to show that $\partial_t \phi$ is almost conserved pointwisely along the generators of the incoming characteristic hypersurfaces. According to the formula of $c$, this also implies the speeds are almost conserved.

\smallskip

Finally, we point out that, as in the Burgers' equation case, instead of showing that characteristic hypersurfaces meet, we show that the inverse density $\mu$ of the foliation by the characteristic hypersurfaces becomes $0$, i.e. the foliation turns to being infinitely dense. Similarly, this can be done by showing that $\underline{L} \mu(t,x)$ is almost a constant along each generating geodesic of the characteristic hypersurfaces.

\subsection{The main result}

With motivations from the previous subsection, we are ready to state the main result of the paper. Let $t$ be the time function in Minkowski spacetime. We use $\Sigma_t$ to denote the level sets of $t$ and it is a copy of $\mathbb{R}^3$ for each $t$. We fix $r_0 = 2$ in this paper.\footnote{In a future work, we will consider a more general case for which the data is prescribed at past infinity. Therefore, we have to let $r_0$ go to $\infty$ and the dependence of the estimates (of the current work) on $r_0$ will be crucial.} We also use $\Sigma_{-r_0}^{\delta}$ to denote the following $\delta$-thin annulus:
\begin{equation}\label{delta annulus}
 \Sigma_{-r_{0}}^{\delta}:=\big\{x\in\Sigma_{-r_{0}} \big| r_{0}\leq r(x)\leq r_{0}+\delta\big\},
\end{equation}
where $\delta$ is any given small positive constant.

\smallskip

We recall that the wave speed $c$ is defined as $c=\big(1+3G''(0)(\partial_t\phi)^2\big)^{-\frac{1}{2}}$. Let $\Lb = \partial_t - c\partial_r$ and $L=\partial_t + c\partial_r$. We first introduce a pair of functions $\big(\phi_1(s,\theta),\phi_2(s,\theta)\big) \in C^\infty\big((0,1]\times \mathbb{S}^2\big)$ and we will call it the \emph{seed data}.

\smallskip

The seed data $\big(\phi_1(s,\theta),\phi_2(s,\theta)\big)$ can be freely prescribed and once it is given once forever. In particular, the choice of the seed data is independent of the small parameter $\delta$.

\begin{lemma}\label{lemma constraint}
 Given seed data $(\phi_1,\phi_2)$, there exists a $\delta'>0$ depending only on the seed data, for all $\delta<\delta'$, we can construct another function $\phi_0\in  C^\infty\big((0,1]\times \mathbb{S}^2\big)$ satisfying the following two properties:

(1) For all $k\in \mathbb{Z}_{\geq 0}$, the $C^k$-norm of $\phi_0$ are bounded by a function in the $C^k$-norms of $\phi_1$ and $\phi_2$;

(2) If we pose initial data for $(\star)$ on $\Sigma_{-2}$ in the following way:

For all $x \in \Sigma_{-2}$ with $r(x) \leq 2$, we require $\big(\phi(-2,x),  \partial_t \phi(-2,x)\big) = \big(0,0\big)$;\ For $2 \leq r(x) \leq 2+\delta$, we require that
\begin{equation*}
 \phi(-2,x)=\delta^{3/2}\phi_{0}\left(\frac{r-2}{\delta},\theta\right),\ \ (\partial_{t}\phi)(-2,x)=\delta^{1/2}\phi_{1}\left(\frac{r-2}{\delta},\theta\right).
\end{equation*}
Then we have
\begin{equation}\label{no outgoing radiation}
\|\Lb\phi\|_{L^\infty\left(\Sigma^{\delta}_{-r_{0}}\right)}\lesssim \delta^{3/2},\ \ \|\Lb^{2}\phi\|_{L^\infty\left(\Sigma^{\delta}_{-r_{0}}\right)}\lesssim \delta^{3/2}.
\end{equation}
\end{lemma}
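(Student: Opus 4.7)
The plan is to reduce the two bounds in \eqref{no outgoing radiation} to conditions on the radial profile $\phi_0(s,\theta)$, with $s=(r-2)/\delta$, and then to solve these conditions by a simple integration in $s$. The strategy is guided by the heuristic that $\Lb\phi$ is an ``outgoing radiation'' quantity whose leading part can be killed by choosing $\partial_s\phi_0$ appropriately, while $\Lb^2\phi$ admits a further cancellation that one additional correction will arrange.

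First I would compute $\Lb\phi|_{\Sigma^{\delta}_{-r_{0}}}$ directly from the ansatz. Since $\partial_r\phi(-2,x)=\delta^{1/2}\partial_s\phi_0$ and on $\Sigma^{\delta}_{-r_{0}}$ the speed is $c=(1+3G''(0)\delta\phi_1^2)^{-1/2}$, one obtains
\[
\Lb\phi\big|_{\Sigma^{\delta}_{-r_{0}}}=\delta^{1/2}\bigl(\phi_1-c\,\partial_s\phi_0\bigr),
\]
so the first estimate in \eqref{no outgoing radiation} is equivalent to $c\,\partial_s\phi_0-\phi_1=O(\delta)$ in $L^\infty$. Then I would expand
\[
\Lb^2\phi=\partial_t^2\phi-2c\,\partial_t\partial_r\phi+c^2\partial_r^2\phi-(\partial_t c-c\,\partial_r c)\partial_r\phi,
\]
substitute $\partial_t^2\phi=c^2\Delta\phi$ on $\Sigma^{\delta}_{-r_{0}}$ from the equation $(\star)$, and compute $\Delta\phi$ in spherical coordinates. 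Each $\partial_s$ of $\phi_0$ costs a factor $\delta^{-1}$, so the potentially largest piece is $\delta^{-1/2}[\,2c^2\partial_s^2\phi_0-2c\,\partial_s\phi_1\,]$; but differentiating the first-order matching once gives $c\,\partial_s^2\phi_0=\partial_s\phi_1+O(\delta)$, so this bracket is in fact $O(\delta)$ and its contribution is only $O(\delta^{1/2})$.

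What remains at order $\delta^{1/2}$ is a collection of explicit terms in $\phi_1$, together with the contribution of the undetermined $\delta$-correction $h$ in the ansatz $\partial_s\phi_0=\phi_1/c+\delta h$. Setting these to zero yields a first-order ODE in $s$ of the schematic form
\[
\partial_s h=-\tfrac{1}{r_0}\phi_1-3G''(0)\phi_1^2\partial_s\phi_1+O(\delta),
\]
with a free integration constant that is a function on $\mathbb{S}^2$. The second seed function $\phi_2$ enters precisely as this integration constant, or equivalently as the inner boundary value of $\phi_0$, thus determining $\phi_0$ uniquely from $(\phi_1,\phi_2)$ and $\delta$. The $C^k$ bounds in part (1) then follow from standard ODE estimates applied to this explicit construction, with constants depending on $\|\phi_1\|_{C^k}$ and $\|\phi_2\|_{C^k}$ and uniform in $\delta<\delta'$, provided $\delta'$ is chosen small enough that $c$ remains bounded away from zero.

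I expect the principal obstacle to be the algebraic bookkeeping in the $\Lb^2\phi$ expansion: one has to identify the cancellation of the leading $\delta^{-1/2}$ contribution, which relies on the expansion $c^2=1-3G''(0)\delta\phi_1^2+O(\delta^2)$ combined with the first-order matching, and then to reorganize the remaining $\delta^{1/2}$ terms into a clean transport equation for $h$. Once this is accomplished, the construction of $\phi_0$ and the verification of both bounds in \eqref{no outgoing radiation} are routine; smooth matching with the trivial data inside $B_{r_0}$ at $s=0$ is arranged by compatible vanishing of the seed data at $s=0$.
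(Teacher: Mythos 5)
Your construction is correct, and the core computation is the same as the paper's: expand $\Lb^2\phi$ on $\Sigma^{\delta}_{-2}$, eliminate $\partial_t^2\phi$ (and, inside $\partial_t c$, again $\partial_t^2\phi$) via the equation $(\star)$, and reduce the requirement $|\Lb^2\phi|\lesssim\delta^{3/2}$ to an ODE in $s$ for the profile, solved with bounds uniform in $\delta$. The organization differs in two genuine ways. First, you build the bound on $\Lb\phi$ into the ansatz from the start, writing $\partial_s\phi_0=\phi_1/c+\delta h$ so that $\Lb\phi=-c\,\delta^{3/2}h$ is manifestly $O(\delta^{3/2})$, and then the $\delta^{-1/2}$ part of $\Lb^2\phi$ cancels automatically and the $\delta^{1/2}$ part yields a first-order equation (essentially a direct integration) for $h$; the paper instead solves a single second-order ODE for $\phi_0$ with data $\phi_0(0,\theta)=\partial_s\phi_0(0,\theta)=0$ and inhomogeneity $\delta^2\phi_2$, and recovers $\|\Lb\phi\|_{L^\infty}\lesssim\delta^{3/2}$ only a posteriori, by expressing $\partial_r(\Lb\phi)$ from the wave equation as an $O(\delta^{1/2})$ quantity and integrating over the $\delta$-thin annulus from $r=2$ where $\Lb\phi=0$. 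Your route makes the first estimate transparent and avoids that indirect integration; the paper's route avoids tracking the first-order cancellation explicitly. Second, the role of $\phi_2$ differs: in the paper $\phi_2=\phi_2(s,\theta)$ is the exact $O(\delta^2)$ source of the ODE (a flexibility device, not needed for the estimates), whereas you let it enter as the free integration constant, which strictly speaking is a function of $\theta$ alone, and you also conflate this constant for $h$ with the separate free datum $\phi_0(0,\theta)$; since the lemma only asks that the $C^k$ norms of $\phi_0$ be controlled by those of $(\phi_1,\phi_2)$, this cosmetic mismatch does not affect the validity of your proof, and your schematic ODE for $h$ (whose exact cubic coefficients in $\phi_1$ are not as you wrote them, but you only claim the schematic form) is indeed solvable with the required $\delta$-uniform $C^k$ bounds.
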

We remark that the condition \eqref{no outgoing radiation} has a clear physical meaning: since $\Lb$ are incoming directions, the waves are initially set to be incoming and the outgoing radiation is very little (controlled by $\delta$).
\begin{definition}
The Cauchy initial data of $(\star)$ constructed in the lemma (satisfying the two properties) are called no-outgoing-radiation short pulse data
\end{definition}

Before we state the main theorem, we prove the lemma hence show the existence of no-outgoing-radiation short pulse data.

\begin{proof}
We recall that  $c=\big(1+3G''(0)(\partial_t\phi)^2\big)^{-\frac{1}{2}}$,$\,\Lb = \partial_t - c\partial_r$ and $L=\partial_t + c\partial_r$.

\smallskip

We first take an arbitrary choice of $\phi_1$ and fix this function. Therefore, $\partial_t \phi$ is given by the formula $(\partial_{t}\phi)(-2,x)=\delta^{1/2}\phi_{1}(\frac{r-2}{\delta},\theta)$. In particular, all the spatial derivatives of $\partial_t \phi$ and $c$ (determined completely by $\partial_t \phi$) are prescribed on $\Sigma^\delta_{-2}$.

\smallskip

Therefore, by definition, we have
$$\Lb^2 \phi = \partial_t^2 \phi -\partial_t c \, \partial_r \phi + c\partial_r c \, \partial_r \phi +c^2\partial_r^2 \phi-2c \partial_r(\partial_t \phi).$$
According to the definition of $c$, we have $\partial_t c = -3 G''(0)c^3 \partial_t \phi\,\partial_t^2 \phi$. Thus, we have
$$\Lb^2 \phi = \big(1-3G''(0)c^3\partial_t\phi\,\partial_r \phi\big)\partial_t^2 \phi + c\partial_r c \, \partial_r \phi +c^2\partial_r^2 \phi-2c \partial_r(\partial_t \phi).$$
By virtue of the main equation, we have $\partial_t^2\phi = c^2\big(\partial_r^2 \phi+\frac{2}{r}\partial_r \phi + \frac{1}{r^{2}}\laplacianslash_{\mathbb{S}^{2}} \phi\big)$, where $\laplacianslash_{\mathbb{S}^{2}} $ is the Laplace operator on $\mathbb{S}^{2}$. Therefore, we obtain
\begin{equation*}
\begin{split}
 \Lb^2 \phi &= \big(2-3G''(0)c^3\partial_t\phi\,\partial_r \phi\big)\big(c^2 \partial_r^2 \phi\big) + \big(1-3G''(0)c^3\partial_t\phi\,\partial_r \phi\big)\big(\frac{2c^2}{r}\partial_r\phi + \frac{c^2}{r^{2}} \laplacianslash_{\mathbb{S}^{2}} \phi \big)\\
 &\ \ \ +c\partial_r c \, \partial_r \phi-2c \partial_r(\partial_t \phi).
\end{split}
\end{equation*}
We now use the fact that $\phi(-2,x)=\delta^{\frac{3}{2}}\phi_{0}(\frac{r-2}{\delta},\theta)$, where $\phi_0$ will be determined later on. Thus, $\Lb^2 \phi$ can be computed as
\begin{equation}
\begin{split}
 \Lb^2 \phi &= \big(2-3G''(0)c^3\phi_1\,\partial_s \phi_0 \,\delta\big)\big(\delta^{-\frac{1}{2}}c^2 \partial_s^2 \phi_0 \big) + \big(1-3G''(0)c^3\phi_1\,\partial_s \phi_0 \,\delta\big)\big(\frac{2c^2}{r}\partial_s\phi_0 \, \delta^{\frac{1}{2}} + \frac{c^2}{r^{2}} \laplacianslash_{\mathbb{S}^{2}} \phi_0 \, \delta^\frac{3}{2} \big)\\
 &\ \ \ +c\partial_r c \, \partial_s \phi_0 \cdot \delta^\frac{1}{2}-2c \partial_s\phi_1 \delta^{-\frac{1}{2}}.
\end{split}
\end{equation}

\smallskip

We claim that we can choose $\phi_0$, which may depend on the choice of $\phi_1$ but is independent of $\delta$, in such a way that $|\Lb^2 \phi| \lesssim \delta^\frac{3}{2}$. 

\smallskip

To see this, we first observe that since $\phi_1$ is given and $\delta$ is small, we have $|c|+|\partial_r c| \lesssim 1$. Indeed, $\partial_r c = -3c^3 G''(0)\phi_1 \partial_s \phi_1$ so the bound on $\partial_r c$ is clear. We make the following ansatz for $\phi_0$:
\begin{equation}\label{ansatz on phi_0}
 |\partial_s \phi_0| +  |\partial^2_\theta \phi_0| \leq C,
\end{equation}
where the constant $C$ may only depend on $\phi_1$ but not on $\delta$.

\smallskip
By the ansatz \eqref{ansatz on phi_0} and by looking at the expansions in $\delta^\frac{1}{2}$, one can ignore all the terms equal to or higher than $\delta^{\frac{3}{2}}$. Therefore, to show $|\Lb^2 \phi| \lesssim \delta^\frac{3}{2}$, it suffices to consider
\begin{equation*}
\begin{split}
 \big(2-3G''(0)c^3\phi_1\,\partial_s \phi_0 \,\delta\big)\big(\delta^{-\frac{1}{2}}c^2 \partial_s^2 \phi_0 \big) +  \frac{2c^2}{r}\partial_s\phi_0 \, \delta^{\frac{1}{2}}    +c\partial_r c \, \partial_s \phi_0 \cdot \delta^\frac{1}{2}-2c \partial_s\phi_1 \delta^{-\frac{1}{2}} = O(\delta^\frac{3}{2}),
\end{split}
\end{equation*}
or equivalently
\begin{equation*}
\begin{split}
 \big(2-3G''(0)c^3\phi_1\,\partial_s \phi_0 \,\delta\big)\big( c^2 \partial_s^2 \phi_0 \big) +  \frac{2c^2}{r}\partial_s\phi_0 \, \delta    +c\partial_r c \, \partial_s \phi_0 \, \delta -2c \partial_s\phi_1  = O(\delta^2).
\end{split}
\end{equation*}
Since $ \big(2-3G''(0)c^3\phi_1\,\partial_s \phi_0 \,\delta\big)^{-1}=\frac{1}{2}+\frac{3}{4}G''(0)c^3\phi_1\,\partial_s \phi_0 \,\delta + O(\delta^2)$, by multiplying both sides of the above identity by $\big(2-3G''(0)c^3\phi_1\,\partial_s \phi_0 \,\delta\big)^{-1}$, it suffices to consider
\begin{equation*}
\begin{split}
 c^2 \partial_s^2 \phi_0   +  \frac{c^2}{r}\partial_s\phi_0 \, \delta+\frac{1}{2}c\partial_r c \, \partial_s \phi_0 \, \delta -\big(1+\frac{3}{2}G''(0)c^3\phi_1\,\partial_s \phi_0 \,\delta \big)c \partial_s\phi_1  = O(\delta^2).
\end{split}
\end{equation*}
Since $c\sim 1$, we finally have
\begin{equation*}
\begin{split}
\partial_s^2 \phi_0   +  \big(\frac{\delta}{r}  
+\frac{\delta}{2c} \partial_r c  -\frac{3\delta}{2}G''(0)c^2\phi_1   \partial_s\phi_1 \big)\partial_s \phi_0-c^{-1}\partial_{s}\phi_1  = O(\delta^2).
\end{split}
\end{equation*}
To solve for $\phi_0$, for $s \in [0,1)$, we consider the following family of (parametrized by a compact set of parameters $\theta \in \mathbb{S}^2$ and the parameter $\delta$) linear ordinary differential equation:
\begin{equation*}
\begin{split}
\partial_s^2 \phi_0   + & \big(\frac{\delta}{r}  +\frac{\delta}{2c} \partial_r c  -\frac{3\delta}{2}G''(0)c^2\phi_1   \partial_s\phi_1 \big)\partial_s \phi_0-c^{-1}\partial_{s}\phi_1  = \delta^2 \phi_{2},\\
&\ \ \ \ \ \ \ \ \ \phi_0(0,\theta)=0, \ \ \partial_s\phi_0(0,\theta) = 0.
\end{split}
\end{equation*}
Since the $C^k$-norms of the solution depends smoothly on the coefficients and the parameter $\theta, \delta$, all $C^k$-norms of $\phi_0$ are of order $O(1)$ and indeed are determined by the solution of 
\begin{equation*}
\begin{split}
&\ \ \  \partial_s^2 \phi_0  -c^{-1}\partial_{s}\phi_1  = 0, \\
&\   \phi_0(0,\theta)=0, \ \partial_s\phi_0(0,\theta) = 0.
\end{split}
\end{equation*}
In particular, this shows that the ansatz \eqref{ansatz on phi_0} holds if we choose $C$ appropriately large in \eqref{ansatz on phi_0} and $\delta$ sufficiently small. Therefore the above construction shows that 
$$|\Lb^2 \phi| \lesssim \delta^\frac{3}{2}.$$
\smallskip
We claim that, by the above choice of intial data, on $\Sigma_{-2}^\delta$, we automatically have
$$|\Lb \phi| \lesssim \delta^\frac{3}{2}.$$
\smallskip
Indeed, by replacing $\partial_t = \Lb +c\partial_r$ in the main equation, we obtain
\begin{equation*}
 \partial_r \Lb\phi = \frac{1}{2c}\Big(-\Lb^2\phi-Lc \, \partial_r\phi+\frac{2c^2}{r}\partial_r\phi+\frac{c^2}{r^{2}}\laplacianslash_{\mathbb{S}^{2}} \phi\Big).
\end{equation*}
By the construction of the data, it is obvious that all the terms on the right hand side are of size $O(\delta^\frac{1}{2})$. By integrating from $2$ to $r$ with $r\in [2,2+\delta)$ and $\Lb \phi(2,\theta)=0$, we have 
$$|\Lb\phi(r,\theta)|\leq \delta \cdot O(\delta^\frac{1}{2})\lesssim \delta^\frac{3}{2}.$$
\end{proof}

The main theorem of the paper is as follows:
\begin{MainTheorem}
For a given constant $G''(0)\neq 0$, we consider
\begin{equation*}
 - \big(1+3G^{\prime\prime}(0) (\partial_t\phi)^2\big)\partial^2_t \phi +\Delta\phi=0.
\end{equation*}
Let $(\phi_1,\phi_2)$ be a pair of seed data and the initial data for the equation is taken to be the no-outgoing-radiation initial data.

If the following condition on $\phi_{1}$ holds for at least one $(r,\theta) \in (0,1] \times \mathbb{S}^2$:
\begin{equation}\label{shock condition}
G''(0)\cdot\partial_r \phi_1(r,\theta) \cdot \phi_1(r,\theta) \leq -\frac{1}{6},
\end{equation}
then there exists a constant $\delta_0$ which depends only on the seed data $(\phi_1,\phi_2)$, so that for all $\delta< \delta_0$, shocks form for the corresponding solution $\phi$ before $t=-1$, i.e. $\phi$ will no longer be smooth.
\end{MainTheorem}

\begin{remark}
The choice of $\phi_1$ in the proof of Lemma \ref{lemma constraint} is arbitrary. In particular, this is \textbf{consistent} with the condition \eqref{shock condition} since $\phi_1$ can be freely prescribed.
\end{remark}

\begin{remark}
\begin{itemize}
\item[(1)] We do \emph{not} assume spherical symmetry on the initial data. Therefore, the theorem is in nature a higher dimensional result. 

\item[(2)] The proof can be applied to a large family of equations derived through action principles. We will discuss this point when we consider the Lagrangian formulation of \eqref{Main Equation}.

\item[(3)] The condition \eqref{shock condition} is \emph{only} needed to create shocks. It is not necessary at all for the a priori energy estimates.

\end{itemize}
\end{remark}

\begin{remark}\label{remark smooth break}
 The smoothness of $\phi$ breaks down in the following sense:
\begin{itemize}
 \item[1)]\ The solution and its first derivative, i.e. $\phi$ and $\partial \phi$, are always bounded. Moreover, $|\partial_t\phi| \lesssim \delta^{\frac{1}{2}}$, therefore \eqref{Main Equation} is always of wave type.

 \item[2)]\ The second derivative of the solution blows up. In fact, when one approaches the shocks, $\nabla\partial_t  \phi$ blows up. See Remark \ref{2nd derivative blow up} for the proof.
\end{itemize}
\end{remark}

\subsection{Lagrangian formulation of the main equation and its relation to nonlinear electromagnetic waves}
We briefly discuss the derivation of the main equation $(\star)$. The linear wave equation in Minkowski spacetime $(\mathbb{R}^{3+1}, m_{\mu\nu})$ can be derived by a variational principle: we take the Lagrangian density $L(\phi)$ to be $\frac{1}{2}\big(-(\partial_t \phi)^2 + |\nabla_x \phi|^2\big)$ and take the action functional $\mathcal{L}(\phi)$ to be $\int_{\mathbb{R}^{3+1}} L(\phi) d \mu_m$ where $d \mu_m$ denotes the volume form of the standard Minkowski metric $m_{\mu\nu}$. The corresponding Euler-Lagrange equation is exactly the linear wave equation  $-\partial^2_t\phi +\Delta \phi = 0$. We observe that the quadratic nature of the Lagrangian density result in the \textit{linearity} of the equation. This simple observation allows one to derive plenty of \textit{nonlinear} wave equations by changing the quadratic nature of the Lagrangian density. In particular, we will change the quadratic term in $\partial_t\phi$ to a quartic term, this will lead to a \textit{quasi-linear} wave equation.

In fact, we consider a perturbation of the Lagrangian density of linear waves:
\begin{equation}\label{Lagrangian density}
 L(\phi)= -\frac{1}{2}G\big((\partial_{t}\phi)^{2}\big)+\frac{1}{2}|\nabla\phi|^{2},
\end{equation}
where $G=G(\rho)$ is a smooth function defined on $\mathbb{R}$ and $\rho = |\partial_t  \phi|^2$. The corresponding Euler-Lagrange equation is
\begin{equation*}
 -\partial_t \big(G^{\prime}(\rho) \partial_t\phi\big)+\Delta\phi=0.
\end{equation*}

 The function $G(\rho)$ as a perturbation of $G_0(\rho)=\rho$ and therefore we can think of the above equation as a perturbation of the linear wave equation. For instance, we can work with a real analytic function $G(\rho)$ with $G(0)=0$ and $G^{\prime}(0)=1$. In particular, we can perturb $G(\rho) = \rho$ in the simplest possible way by adding a quadratic function so that $G(\rho) = \rho +\frac{1}{2}G''(0)\rho^2$. In this situation, we obtain precisely the main equation $(\star)$. It is in this sense that $(\star)$ can be regarded as the simplest quasi-linear wave equation derived from action principles.

The main equation $(\star)$ is also closely tight to electromagnetic waves in a nonlinear dielectric. The Maxwell equations in a homogeneous insulator derived from a Lagrangian $L$ which is a function of the electric field $E$ and the magnetic field $B$. The corresponding displacements $D$ and $H$ are defined through $L$ by $D=-\frac{\partial L}{\partial E}$ and $H=\frac{\partial L}{\partial B}$ respectively. In the case of an isotropic dielectric, $L$ is of the form
\begin{align}\label{Lagrangian Maxwell}
L=-\frac{1}{2}G(|E|^{2})+\frac{1}{2}|B|^2,
\end{align}
hence $H = B$. The fields $E$ and $B$ are derived from the scalar potential $\phi$ and the vector potential $A$ according to $E=-\nabla\phi-\partial_{t}A$ and $B=\nabla\times A$ respectively. This is equivalent to the first pair of Maxwell equations:
\begin{align*}
\nabla\times E+\partial_{t}B=0,\quad \nabla\cdot B=0.
\end{align*}
The potentials are determined only up to a gauge transformation $\phi\mapsto \phi-\partial_{t}f$ and $A\mapsto A+df$, where $f$ is an arbitrary smooth function. The second pair of Maxwell equations
\begin{align*}
\nabla\cdot D=0,\quad \nabla\times H-\partial_{t}D=0
\end{align*}
are the Euler-Lagrange equations, the first resulting from the variation of $\phi$ and the second resulting from the variation of $A$. Fixing the gauge by setting $\phi = 0$, we obtain a simplified model if we neglect the vector character of $A$ replacing it by a scalar function $\phi$. Then the above equations for the fields in terms of the potentials simplify to $E=-\partial_{t}\phi$ and $B=\nabla\phi$.
The Lagrangian \eqref{Lagrangian Maxwell} becomes
\begin{align*}
L=-\frac{1}{2}G((\partial_{t}\phi)^{2})+\frac{1}{2}|\nabla\phi|^{2}
\end{align*}
which is exactly \eqref{Lagrangian density}. Therefore, the main equation $(\star)$ provides a good approximation for shock formation in a natural physical model: the shock formations for nonlinear electromagnetic waves.

\subsection{Main features of the proof}

We now briefly sketch four main ingredients of the proof.

(1) The short pulse method. By rewriting $(\star)$ in the semilinear form $\Box_m \phi = -3 G''(0)(\partial_t\phi)^2 \partial_t^2\phi$, we notice that the nonlinearity is cubic. Therefore the result in \cite{K-80} implies that small smooth initial data lead to global smooth solutions since the classical null condition is satisfied. We are then forced to consider large initial data. According to the choice of data in the Main Theorem, they are supported in the annulus of width $\delta$ and with amplitude $\delta^{\frac{1}{2}}$ which looks like a pulse (the short pulse data). The energy associated to the data is  of size $1$. On the technical level, although the short pulse data is no longer small,
we still have a small parameter $\delta$ coming into play. Therefore, most of the techniques for small data problems can also be applied here.

(2) A Lorentzian geometry defined by the solutions. Since the Lagrangian, therefore $(\star)$ itself, is invariant under the time translation and the isometries of $\mathbb{R}^{3}$, we can linearize the equation via the infinitesimal generators of those actions. The most important feature about the linearized equations is, they are not just linear, they are linear \emph{wave} equations with respect to a special Lorentzian metric defined by the solution. This reflects the Lagrangian nature of the equation $(\star)$: the metric comes from the second derivative of the Lagrangian. In particular, the incoming null hypersurfaces with respect to this metric correspond to the characteristic hypersurfaces of the solution. Recall that the shock formation is the study of the collapsing of the characteristic hypersurfaces, hence the differential geometry of the metric dictates shock formation.

Moreover, we study the energy estimates for the linearized linear wave equations.  The energy estimates on one hand depend heavily on the underlying geometry, e.g. the curvature, the fundamental forms of null foliations, the isoperimetric inequalities, etc.; on the other hand, the energy estimates also control the underlying geometry. Therefore, the study of the linearized equations are more or less equivalent to the study of the underlying geometry. This leads to a natural bootstrap argument.

(3) Coercivity of angular energy near shocks. We use the vector field method to study the energy estimates for the linearized equations. Since we expect shock waves, the function $\mu$, i.e., the inverse density of the characteristic hypersurfaces, may turn to $0$. This will pose a fundamental difficulty for energy estimates (even for linear wave equations!). Roughly speaking, for a free wave $\psi$, for all possible multiplier vectorfields, in the associated energy or flux integrals, the components for the rotational directions all look like $\int \mu |\nablaslash \psi|^2$. But in the error integrals, some $\nablaslash \psi$ components show up \textit{without} a $\mu$ factor. In view of the fact that $\mu\rightarrow 0$ in the shock region, the above disparity in $\mu$ shows that one can not control the error integrals by the energy or flux terms.

This difficulty is of course tied to the formation of shocks. The remarkable thing is, it is also resolved by the formation of shocks. The idea is as follows: initially, the $\mu\sim 1$. If in the future, no shock forms, then the disparity of $\mu$ simply result in an universal constant in the estimates since $\mu$ will be bounded below and above. If shock forms eventually, then along the incoming direction $\mu$ decreases, i.e. $\Lb \mu <0$ where $\Lb$ is the generator of the incoming null geodesics. Although the error integrals contain many terms without factor $\mu$ for $\nablaslash{\psi}$, there is one term has a very special form: it looks like $\int\!\!\!\int \Lb\mu |\nablaslash \psi|^2$. The sign of $\Lb \mu$ in the shock region shows a miraculous coercivity of the energy estimates. This term is just enough to control all the $\nablaslash \psi$ terms appearing in the error terms. This is the major difference between the usual energy estimates and the case where shocks
form. The use of the sign $\Lb \mu$ is the key to the entire argument in the current work.

(4) The descent scheme. The energy estimates on the top order terms may suffer a loss of a factor in $\mu$ and this can be dangerous in the shock region. Indeed, some error integral looks like $\int_{-r_0}^t \mu^{-1}\frac{\partial\mu}{\partial t}E(\tau) d\tau$ where $E(\tau)$ for the energy (it appears also on the lefthand side of the energy identity). If $s^*$ is the time where shock forms, we can show that $\mu$ behaves like $|t-s^*|$ near shocks. Therefore, the presence of $\mu^{-1}$ cause a $\log$ loss in time. The descent scheme is designed to retrieve the loss. The idea is, rather than proving the top order terms are bounded in energy, we prove that the energy may blow up with a specific rate in $\mu$ to some negative power. To illustrate the idea, we do the following formal computations by assuming $\widetilde{E}(t) = \sup_{\tau \leq t}\mu^a E(\tau)$ is bounded for a large positive number $a$. The energy identity, which looks like $E(t) + \cdots \lesssim \int_{-r_0}^t \mu^{-1}\frac{\partial\mu}{\partial t}E(\tau) d\tau + \cdots$, can be rewritten as
\begin{align*}
E(t)+ \cdots 
&\lesssim \big(\int_{-r_0}^t \mu^{-(a+1)}\frac{\partial\mu}{\partial t} d\tau \big)\widetilde{E}(t) + \cdots.
\end{align*}
Since $\mu\sim |t-s^*|$, the term in the parenthesis gives a factor $\frac{1}{a}$ which is small ($a$ is large!). Therefore, the righthand side can be absorbed by the left hand side.

\section{The optical geometry}

\subsection{Optical metrics and linearized equations}
We observe that main equation $(\star)$ is invariant under the following symmetries: space translations, rotations and the time translation. Indeed, the Lagrangian $L(\phi)$ is invariant under these symmetries, hence the Euler-Lagrange equation must be invariant too. We use $A$ to denote any possible choice from $\{\partial_t, \partial_i, \Omega_{ij}= x^i \partial_j-x^j \partial_i\}$ where $i,j=1,2,3$ and $i<j$. These vectorfields correspond to the infinitesimal generators of the symmetries of $(\star)$.

We linearize $(\star)$ according to $A$ by the following procedure: We apply the symmetry generated by $A$ to a solution $\phi$ of $(\star)$ to obtain a family of solutions $\{\phi_{\tau}:\tau\in \mathbb{R}\big| \phi_0 = \phi\}$. Therefore, $ -\frac{1}{c(\phi_\tau)^2} \partial^2_t \phi_\tau +\Delta\phi_\tau=0$ for $\tau \in \mathbb{R}$. We then differentiate in $\tau$ and evaluate at $\tau=0$.  We define the so called \emph{variations} $\psi$ as
\begin{equation}\label{definition for variation}
 \psi:= A\phi = \frac{d\phi_{\tau}}{d\tau}|_{\tau=0}
\end{equation}
By regarding $\phi$ as a fixed function, this procedure produce a linear equation for $\psi$. We call it \emph{the linearized equation of $(\star)$ for the solution $\phi$ with respect to the symmetry $A$}.

In the tangent space at each point in $\mathbb{R}^{3+1}$ where the solution $\phi$ is defined, we introduce a Lorentzian metric $g_{\mu\nu}$ as follows
\begin{equation}\label{optical metric}
g = - c^2 dt \otimes d t +d x^1 \otimes d x^1 + d x^2 \otimes d x^2 + d x^3 \otimes d x^3,
\end{equation}
with $(t,x^{1},x^{2},x^{3})$ being the standard rectangular coordinates in Minkowski spacetime.
Since $c$ depends on the solution $\phi$, $g_{\mu\nu}$ also depends on the solution $\phi$. We also introduce a conformal  metric $\gt_{\mu\nu}$ with the conformal factor $\Omega=\dfrac{1}{c}$
\begin{align}
 \gt_{\mu\nu}=\Omega \cdot g_{\mu\nu} = \frac{1}{c} g_{\mu\nu}.
\end{align}
We refer $g_{\mu\nu}$ and $\gt_{\mu\nu}$ as the \textit{optical metric} and the \textit{conformal optical metric} respectively.

\begin{lemma}
The linearized equation of $(\star)$ for a solution $\phi$ with respect to $A$ can be written as
\begin{equation}\label{linearization conformal}
 \Box_{\tilde{g}}\psi=0,
\end{equation}
where $\Box_{\gt}$ is the wave operator with respect to $\gt$ and $\psi = A \phi$.
\end{lemma}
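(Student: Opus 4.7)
The plan is to obtain the linearized equation by differentiating $(\star)$ along the one-parameter family of solutions generated by $A$, and then to verify by a direct computation that the result agrees, up to a strictly positive overall factor of $c$, with $\Box_{\gt}\psi=0$. Both steps are elementary; the content of the lemma is really the observation that the specific conformal factor $\Omega=1/c$ is exactly what is needed to cast the linearization in clean geometric form.

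For the first step, since $A$ generates a symmetry of $(\star)$ its flow maps solutions to solutions, producing a one-parameter family $\{\phi_\tau\}$ with $\phi_0=\phi$ and $\psi=\tfrac{d\phi_\tau}{d\tau}\big|_{\tau=0}$. Differentiating the identity $-\bigl(1+3G''(0)(\partial_t\phi_\tau)^2\bigr)\partial_t^2\phi_\tau+\Delta\phi_\tau=0$ at $\tau=0$ and using $c^{-2}=1+3G''(0)(\partial_t\phi)^2$, so that $\partial_t(c^{-2})=6G''(0)\,\partial_t\phi\,\partial_t^2\phi$, the first-order term arising from differentiation is exactly what is needed to collapse the expression into divergence form, yielding
\begin{equation*}
-\partial_t\!\bigl(c^{-2}\partial_t\psi\bigr)+\Delta\psi=0.
\end{equation*}

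For the second step, from $g_{tt}=-c^2$, $g_{ii}=1$ and $\gt_{\mu\nu}=c^{-1}g_{\mu\nu}$ one reads off $\gt^{tt}=-c^{-1}$, $\gt^{ii}=c$, and $\sqrt{-\det\gt}=c^{-1}$ (in $3+1$ dimensions the conformal factor $c^{-1}$ on each of the four diagonal entries produces an overall $c^{-4}$ on the determinant). Substituting into the standard formula
\begin{equation*}
\Box_{\gt}\psi=\frac{1}{\sqrt{-\det\gt}}\,\partial_\mu\!\bigl(\sqrt{-\det\gt}\,\gt^{\mu\nu}\partial_\nu\psi\bigr)
\end{equation*}
gives, after a short computation,
\begin{equation*}
\Box_{\gt}\psi=c\bigl[-\partial_t(c^{-2}\partial_t\psi)+\Delta\psi\bigr],
\end{equation*}
which is precisely $c$ times the linearized equation derived in the previous step. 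Since $c>0$ (indeed $c$ is close to $1$ for the short pulse solutions in question), the two equations are equivalent, proving the lemma.

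No serious obstacle is expected. The one point worth flagging is that the lemma would genuinely fail with $g$ in place of $\gt$: carrying out the same derivation with $\Box_g$ produces unmatched first-order terms involving $\partial_i c$ and $\partial_t c$, and it is exactly the density $\sqrt{-\det\gt}=c^{-1}$ that cancels them. This is in line with the general principle mentioned in the introduction that for Lagrangian field theories it is the conformally rescaled metric, whose volume form is tied to the Lagrangian measure, that governs variations of the solution.
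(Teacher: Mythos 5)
Your proof is correct and follows essentially the same route as the paper's first derivation: differentiate $(\star)$ along the symmetry flow to get $-\partial_t(c^{-2}\partial_t\psi)+\Delta\psi=0$, then check by direct computation (you via the density formula with $\sqrt{-\det\gt}=c^{-1}$, the paper via the contracted Christoffel symbols $\widetilde{\Gamma}^{\gamma}$) that $\Box_{\gt}\psi=c\bigl[-\partial_t(c^{-2}\partial_t\psi)+\Delta\psi\bigr]$, so the two equations agree up to the positive factor $c$. The paper additionally records a second, variational proof via the second variation of the Lagrangian, which your closing remark about the conformal volume form already gestures at but does not carry out; this is not needed for correctness.
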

There are two ways to derive the linearized equations.

To derive \eqref{linearization conformal}, we can directly differentiate $(\star)$. We denote the Christoffel symbols of $\gt$ in the Cartesian coordinates by $\widetilde{\Gamma}_{\alpha\beta}^{\gamma}$. Let $\widetilde{\Gamma} ^\gamma = \gt^{\alpha\beta}\widetilde{\Gamma}_{\alpha\beta}^{\gamma}$, then $\widetilde{\Gamma} ^0 =-{2}{c^{-2}} \partial_t c$ and the other $\widetilde{\Gamma}^\gamma$'s vanish. Hence,
\begin{align*}
\Box_{\tilde{g}}\psi &= \gt^{\mu\nu}\partial_\mu\partial_\nu \psi - \widetilde{\Gamma}^\gamma\partial_\gamma \psi = c\big[\underbrace{\big(-\frac{1}{c^2} \partial^2_t \psi +\Delta\psi\big)}_{T_1} - \underbrace{\partial_\rho\big(\frac{1}{c^2}\big) \partial_t \rho \cdot \partial_t \psi}_{T_2}\big],
\end{align*}
where $\rho = (\partial_t\phi)^2$. We use $A$ to differentiate $(\star_g)$. If $A$ hits the factor $-\frac{1}{c^2}$, it yields $T_2$; since $A$ (the symmetries!) commutes with $\partial_t$ and $\Delta$, the other terms are precisely $T_1$.

There is a more natural proof which is standard in Lagrangian field theory, e.g. see \cite{Christodoulou-Acition Principle}. In fact, the linearized equation of $(\star)$ is the Euler-Lagrange equation of the linearized Lagrangian density $\dot{L}(\psi):=\frac{1}{2}\frac{d^{2}}{d\tau^{2}}\big|_{\tau=0} L(\phi+\tau\psi)$.
Since $G(\rho) = \rho+ \dfrac{1}{2}G''(0)\rho^2$, we have
\begin{align*}
 \dot{L}(\psi)&=-\frac{1}{2}G^{\prime}(0)(\partial_{t}\psi)^{2}-\frac{3}{2}G^{\prime\prime}(0)(\partial_{t}\phi)^{2}(\partial_{t}\psi)^{2}
+\frac{1}{2}|\nabla\psi|^{2}=\frac{1}{2}g^{\mu\nu}\partial_{\mu}\psi\partial_{\nu}\psi.
\end{align*}
Therefore, if $D\subset\mathbb{R}^{3+1}$ is a domain in which the solution $\phi$ is defined, the action corresponding to $\dot{L}$ is $\dot{\mathcal{L}}(\psi) = \frac{1}{2} \int_{D} g^{\mu\nu}\partial_{\mu}\psi\partial_{\nu}\psi \, d \mu_m$. We emphasize that the volume form $d\mu_m$ is defined by the Minkowski metric $m_{\alpha\beta}$. In view of the definitions of $g_{\mu\nu}$ and $\gt_{\mu\nu}$, the action $\dot{\mathcal{L}}(\psi)$ can be written as
\begin{equation*}
\dot{\mathcal{L}}(\psi) = \frac{1}{2} \int_{D} \tilde{g}^{\mu\nu}\partial_{\mu}\psi\partial_{\nu}\psi\, d \mu_{\tilde{g}}.
\end{equation*}
At this stage, it is clear that the linearized equation is the free wave equation with respect to $\Box_{\gt}$.

\subsection{Lorentzian geometry of the maximal development}
\subsubsection{The maximal development}
We define a function $\ub$ on $\Sigma_{-2}$ as follows:
\begin{align}\label{ub initial}
\ub:=r-2.
\end{align}
The level sets of $\ub$ in $\Sigma_{-2}$ are denoted by $S_{-2,\ub}$ and they are round spheres of radii $\ub+2$.  The annular region $\Sigma_{-2}^\delta$ defined in \eqref{delta annulus} is foliated by $S_{-2,\ub}$ as
\begin{align}\label{foliation initial}
\Sigma_{-2}^{\delta}:=\bigcup_{\ub\in[0,\delta]}S_{-2,\ub}.
\end{align}

Given an initial data set $(\phi,\partial_t \phi)\big |_{t=-2}$ defined on $B_{-2}^{2+\delta}=\bigcup_{\ub\in[-2,\delta]}S_{-2,\ub}$ to the main equation $(\star)$ (as we stated in the Main Theorem), we recall the notion of \emph{the maximal development} or maximal solution with respect to the given data. 

\smallskip

By virtue of the local existence theorem (to $(\star)$ with smooth data), one can claim the existence of a \emph{development} of the given initial data set, namely, the existence of 
\begin{itemize}
\item a domain $\mathcal{D}$ in Minkowski spacetime, whose past boundary is $B_{-2}^{2+\delta}$;
\item a smooth solution $\phi$ to $(\star)$ defined on $\mathcal{D}$ with the given data on $B_{-2}^{2+\delta}$ with following property: For any point $p\in\mathcal{D}$, if an inextendible curve $\gamma: [0,\tau) \rightarrow \mathcal{D}$ satisfies the property that 

(1) $\gamma(0)=p$,

(2) For any $\tau' \in [0,\tau)$, the tangent vector $\gamma'(\tau')$ is past-pointed and causal (i.e., $g(\gamma'(\tau'),\gamma'(\tau'))\leq 0$) with respect to the optical metric $g_{\alpha\beta}$ at the point $\gamma(\tau')$,

\noindent then the curve $\gamma$ 
must terminate at a point of  $B_{-2}^{2+\delta}$.
\end{itemize}

\ \ \ \ \ \ \ \ \includegraphics[width = 4.7 in]{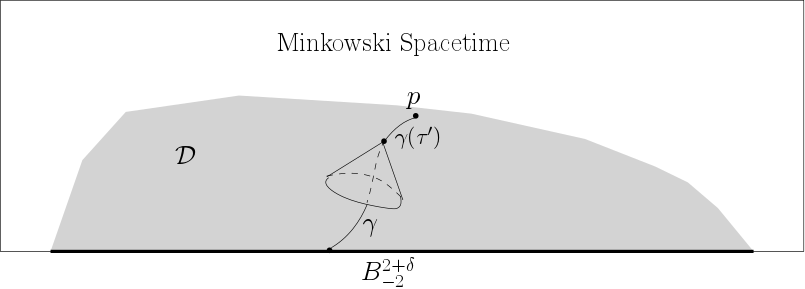}

By the standard terminology of Lorentzian geometry, the above simply says that $B_{-2}^{2+\delta}$ is a Cauchy hypersurface of $\mathcal{D}$.

\smallskip

The local uniqueness theorem asserts that if $(\mathcal{D}_{1},\phi_{1})$ and $(\mathcal{D}_{2},\phi_{2})$ are two developments of the same initial data sets, then $\phi_{1}=\phi_{2}$ in $\mathcal{D}_{1}\bigcap\mathcal{D}_{2}$. Therefore the union of all developments of a given initial data set is itself a development. This is the so called \emph{maximal development} and its corresponding domain is denoted by $W^*$. The corresponding solution is called the \emph{maximal solution}. Sometimes we also identify the development as its corresponding domain when there is no confusion.

\ \ \ \ \ \ \ \ \includegraphics[width = 4.7 in]{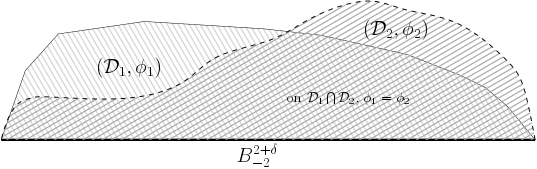}

\subsubsection{Geometric set-up}
Given an initial data set,  we consider a specific family of incoming null hypersurfaces (with respect to the optical metric $g$)  on the maximal development $W^*$. Recall that $\ub$ is defined on $\Sigma_{-2}$ as $r-2$. For any $\ub \in [0,\delta]$, we use $\Cb_{\ub}$ to denote the incoming null hypersurface  emanated from the sphere  $S_{-2,\ub}$.   By definition, we have $\Cb_{\ub} \subset W^*$ and $\Cb_{\ub}\bigcap\Sigma_{-2}=S_{-2,\ub}$. 

We denote the subset of the maximal development of  the given initial data foliated by $\Cb_{\ub}$ with $\ub \in [0,\delta]$ by $W_{\delta}$, i.e.,
\begin{equation}\label{W delta}
W_{\delta} =\bigcup_{\ub\in[0,\delta]}\Cb_{\ub}.
\end{equation}
Roughly speaking, our main estimates will be carried out only on $W_{\delta}$. The reason is as follows: since we assume that the data set is completely trivial for $\ub\leq 0$ on $\Sigma_{-2}$, the uniqueness of smooth solutions for quasilinear wave equations implies that the spacetime in the interior of $\Cb_{0}$ 
is indeed determined by the trivial solution.  Modulo the spherical configurations, the situation can be depicted as

\ \ \ \ \ \ \ \ \includegraphics[width = 4.7 in]{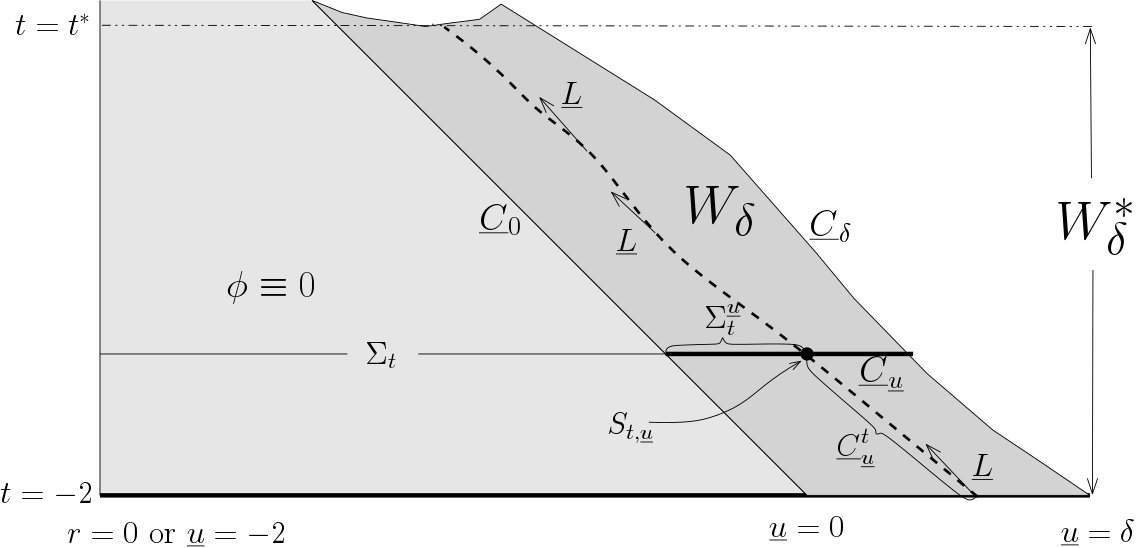}

The grey region is $W_\delta$. In the light gray region, the solution $\phi \equiv 0$ thanks to the construction of the initial data. The solution to \eqref{main eq} then vanishes up to infinite order on $\Cb_{0}$, which is part of the boundary of $W_{\delta}$. In particular, $\Cb_{0}$ is a flat cone in Minkowski spacetime (with respect to the Minkowski metric).

The dashed line denotes a incoming null hypersurface in the above picture. We extend the function $\ub$ to $W_{\delta}$ by requiring that the hypersurfaces $\Cb_{\ub}$ are precisely the level sets of the function $\ub$.  Since $\Cb_{\ub}$ is null with respect to $g_{\alpha\beta}$, the function $\ub$ is then a solution to the equation
\begin{align}\label{eikonal eq}
(g^{-1})^{\alpha\beta}\partial_{\alpha}\ub\partial_{\beta}\ub=0,
\end{align}
where $(g^{-1})^{\alpha\beta}$ is the  inverse of the metric $g_{\alpha\beta}$. 
We call such a function $\ub$ an \emph{optical function}. 

With respect to the affine parameter, the future-directed tangent vectorfield of a null geodesic on $\Cb_{\ub}$ is given by
\begin{align}\label{Lb affine}
\widehat{\underline{L}}:=-(g^{-1})^{\alpha\beta}\partial_{\alpha}\ub\,\partial_{\beta}.
\end{align}
However, for an apparent reason, which will be seen later, instead of using $\widehat{\underline{L}}$, we will work with a renormalized (by the time function $t$) vectorfield $\Lb$ defined through
\begin{align}\label{Lb}
\Lb=\mu\widehat{L},\quad \Lb t=1,
\end{align}
i.e., $\Lb$ is the tangent vectorfield of null geodesics parametrized by $t$. 

The function $\mu$ can be computed as
\begin{align*}
\frac{1}{\mu}=-(g^{-1})^{\alpha\beta}\partial_{\alpha}\ub\partial_{\beta}t.
\end{align*}
We will see later on that the $\mu$ also has a very important geometric meaning: ${\mu}^{-1}$ is the density of the foliation $\bigcup_{\ub\in[0,\delta]}\Cb_{\ub}$.

\medskip

Given $\ub \leq \delta$, to consider the density of null-hypersurface-foliation on $\Sigma_t \bigcap W_\delta$, we define
\begin{align}\label{definition of mu m}
\mu^{\ub}_{m}(t)=\min\left(\inf_{(\ub',\theta)\in[0,\ub]\times \mathbb{S}^{2}}\mu(t,\ub',\theta), 1\right).
\end{align}
For $\ub=\delta$, we  define 
$$s_{*} = \sup \big\{t\big | t\geq -2 \ \text{and} \ \mu_m^\delta(t)>0 \big\}.$$ 
From the PDE perspective, for the given initial data to $(\star)$ (as constructed in Lemma \ref{lemma constraint}), we also define 

\begin{align*}
t_{*} =  \sup \big\{&\tau \big | \tau \geq -2 \ \text{such that the smooth solution exists for all} \ (t,\ub)\in[-2,\tau)\times[0,\delta] \ \text{and} \ \theta\in\mathbb{S}^{2}\big\}.
\end{align*}

Finally, we define
\begin{align}\label{geodesic upper bound}
s^{*}=\min\{s_{*},-1\},\ \ t^{*}=\min\{t_{*},s^{*}\}.
\end{align}
We remark that we will exhibit data in such a way that the solution breaks down before $t=-1$. This is the reason we take $-1$ in the definition of $s^*$.

\smallskip

In the sequel, we will work in a further confined spacetime domain $W^{*}_{\delta}\subset W_\delta \subset W^*$ to prove a priori energy estimates. By definition, it consists of all the points in $W_{\delta}$ with time coordinate $t\leq t^{*}$, i.e.,
$$W^*_\delta =W_\delta \bigcap\Big( \bigcup_{-2\leq t \leq t^*} \!\!\!\!\Sigma_t\Big).$$
In the previous picture, the region $W_\delta^{*}$ is the part of the grey region below the horizontal dash-dot line. 

\smallskip

For the purpose of future use, we introduce more notations to describe various geometric objects.

\smallskip

For each $(t,\ub)\in[-2,t^{*})\times[0,\delta]$, we use $S_{t,\ub}$ to denote the closed two dimensional surface
\begin{align}\label{S t ub}
S_{t,\ub}:=\Sigma_{t}\bigcap\Cb_{\ub}.
\end{align}
In particular, we have
\begin{align}\label{W star}
W^{*}_{\delta}=\bigcup_{(t,\ub)\in[-2,t^{*})\times[0,\delta]}S_{t,\ub}.
\end{align}
For each $(t,\ub) \in [-2,t^*)\times [0,\delta]$, we define
\begin{align}\label{various notations}
\begin{split}
\Sigma_{t}^{\ub}:=&\big\{(t,\ub',\theta) \in \Sigma_t \mid 0\leq\ub' \leq \ub \big\},\\ \Cb_{\ub}^{t}:=&\big\{(\tau,\ub,\theta) \in \Cb_{\ub} \mid  -2\leq\tau \leq t\big\},\\
 W^{t}_{\ub}:=&\bigcup_{(t',\ub')\in[-2,t)\times [0,\ub]}S_{t',\ub'}.
 \end{split}
\end{align}
One can consult the previous picture to visualize those objects. In particular, $W_{\ub}^t$ is the grey region bounded by $\Sigma_t^{\ub}$ and $\Cb_{\ub}^t$.

In what follows when working in $W^{t}_{\ub}$, we usually omit the superscript $\ub$ to write $\mu^{\ub}_{m}(t)$ as $\mu_{m}(t)$, whenever there is no confusion. 

\smallskip

We define the vectorfield $T$ in $W^{*}_{\delta}$ by the following three conditions:
\begin{itemize}
\item[1)] $T$ is tangential to $\Sigma_{t}$;
\item[2)] $T$ is orthogonal (with respect to $g$) to $S_{t,\ub}$ for each $\ub\in[0,\delta]$;
\item[3)] $T\ub=1$.
\end{itemize}
The letter $T$ stands for ``transversal" since the vectorfield is transversal to the foliation of null hypersurfaces $\Cb_{\ub}$.

In particular, the point $1)$ implies
\begin{align}\label{T t}
Tt=0.
\end{align}
According to \eqref{eikonal eq}-\eqref{Lb}, we have
\begin{align}\label{Lb ub}
\Lb\ub=0,\quad \Lb t=1.
\end{align}
In view of \eqref{Lb}, \eqref{Lb ub}, \eqref{T t} and the fact $T\ub=1$, we see that the commutator 
\begin{align}\label{commutator Lb T}
\Lambda:=[\Lb,T]
\end{align}
is tangential to $S_{t,\ub}$.

In view of \eqref{eikonal eq}-\eqref{Lb} and the fact $T\ub=1$ we have
\begin{align}\label{Lb T}
g(\Lb, T)=-\mu, \quad g(\Lb,\Lb)=0.
\end{align}

Since $T$ is spacelike with respect to $g$ (indeed, $\Sigma_t$ is spacelike and $T$ is tangential to $\Sigma_t$), we denote
\begin{align}\label{T magnitude}
g(T,T)=\kappa^{2},\quad \kappa>0.
\end{align}

\begin{lemma}\label{lemma mu kappa}
We have the following relations for $\Lb$, $T$, $\mu$ and $\kappa$:
\begin{align}\label{mu kappa}
\mu=c\kappa,\ \ \Lb=\partial_{0}-c\kappa^{-1}T,
\end{align}
where $\partial_0$ is the standard time vectorfield in Minkowski spacetime.
\end{lemma}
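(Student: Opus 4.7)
The plan is to decompose $\Lb$ in the frame adapted to the foliations $\{\Sigma_t\}$ and $\{\Cb_{\ub}\}$, then pin down the coefficients using the null condition $g(\Lb,\Lb)=0$ and the identity $g(\Lb,T)=-\mu$ from \eqref{Lb T}.

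First I would record the key observation that in the rectangular coordinates $(t,x^1,x^2,x^3)$ the optical metric \eqref{optical metric} is diagonal, so $g(\partial_0,\partial_i)=0$. Hence $\partial_0$ is $g$-orthogonal to every leaf $\Sigma_t$, and in particular $g(\partial_0,V)=0$ for every $V$ tangent to $\Sigma_t$. Since $\Lb t=1=\partial_0 t$, the difference $\Lb-\partial_0$ is tangent to $\Sigma_t$; and since $T$ is orthogonal to $S_{t,\ub}$ inside $\Sigma_t$ and spans the $g$-orthogonal complement of $TS_{t,\ub}$ there, I can write
\[
\Lb=\partial_0+aT+X,
\]
for some scalar $a$ and some $X\in TS_{t,\ub}$.

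The heart of the argument is to show $X=0$. Because $\Cb_{\ub}$ is null with null generator $\Lb$, every $Y\in TS_{t,\ub}\subset T\Cb_{\ub}$ satisfies $g(\Lb,Y)=0$. Combining with $g(\partial_0,Y)=0$ (from the first paragraph) and $g(T,Y)=0$ (by definition of $T$), we obtain $g(X,Y)=0$ for all $Y\in TS_{t,\ub}$. Since $X\in TS_{t,\ub}$ and the induced metric $\slashed g$ on $S_{t,\ub}$ is positive definite, this forces $X=0$, and therefore $\Lb=\partial_0+aT$.

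It then remains to compute $a$. Taking the $g$-inner product with $T$ and using $g(\partial_0,T)=0$ together with \eqref{T magnitude} gives $g(\Lb,T)=a\kappa^2$, which combined with \eqref{Lb T} yields $a\kappa^2=-\mu$. Taking the $g$-inner product of $\Lb$ with itself gives $g(\Lb,\Lb)=-c^2+a^2\kappa^2$, and the null condition in \eqref{Lb T} forces $a^2\kappa^2=c^2$. Since $\mu,\kappa,c>0$ we must take the negative root $a\kappa=-c$, i.e.\ $a=-c\kappa^{-1}$. Substituting back into $a\kappa^2=-\mu$ gives $\mu=c\kappa$, and $\Lb=\partial_0-c\kappa^{-1}T$, as claimed. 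There is no real obstacle here: the only delicate point is the vanishing of $X$, which is the standard fact that the null generator of a null hypersurface is $g$-orthogonal to the hypersurface; the rest is a two-line linear-algebra computation made possible by the diagonal form of $g$ in rectangular coordinates.
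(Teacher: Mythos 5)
Your proof is correct and follows essentially the same route as the paper: both arguments express $\Lb$ (equivalently $\partial_0$) in the span of $\{\partial_0,T\}$ modulo $TS_{t,\ub}$, using that $\Lb$, $T$, $\partial_0$ are all $g$-orthogonal to $S_{t,\ub}$, and then contract with $g$ using $g(\Lb,\Lb)=0$, $g(\Lb,T)=-\mu$, $g(T,T)=\kappa^2$, $g(\partial_0,\partial_0)=-c^2$, $g(\partial_0,T)=0$ to solve for the coefficient and obtain $\mu=c\kappa$. The only difference is cosmetic: you verify explicitly that the $S_{t,\ub}$-tangential part vanishes, where the paper simply notes that $\Lb$ and $T$ span the orthogonal complement of $TS_{t,\ub}$.
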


\begin{proof}
The vectorfield $\partial_{0}$ is perpendicular to $\Sigma_{t}$ and therefore is perpendicular to $S_{t,\ub}$. Since $\Lb$ and $T$ are two linearly independent vectorfields perpendicular to $S_{t,\ub}$ and $\Lb t=\partial_{0}t=1$, we have
\begin{align*}
\partial_{0}=\Lb+f T
\end{align*}
for some scalar function $f$. On the other hand,  $\partial_{0}$ is perpendicular to $\Sigma_{t}$ hence to $T$, we have
\begin{align*}
0=g(\partial_{0},T)=g(\Lb,T)+fg(T,T)=-\mu+f\kappa^{2}.
\end{align*}
Therefore, $ f=\frac{\mu}{\kappa^{2}}=\frac{c}{\kappa}$ and the second formula in \eqref{mu kappa} follows. 

For the first formula, in view of the defining equation of the optical metric $g$, we have
\begin{align*}
-c^{2}=g(\partial_{0},\partial_{0})=g(\Lb+fT,\Lb+fT)=-2f\mu+f^{2}\kappa^{2}.
\end{align*}
Since $f=\frac{c}{\kappa}$, we can solve for $\mu$ to complete the proof.
\end{proof}

\begin{remark}
On the initial Cauchy surface $\Sigma_{-2}$, since $\ub=r-2$, we have $T=\partial_r$ and $\kappa=1$. Therefore, by using the standard rectangular coordinates, we obtain that 
$$\Lb = \partial_t -c\partial_r.$$
This is coherent with the notations and computations in  Lemma \ref{lemma constraint}.
\end{remark}

\subsubsection{The optical coordinates}

We construct a new coordinate system on $W_{\delta}^*$. If shocks form, the new coordinate system is completely different from the rectangular coordinates. Indeed, we will show that they define two differentiable structures on $W_\delta^*$ when shocks form.

\smallskip

Given $\ub\in[0,\delta]$, the generators of $\Cb_{\ub}$ define a diffeomorphism between $S_{-2,\ub}$ and $S_{t,\ub}$ for each $t\in[-2,t^{*})$. Since $S_{-2,\ub}$ is diffeomorphic to the standard sphere $\mathbb{S}^{2}\subset\mathbb{R}^{3}$ in a natural way. We obtain a natural diffeomorphism between $S_{t,\ub}$ and $\mathbb{S}^{2}$. If local coordinates $(\theta^{1},\theta^{2})$ are chosen on $\mathbb{S}^{2}$, the diffeomorphism then induces local coordinates on $S_{t,\ub}$ for every $(t,\ub)\in[-2,t^{*})\times[0,\delta]$. The local coordinates $(\theta^{1},\theta^{2})$, together with the functions $(t,\ub)$ define a complete system of local coordinates $(t,\ub,\theta^{1},\theta^{2})$ for $W^{*}_{\delta}$. This new coordinates are defined as \emph{the optical coordinates}. 

\smallskip
We now express for $\Lb$, $T$ and the optical metric $g$ in the optical coordinates. 

\smallskip
First of all, the integral curves of $\Lb$ are the lines with constant $\ub$ and $\theta$. Since $\Lb t=1$, therefore in optical coordinates we have
\begin{align}\label{Lb op}
\Lb=\frac{\partial}{\partial t}.
\end{align}
Similarly, since $T\ub=1$ and $T$ is tangential to $\Sigma_{t}$, we have
\begin{align}\label{T op}
T=\frac{\partial}{\partial\ub}-\Xi
\end{align}
with $\Xi$ a vectorfield tangential to $S_{t,\ub}$. Locally, we can express $\Xi$ as
\begin{align}\label{Xi}
\Xi=\sum_{A=1,2}\Xi^{A}\frac{\partial}{\partial\theta^{A}},
\end{align}
The metric $g$ then can be written in the optical coordinates $(t,\ub,\theta^{1},\theta^{2})$ as
\begin{align}\label{metric op}
g=-2\mu dtd\ub+\kappa^{2}d\ub^{2}+\slashed{g}_{AB}\left(d\theta^{A}+\Xi^{A}
d\ub\right)\left(d\theta^{B}+\Xi^{B}d\ub\right)
\end{align}
with
\begin{align}\label{gslash}
\slashed{g}_{AB}=g\left(\frac{\partial}{\partial\theta^{A}},\frac{\partial}{\partial\theta^{B}}\right), \ 1\leq A,B\leq 2.
\end{align}

\medskip

To study the differentiable structure defined by the optical coordinates, we study
the Jacobian $\triangle$ of the transformation from the optical coordinates $(t,\ub,\theta^{1},\theta^{2})$ to the rectangular coordinates $(x^0,x^{1},x^{2},x^{3})$. 

First of all, since $x^0=t$, we have 
\begin{align*}
\frac{\partial x^{0}}{\partial t}=&1,\quad \frac{\partial x^{0}}{\partial\ub}=\frac{\partial x^{0}}{\partial\theta^{A}}=0.
\end{align*}
Secondly, by \eqref{T op}, we can express $T =  T^i \partial_{i}$ in the rectangular coordinates $(x^1,x^2,x^3)$ as
\begin{align*}
T^{i}=\frac{\partial x^{i}}{\partial\ub}-\sum_{A=1,2}\Xi^{A}\frac{\partial x^{i}}{\partial\theta^{A}}
\end{align*}
In view of the fact that $T$ is orthogonal to $\frac{\partial}{\partial\theta^{A}}$ with respect to the Euclidean metric (which is the induced metric of $g$ on $\Sigma_{t}$!), we have

\begin{align*}
\triangle=\det\left(
\begin{array}{ccc}
T^{1}&T^{2}&T^{3}\\
\dfrac{\partial x^{1}}{\partial\theta^{1}}&\dfrac{\partial x^{2}}{\partial\theta^{1}}&\dfrac{\partial x^{3}}{\partial\theta^{1}}\\
\dfrac{\partial x^{1}}{\partial\theta^{2}}&\dfrac{\partial x^{2}}{\partial\theta^{2}}&\dfrac{\partial x^{3}}{\partial\theta^{2}}
\end{array}
\right)=\|T\|\left\|\dfrac{\partial}{\partial\theta^{1}}\wedge\dfrac{\partial}{\partial\theta^{2}}\right\|=c^{-1}\mu\sqrt{\det\slashed{g}},
\end{align*}
where $\|\cdot\|$ measures the magnitude of a vectorfield with respect to the Euclidean metric in $\mathbb{R}^{3}$ (defined by the rectangular coordinates $(x^1,x^2,x^3)$). 

\smallskip

We end the discussion by an important remark. We can also read the conclusions from the following picture:

\ \ \ \ \ \ \ \ \  \includegraphics[width = 5.5 in]{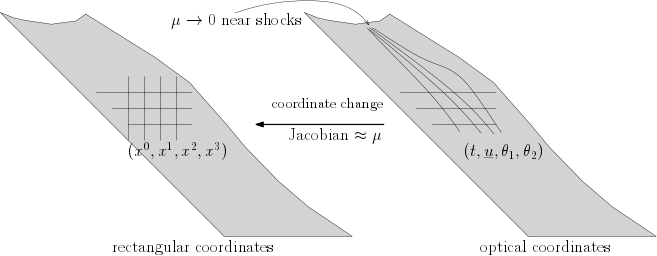}

\begin{remark}[{\bf Geometric meaning of $\mu$}]\label{geometric meaning of mu}
In the sequel, we will show that the wave speed function $c$ will be always approximately equal to $1$ in $W_\delta^*$. Since $\mu = c\kappa$, we may think of $\mu$ being $\kappa$ in a efficient way.

On the other hand, by the definition of $T$, in particular $T\ub=1$,  we know that $\kappa^{-1}$ is indeed the density of the foliation by the $\Cb_{\ub}$'s. This is because $g(T,T)=\kappa^2$. Since the optical metric coincides with the Euclidean metric on each constant time slice $\Sigma_t$, by $\mu \sim \kappa$, we arrive at the following conclusion:
\begin{itemize}
\item  $\mu^{-1}$ measures the foliation of the incoming null hypersurfaces $\Cb_{\ub}$'s.
\end{itemize}
Therefore, by regarding shock formation as the collapsing (i.e. the density blows up) of the characteristics ($\simeq$ the incoming null hypersurfaces), we may say that
\begin{itemize}
\item Shock formation is equivalent to $\mu \rightarrow 0$.
\end{itemize}

By virtue of the formula $\triangle=c^{-1}\mu\sqrt{\det\slashed{g}}$, it is clear (the volume element $\sqrt{\det\slashed{g}}$ will be controlled in the sequel) that if shock forms then the coordinate transformation between the optical coordinates and the rectangular coordinates will fail to be a diffeomorphism. Therefore, we can also say that
\begin{itemize}
\item Shock formation is equivalent to the fact that the optical coordinates on the maximal development defines a different differentiable structure (compared to the usual differentiable structure induced from the Minkowski spacetime).
\end{itemize}
\end{remark}

\subsection{Connection, curvature and structure equations}
We use $\nabla$ to denote the Levi-Civita connection of $g$ and use $X_A$ to denote $\frac{\partial}{\partial \theta^A}$. The $2^{\text{nd}}$ fundamental form of the embedding $S_{t,\ub} \hookrightarrow \Cb_{\ub}$ is
\begin{equation}\label{definition for chib}
\chib_{AB} = g(\nabla_{X_A} \Lb, X_B).
\end{equation}
The trace/traceless part is defined by $\tr\chib = \tr_{\gslash}\chib = \gslash^{AB}\chib_{AB}$ and $\chibh_{AB} = \chib_{AB}-\dfrac{1}{2}\tr\chib \,\gslash_{AB}$. Let $\widehat{T}=c\mu^{-1}T$. Then, $g(\Th,\Th)=1$. The $2^{\text{nd}}$ fundamental form of $S_{t,\ub} \hookrightarrow \Sigma_t$ is
\begin{equation}\label{definition for theta}
\theta_{AB} = g(\nabla_{X_A} \Th, X_B)
\end{equation}
By virtue of \eqref{optical metric}, we have $\underline{\chi}_{AB}=-{c}\,\theta_{AB}$.
Thanks to Gauss' Theorema Egregium, the Gauss curvature $K$ of $S_{t,\ub}$ is
\begin{equation}\label{Gauss equation}
K=\frac{1}{2}(\textrm{tr}_{\slashed{g}}\theta)^{2}-\frac{1}{2}|\theta|^{2}_{\slashed{g}} = \frac{1}{2}c^{-2}\big((\textrm{tr}_{\slashed{g}}\chib)^{2}-|\chib|^{2}_{\slashed{g}}\big).
\end{equation}

We introduce an outgoing null vectorfield
\begin{align}\label{outgoing null}
L = c^{-2}\mu\Lb+2T
\end{align}
 so that $g(L,\Lb) = -2\mu$. The corresponding  $2^{\text{nd}}$ fundamental form is $\chi_{AB} = g(\nabla_{X_A} L, X_B)$. Similarly, we define $\tr\chi = \tr_{\gslash}\chi = \gslash^{AB}\chi_{AB}$ and $\widehat{\chi}_{AB} = \chi_{AB}-\dfrac{1}{2}\tr\chi \,\gslash_{AB}$.

The torsion one forms $\etab_A$ and $\zetab_A$ are defined by $\zetab_A =g(\nabla_{X_A}\Lb,T)$ and $\etab_A=-g(\nabla_{X_A}T,\Lb)$.
They are related to the inverse density $\mu$ by $\etab_A=\zetab_A+X_A(\mu)$ and $\zetab_A=-c^{-1}\mu \, X_A(c)$.

The covariant derivative $\nabla$ is now expressed in the frame $(T,\Lb, X_1,X_2)$ by ($\nablaslash$ is the restriction of $\nabla$ on $S_{t,\ub}$)
\begin{align*}
 \nabla_{\Lb}\Lb&=\mu^{-1}(\Lb\mu)\Lb,\ \ \nabla_{T}\Lb=\etab^{A}X_{A}-{c}^{-1}\Lb(c^{-1}\mu)\Lb,\\
 \nabla_{X_{A}}\Lb&=-\mu^{-1}\zetab_{A}\Lb+\underline{\chi}_{A}{}^{B} X_{B},\ \  \nabla_{\Lb}T=-\zetab^{A}X_{A}-{c}^{-1} \Lb(c^{-1}\mu)\Lb,\\
 \nabla_{T}T &=c^{-3}\mu\big(Tc + \Lb (c^{-1}\mu)\big)\Lb+\Big(c^{-1}\big(Tc+\Lb(c^{-1}\mu)\big)+T\big(\log(c^{-1}\mu)\big)\Big)T-c^{-1}\mu\slashed{g}^{AB}X_{B}(c^{-1}\mu)X_{A},\\
\nabla_{X_{A}}T&=\mu^{-1}\etab_{A}T+c^{-1}\mu\theta_{AB}\slashed{g}^{BC}X_{C},\ \ \nabla_{\Lb}X_{A}=\nabla_{X_{A}}\Lb, \ \ \nabla_{X_{A}}X_{B}=\nablaslash_{X_{A}}X_{B}+\mu^{-1}\underline{\chi}_{AB}T.
\end{align*}
In terms of null frames $(L,\Lb, X_1,X_2)$, we have
\begin{align*}
\nabla_{L}{\Lb}&=-{\Lb}({c}^{-2}\mu){\Lb}+2\underline{\eta}^{A}X_{A}, \ \ \nabla_{{\Lb}}L=-2\underline{\zeta}^{A}X_{A},\\
\nabla_{L}L&=(\mu^{-1}L\mu + {\Lb}({c}^{-2}\mu))L-2\mu X^{A}({c}^{-2}\mu)X_{A}, \ \ \nabla_{X_{A}}L=\mu^{-1}\underline{\eta}_{A}L+\chi_{A}{}^{B}X_{B},\\
\nabla_{X_{A}}X_{B}&=\nablaslash_{X_{A}}X_{B}+\frac{1}{2}\mu^{-1}\underline{\chi}_{AB}L+\frac{1}{2}\mu^{-1}\chi_{AB}{\Lb}.
\end{align*}
In the Cartesian coordinates, the only non-vanishing curvature components are $R_{0i0j}$'s:
\begin{equation*}
R_{0i0j} =\frac{1}{2}\frac{d(c^{2})}{d\rho}\nabla_{i}\nabla_j\rho +\frac{1}{2}\frac{d^{2}(c^{2})}{d\rho^{2}}\nabla_{i}\rho\nabla_{j}\rho
-\frac{1}{4}c^{-2}|\frac{d(c^{2})}{d\rho}|^2\nabla_{i}\rho \nabla_{j}\rho.
\end{equation*}
In the optical coordinates, the only nonzero curvature components are $\alphab_{AB}=R(X_{A},\Lb,X_{B},\Lb)$:
\begin{equation*}
\alphab_{AB}= \frac{1}{2}\frac{d(c^{2})}{d\rho}\slashed{\nabla}^{2}_{X_A,X_B}\rho -\frac{1}{2}\mu^{-1}\frac{d(c^{2})}{d\rho}T(\rho) \chib_{AB}+\frac{1}{2}\big(\frac{d^{2}(c^{2})}{d\rho^{2}}
-\frac{1}{2}c^{-2}\big|\frac{d(c^{2})}{d\rho}\big|^2\big)X_{A}(\rho) X_{B}(\rho).
\end{equation*}
We define $$\alphab'_{AB}= \frac{1}{2}\frac{d(c^{2})}{d\rho}\slashed{\nabla}^{2}_{X_A,X_B}\rho +\frac{1}{2}[\frac{d^{2}(c^{2})}{d\rho^{2}}
-\frac{1}{2}c^{-2}|\frac{d(c^{2})}{d\rho}|^2]X_{A}(\rho) X_{B}(\rho).$$ Therefore, we write 
\begin{equation}\label{alpha expansion in mu and alpha prime}
\alphab_{AB} =  -\dfrac{1}{2}\mu^{-1}\dfrac{d(c^{2})}{d\rho}T(\rho) \chib_{AB}+ \alphab'_{AB}.
\end{equation}

\begin{remark}
As a convention, we say that the first term on the right hand side of \eqref{alpha expansion in mu and alpha prime}  is \emph{singular} in $\mu$ (since $\mu$ may go to zero). The second term $\alphab'_{AB}$ is \emph{regular} in $\mu$.

Indeed, in the course of the proof, we will see that $\alphab'_{AB}$ are bounded and $\alphab_{AB}$ behaves exactly as ${\mu}^{-1}$ in amplitude. Therefore, in addition to two equivalent descriptions of the shock formation in Remark \ref{geometric meaning of mu}, we have another geometric interpretation: 
\begin{itemize}
\item Shock formation is equivalent to the fact that curvature tensor of the optical metric $g$ becomes unbounded.
\end{itemize}
Compared to the one dimensional picture of shock formations in conservation laws, e.g., for inviscid Burgers equation, this new description of shock formation is purely geometric in the following sense: it does not even depend on the choice of characteristic foliation (because the curvature tensor is tensorial!).
\end{remark}

In the frame $(T, \Lb, \frac{\partial}{\partial\theta^A})$, the connection coefficients and the curvature components satisfies the following \emph{structure equations}:
\begin{equation}\label{Structure Equation Lb chibAB}
\Lb(\underline{\chi}_{AB})=\mu^{-1}(\Lb\mu)\underline{\chi}_{AB} +\underline{\chi}_{A}{}^{C}\underline{\chi}_{BC}-\alphab_{AB},
\end{equation}
\begin{equation}\label{Structure Equation div chib}
\divslash\underline{\chi}-\slashed{d}\textrm{tr}\underline{\chi} = -\mu^{-1}(\zetab\cdot\underline{\chi}-\zetab\textrm{tr}\underline{\chi}),
\end{equation}
\begin{equation}\label{Structure Equation T chib}
\slashed{\mathcal{L}}_{T}\underline{\chi}_{AB}=(\nablaslash\widehat{\otimes}\etab)_{AB}+\mu^{-1}(\zetab \hat{\otimes}\etab)_{AB}-{c}^{-1}\Lb(c^{-1}\mu)\underline{\chi}_{AB}+c^{-1}\mu(\theta \widehat{\otimes}\chib)_{AB},
\end{equation}
where $(\zetab\cdot\underline{\chi})_{B}=\slashed{g}^{AC}\zetab_{A}\underline{\chi}_{BC}$, $(\nablaslash\widehat{\otimes}\etab)_{AB} = \dfrac{1}{2}(\nablaslash_{A}\etab_{B}+\nablaslash_{B}\etab_{A})$, $(\zetab \widehat{\otimes}\etab)_{AB}=\frac{1}{2}(\zetab_{A}\etab_{B}+\zetab_{B}\etab_{A})$ and $(\theta \widehat{\otimes}\chib)_{AB}=\frac{1}{2}(\theta_{AC}\underline{\chi}_{B}^{C}+\theta_{BC}\underline{\chi}_{A}^{C})$.
By taking the trace of \eqref{Structure Equation Lb chibAB}, we have
\begin{equation}\label{Structure Equation Lb trchib}
\Lb\textrm{tr}\underline{\chi}=\mu^{-1}(\Lb\mu)\textrm{tr}\underline{\chi}-|\underline{\chi}|^{2}_{\slashed{g}}-\textrm{tr}\alphab.
\end{equation}

The inverse density function $\mu$ satisfies the following transport equation:
\begin{equation}\label{Structure Equation Lb mu}
\Lb\mu=m+\mu e,
\end{equation}
with $m=-\frac{1}{2}\frac{d(c^{2})}{d\rho}T\rho$ and $e=\frac{1}{2c^{2}}\frac{d(c^{2})}{d\rho}\Lb\rho$. With these notations, we have $\alphab_{AB} = \mu^{-1} m \chib_{AB} + \alphab'_{AB}$.

Regarding the regularity in $\mu$, we use \eqref{Structure Equation Lb mu} to replace $\Lb\mu$ in \eqref{Structure Equation Lb chibAB}. This yields
\begin{equation}\label{Structure Equation Lb chibAB nonsingular}
\Lb (\chib_{AB}) = e \chib_{AB} + \chib_{A}{}^{C}\chib_{BC}-\alphab_{AB}'.
\end{equation}
Compared to the original \eqref{Structure Equation Lb chibAB}, the new equation is \textit{regular} $\mu$ in the sense that it has no $\mu^{-1}$ terms.

\subsection{Rotation Vectorfields}\label{section definition for Ri}
Although $g\big|_{\Sigma_t}$ is flat, the foliation $S_{t,\ub}$ is different from the standard spherical foliations. In the Cartesian coordinates on $\Sigma_t$, let $\Omega_{1} = x^2 \partial_3 -x^3 \partial_2$, $\Omega_{2} = x^3 \partial_1 -x^1 \partial_3$ and $\Omega_{3} = x^1 \partial_2 -x^2 \partial_1$ be the standard rotations.  Let $\Pi$ be the orthogonal projection to $S_{t,\ub}$ (embedded in $\Sigma_t$). The rotation vectorfields $R_i \in \Gamma(TS_{t,\ub})$ ($i=1,2,3$) are defined by
\begin{equation}\label{definition for rotation vectorfields}
R_i = \Pi \,\Omega_{i}.
\end{equation}
Let indices $i, j, k \in \{1,2,3\}$. We use the $T^k$, $\Lb^k$ and $X_A^{k}$ to denote the components for $T$, $\Lb$ and $X_A$ in the Cartesian frame $\{\partial_{i}\}$ on $\Sigma_t$ (notice that $\Lb$ has also a $0$ component $L^{0} = 1$). We use $\widehat{T}= c\mu^{-1} T$ is the outward unit normal of $S_{t,\ub}$ in $\Sigma_{t}$. We introduce some functions to measure the difference between the foliations $S_{t,\ub}$ and the standard spherical foliations.

The functions $\lambda_i$'s measure the derivation from $R_i$ to $\Omega_i$:
\begin{equation}\label{definition for lambda_i}
 \lambda_i \widehat{T} = \Omega_i - R_i.
\end{equation}
The functions $y'^k$'s measure the derivation from $\widehat{T}$ to the standard radial vectorfield $\frac{x^i}{r}\partial_i$:
\begin{equation}\label{definition for y'}
 y'^k= \widehat{T}^k-\frac{x^k}{r}.
\end{equation}
We also define (we will show that $|y^k-y'^k|$ is bounded by a negligible small number)
\begin{equation}\label{definition for y}
 y^k= \widehat{T}^k-\frac{x^k}{\ub-t}.
\end{equation}
The functions $z^k$'s measure the derivation of $\Lb$ from $\partial_t - \partial_r$ in Minkowski spacetime:
\begin{equation*}
 z^k= \Lb^k+\frac{x^k}{\ub-t} = -\frac{(c-1)x^k}{\ub-t}-cy^k.
\end{equation*}
Finally, the rotation vectorfields can be expressed as
\begin{equation}\label{explicit expression of R i}
R_i = \Omega_i- \lambda_i \displaystyle\sum_{j=1}^3 \widehat{T}^j \partial_j,\quad \lambda_i= \displaystyle\sum_{j,k,l=1}^3 \varepsilon_{ilk}x^l y^k,
\end{equation}
where $\varepsilon_{ijk}$ is the totally skew-symmetric symbol.

\section{Initial data, bootstrap assumptions and the main estimates}
\subsection{Preliminary estimates on initial data}\label{section short pulse data}
In the Main Theorem, we take the so called short pulse datum for $(\star)$ on $\Sigma_{-r_0}^{\delta}$. Recall that $\phi(-r_0,x)=\delta^{3/2}\phi_{0}(\frac{r-r_{0}}{\delta},\theta)$ and $\partial_{t}\phi(-r_0,x)=\delta^{1/2}\phi_{1}(\frac{r-r_{0}}{\delta},\theta)$, where $\phi_{0}, \phi_{1}\in C^\infty_0\big((0,1]\times \mathbb{S}^2\big)$. The condition $(3)$ in the statement of the Main Theorem reads as
\begin{align*}
\|\Lb\phi\|_{L^\infty\left(\Sigma^{\delta}_{-2}\right)}\lesssim \delta^{3/2},\ \ \|\Lb^{2}\phi\|_{L^\infty\left(\Sigma^{\delta}_{-2}\right)}\lesssim \delta^{3/2}.
\end{align*}
We now derive estimates for $\phi$ and its derivatives on $\Sigma^{\delta}_{-2}$. These estimates also suggest the estimates, e.g. the bootstrap assumptions in next subsection, that one can expect later on.

For $\phi$ and $\psi = A \phi$ where $A \in \{\partial_\alpha\}$, by the form of the data, we clearly have
\begin{equation}\label{initial L infinity estimates for psi}
 \|\phi\|_{L^{\infty}(\Sigma^{\delta}_{-2})}\lesssim\delta^{3/2}, \  \|\psi\|_{L^{\infty}(\Sigma^{\delta}_{-2})}\lesssim\delta^{1/2}.
\end{equation}
We will use $Z$ or $Z_j$ to denote any vector from $\{T, R_i, Q\}$ where $Q= t\Lb$. On $\Sigma^{\delta}_{-2}$, $Z$ is simply $\partial_r$, $\Omega_{i}$ or $-r_0(\partial_{t}-\partial_{r})$, therefore, we have $\|Z_1 \circ Z_2 \circ \cdots \circ Z_m (\psi)\|_{L^{\infty}(\Sigma^{\delta}_{-2})} \lesssim \delta^{1/2-l}$
where $l$ is the number of $T$'s appearing in $\{Z_j\}_{1\leq j\leq m}$. We shall use the following schematic expression
\begin{equation}\label{initial L infinity estimates for commutations on psi}
\|Z^m \psi\|_{L^{\infty}(\Sigma^{\delta}_{-2})} \lesssim \delta^{1/2-l},
\end{equation}
with $l$ is the number of $T$'s and $Z \in \{T,\Omega_i,Q\}$. We remark that in this paper $l \leq 2$ and $Q$ appears at most  twice in the string of $Z$'s. 

We also consider the incoming energy for $Z^m \psi$ on $\Sigma^{\delta}_{-2}$. According to \eqref{initial L infinity estimates for commutations on psi}, we have
\begin{align*}
\|\Lb ( Z^m \psi )\|_{L^{2}(\Sigma^{\delta}_{-2})} + \|\slashed{d}(Z^m \psi)\|_{L^{2}(\Sigma^{\delta}_{-2})} \lesssim \delta^{1-l}, \  \|T(Z^m \psi)\|_{L^{2}(\Sigma^{\delta}_{-2})} \lesssim \delta^{-l}
\end{align*}
where $\ds$ denotes for the exterior differential on $S_{t,\ub}$. In terms of $L$, for $m\in \mathbb{Z}_{\geq 0}$, we obtain
\begin{equation}\label{initial energy estimates for commutations on psi}
\|\Lb ( Z^m \psi )\|_{L^{2}(\Sigma^{\delta}_{-2})} + \|\slashed{d}(Z^m \psi)\|_{L^{2}(\Sigma^{\delta}_{-2})}  \lesssim \delta^{1-l}, \ \ \ \|L (Z^m \psi)\|_{L^{2}(\Sigma^{\delta}_{-2})} \lesssim \delta^{-l}
\end{equation}
where $l$ is the number of $T$'s in $Z$'s.

We also consider the estimates on some connection coefficients on $\Sigma_{-r_0}$. For $\mu$, since we have $g(T,T) = c^{-2}\mu^{2}$ and $T=\partial_r$ on $\Sigma_{-r_0}$, we then have $\mu = c$ on $\Sigma_{r_0}$. Since $c =\big(1+ 2 G''(0)(\partial_t \phi)^2\big)^{-\frac{1}{2}}$, according to \eqref{initial L infinity estimates for psi}, for sufficiently small $\delta$, we obtain
\begin{equation}\label{initial estimates on mu}
\|\mu - 1\|_{L^\infty(\Sigma^{\delta}_{-2})} \lesssim \delta.
\end{equation}
For $\chib_{AB}$, since $\chib_{AB} = -c\theta_{AB} =-\dfrac{c}{r_0}\slashed{g}_{AB}$, we have $\chib_{AB} +\dfrac{1}{r_0}\slashed{g}_{AB}  = (1-c)\dfrac{1}{r_0}\slashed{g}_{AB}$.
Hence,
\begin{equation}\label{initial L infinity estimates for tr chib prime}
\|\chib_{AB} +\dfrac{1}{r_0}\slashed{g}_{AB}\|_{L^{\infty}(\Sigma^{\delta}_{-2})}  \lesssim \delta.
\end{equation}
It measures the difference between the $2^{\text{nd}}$ fundamental form with respect to $g_{\alpha\beta}$ and $m_{\alpha\beta}$.

\subsection{Bootstrap assumptions and the main estimates}\label{section Bootstrap Assumptions}
We expect the estimates \eqref{initial L infinity estimates for psi}, \eqref{initial L infinity estimates for commutations on psi} and \eqref{initial energy estimates for commutations on psi} hold not only for $t=-2$ but also for later time slice in $W^*_\delta$. For this purpose, we will run a bootstrap argument to derive the \emph{a priori} estimates for the $Z^m \psi$'s.

\subsubsection{Conventions}
We first introduce three large positive integers $N_{\text{top}}$, $N_{\mu}$ and $\Ninfty$. They will be determined later on.  We require that $\Nmu = \lfloor \frac{3}{4} \Ntop\rfloor$ and $\Ninfty = \lfloor \frac{1}{2} \Ntop\rfloor + 1$. $\Ntop$ will eventually be the total number of derivatives applied to the linearized equation $\Box_{\gt} \psi = 0$.

To count the number of derivatives, we define the \emph{order} of an object. The solution $\phi$ is considered as an order $-1$ object. The variations $\psi = A \phi$ are of order $0$. The metric $g$ depends only on $\psi$, so it is of order $0$. The inverse density function $\mu$ is of order $0$. The connection coefficients are 1st order derivatives on $g$, hence, of order $1$. In particular, $\chib_{AB}$ is of order $1$. Let $\alpha =(i_1,\cdots,i_{k-1})$ be a multi-index with $i_j$'s from $\{1,2,3\}$. We use $Z^{\alpha} \psi$ as a schematic expression of $Z_{i_1}Z_{i_2}\cdots Z_{i_{k-1}} \psi$. The order of $Z^\alpha\psi$ is $|\alpha|$, where $|\alpha| = k-1$. Similarly, for any tensor of order $|\alpha|$, after taking $m$ derivatives, its order becomes $|\alpha|+m$. The highest order objects in this paper will be of order $\Ntop +1$.

Let $l \in \mathbb{Z}_{\geq 0}$ and $k \in \mathbb{Z}$. We use $\O^l_k$ or $\O^{\leq l}_k$ to denote any term of order $l$ or at most $l$ with estimates
\begin{equation*}
 \|\O^l_k\|_{L^\infty(\Sigma^{\delta}_t)}\lesssim \delta^{\frac{1}{2}k}, \ \|\O^{\leq l}_k\|_{L^\infty(\Sigma^{\delta}_t)}\lesssim \delta^{\frac{1}{2}k}.
\end{equation*}
 Similarly, we use $\Psi^l_k$ or $\Psi^{\leq l}_k$ to denote any term of order $l$ or at most $l$ with estimates
\begin{equation*}
 \|\Psi^l_k\|_{L^\infty(\Sigma^{\delta}_t)}\lesssim \delta^{\frac{1}{2}k}, \ \|\Psi^{\leq l}_k\|_{L^\infty(\Sigma^{\delta}_t)}\lesssim \delta^{\frac{1}{2}k},
\end{equation*}
and moreover, it can be explicitly expressed a function of the variations $\psi$. For example,
$\partial_t \phi \cdot \partial_i\phi \in \Psi^0_{2}$; A term of the form $\prod_{i=1}^{n} Z^{\alpha_i} \psi$
so that $\max|\alpha_i|\leq m$ is $\Psi^{\leq m}_{n-2l}$, where $l$ is the number of $T$ appearing in the derivatives. Note that $\chib$ and $\mu$ can not be expressed explicitly in terms of $\psi$. The $\O^l_k$ terms (or similarly the $\Psi^l_k$ terms) obey the following algebraic rules:
\begin{align*}
 \O^{\leq l}_k + \O^{\leq l'}_{k'} &= \O^{\leq \max(l,l')}_{\min(k,k')},\ \   \O^{\leq l}_k  \O^{\leq l'}_{k'} = \O^{\leq \max(l,l')}_{k+k'}.
\end{align*}

\subsubsection{Bootstrap assumptions on $L^\infty$ norms}

Motivated by \eqref{initial L infinity estimates for commutations on psi}, we make the following bootstrap assumptions (B.1) on $W^*_\delta$: For all $t$ and $2\leq |\alpha|\leq \Ninfty$, \footnote{For a multi-index $\alpha$, the symbol $\alpha-1$ means another multi-index $\beta$ with degree $|\beta| = |\alpha|-1$.}
\begin{align*}\tag{B.1}
\|\psi\|_{L^{\infty}(\Sigma^{\delta}_{t})}+\|\Lb \psi\|_{L^\infty(\Sigma^{\delta}_t)}+\|\ds\psi\|_{L^\infty(\Sigma^{\delta}_t)} + \delta \|T \psi\|_{L^\infty(\Sigma^{\delta}_t)} + \delta^l\|Z^\alpha \psi\|_{L^\infty(\Sigma^{\delta}_t)}&\lesssim \delta^{\frac{1}{2}} M.
\end{align*}
where $l$ is the number of $T$'s appearing in $Z^\alpha$ and $M$ is a large positive constant depending on $\phi$. We will show that if $\delta$ is sufficiently small which may depend on $M$, then we can choose $M$ in such a way that it depends only on the initial datum.

\subsubsection{Energy norms}
Let $\dmug$ be the volume form of $\slashed{g}$, for a function $f(t,\ub,\theta)$, we define
\begin{equation*}
 \int_{\Sigma_{t}^{\ub}} f = \int_{0}^{\ub} \Big(\int_{S_{t,\ub'}}f(t,\ub',\theta)\dmug\Big) d \ub', \ \ \int_{\Cb_{\ub}^t} f= \int_{-r_0}^{t} \Big(\int_{S_{\tau,\ub}}f(\tau,\ub,\theta)\dmug\Big) d \tau.
\end{equation*}

For a function $\Psi(t,\ub,\theta)$, we define the energy flux through the hypersurfaces $\Sigma_{t}^{\ub}$ and $\Cb_{\ub}^{t}$ as
\begin{equation}\label{definition for energy and flux}
 \begin{split}
 {E}(\Psi)(t,\ub)&=\int_{\Sigma_{t}^{\ub}}(L\Psi)^2 + \mu^2 |\slashed{d} \Psi|^2, \ \  {F}(\Psi)(t,\ub)=\int_{\Cb_{\ub}^{t}} \mu |\slashed{d} \Psi|^2,\\
{\Eb}(\Psi)(t,\ub)&= \int_{\Sigma_{t}^{\ub}} \mu (\Lb \Psi)^2 + \mu |\slashed{d}\Psi|^2,\ \ {\Fb}(\Psi)(t,\ub)=\int_{\Cb_{\ub}^{t}} (\Lb \Psi)^2.
 \end{split}
\end{equation}

For each integer $0\leq k\leq \Ntop$, we define
\begin{equation}\label{definition for kth energy}
 \begin{split}
 {E}_{k+1}(t,\ub)=\sum_{\psi}\sum_{|\alpha| = k-1} \delta^{2l}E(Z^{\alpha+1} \psi)(t,\ub),\quad F_{k+1}(t,\ub)=  \sum_{\psi}\sum_{|\alpha| = k-1} \delta^{2l}F(Z^{\alpha+1} \psi)(t,\ub),\\
{\Eb}_{k+1}(t,\ub)=\sum_{\psi}\sum_{|\alpha| = k-1} \delta^{2l}\Eb(Z^{\alpha+1} \psi)(t,\ub),\quad \Fb_{k+1}(t,\ub)= \sum_{\psi}\sum_{|\alpha| = k-1}\delta^{2l}\Fb(Z^{\alpha+1} \psi)(t,\ub),
 \end{split}
\end{equation}
where $l$ is the number of $T$'s appearing in $Z^\alpha$.  The symbol $\displaystyle\sum_{\psi}$ means to sum over all the first order variations $A\phi$ of $\psi$. For the sake of simplicity, we shall omit this sum symbol in the sequel.

For each integer $0\leq k \leq \Ntop$, we assign a nonnegative integer $b_k$ to $k$ in such a way that
\begin{equation}
b_0 = b_1 =\cdots = b_{\Nmu} =0, \ \ b_{\Nmu +1} <  b_{{\Nmu}+2}< \cdots <  b_{\Ntop}.
\end{equation}
We call $b_k$'s \emph{the blow-up indices}. The sequence $(b_k)_{0\leq k \leq \Ntop}$ will be determined later on.

For each integer $0\leq k\leq \Ntop$, we also define the modified energy ${\Et}_k(t,\ub)$ and ${\Ebt}_k(t,\ub)$  as
\begin{equation}\label{definition for modified energy}
\begin{split}
 {\Et}_{k+1}(t,\ub)= \sup_{\tau \in[-r_{0},t]}\{\mu^{\ub}_{m}(\tau)^{2b_{k+1}}E_{k+1}(\tau,\ub)\},
 \quad {\Ebt}_{k+1}(t,\ub)=\sup_{\tau \in[-r_{0},t]}\{\mu^{\ub}_{m}(\tau)^{2b_{k+1}}\Eb_{k+1}(\tau,\ub)\}
\\{\widetilde{F}}_{k+1}(t,\ub)= \sup_{\tau \in[-r_{0},t]}\{\mu^{\ub}_{m}(\tau)^{2b_{k+1}}F_{k+1}(\tau,\ub)\},
 \quad {\widetilde{\Fb}}_{k+1}(t,\ub)=\sup_{\tau \in[-r_{0},t]}\{\mu^{\ub}_{m}(\tau)^{2b_{k+1}}\Fb_{k+1}(\tau,\ub)\}
 \end{split}
 \end{equation}

We now state the main estimates of the paper.
\begin{theorem}\label{theorem main estimates}
There exists a constant $\delta_0$ depending only on the seed data $\phi_0$ and $\phi_1$, so that for all $\delta< \delta_0$, there exist constants $M_0$, $\Ntop$ and $(b_k)_{0 \leq k \leq \Ntop}$ with the following properties
\begin{itemize}
\item $M_0$, $\Ntop$ and $(b_k)_{0 \leq k \leq \Ntop}$ depend only on the initial datum.
\item The inequalities (B.1) holds for all $t < t^*$ with $M=M_0$.
\item Either $t^{*}=-1$ and we have a smooth solution in the time slab $[-2,-1]$; or $t^{*}<-1$ and then $\psi_{\alpha}$'s as well as the rectangular coordinates $x^{i}$'s extend smoothly as functions of the coordinates $(t,\ub,\theta)$ to $t=t^{*}$ and there is at least one point on $\Sigma_{t^{*}}^{\delta}$ where $\mu$ vanishes, thus we have shock formation.
\item If, moreover, the initial data satisfies the largeness condition \eqref{shock condition}, then in fact $t^{*}<-1$.
\end{itemize}
\end{theorem}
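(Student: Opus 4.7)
The plan is to run a continuity/bootstrap argument on the interval $[-2,t^{*})$ in which the assumptions (B.1) are upgraded to estimates whose constants depend only on the seed data $(\phi_{1},\phi_{2})$. Concretely, I would assume (B.1) with a large constant $M$ on $W^{*}_{\delta}$ and proceed in five stages. \emph{Stage 1 (geometric recovery).} Using (B.1) and the transport equation $\Lb\mu=m+\mu e$, together with the structure equations \eqref{Structure Equation Lb chibAB nonsingular}, \eqref{Structure Equation div chib}, \eqref{Structure Equation T chib} and the evolution equations for $\widehat{T}^{k}$, $y^{k}$, $z^{k}$, $\lambda_{i}$ derived from the connection formulas in the frame $(T,\Lb,X_{A})$, I recover pointwise control of $\mu$, $\kappa$, $\chib$, $\zetab$, $\etab$, $\slashed{g}$, the rotation vectorfields $R_{i}$, and their higher derivatives, all expressed schematically in the $\mathcal{O}^{l}_{k}$ / $\Psi^{l}_{k}$ calculus. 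The key observations here are that $\mu$ stays close to $1$ except possibly near the outermost hypersurface, and that $\chib_{AB}+\frac{1}{r_{0}}\slashed{g}_{AB}$ stays of order $\delta$, both inherited from \eqref{initial estimates on mu}--\eqref{initial L infinity estimates for tr chib prime} and propagated by the regular structure equation.

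\emph{Stage 2 (energy estimates for $\Box_{\widetilde g}\psi=0$).} For each variation $\psi=A\phi$, I use the multiplier fields $K_{1}=L$ and $K_{2}=\Lb$ applied to the energy-momentum tensor of the linearized wave equation and integrate on $W^{t}_{\ub}$ to obtain energy identities for $E(\psi)$, $\Eb(\psi)$, $F(\psi)$, $\Fb(\psi)$. After commuting with strings $Z^{\alpha+1}\in\{T,R_{i},Q\}^{\leq \Ntop}$, the commutator error terms are controlled by the Stage 1 geometric quantities and by lower-order energies, except for two critical contributions: the deformation tensor of $T$ produces an error in which $|\nablaslash(Z^{\alpha}\psi)|^{2}$ appears \emph{without} the factor $\mu$ that sits in front of the corresponding angular term in $\Eb_{k+1}$. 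This is the fundamental difficulty flagged in the introduction. \emph{Stage 3 (coercivity from $\Lb\mu<0$).} I resolve this by isolating the borderline term $\int\!\!\int (\Lb\mu)\,|\nablaslash Z^{\alpha}\psi|^{2}$ in the energy identity: in the shock region $\{\mu\ll 1\}$ one has $\Lb\mu\leq -c_{0}<0$ (propagated from a region of $\Sigma_{-2}$ where the seed data makes $m$ negative, via the transport $\Lb\mu=m+\mu e$), and this produces a coercive bulk term on the left-hand side which absorbs precisely the uncontrolled angular errors. This miraculous cancellation is identical in spirit to Christodoulou's argument in \cite{Ch-Shocks}, \cite{Ch-Miao}.

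\emph{Stage 4 (descent scheme and $L^{\infty}$ recovery).} At the top order the commutator coefficients contain a factor $\mu^{-1}\partial_{t}\mu$ which creates a Gr\"onwall-type loss $\sim\int_{-r_{0}}^{t}\mu^{-1}(\partial_{t}\mu)E_{k+1}\,d\tau$, and since $\mu\sim |t-s_{*}|$ this is only an $L\log L$ loss. The trick, following the sketch in the introduction, is to prove a weighted bound of the form $\widetilde E_{k+1}(t,\ub)\leq C_{k+1}$ with blow-up indices $b_{\Nmu+1}<\cdots<b_{\Ntop}$ chosen so that after integration $\int_{-r_{0}}^{t}\mu^{-(2b_{k+1}+1)}(\partial_{t}\mu)\,d\tau\lesssim b_{k+1}^{-1}\mu_{m}^{-2b_{k+1}}$; choosing $b_{k+1}$ large and stepping down in order via this inequality closes the hierarchy. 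From $\widetilde E_{k+1}$ I recover (B.1) through the Sobolev embedding on $S_{t,\ub}$ and Hardy-type bounds along $\Cb_{\ub}$, at which point one can choose $M=M_{0}$ dictated only by initial data, improving the bootstrap.

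\emph{Stage 5 (continuation and actual shock formation).} A standard continuation argument on the maximal development shows that if $t_{*}<-1$ then the only way smoothness breaks is via $\mu\to 0$, so $t^{*}=\min(t_{*},-1)$. To upgrade to $t^{*}<-1$ under the condition \eqref{shock condition}, I integrate $\Lb\mu=m+\mu e$ along the integral curve of $\Lb$ through the point $(r,\theta)$ where \eqref{shock condition} is saturated. The already-established $L^{\infty}$ bounds on the variations imply that $e=\mathcal{O}(\delta^{1/2})$ is negligible and that $m=-\frac{1}{2}\frac{d(c^{2})}{d\rho}T\rho$ is, to leading order in $\delta$, the explicit initial quantity $3G''(0)\phi_{1}\partial_{s}\phi_{1}$ plus $\mathcal O(\delta^{1/2})$. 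Under \eqref{shock condition} this makes $\Lb\mu\leq -c_{1}<0$ uniformly, so starting from $\mu(-2)=1+\mathcal O(\delta)$ we reach $\mu=0$ at some $t<-1$, producing the shock.

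\textbf{Main obstacle.} I expect the hardest single step to be Stage 3 combined with Stage 4: one has to ensure that the coercive term generated by $\Lb\mu<0$ not only absorbs the borderline angular errors at every order $k\leq \Ntop$, but does so with constants compatible with the descent scheme's choice of $b_{k}$. In particular, the top-order commutators of $T^{\Nmu}R^{\alpha}$ with $\Box_{\widetilde g}$ involve the highest derivatives of $\chib$ and $\mu$, which are themselves controlled only through elliptic estimates on $S_{t,\ub}$ and a coupled transport-elliptic system; disentangling this loop without losing derivatives is the technical heart of the argument, and is where the variational (Lagrangian) origin of \eqref{Main Equation} is crucial so that $\alphab_{AB}$ has the specific form \eqref{alpha expansion in mu and alpha prime} allowing the singular part to be absorbed by renormalization into $\widetilde\chi$-type quantities.
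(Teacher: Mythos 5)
Your outline follows the paper's architecture quite closely (bootstrap on (B.1), recovery of the optical geometry, energy identities with the two null multipliers, coercivity from $\Lb\mu<0$, a transport--elliptic system at top order, the descent scheme with blow-up indices, Sobolev recovery of (B.1), continuation, and shock formation by integrating $\Lb\mu$). However, two specific steps, as you justify them, would not go through. First, in Stages 3 and 5 you claim the sign and size of $\Lb\mu$ in the shock region, and the statement that $m$ stays close to its initial profile, follow from (B.1) together with the transport equation $\Lb\mu=m+\mu e$. They do not: the bootstrap assumptions only give $|m|\lesssim M^2$, $|e|\lesssim\delta M^2$, hence $|\Lb\mu|\lesssim M^2$ with no sign information. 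The missing ingredient is the set of asymptotic expansions obtained by integrating the null-decomposed wave equation $\Box_{\gt}\psi_0=0$ along $\Lb$ (using $\tr\chibt\approx -2/(\ub-t)$), which give $\big||t|\,L\psi_0(t)-r_0\,L\psi_0(-r_0)\big|\lesssim\delta^{1/2}M^3$ and consequently $\big||t|^2\Lb\mu(t)-r_0^2\Lb\mu(-r_0)\big|\lesssim\delta M^4$. Only this near-conservation of $t^2\Lb\mu$ yields (i) $\Lb\mu\leq-\tfrac{1}{4|t|^2}$ wherever $\mu<\tfrac1{10}$ (the coercivity you invoke in Stage 3), (ii) $\mu\sim|t-s^*|$ near the shock (which you use in Stage 4), and (iii) the quantitative Stage 5: a uniform bound $\Lb\mu\leq-c_1<0$ with an unspecified $c_1$ does not force $\mu$ to vanish before $t=-1$; one needs precisely $\mu(t)\leq 1-3r_0^2\big(\tfrac1{|t|}-\tfrac1{r_0}\big)\cdot\tfrac16+O(\delta)$, which is where the constant $-\tfrac16$ in \eqref{shock condition} and the $r_0^2/t^2$ rescaling enter; without the expansion your argument only gives blow-up at some finite time, not $t^*<-1$.

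Second, your energy scheme does not address the borderline error produced by the multiplier $L$ (the paper's $K_0$), namely the spacetime term with integrand $\mu^{-1}T\mu\,|\ds\psi|^2$ (the term $Q_{0,1}$). This is not absorbed by the coercive bulk term generated by $\Lb\mu<0$, which carries the other multiplier, and it cannot be handled by Gronwall as stated: $T\mu\sim\delta^{-1}$ while $\mu\sim|t-s^*|$, so the naive bound $\mu^{-1}T\mu\sim\delta^{-1}|t-s^*|^{-1}$ is not integrable in time. The paper needs the additional estimate $(\mu^{-1}T\mu)_+\lesssim\delta^{-1}|t-s^*|^{-1/2}$, proved by a maximum-principle argument along the $T$-integral curves using $\|T^2\mu\|_{L^\infty}\lesssim\delta^{-2}$ together with the lower bound $\mu\gtrsim|t-s^*|$ (again a consequence of the expansions above). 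Without this estimate, or an equivalent device, the $K_0$ part of the energy hierarchy, and hence the recovery of (B.1) with the stated $\delta$-weights, does not close.
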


Before we start the detailed analysis, it is instructional to provide a 3-step scheme to illustrate the structure of the proof (of Theorem \ref{theorem main estimates}):\\

\ \ \ \ \ \ \ \ \ \ \ \ \ \ \ \  \includegraphics[width = 4.7 in]{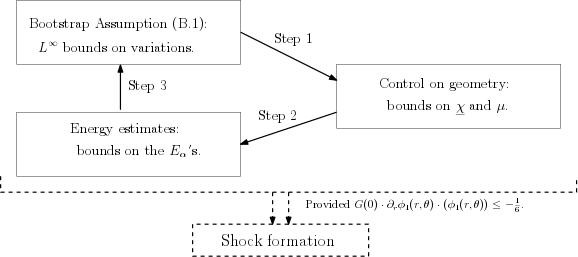}

\subsection{Preliminary results based on (B.1)}

\subsubsection{Estimates on metric and connection}
We start with bounds on $c$.
\begin{lemma} For sufficiently small\footnote{This sentence always means that, there exists $\varepsilon = \varepsilon(M)$ so that for all $\delta \leq \varepsilon$, we have ...} $\delta$, we have
\begin{equation}\label{bound on c}
\begin{split}
\|c-1\|_{L^{\infty}(\Sigma^{\delta}_{t})}&\lesssim \delta M^{2},
\frac{1}{2} \leq c \leq 2,\\
\|\Lb c\|_{L^\infty(\Sigma^{\delta}_t)} + \|{\slashed{d}} c\|_{L^\infty(\Sigma^{\delta}_t)} &\lesssim \delta M^2,\ \|T c\|_{L^\infty(\Sigma^{\delta}_t)} \lesssim M^2.
\end{split}
\end{equation}
\end{lemma}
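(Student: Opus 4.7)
The plan is to simply unwind the defining relation $c = \bigl(1 + 3G''(0)(\partial_t\phi)^2\bigr)^{-1/2}$ and feed in the bootstrap bound (B.1), using the crucial observation that $\partial_t\phi$ is itself one of the first-order variations $\psi = A\phi$ (taking $A = \partial_t$). Consequently, every derivative bound that (B.1) provides for a generic variation $\psi$ is automatically available for $\partial_t\phi$.

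First I would establish the $C^0$ bound and the bracket $\tfrac12 \le c \le 2$. From (B.1) applied to $\psi=\partial_t\phi$ we get $|\partial_t\phi|\lesssim \delta^{1/2}M$, hence $|3G''(0)(\partial_t\phi)^2|\lesssim \delta M^2$. For $\delta$ small enough (depending on $M$) this quantity is $\le 1/2$, so a one-line Taylor expansion of $(1+x)^{-1/2}$ around $x=0$ yields $|c-1|\lesssim \delta M^2$ and the two-sided bound $\tfrac12 \le c \le 2$.

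Next I would derive a universal formula for the action of a vector field $V$ on $c$. Applying $V$ to the identity $c^{-2} = 1 + 3G''(0)(\partial_t\phi)^2$ gives
\begin{equation*}
Vc \;=\; -3\,c^{3}\,G''(0)\,(\partial_t\phi)\,V(\partial_t\phi).
\end{equation*}
Plugging in $V=\Lb$ and $V=X_A$ (for $A=1,2$) and invoking (B.1) on $\psi=\partial_t\phi$, we have $|\partial_t\phi|\lesssim \delta^{1/2}M$, $|\Lb\psi|\lesssim \delta^{1/2}M$, and $|\ds\psi|\lesssim \delta^{1/2}M$. Combined with $|c|\lesssim 1$ from the first step, this gives $|\Lb c|+|\ds c|\lesssim \delta M^{2}$. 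For $V=T$ the same formula applies, but now the bootstrap only provides the weaker pointwise bound $|T\psi|\lesssim \delta^{-1/2}M$; multiplying by $|\partial_t\phi|\lesssim \delta^{1/2}M$, the $\delta^{\pm 1/2}$ factors cancel and we obtain $|Tc|\lesssim M^{2}$, matching the asymmetric loss exhibited by (B.1) in the transversal direction.

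There is no real obstacle here, only a bookkeeping point worth emphasizing: the asymmetry between the tangential derivatives ($\Lb$, $\ds$) and the transversal derivative $T$ in the final line of the lemma is an exact reflection of the asymmetry already present in (B.1), and it is the factor $\partial_t\phi\sim \delta^{1/2}M$ appearing in the formula for $Vc$ that converts the $\delta^{-1/2}$ loss of $|T\psi|$ into the clean $M^2$ bound for $|Tc|$. Every step is algebraic once the right variables are recognized as variations.
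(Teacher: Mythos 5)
Your proposal is correct and follows essentially the same route as the paper: write $c$ explicitly in terms of $\psi_0=\partial_t\phi$, differentiate the defining relation to get $Vc\sim c^3 G''(0)\,\psi_0\,V\psi_0$, and feed in the bootstrap assumption (B.1) for $\psi_0$, with the $\delta^{1/2}$ from $|\psi_0|$ cancelling the $\delta^{-1/2}$ loss of $|T\psi_0|$ in the transversal direction. The only differences are cosmetic (you differentiate $c^{-2}$ rather than $c$ itself, and the paper's displayed coefficient $-(3/2)$ is a harmless typo for your $-3$).
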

\begin{proof}
Let $\psi_0 = \partial_t \phi$. Since $c =\big(1+ 3 G''(0)(\partial_t \phi)^2\big)^{-\frac{1}{2}}$, in view of (B.1), we can take $\varepsilon = \frac{1}{4 G''(0)M^2}$. Therefore, the quantity in the parenthesis falls in $[\dfrac{1}{4},\dfrac{7}{4}]$, this implies the bound on $c$.

On the other hand, $\Lb c =-(3/2)G''(0)\big(1+ 3 G''(0)\psi^{2}_{0}\big)^{-\frac{3}{2}} \psi_{0}\cdot \Lb \psi_{0}$. We then use (B.1) exactly in the same way as above, this gives the bound on $\Lb c$. Similarly, we can obtain other bounds in \eqref{bound on c}.
\end{proof}

We now derive estimates on $m$, $e$ and $\mu$:
\begin{lemma}
 For sufficiently small $\delta$, we have
\begin{equation}\label{bound on m}
 \|m\|_{L^\infty(\Sigma^{\delta}_t)} +\|{\slashed{d}} m\|_{L^\infty(\Sigma^{\delta}_t)} \lesssim M^2,\quad  \|T m\|_{L^\infty(\Sigma^{\delta}_t)} \lesssim \delta^{-1} M^2,
\end{equation}
\begin{equation}\label{bound on e}
 \|e\|_{L^\infty(\Sigma^{\delta}_t)}  +  \|{\slashed{d}} e\|_{L^\infty(\Sigma^{\delta}_t)}  \lesssim \delta M^2,\quad  \|T e\|_{L^\infty(\Sigma^{\delta}_t)}  \lesssim  M^2,
\end{equation}
\begin{equation}\label{bound on mu}
 \|\mu\|_{L^\infty(\Sigma^{\delta}_t)} +\|\Lb \mu\|_{L^\infty(\Sigma^{\delta}_t)}\lesssim  M^2.
\end{equation}
\end{lemma}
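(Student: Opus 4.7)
The plan is to reduce everything to explicit polynomial expressions in $c$ and low-order derivatives of the single variation $\psi_0 := \partial_t\phi$. Direct differentiation of $c^2 = (1+3G''(0)\rho)^{-1}$ gives $\tfrac{d(c^2)}{d\rho} = -3G''(0)c^4$, so the formulas for $m$ and $e$ become
\[
m = 3G''(0)\, c^4\, \psi_0\, T\psi_0, \qquad e = -3G''(0)\, c^2\, \psi_0\, \Lb\psi_0.
\]
The bound \eqref{bound on c} controls $c$ and its first derivatives, and (B.1) supplies the $L^\infty$ bounds for up to two derivatives of $\psi_0$ that I need: schematically, $\psi_0, \Lb\psi_0, \ds\psi_0 \lesssim \delta^{1/2}M$; $T\psi_0 \lesssim \delta^{-1/2}M$; $T\ds\psi_0, T\Lb\psi_0 \lesssim \delta^{-1/2}M$ (order $2$ with one $T$); $T^2\psi_0 \lesssim \delta^{-3/2}M$ (order $2$ with two $T$'s). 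Everything then follows by Leibniz.

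For \eqref{bound on m}, the product formula for $m$ gives $\|m\|_\infty \lesssim \delta^{1/2}M \cdot \delta^{-1/2}M = M^2$. Distributing $\ds$ either sends a derivative onto $c^4$ (picking up the small factor $\ds c \lesssim \delta M^2$) or onto $\psi_0\, T\psi_0$ (yielding $\ds\psi_0\cdot T\psi_0$ or $\psi_0\cdot \ds T\psi_0$, each of order $M^2$), so $\|\ds m\|_\infty \lesssim M^2$. Distributing $T$ produces the dangerous terms $\psi_0\, T^2\psi_0 \lesssim \delta^{1/2}M \cdot \delta^{-3/2}M = \delta^{-1}M^2$, $(T\psi_0)^2 \lesssim \delta^{-1}M^2$, and $Tc\cdot T\rho \lesssim M^2 \cdot M^2 = M^4$; the last is absorbed into $\delta^{-1}M^2$ as soon as $\delta \lesssim M^{-2}$, which is what "for sufficiently small $\delta$" allows. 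The bounds \eqref{bound on e} on $e$, $\ds e$, $Te$ are completely analogous, the difference being that $\Lb\rho$ replaces $T\rho$ and hence contributes an extra power of $\delta^{1/2}$, producing the improved scaling $\delta M^2$ instead of $M^2$ and $M^2$ instead of $\delta^{-1}M^2$.

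For \eqref{bound on mu}, I use the transport equation $\Lb\mu = m + \mu e$ of \eqref{Structure Equation Lb mu}. In the optical coordinates $\Lb = \partial/\partial t$, so integration along an integral curve of $\Lb$ from the initial slice $\Sigma_{-2}$, where \eqref{initial estimates on mu} gives $\mu(-2,\cdot) = 1 + O(\delta)$, yields
\[
\mu(t,\ub,\theta) = \mu(-2,\ub,\theta) + \int_{-2}^{t} \big(m + \mu\, e\big)(\tau,\ub,\theta)\, d\tau.
\]
Since $|t+2|\le 1$ on $W^*_\delta$ (recall $t^* \leq -1$), inserting the bounds $\|m\|_\infty \lesssim M^2$ and $\|e\|_\infty \lesssim \delta M^2$ already established and applying Gronwall gives $\|\mu(t,\cdot)\|_\infty \lesssim \big(1 + M^2\big) e^{C\delta M^2} \lesssim M^2$, provided $\delta M^2$ is small enough that $e^{C\delta M^2} \leq 2$. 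The bound on $\Lb\mu$ is then immediate from the transport equation: $\|\Lb\mu\|_\infty \leq \|m\|_\infty + \|\mu\|_\infty\|e\|_\infty \lesssim M^2 + M^2\cdot\delta M^2 \lesssim M^2$.

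The proof has no genuine conceptual obstacle — it is a mechanical expansion coupled with (B.1) and \eqref{bound on c}. The only thing one must watch is the bookkeeping in $\delta$ and $M$: spurious terms of size $M^4$ (from a $T$-derivative landing on $c$) have to be absorbed either directly (by the $\delta^{-1}M^2$ allowance for $Tm$) or after a beneficial $\delta$ factor (by the $M^2$ allowance for $Te$). Both are legitimate once $\delta \lesssim M^{-2}$, which is what the statement "for sufficiently small $\delta$" encodes.
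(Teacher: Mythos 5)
Your proof is correct and follows essentially the same route as the paper: write $m$ and $e$ explicitly as products of $c$-factors with $\psi_0$, $T\psi_0$, $\Lb\psi_0$, apply (B.1) and \eqref{bound on c} via Leibniz (absorbing the $M^4$ terms using $\delta\lesssim M^{-2}$), and then bound $\mu$ by integrating the transport equation $\Lb\mu=m+\mu e$ from $\Sigma_{-2}$ with \eqref{initial estimates on mu}. Your Gronwall step is just the paper's explicit integrating-factor formula \eqref{explicit formula for mu} in disguise, and the bound on $\Lb\mu$ then follows identically from the transport equation.
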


\begin{proof}
Let $\psi_{0} = \partial_t \phi$. We first bound $m$. Since $m \sim (1\pm \psi_{0}^{2})^{-2}  \psi_{0} \cdot T\psi_{0}$, according to (B.1), we have
\begin{align*}
 \|m\|_{L^\infty(\Sigma^{\delta}_t)} &\lesssim \frac{1}{(1-3G''(0) \|\psi_{0}\|^2_{L^\infty(\Sigma^{\delta}_t)})^2} \|\psi_{0}\|_{L^\infty(\Sigma^{\delta}_t)} \cdot \|T\psi_{0}\|_{L^\infty(\Sigma^{\delta}_t)} \lesssim \frac{1}{(1-3G''(0)\delta M^2)^2} \cdot\delta^{\frac{1}{2}} M \cdot\delta^{-\frac{1}{2}} M.
\end{align*}
This clearly implies the bound on $m$ for sufficiently small $\delta$. For the derivatives of $m$, say $Tm$, it is based on (B.1) and the explicit computation $T m =  \frac{3G''(0)}{(1+3G''(0) \psi_{0}^{2})^2}  \psi_{0} \cdot T^2\psi_{0} +\frac{3G''(0)}{(1+3G''(0) \psi^{2}_{0})^2}(T\psi_{0})^{2} -\frac{36 G''(0)^2}{(1+3G''(0) \psi^{2}_{0})^3}  \psi^{2}_{0}  (T\psi_{0})^2$. Similarly, we can bound $\ds m$.

The estimates on $e$ can be derived exactly in the same way, so we omit the proof.

To bound $\mu$, we integrate the equation $\Lb\mu=m+\mu e$ to derive
\begin{equation}\label{explicit formula for mu}
\mu(t,\ub, \theta)= \exp\big(\int_{-r_{0}}^{t}e(\tau)d\tau\big) \mu(-r_0,\ub,\theta) +\int_{-r_{0}}^{t}\exp(\int_{\tau}^{t}e(\tau^{\prime})d\tau^{\prime}) m(\tau,\ub,\theta)  d \tau.
\end{equation}
Then \eqref{initial estimates on mu}, \eqref{bound on m} and \eqref{bound on e} imply the estimate on $\mu$ immediately. For $\Lb \mu$, we simply combine $\Lb\mu=m+\mu e$, \eqref{bound on m}, \eqref{bound on e} and the bound on $\mu$. This completes the proof.
\end{proof}

We move to the bounds on $T\mu$ and $\slashed{d} \mu$:
\begin{lemma}
For sufficiently small $\delta$, we have
\begin{equation}\label{bound on T mu and slashed d mu}
 \|T\mu\|_{L^\infty(\Sigma^{\delta}_t)} \lesssim \delta^{-1} M^2, \quad \|\slashed{d} \mu\|_{L^\infty(\Sigma^{\delta}_t)} \lesssim M^2.
\end{equation}
\end{lemma}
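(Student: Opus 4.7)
The plan is to derive transport equations for $T\mu$ and $\slashed{d}\mu$ along the integral curves of $\Lb$, and then run Gronwall on them using the bounds on $m$, $e$ and their derivatives from \eqref{bound on m}--\eqref{bound on e}. The structure equation $\Lb\mu=m+\mu e$ is the starting point for both quantities.

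\emph{Step 1: commutator $[\Lb,T]$.} From the covariant derivative formulas in Section 2.3 one reads off
\[
[\Lb,T]=\nabla_\Lb T-\nabla_T\Lb=-(\zetab+\etab)^A X_A,
\]
which is tangential to $S_{t,\ub}$, confirming \eqref{commutator Lb T}. Using $\etab_A=\zetab_A+X_A\mu$ and $\zetab_A=-c^{-1}\mu X_A(c)$, each component of $[\Lb,T]$ is a linear combination of $\slashed{d}\mu$ and of a factor that is already under control by \eqref{bound on c} and \eqref{bound on mu}.

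\emph{Step 2: transport equations.} Applying $T$ (resp.\ $\slashed{d}$, equivalently a rotation $R_i$) to $\Lb\mu=m+\mu e$ and using Step 1 to swap orders of differentiation, one obtains
\[
\Lb(T\mu)=e\,(T\mu)+T m+\mu\,Te+[\Lb,T]\mu,
\qquad
\Lb(\slashed{d}\mu)=e\,\slashed{d}\mu+\slashed{d}m+\mu\,\slashed{d}e+(\text{lower order}),
\]
where the last error in the second equation comes from the difference between $\Lb\circ\slashed{d}$ and $\slashed{d}\circ\Lb$ and is controlled by $\chib$-type terms times $\slashed{d}\mu$ itself, all bounded by \eqref{initial L infinity estimates for tr chib prime} and the bootstrap assumption.

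\emph{Step 3: initial data on $\Sigma_{-r_0}^\delta$.} Since on $\Sigma_{-r_0}$ one has $T=\partial_r$ and $\mu=c$, direct differentiation of the formula $c=(1+3G''(0)(\partial_t\phi)^2)^{-1/2}$ together with (B.1) gives $\|T\mu\|_{L^\infty(\Sigma_{-r_0}^\delta)}\lesssim M^2$ (the $\delta^{-1/2}$ from $T\partial_t\phi$ pairs with the $\delta^{1/2}$ of $\partial_t\phi$) and $\|\slashed{d}\mu\|_{L^\infty(\Sigma_{-r_0}^\delta)}\lesssim \delta M^2$.

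\emph{Step 4: Gronwall.} I would close $\slashed{d}\mu$ first: the source terms $\slashed{d}m$ and $\mu\,\slashed{d}e$ are $\lesssim M^2$ and $\lesssim \delta M^2$ by \eqref{bound on m}--\eqref{bound on e}, and the commutator error is linear in $\slashed{d}\mu$ with coefficient $\lesssim M^2$. Since the $\Lb$-integration length is $\lesssim 1$, Gronwall yields $\|\slashed{d}\mu\|_{L^\infty(\Sigma_t^\delta)}\lesssim M^2$. Feeding this back into the $T\mu$ equation, the commutator term $[\Lb,T]\mu$ is then bounded by (powers of $M$)\,$\cdot\,\|\slashed{d}\mu\|\lesssim M^4$, while the genuinely singular source $Tm$ contributes $\lesssim \delta^{-1}M^2$ per unit $t$-time. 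Integrating gives $\|T\mu\|_{L^\infty(\Sigma_t^\delta)}\lesssim \delta^{-1}M^2$, as desired.

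\emph{Main obstacle.} The only subtlety is the coupling through $[\Lb,T]\mu$: this is why $\slashed{d}\mu$ has to be controlled first, or the two estimates have to be bootstrapped together. After that, the $\delta^{-1}$ loss in $T\mu$ is forced by the $\delta^{-1}$ loss in $Tm$, which in turn is forced by the $\delta^{-1/2}$ loss of $T$-derivatives in (B.1); there is no hidden cancellation to exploit, so the stated bounds are sharp within this scheme.
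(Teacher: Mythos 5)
Your proposal is correct and follows essentially the same route as the paper: commute $T$ and $\slashed{d}$ with $\Lb\mu=m+\mu e$, bound $\slashed{d}\mu$ first so that $\etab$ and the commutator term $[\Lb,T]\mu=-(\zetab+\etab)^A\slashed{d}_A\mu$ are under control, and then integrate the $T\mu$ equation, with the $\delta^{-1}$ forced by $Tm$ and the $M^4$ commutator contribution absorbed for small $\delta$. The only cosmetic difference is that the paper writes the angular equation with $\slashed{\mathcal{L}}_{\Lb}(\slashed{d}\mu)=e\,\slashed{d}\mu+\slashed{d}m+\mu\,\slashed{d}e$ (no extra error term), whereas you allow a harmless $\chib$-type correction.
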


\begin{proof}
The idea is to commute $T$ and $\slashed{d}$ with $\Lb \mu = m+ e \mu$. For $\slashed{d} \mu$, we have $\slashed{\mathcal{L}}_{\Lb} (\slashed{d} \mu) =  e\slashed{d}\mu +(\slashed{d}m + \mu \slashed{d} e)$.
We have already seen that $\|\slashed{d}m\|_{L^\infty(\Sigma^{\delta}_t)}+ \|\mu \slashed{d} e \|_{L^\infty(\Sigma^{\delta}_t)} \lesssim M^2$. Therefore, we integrate along $\Lb$ to derive the desired bound on $\slashed{d} \mu$.

For $T\mu$, we have $\Lb (T \mu) = eT\mu + \big( Tm  + \mu T e -(\zetab^A+\etab^A) \slashed{d}_A \mu \big)$.
We first show that
\begin{equation}\label{bound on zetab and etab}
\|\zetab\|_{L^\infty(\Sigma^{\delta}_t)} \lesssim \delta M^2, \ \ \|\etab\|_{L^\infty(\Sigma^{\delta}_t)} \lesssim M^2.
\end{equation}

In fact,  $\zetab_A = -c^{-1}\mu \slashed{d}_A (c)$. The bound on $\zetab$ the follows immediately from \eqref{bound on c} and \eqref{bound on mu}. Since $\etab = \zetab + \slashed{d}\mu$, the bound on $\etab$ is also clear because we have just obtained estimates on $\ds \mu$.

Back to the formula for $\Lb(T\mu)$, for small $\delta$, we bound the terms in the parenthesis by
\begin{equation*}
\|\big( Tm  + \mu T e -(\zetab^A+\etab^A) \slashed{d}_A \mu \big)\|_{L^\infty(\Sigma^{\delta}_t)} \lesssim \delta^{-1} M^2.
\end{equation*}
We then integrate to derive the bound for $T\mu$. This completes the proof.
\end{proof}

We now estimate $\chib_{AB}$. For this purpose, we introduce
\begin{equation}\label{definition for chib prime}
\chib'_{AB} = \chib_{AB} + \frac{\slashed{g}_{AB}}{\ub-t}.
\end{equation}
which measures the deviation of $\chib_{AB}$ from the null $2^{\text{nd}}$ fundamental form in Minkowski space. We have
\begin{lemma}
For sufficiently small $\delta$, we have
\begin{equation}\label{precise bound on chib'}
 \| \chib'_{AB}\|_{L^\infty(\Sigma^{\delta}_t)} \lesssim  \delta M^2.
\end{equation}
\end{lemma}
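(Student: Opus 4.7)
The plan is to derive a transport equation for $\chib'_{AB}$ along $\Lb$ from the regularized structure equation \eqref{Structure Equation Lb chibAB nonsingular}, observe that the Minkowski reference term $-\gslash_{AB}/(\ub-t)$ produces exact cancellations on the right-hand side, and then close a short Gronwall argument. The choice of reference is motivated by the flat background: if $c\equiv 1$, then a null geodesic in the $\Lb$-direction leaving the sphere of radius $\ub + 2$ at $t=-2$ reaches radius $\ub - t$ at time $t$, so that $\chib_{AB} = -\gslash_{AB}/(\ub-t)$ in that case, and $\chib'_{AB}$ measures exactly the deviation from this background.

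For the transport equation, I would use two auxiliary identities: in the optical coordinates $\Lb = \partial_t$ commutes with $X_A = \partial/\partial\theta^A$, so $\Lb(\gslash_{AB}) = (\slashed{\mathcal{L}}_\Lb\gslash)_{AB} = 2\chib_{AB}$; and $\Lb(\ub - t) = -1$ by \eqref{Lb ub}. Applying $\Lb$ to the definition $\chib'_{AB} = \chib_{AB} + \gslash_{AB}/(\ub-t)$, substituting from \eqref{Structure Equation Lb chibAB nonsingular}, and then re-expressing $\chib_{AB} = \chib'_{AB} - \gslash_{AB}/(\ub-t)$ inside the quadratic term $\chib_A{}^{C}\chib_{BC}$, the linear-in-$\chib'/(\ub-t)$ contributions and the Minkowski-order $\gslash/(\ub-t)^2$ contributions all cancel, leaving
\[
\Lb\chib'_{AB} = e\,\chib_{AB} + (\chib')_A{}^{C}(\chib')_{BC} - \alphab'_{AB}.
\]
This algebraic cancellation is the only delicate step; the rest is routine.

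To bound the right-hand side I would use the preceding lemmas together with the bootstrap assumption \textrm{(B.1)}. From \eqref{bound on e} one has $|e|\lesssim \delta M^2$; on $W^*_\delta$ the factor $\ub - t$ is uniformly bounded above and below since $t \le -1$ and $\ub \in [0,\delta]$, so $|\chib|_{\gslash} \lesssim 1 + |\chib'|_{\gslash}$. The curvature term $\alphab'_{AB}$ is bounded by $\lesssim \delta M^2$ upon unpacking its defining formula: writing $\rho = \psi_0^2$, both $X_A(\rho)$ and $\slashed{\nabla}^2_{X_A,X_B}\rho$ reduce to polynomial expressions in $\psi_0$ and its angular ($R$-)derivatives, each carrying at least one factor of $\psi_0 \sim \delta^{1/2} M$ by \textrm{(B.1)}. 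Setting $f = |\chib'|_{\gslash}$ and contracting the transport equation above, one obtains $\Lb f \lesssim \delta M^2 + f^2$. The initial bound $f\big|_{\Sigma^\delta_{-2}} \lesssim \delta$ follows from \eqref{initial L infinity estimates for tr chib prime} by noting that on $\Sigma_{-2}$ one has $\ub - t = r$ and $\gslash_{AB}/r - \gslash_{AB}/r_0 = O(\delta)\gslash_{AB}$ for $r \in [2, 2+\delta]$. A standard bootstrap, over the time interval $[-2,t]\subset[-2,-1]$ of length at most one, then yields $\|\chib'_{AB}\|_{L^\infty(\Sigma_t^\delta)} \lesssim \delta M^2$, as claimed. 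The main conceptual obstacle is identifying the cancellation, which depends on the precise Minkowski reference and on the regularized form \eqref{Structure Equation Lb chibAB nonsingular} of the structure equation rather than its $\mu^{-1}$-singular counterpart.
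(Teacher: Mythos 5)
Your proposal is correct and follows essentially the same route as the paper: the transport equation you derive is algebraically identical to \eqref{transport equation for chib prime} (your $e\,\chib_{AB}$ equals the paper's $e\,\chib'_{AB}-\frac{e}{\ub-t}\slashed{g}_{AB}$), and the subsequent small-data Gronwall/bootstrap on the unit-length time interval, with initial size $O(\delta)$ and source terms $e,\alphab'$ of size $O(\delta M^2)$ via (B.1), is exactly the paper's argument. One small caveat: when you contract to get a scalar inequality, $\Lb$ also acts on $\slashed{g}^{-1}$ inside the norm (since $\slashed{\mathcal{L}}_{\Lb}\slashed{g}=2\chib$), which produces an additional term of size $\frac{|\chib'|}{\ub-t}$, an $O(1)$-coefficient linear term missing from your stated $\Lb f\lesssim \delta M^2+f^2$; the paper eliminates it by propagating the weighted quantity $(t-\ub)^{2}|\chib'|$, while in your setup it is harmless anyway because the time interval has length at most one, so the conclusion is unaffected.
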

\begin{proof}
According to \eqref{Structure Equation Lb chibAB nonsingular}, we have
\begin{equation}\label{transport equation for chib prime}
\Lb (\chib_{AB}') = e \chib'_{AB} + \chib'_{A}{}^{C}\chib'_{BC}- \frac{e}{\ub - t}\slashed{g}_{AB} - \alphab'_{AB}.
\end{equation}
Hence, $\Lb|\chib'|^2_{\slashed{g}} =2e|\chib'|^2 -2\chib'_{A}{}^{B} \chib'_{B}{}^{C} \chib'_{C}{}^{A}+\frac{2|\chib'|^{2}}{\ub-t}-\frac{2e}{\ub-t}\tr\chib' - 2\chib'^{AB}\alphab_{AB}$.
Therefore, we obtain
\begin{align}\label{a 1}
\Lb((t-\ub)^{2}|\chib'|)\lesssim (t-\ub)^{2}\left((|e|+|\chib'|)|\chib'|+\frac{|e|}{\ub-t}+|\alphab'|\right)
\end{align}
where all norms are defined with respect to $\slashed{g}$. 

Let $\mathcal{P}(t)$ be the property that $\|\chib'\|_{L^{\infty}(\Sigma^{\delta}_{t})}\leq C_{0}\delta M^{2}$ for all $t'\in[-r_{0},t]$. By choosing $C_{0}$ suitably large, according to the assumptions on initial data, we have $\|\chib'\|_{L^{\infty}(\Sigma^{\delta}_{-2})}<C_{0}\delta$. It follows by continuity that $\mathcal{P}(t)$ is true for $t$ sufficiently close to $-r_{0}$. Let $t_{0}$ be the upper
bound of $t\in [-r_{0}, t_0]$ for which $\mathcal{P}(t)$ holds. By continuity, $\mathcal{P}(t_0)$ is true. Therefore, for $t\leq t_0$, we have $|\chib'|+|e|\leq (C_{0}+C_{1})\delta M^{2}$ for a universal constant $C_{1}$. According to the explicit formula of $\alphab'$ and (B.1), for sufficiently small $\delta$, there is a universal constant $C_3$ so that $\|\alphab'\|_{L^\infty(\Sigma^{\delta}_t)} \leq C_3\delta M^{2}$. In view of \eqref{a 1}, there is a universal constant $C_{4}$ so that
\begin{align}\label{chib transport}
\Lb((t-\ub)^{2}|\chib'|)\leq C_{4}(t-\ub)^{2}\left((C_{0}+C_{1})\delta M^{2}|\chib'|+(C_{1}+C_{3})\delta M^{2}\right).
\end{align}
If we define $x(t)=(t-\ub)^{2}|\chib'|$ along the integral curve of $\Lb$, then we can rewrite \eqref{chib transport} as $\frac{dx}{dt}\leq fx+g$, where $f(t)=C_{4}(C_{0}+C_{1})\delta M^{2}$ and $g(t)=C_{4}(C_{1}+C_{3})\delta M^{2}$. By integrating from $-r_{0}$ to $t$, we obtain
\begin{align*}
x(t)\leq e^{\int_{-r_{0}}^{t}f(t')dt'}\big(x(-r_{0})+\int_{-r_{0}}^{t}e^{-\int_{-r_{0}}^{t'}f(t'')dt''}g(t')dt'\big).
\end{align*}
Taking into account the facts that $\int_{-r_{0}}^{t}f(t')dt'\leq C_{4}(C_{0}+C_{1})\delta M^{2}$ and $\int_{-r_{0}}^{t}g(t')dt'\leq C_{4}(C_{1}+C_{3})\delta M^{2}$, since $(t-\ub)\sim 1$ on the support of $\chib'$, for some universal constant $C_{5}$, we have
\begin{align}\label{chib infty 1}
\|\chib'\|_{L^{\infty}(\Sigma^{\delta}_{t})}\leq C_{5}e^{C_{4}(C_{0}+C_{1})\delta M^{2}}\big(\|\chib'\|_{L^{\infty}(\Sigma^{\delta}_{-2})}+C_{4}(C_{1}+C_{3})\delta M^{2}\big).
\end{align}
We then fix $C_{0}$ in such a way that $C_{0}>2C_{5}C_{4}(C_{1}+C_{3})$ and $C_{0}>\frac{\|\chib'\|_{L^{\infty}(\Sigma^{\delta}_{t})}}{4C_{5}}$. Provided $\delta$ satisfying $\delta<\frac{\log 2}{C_{4}(C_{0}+C_{1})M^{2}}$, the estimate \eqref{chib infty 1} implies $\|\chib'\|_{L^{\infty}(\Sigma^{\delta}_{t})}<C_{0}\delta M^{2}$ for all $t\in[-r_{0},t_{0}]$. By continuity, $\mathcal{P}(t)$ holds for some $t>t_{0}$. Hence the lemma follows.\end{proof}

\begin{remark}[\textbf{Estimates related to the conformal optical metric $\gt$}]
 We shall use $\ \widetilde{ } \ $ to indicate the quantities defined with respect to $\gt$, e.g., $\widetilde{\nabla}$ is the Levi-Civita connection of $\gt$ and $\chibt$ is the $2^{\text{nd}}$ fundamental form of $S_{\ub,t} \hookrightarrow \Cb_{\ub}$ with respect to $\Lb$ and $\tilde{g}$: $\chibt_{AB} = \gt(\widetilde{\nabla}_{X_A} \Lb, X_B)$.

We expect the quantities (with $\ \widetilde{} \ $) defined with respect to $\gt$ have the similar estimates as the counterparts (without $\ \widetilde{} \ $) defined with respect to $g$. This is clear: the difference can be explicitly computed in terms of $c$ and hence controlled by the estimates on $c$. For example, the difference between $\chibt'_{AB}$ and $\chib'_{AB}$ is $\chibt_{AB}- \chib_{AB}= -\frac{1}{2c^2} \Lb(c)\slashed{g}_{AB}$, based on \eqref{bound on c} and \eqref{precise bound on chib'}, we have
\begin{equation}\label{precise bound on chibt}
 \| \chibt_{AB} + \frac{\slashed{g}_{AB}}{\ub-t}\|_{L^\infty(\Sigma^{\delta}_t)} \lesssim  \delta M^2.
\end{equation}
\end{remark}

\subsubsection{Estimates on deformation tensors}

We use five vectorfields $Z_1 = T, Z_2 = R_1, Z_3 = R_2, Z_4 =R_3, Z_5 = Q$ as commutation vectorfields in the paper.
We use $Z$ to denote any possible choice of the above $Z_i$'s. The notation $Z^\alpha$ for a multi-index
$\alpha =(i_1,\cdots,i_m)$ means $Z_{i_1} Z_{i_2} \cdots Z_{i_m}$ with $i_j \in \{1,2,3,4,5\}$.

We recall that, for $Z$, the deformation tensor $^{(Z)}{}\pi$ or $^{(Z)}{}\widetilde{\pi}$ with respect to $g$ and $\gt$ is defined by
${}^{(Z)}{}{\pi}_{\alpha\beta} = \nabla_\alpha Z_\beta + \nabla_\beta Z_\alpha$ or ${}^{(Z)}{}\widetilde{\pi}_{\alpha\beta} = \frac{1}{c}{}^{(Z)}{}{\pi}_{\alpha\beta} + Z(\frac{1}{c})g_{\alpha\beta}$.
We now compute the deformation tensors with respect to $\gt$. The deformation tensor of $Z_1 = T$ is given by \footnote{We emphasize that the trace $\tr$ is defined with respect to $g$.}
\begin{equation}\label{deformation tensor of T}
 \begin{split}
  ^{(T)}{}\widetilde{\pi}_{\Lb\Lb}&=0,\ \ ^{(T)}{}\widetilde{\pi}_{LL}= 4 c^{-1}\mu T(c^{-2}\mu),\ \ ^{(T)}{}\widetilde{\pi}_{L\Lb}= -2c^{-1}(T\mu -c^{-1} \mu T(c)),\\
 ^{(T)}{}\widetilde{\pi}_{L A}&=-c^{-3}\mu(\zetab_A+\etab_A),\ \ ^{(T)}{}\widetilde{\pi}_{\Lb A}=-c^{-1}(\zetab_A+\etab_A),\\
^{(T)}{}\widehat{\widetilde{\slashed\pi}}_{A B}&=-2c^{-3}\mu\widehat{\chib}_{AB},\ \ \text{tr}{}^{(T)}\widetilde{\slashed\pi}=-2c^{-3}\mu \text{tr}_{\tilde{\slashed{g}}}\widetilde{\chib}.
 \end{split}
\end{equation}
By virtue of the estimates already derived, for sufficiently small $\delta$, we have
\begin{equation}\label{L infinity etimates on the deformation tensor of T}
 \begin{split}
 \|\mu^{-1}{}^{(T)}{}\widetilde{\pi}_{LL}\|_{L^\infty(\Sigma^{\delta}_t)}&\lesssim \delta^{-1}M^2,\ \ \|{}^{(T)}{}\widetilde{\pi}_{L\Lb}\|_{L^\infty(\Sigma^{\delta}_t)}\lesssim \delta^{-1}M^2,\ \ \|\mu^{-1}{} ^{(T)}{}\widetilde{\pi}_{L A}\|_{L^\infty(\Sigma^{\delta}_t)}\lesssim M^2, \\
 \|{}^{(T)}{}\widetilde{\pi}_{\Lb A}\|_{L^\infty(\Sigma^{\delta}_t)}&\lesssim M^2, \ \ \|\mu^{-1} \leftexp{(T)}{\widehat{\widetilde{\slashed\pi}}}_{A B}\|_{L^\infty(\Sigma^{\delta}_t)}\lesssim \delta M^2,\ \ \|\mu^{-1}\text{tr}{}^{(T)}\widetilde{\slashed\pi}\|_{L^\infty(\Sigma^{\delta}_t)}\lesssim 1.
 \end{split}
\end{equation}
The deformation tensor of $Z_{5}=Q$ is given by
\begin{equation}\label{deformation tensor of Q}
\begin{split}
\leftexp{(Q)}{\tilde{\pi}}_{\Lb\Lb}&=0,\ \leftexp{(Q)}{\tilde{\pi}}_{LL}=4t\Lb(c^{-2}\mu)(c^{-1}\mu)-4c^{-3}\mu^{2},\  \leftexp{(Q)}{\tilde{\pi}}_{L\Lb}=-2t\Lb(c^{-1}\mu)
-2c^{-1}\mu\\
\leftexp{(Q)}{\tilde{\pi}}_{\Lb A}&=0,\  \leftexp{(Q)}{\tilde{\pi}}_{LA}=2tc^{-1}(\zetab_{A}+\etab_{A}), \  \leftexp{(Q)}{\widehat{\tilde{\slashed{\pi}}}}_{AB}
=2tc^{-1}\widehat{\chib}_{AB},\  \text{tr}\leftexp{(Q)}{\tilde{\slashed{\pi}}}=2c^{-1}t\text{tr}_{\tilde{\slashed{g}}}\widetilde{\chib}.
\end{split}
\end{equation}
By virtue of  the estimates already derived, for sufficiently small $\delta$, we have
\begin{equation}\label{Linfty of deformation tensor of Q}
\begin{split}
\|\mu^{-1}\leftexp{(Q)}{\tilde{\pi}}_{LL}\|_{L^{\infty}(\Sigma^{\delta}_{t})}&\lesssim M^{2},\ \|\leftexp{(Q)}{\tilde{\pi}}_{L\Lb}\|_{L^{\infty}(\Sigma^{\delta}_{t})}\lesssim M^{2},\ \|\leftexp{(Q)}{\tilde{\pi}}_{LA}\|_{L^{\infty}(\Sigma^{\delta}_{t})}\lesssim M^{2},\\
\|\leftexp{(Q)}{\widehat{\tilde{\slashed{\pi}}}}_{AB}\|_{L^{\infty}(\Sigma^{\delta}_{t})}&\lesssim\delta M^{2},\ \|\text{tr}\leftexp{(Q)}{\tilde{\slashed{\pi}}}\|_{L^{\infty}(\Sigma^{\delta}_{t})}\lesssim 1.
\end{split}
\end{equation}

Actually the estimate for $\tr\leftexp{(Q)}{\widetilde{\slashed{\pi}}}$ can be improved more precisely. By \eqref{Linfty of deformation tensor of Q} : $\|\text{tr}\leftexp{(Q)}{\tilde{\slashed{\pi}}}\|_{L^{\infty}(\Sigma_{t}^{\ub})}\lesssim 1$ and the equation \eqref{box psi_0 in null frame} together with the bootstrap assumptions, we see the order of the former is also $1$. While the estimate for the latter is a bit more delicate. Let us rewrite the following component of deformation tensor of $Q$:
\begin{align*}
\text{tr}\leftexp{(Q)}{\tilde{\slashed{\pi}}}=2c^{-1}t
\text{tr}_{\tilde{\slashed{g}}}
\widetilde{\chib}=2c^{-1}t
\text{tr}_{\tilde{\slashed{g}}}
\widetilde{\chib}'+4\frac{c^{-1}\ub}{\ub-t}-4(c^{-1}-1)-4
\end{align*}
This tells us:
\begin{align}\label{Improved estimate for trQ}
\|\text{tr}\leftexp{(Q)}{\tilde{\slashed{\pi}}}+4\|_{L^{\infty}(\Sigma_{t}^{\delta})}\lesssim\delta.
\end{align}

We study the deformation tensors of $R_i$'s. Based on \eqref{explicit expression of R i}, we compute $^{(R_i)}{}{\pi}_{\alpha\beta}$ with respect to $g$:
\begin{equation}\label{deformation tensor of Rotational R_i in T,Lb,X_A}
\begin{split}
  ^{(R_i)}{}{\pi}_{\Lb\Lb}&=0,\  ^{(R_i)}{}{\pi}_{TT}= 2c^{-1}\mu \cdot R_i(c^{-1}\mu), \  ^{(R_i)}{}{\pi}_{\Lb T}= -R_i (\mu), \ \ ^{(R_i)}{}{\pi}_{AB}= -2\lambda_i \theta_{AB},\\
^{(R_i)}{}{\pi}_{T A}
&=-c^{-1}\mu (\theta_{AB}-\frac{\slashed{g}_{AB}}{\ub-t})R_i{}^B + c^{-1} \mu \varepsilon_{ikj} y^k X_A{}^{j} + \lambda_i X_A (c^{-1}\mu),\\
^{(R_i)}{}{\pi}_{\Lb A}&=-\chib_{AB}R_i{}^B + \Lb^k \varepsilon_{ikj}X_A{}^{j} + c\mu^{-1}\lambda_i \zetab_A=-(\chib_{AB}+\frac{\slashed{g}_{AB}}{\ub-t})R_i{}^B +  \varepsilon_{ikj}z^k X_A{}^{j} + c\mu^{-1}\lambda_i \zetab_A.
\end{split}
\end{equation}
The Latin indices $i,j,k$ are defined with respect to the Cartesian coordinates on $\Sigma_t$. To bound deformation of $R_{i}$, it suffices to control the $\lambda_i$'s, $y^i$'s and $z^i$'s.

First of all, we have
\begin{equation}\label{estimates on T hat i and Lb i}
|\widehat{T}^i|+|\Lb^i|\lesssim 1.
\end{equation}
The proof is straightforward: $g\big|_{\Sigma^{\delta}_t}$ is flat and $\widehat{T}$ is the unit normal of $S_{t,\ub}$ in $\Sigma^{\delta}_t$, so $|\widehat{T}^i|\leq 1$. In the Cartesian coordinates $(t,x^1,x^2,x^3)$, $\Lb = \partial_t - c\widehat{T}^i \partial_i$, so $|\Lb^i|\lesssim 1$.

Let $r=(\sum_{i=1}^3 x^i)^{\frac{1}{2}}$. Since $Tr =  c^{-1}\mu \sum_{i=1}^3 \frac{x^i \widehat{T}^i}{r}$,
\eqref{estimates on T hat i and Lb i} implies that $|T r| \lesssim M^2$. We then integrate from $0$ to $\ub$,
since $r=-t$ when $\ub = 0$ and $|\ub| \leq \delta$, we obtain $|r+t|\lesssim \delta M^2$. In application, for sufficiently small $\delta$,
we often use $r \sim |t|$. The estimate can also be written as
\begin{equation}\label{estimates on r precise}
\left|\frac{1}{r} - \frac{1}{\ub+|t|}\right|\lesssim \delta M^{2}.
\end{equation}

To control $\lambda_i$, we consider its $\Lb$ derivative. By definition $\lambda_i = g(\Omega_i, \widehat{T})$,
we can write its derivative along $\Lb$ as $\Lb \lambda_i = \sum_{k=1}^3 (\Omega_i){}^k \Lb \widehat{T}^k=\sum_{k=1}^3 (\Omega_i){}^k X^A(c) X_A{}^k$.
As $|t| \sim r$, we have  $|\Omega_i| \lesssim |t| \lesssim 1$, this implies
\begin{equation}\label{estimates on Lb lambda_i}
 \|\Lb \lambda_i\|_{L^\infty(\Sigma^{\delta}_t)} \lesssim \delta M^2.
\end{equation}
Since $\lambda_i = 0$ on $\Sigma^{\delta}_{-r_0}$, we have
\begin{equation}\label{estimates on lambda_i}
 \|\lambda_i\|_{L^\infty(\Sigma^{\delta}_t)} \lesssim \delta M^2.
\end{equation}
To control $y^i$'s and $z^i$'s, let $\overline{y}=(y^1,y^2,y^3)$ and $\overline{x}=(x^1,x^2,x^3)$, we then have
\begin{equation}\label{y estimates 1}
 |\overline{y}-(\frac{1}{r}-\frac{1}{\ub-t})\overline{x}|^2=|(g(\widehat{T},\partial_r)-1)\partial_r|^2+\frac{1}{r^{2}}\sum_{i=1}^{3}\lambda_{i}^{2}
\end{equation}
On the other hand, we have $1-|g(\widehat{T},\partial_{r})|^{2}=\frac{1}{r^{2}}\sum_{i=1}^{3}\lambda_{i}^{2}\lesssim \delta M^{2}$.
While on $S_{t,0}$,
Since $g(\partial_{r},\widehat{T})=1$ on $S_{t,0}$, for sufficiently small $\delta$, the angle between $\partial_{r}$ and $\widehat{T}$ is
less than $\frac{\pi}{2}$, which implies $1+g(\widehat{T},\partial_{r})\geq 1$. Therefore,
\begin{align}\label{angle estimate}
|1-g(\widehat{T},\partial_{r})|\lesssim \delta M^{2}.
\end{align}
Together with
\eqref{estimates on lambda_i} and \eqref{y estimates 1}, this implies
\begin{equation}\label{estimates on y^i}
 |y^i|\lesssim \delta M^2,\quad |y'^{i}|\lesssim \delta M^{2}.
\end{equation}
We then control $z^i$ from its definition
\begin{equation}\label{estimates on z^i}
 |z^i|\lesssim \delta M^2.
\end{equation}
The derivatives of $\lambda_{i}$ on $\Sigma_{t}$ are given by $X_{A}(\lambda_{i})=\left(\theta_{AB}-\frac{\slashed{g}_{AB}}{\ub-t}\right)R^{B}_{i}-\varepsilon_{ikj}y^{k}X^{j}_{A}$ and $T(\lambda_{i})=-R_{i}(c^{-1}\mu)$. Hence,
\begin{align*}
\|\ds\lambda_{i}\|_{L^{\infty}(\Sigma^{\delta}_{t})}\lesssim \delta M^{2},\quad \|T\lambda_{i}\|_{L^{\infty}(\Sigma^{\delta}_{t})}\lesssim M^{2}
\end{align*}
Finally, we obtain the following estimates for the deformation tensor of $R_i$:\footnote{We emphasize that the traceless part of $\widehat{{\slashed\pi}}_{AB}$ is defined with respect to $\slashed{g}$.}
\begin{equation*}
 \begin{split}
  ^{(R_i)}{}{\pi}_{\Lb \Lb}=0, \ \ \|{} ^{(R_i)}&{}{\widetilde{\pi}}_{\Lb T}\|_{L^\infty(\Sigma^{\delta}_t)}\lesssim M^2, \ \  \|\mu^{-1}{}^{(R_i)}{}{{\pi}}_{TT}\|_{L^\infty(\Sigma^{\delta}_t)} \lesssim M^2,\\
\|{}^{(R_i)}{}{{\pi}}_{\Lb A}\|_{L^\infty(\Sigma^{\delta}_t)} &\lesssim \delta M^2,\ \ \|{}^{(R_i)}{}{\pi}_{T A}\|_{L^\infty(\Sigma^{\delta}_t)} \lesssim \delta M^4,\\
\|{}^{(R_i)}{}\widehat{{\slashed\pi}}_{AB}\|_{L^\infty(\Sigma^{\delta}_t)} &\lesssim \delta^2 M^4,\ \ \|\text{tr}{}^{(R_i)}{}{{\slashed\pi}}\|_{L^\infty(\Sigma^{\delta}_t)} \lesssim \delta M^4.
  \end{split}
\end{equation*}
We use the relation $L = c^{-2}\mu \Lb + 2T$ to rewrite the above estimates in null frame as follows:
\begin{equation}\label{estimates on the deformation tensor of Rotational R_i in g metric}
 \begin{split}
 \|{} ^{(R_i)}{}{\pi}_{\Lb L}\|_{L^\infty(\Sigma^{\delta}_t)}&\lesssim M^2, \ \  \|\mu^{-1}{}^{(R_i)}{}{\pi}_{LL}\|_{L^\infty(\Sigma^{\delta}_t)} \lesssim M^2,\\
\|{}^{(R_i)}{}{\pi}_{\Lb A}\|_{L^\infty(\Sigma^{\delta}_t)} &\lesssim  \delta M^2,\ \ \|{}^{(R_i)}{}{\pi}_{L A}\|_{L^\infty(\Sigma^{\delta}_t)} \lesssim \delta M^4,\\
\|{}^{(R_i)}{}\widehat{\slashed\pi}_{AB}\|_{L^\infty(\Sigma^{\delta}_t)} &\lesssim \delta^2 M^4,\ \ \|\text{tr}{}^{(R_i)}{}{\slashed\pi}\|_{L^\infty(\Sigma^{\delta}_t)} \lesssim \delta M^4.
  \end{split}
\end{equation}
The deformation tensors of $R_i$'s with respect to $\widetilde{g}$ are estimated by
\begin{equation}\label{estimates on the deformation tensor of Rotational R_i in g tilde}
 \begin{split}
  ^{(R_i)}{}\widetilde{\pi}_{\Lb \Lb}=0, \ \ \|{} ^{(R_i)}&{}{\widetilde{\pi}}_{\Lb L}\|_{L^\infty(\Sigma^{\delta}_t)}\lesssim M^2, \ \  \|\mu^{-1}{}^{(R_i)}{}{\widetilde{\pi}}_{LL}\|_{L^\infty(\Sigma^{\delta}_t)} \lesssim M^2,\\
\|{}^{(R_i)}{}{\widetilde{\pi}}_{\Lb A}\|_{L^\infty(\Sigma^{\delta}_t)} &\lesssim \delta M^2,\ \ \|{}^{(R_i)}{}\widetilde{\pi}_{L A}\|_{L^\infty(\Sigma^{\delta}_t)} \lesssim \delta M^4,\\
\|{}^{(R_i)}{}\widehat{\widetilde{\slashed\pi}}_{AB}\|_{L^\infty(\Sigma^{\delta}_t)} &\lesssim \delta^2 M^4,\ \ \|\text{tr}{}^{(R_i)}{}{\widetilde{\slashed\pi}}\|_{L^\infty(\Sigma^{\delta}_t)} \lesssim \delta M^4.
  \end{split}
\end{equation}

\subsubsection{Applications} As in \cite{Ch-Shocks} and \cite{Ch-Miao}, we are able to show that the $R_i$ derivatives are equivalent to the $\slashed{d}$ and $\nablaslash$ derivative.
For a 1-form $\xi$ on $S_{t,\ub}$, we have $\sum_{i=1}^3 \xi(R_i)^2 = r^2\Big(|\xi|^2-\big(\xi(y')\big)^2\Big)$.
This is indeed can be derived from the formula $\sum_{i=1}^{3}(R_{i})^{a}(R_{i})^{b}=r^{2}(\delta_{cd}-y^{\prime c}y^{\prime d})\Pi^{a}_{c}\Pi^{b}_{d}$, where $a,b,c,d\in\{1, 2, 3\}$. In view of \eqref{estimates on r precise}, \eqref{estimates on y^i} and the definition of $y^{\prime i}$, for sufficiently small $\delta$,
we have $\sum_{i=1}^3 \xi(R_i)^2 \sim  r^2 |\xi|^2$. Since $r$ is bounded below and above by a constant,
we obtain $\sum_{i=1}^3 \xi(R_i)^2 \sim   |\xi|^2$. Similarly, for a $k$-covariant tensor $\xi$ on $S_{t,\ub}$,
we have $\sum_{i_1,i_2,\cdots, i_k=1}^3 \xi(R_{i_1},R_{i_2},\cdots,R_{i_k})^2 \sim   |\xi|^2$.
In particular, we can take $\xi = \slashed{d}\psi$, therefore, $\sum_{i=1}^3 (R_i\psi)^2 \sim   |\slashed{d}\psi|^2$.
Henceforth, we omit the summation and write schematically as $|R_i\psi| \sim   |\slashed{d}\psi|.$

We can also compare the $R_i$-derivatives with the $\nablaslash$-derivatives for tensors. For $S_{t,\ub}$-tangential 1-form $\xi$ and vectorfield $X$,  let $\slashedLRi \xi$ be the orthogonal projection of the Lie derivative $\mathcal{L}_{R_i} \xi$ onto the surface $S_{t,\ub}$. Since $(\slashedLRi \xi)(X) = (\nablaslash_{R_i}\xi)(X) + \xi(\nablaslash_X R_i)$, we obtain
\begin{equation*}
\sum_{i=1}^3|\slashedLRi \xi|^2 =\sum_{i=1}^3|\xi(R_{i})|^2 + 2 \sum_{i=1}^3 \xi^k (\nablaslash_{R_i} \xi)_a (\nablaslash R_i)^a{}_k + \sum_{i=1}^3 \xi^k \xi^l (\nablaslash R_i)_a {}_k (\nablaslash R_i)^a{}_l.
\end{equation*}
We also have $\sum_{i=1}^3|\nablaslash_{R_i}\xi|^2 = r^2(\delta^{cd}-y'^c y'^d)(\nablaslash \xi)_{ca}(\nablaslash \xi)_d{}^a$. In view of the estimates on $y^{\prime i}$, for sufficiently small $\delta$, we obtain
\begin{equation*}
\sum_{i=1}^3|\nablaslash_{R_i}\xi|^2  \gtrsim |\nablaslash \xi|^2.
\end{equation*}
Let $\varepsilon_{ijk}$ be the volume form on $\Sigma_t$ and $v_i$ be a $S_{t,\ub}$ 1-form with
rectangular components $(v_{i})_{a}=\Pi_{a}^{b}\varepsilon_{ibk}\xi_{k}$. By virtue of the formula $
(\slashed{\nabla}R_{i})^{k}_{l}=\Pi^{m}_{k}\Pi^{n}_{l}
\varepsilon_{inm}-\lambda_{i}\theta_{kl}$, we have
\begin{equation*}
\sum_{i=1}^3 \xi^k \xi^l (\nablaslash R_i)_a {}_k (\nablaslash R_i)^a{}_l =|\xi|^2+ 2c \sum_{i=1}^3 \lambda_i \xi\cdot\chib\cdot v_i + c^2 |\chib \cdot \xi|^2 \sum_{i=1}^3 \lambda_i^2.
\end{equation*}
In view of the estimates on $\lambda_i$, for sufficiently small $\delta$, we have $\sum_{i=1}^3 \xi^k \xi^l (\nablaslash R_i)_a {}_k (\nablaslash R_i)^a{}_l  \sim |\xi|^2$.
Similarly, we have $
\big|\sum_{i=1}^3 \xi^k (\nablaslash_{R_i} \xi)_a (\nablaslash {R_i})^b{}_k \big| \lesssim
|\xi||\nablaslash \xi |$.
Finally, we conclude that
\begin{equation*}
|\xi|^2 + |\nablaslash \xi|^2 \lesssim \sum_{i=1}^3 |\slashedLRi\xi|^2 \lesssim |\xi|^2 + |\nablaslash \xi|^2 .
\end{equation*}
 Henceforth, we omit the summation and write schematically as  $|\slashedLRi\xi| \sim   |\xi|+|\nablaslash\xi|$. Similarly, for a tracefree symmetric 2-tensors $\theta_{AB}$ tangential to $S_{t,\ub}$, we have
 $|\theta|+|\nablaslash \theta| \lesssim |\slashedLRi\theta| \lesssim  |\theta|+|\nablaslash\theta|$. This will be applied to $\theta = \widehat{\chib}_{AB}$ later on.
 
\bigskip

Another application of the pointwise estimates based on the bootstrap assumption is to give an estimate on $\sqrt{\det\slashed{g}}$. On $\Sigma^{\delta}_{-2}$, $S_{-2,\ub}$ is around sphere and $\sqrt{\det\slashed{g}}$ is bounded above and below by positive absolute constants. 
On the other hand, \eqref{definition for chib prime} and \eqref{precise bound on chib'} implies that $\textrm{tr}\chib$ is bounded. This together with the formula
\begin{align}\label{gslash chib}
\Lb\left(\log\sqrt{\det\slashed{g}}\right)=\tr\chib
\end{align}
gives the fact that $\sqrt{\det\slashed{g}}$ is bounded and never vanishes along each null generator.

\subsubsection{Sobolev inequalities and elliptic estimates}
To obtain the Sobolev inequalities on $S_{t,\ub}$, we introduce
$$I(t,\ub) = \displaystyle \sup_{U \in S_{t,\ub}, \atop \partial U \text{ is } C^1} \frac{\min\big(|U|, |S_{t,\ub}-U|\big)}{|\partial U|^2}$$
the isoperimetric constant on $S_{t,\ub}$, where $|U|$, $|S_{t,\ub}-U|$ and $|\partial U|$ are the measures of the corresponding sets with respect to $\slashed{g}$ on $S_{t,\ub}$.
Therefore, in view of the fact that $R_i \sim \nablaslash$, for sufficiently small $\delta$, we have the following Sobolev inequalities:
\begin{align}\label{Sobolev}
\begin{split}
\|f\|_{W^{1,4}(S_{t,\ub})} &\lesssim |I(t,\ub)|^{\frac{1}{4}}|S_{t,\ub}|^{-\frac{3}{4}}\big(\|f\|_{L^2(S_{t,\ub})}+ \|R_i f\|_{L^2(S_{t,\ub})} + \|R_i R_j f\|_{L^2(S_{t,\ub})}\big),\\
\|f\|_{L^{\infty}(S_{t,\ub})} &\lesssim |I(t,\ub)|^{\frac{3}{4}}|S_{t,\ub}|^{-\frac{1}{2}}\big(\|f\|_{L^2(S_{t,\ub})}+ \|R_i f\|_{L^2(S_{t,\ub})} + \|R_i R_j f\|_{L^2(S_{t,\ub})}\big).
\end{split}
\end{align}
where $\|f\|_{W^{1,4}(S_{t,\ub})}$ is defined as $
\|f\|_{W^{1,4}(S_{t,\ub})}=|S_{t,\ub}|^{-1/2}\|f\|_{L^{4}(S_{t,\ub})}+\|\slashed{d}f\|_{L^{4}(S_{t,\ub})}$.
It remains to control the isoperimetric constant $I(t,\ub)$.

We use $T$ to generate a diffeomorphism of $S_{t,\ub}$ to $S_{t,0}$ which maps $U$, $S_{t,\ub}-U$ and $\partial U$ to corresponding sets $U_{\ub}$, $S_{t,0}-U_{\ub}$ and $\partial U_{\ub}$ on $S_{t,0}$. Let $U_{\ub'}, S_{t,\ub'}-U_{\ub'}, \partial U_{\ub'}$ be the inverse images of these on each $S_{t,\ub'}$ for $\ub'\in[0,\ub]$. Since $\slashed{\mathcal{L}}_T \gslash_{AB} = 2c^{-1}\mu\theta = -2c^{-2}\mu\chib_{AB}$, for $\ub' \in [0,\ub]$, we obtain
\begin{align*}
\frac{d}{d\ub'}\big(|U_{\ub'}|\big) = -\int_{U_{\ub'}}c^{-2}\mu \tr\chib d\mu_{\gslash}, \ \ \frac{d}{d\ub'}\big(|\partial U_{\ub'}|\big) = -\int_{U_{\ub'}}c^{-2}\mu \chib(\nu,\nu) ds,
\end{align*}
where $\nu$ is the unit normal of $\partial U_{\ub'}$ in $S_{t,0}$ and $ds$ the element of arc length of $\partial U_{\ub'}$.  In view of the estimates on $\chib$ and $\mu$ derived before,  for sufficiently small $\delta$, we have
\begin{align*}
\frac{d}{d \ub'}\big(|U_{\ub'}|\big) \lesssim |U_{\ub'}|, \ \ \frac{d}{d\ub'}\big(|\partial U_{\ub'}|\big) \lesssim |\partial U_{\ub'}|.
\end{align*}
Therefore, by integrating from $0$ to $\ub$, we have
\begin{align*}
|U| \sim |U_{\ub}|, \ \ |\partial U| \sim |\partial U_{\ub}|.
\end{align*}
Hence, $I(t,\ub) \sim I(t,0) \sim 1$. Finally, since $|S_{t,\ub}|\sim 1$ (This is seen by the fact $d\mu_{\slashed{g}(t,\ub)}\sim d\mu_{\slashed{g}(-r_{0},0)}$, which can be shown by calculations in \cite{Ch-Shocks}.), we conclude that
\begin{equation}\label{Sobolev on S_t ub}
\begin{split}
\|f\|_{W^{1,4}(S_{t,\ub})} + \|f\|_{L^{\infty}(S_{t,\ub})} &\lesssim  \|f\|_{L^2(S_{t,\ub})}+ \|R_i f\|_{L^2(S_{t,\ub})} + \|R_i R_j f\|_{L^2(S_{t,\ub})}.
\end{split}
\end{equation}
We remark that, similarly, we have
\begin{equation}\label{Sobolev on S_t ub L4}
\|f\|_{L^{4}(S_{t,\ub})} \lesssim  \|f\|_{L^2(S_{t,\ub})}+ \|R_i f\|_{L^2(S_{t,\ub})}.
\end{equation}

We also have the following elliptic estimates for traceless two-tensors.
\begin{lemma}
If $\delta$ is sufficiently small, for any traceless 2-covariant symmetric tensor $\theta_{AB}$ on $S_{t,\ub}$, we have
\begin{equation}\label{elliptic estimates}
\int_{S_{t,\ub}} \mu^2 \big(|\nablaslash \theta|^2 + |\theta|^2\big)d\mu_{\slashed{g}}\lesssim \int_{S_{t,\ub}} \mu^2 |\divslash \theta|^2 + |\ds\mu|^2 |\theta|^2 \slashed{d}\mu_{\slashed{g}}
\end{equation}
\end{lemma}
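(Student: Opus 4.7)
The strategy is to apply the classical integral Bochner identity for traceless symmetric 2-tensors on a Riemannian 2-surface to the rescaled tensor $\mu\theta$, and exploit the pointwise positivity of the Gauss curvature $K$ of $(S_{t,\ub},\slashed{g})$ to produce the $\mu^{2}|\theta|^{2}$ term on the left-hand side.

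\emph{Step 1: Positivity of $K$.} Since $\chib-\chib'=-\slashed{g}/(\ub-t)$ is pure trace, we have $\widehat{\chib}=\widehat{\chib'}$ and $\tr\chib=-2/(\ub-t)+\tr\chib'$. Substituting into the Gauss equation \eqref{Gauss equation} and using the decomposition $|\chib|_{\slashed{g}}^{2}=\tfrac{1}{2}(\tr\chib)^{2}+|\widehat{\chib}|^{2}$, a direct computation together with the bound $\|\chib'\|_{L^{\infty}}\lesssim\delta M^{2}$ from \eqref{precise bound on chib'} and \eqref{bound on c} yields
\begin{equation*}
K=\frac{c^{-2}}{(\ub-t)^{2}}+O(\delta M^{2}).
\end{equation*}
Because $\ub\in[0,\delta]$ and $t\in[-2,t^{*})\subset[-2,-1]$, we have $\ub-t\sim 1$, so for $\delta$ sufficiently small there is an absolute constant $c_{0}>0$ with $K\geq c_{0}$ pointwise on every $S_{t,\ub}\subset W^{*}_{\delta}$.

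\emph{Step 2: Bochner identity.} For any traceless symmetric $(0,2)$-tensor $\xi$ on a Riemannian $2$-surface one has
\begin{equation*}
\int_{S_{t,\ub}}\bigl(|\nablaslash\xi|^{2}+2K|\xi|^{2}\bigr)\,\dmug=2\int_{S_{t,\ub}}|\divslash\xi|^{2}\,\dmug.
\end{equation*}
This is most quickly verified in a local orthonormal frame in which a traceless symmetric $2$-tensor is parameterized by two scalars $a,b$: one checks that $2|\divslash\xi|^{2}-|\nablaslash\xi|^{2}$ equals a flat-space divergence plus $2K|\xi|^{2}$ coming from the curvature commutators. Applying this identity to $\xi=\mu\theta$, which is again traceless symmetric because $\mu$ is a scalar, is the entire content of the reduction.

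\emph{Step 3: Expansion and Cauchy--Schwarz.} From
\begin{align*}
|\nablaslash(\mu\theta)|^{2}&=\mu^{2}|\nablaslash\theta|^{2}+2\mu\,\theta^{BC}\nablaslash^{A}\mu\,\nablaslash_{A}\theta_{BC}+|\ds\mu|^{2}|\theta|^{2},\\
|\divslash(\mu\theta)|^{2}&=\mu^{2}|\divslash\theta|^{2}+2\mu(\divslash\theta)^{B}\theta_{AB}\nablaslash^{A}\mu+|\theta(\ds\mu,\cdot)|^{2},
\end{align*}
together with the pointwise bound $|\theta(\ds\mu,\cdot)|^{2}\leq|\theta|^{2}|\ds\mu|^{2}$, substitution into the Bochner identity and Cauchy--Schwarz with a small parameter $\varepsilon>0$ on each cross term (absorbing $\varepsilon\mu^{2}|\nablaslash\theta|^{2}$ on the left and $\mu^{2}|\divslash\theta|^{2}$ on the right) yields
\begin{equation*}
(1-\varepsilon)\int_{S_{t,\ub}}\mu^{2}|\nablaslash\theta|^{2}+2\int_{S_{t,\ub}}K\mu^{2}|\theta|^{2}\lesssim\int_{S_{t,\ub}}\mu^{2}|\divslash\theta|^{2}+\int_{S_{t,\ub}}|\ds\mu|^{2}|\theta|^{2}.
\end{equation*}
Choosing $\varepsilon=1/2$ and invoking $K\geq c_{0}>0$ to convert $2\int K\mu^{2}|\theta|^{2}$ into a multiple of $\int\mu^{2}|\theta|^{2}$ on the left gives \eqref{elliptic estimates}.

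\emph{Main obstacle.} The only substantive ingredient beyond the standard Bochner identity is the uniform positivity of $K$ in Step 1; once that is in hand, the weighted estimate is essentially a routine consequence of the sharp two-dimensional identity. The sign of $K$ here is guaranteed by the $\delta$-smallness of $\chib'$ relative to its Minkowskian value $-\slashed{g}/(\ub-t)$, i.e.\ by the fact that $S_{t,\ub}$ remains very close to a round sphere throughout $W^{*}_{\delta}$.
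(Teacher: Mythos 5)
Your proof is correct and follows essentially the same route as the paper's: the Bochner identity for traceless symmetric 2-tensors on $S_{t,\ub}$, the positivity $K\sim 1$ of the Gauss curvature for small $\delta$ (via the Gauss equation and the smallness of $\chib'$), and Cauchy--Schwarz to handle the $\mu$-weight. Applying the integrated identity to $\mu\theta$ instead of multiplying the pointwise identity (with its divergence term $\divslash J$) by $\mu^2$ and integrating by parts is only a cosmetic difference, since the resulting cross terms are the same.
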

\begin{proof}
Let $J^A = \theta^{B}{}_{C}\nablaslash_B \theta^{AC} - \theta^{A}{}_{C}(\divslash \theta)^C$, then $|J| \lesssim |\theta|\big(|\slashed{\nabla}\theta| + |\divslash \theta|\big)$. The B$\hat{\text{o}}$chner formula says
\begin{equation*}
|\nablaslash \theta|^2+ 2K |\theta|^2=2|\divslash \theta|^2 + \divslash J,
\end{equation*}
where $K$ is the Gauss curvature. According to \eqref{Gauss equation} and the estimates on $\chib_{AB}$,
for sufficiently small $\delta$, we know that $|K|\sim 1$. Therefore, we have
\begin{equation*}
|\nablaslash \theta|^2+  |\theta|^2 \sim 2|\divslash \theta|^2 + \divslash J,
\end{equation*}
We then multiply both sides by $\mu^2$ and integrate on $S_{t,\ub}$. The Cauchy-Schwarz inequality together with the above estimates on $|J|$ as well as the divergence theorem yields the inequality.
\end{proof}

\section{The behavior of the inverse density function}
The behavior of the inverse density function $\mu$ plays an dominant r\^{o}le in this paper. The method of obtaining estimates on $\mu$ is
to relate $\mu$ to its initial value on $\Sigma_{t=-r_0}$. Since the metric $g$ depends only on $\psi_0 = \partial_t \phi$, $\mu$
is also determined by $\psi_0$. This leads naturally to the study of the wave equation $\Box_{\widetilde{g}} \psi_0 =0$. We can rewrite it in the null frame as
\begin{equation}\label{box psi_0 in null frame}
\Lb(L\psi_0) + \frac{1}{2}\tr\chibt\cdot L\psi_0 +\big(-\mu\laplacianslash \psi_0 +\frac{1}{2}\tr\widetilde{\chi}\cdot \Lb\psi_0 + 2 \zetab \cdot \slashed{d}\psi_0 + \mu \slashed{d}\log(c)\cdot \slashed{d}\psi_0\big)=0.
\end{equation}

\subsection{The asymptotic expansion for $\mu$}
We start with a lemma which relates $L\psi_0(t,\ub,\theta)$ to its initial value on $\Sigma_{-r_0}$.
\begin{lemma}
For sufficiently small $\delta$, we have
\begin{equation}\label{expansion of L psi_0}
\big||t| L\psi_0(t,\ub, \theta)-r_0 L \psi_0(-r_0,\ub, \theta)\big| \lesssim \delta^{\frac{1}{2}} M^3.
\end{equation}
\end{lemma}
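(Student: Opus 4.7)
The plan is to derive a transport equation for the weighted quantity $(\ub-t)L\psi_0$ along $\Lb$, integrate it from $-r_0$ to $t$, and then convert the weight $(\ub-t)$ into $|t|$ using the smallness of $\ub \leq \delta$.

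First I would rewrite the wave equation \eqref{box psi_0 in null frame} as a transport equation for $L\psi_0$ along $\Lb=\partial/\partial t$:
\begin{equation*}
\Lb(L\psi_0) = -\tfrac{1}{2}\tr\chibt\cdot L\psi_0 + \mu\laplacianslash\psi_0 -\tfrac{1}{2}\tr\widetilde{\chi}\cdot\Lb\psi_0 - 2\zetab\cdot\ds\psi_0 - \mu\,\ds\log c\cdot\ds\psi_0.
\end{equation*}
The key observation is that by \eqref{precise bound on chibt}, $\tr\chibt = \frac{2}{t-\ub} + O(\delta M^2)$. Multiplying the equation by $(\ub-t)$ and using $\Lb(\ub-t)=-1$ to write $(\ub-t)\Lb(L\psi_0)=\Lb\bigl((\ub-t)L\psi_0\bigr)+L\psi_0$, the leading term $\frac{1}{2}(\ub-t)\tr\chibt\cdot L\psi_0 = -L\psi_0 + O(\delta M^2)L\psi_0$ cancels the extra $L\psi_0$, leaving
\begin{equation*}
\Lb\bigl((\ub-t)L\psi_0\bigr) = O(\delta M^2)\,L\psi_0 + (\ub-t)\bigl[\mu\laplacianslash\psi_0 - \tfrac{1}{2}\tr\widetilde\chi\cdot\Lb\psi_0 - 2\zetab\cdot\ds\psi_0 - \mu\,\ds\log c\cdot\ds\psi_0\bigr].
\end{equation*}

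Next I would bound the right-hand side in $L^\infty$ using the bootstrap assumption (B.1) and the preliminary estimates. From (B.1) we have $\|\Lb\psi_0\|_\infty, \|\ds\psi_0\|_\infty \lesssim \delta^{1/2}M$ and $\|T\psi_0\|_\infty \lesssim \delta^{-1/2}M$, whence $\|L\psi_0\|_\infty = \|c^{-2}\mu\Lb\psi_0 + 2T\psi_0\|_\infty \lesssim \delta^{-1/2}M$. Using $R_iR_j\psi_0 \sim \slashed\nabla^2\psi_0$ to the relevant order, $\|\laplacianslash\psi_0\|_\infty \lesssim \delta^{1/2}M$, so $\|\mu\laplacianslash\psi_0\|_\infty \lesssim \delta^{1/2}M^3$. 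The estimates \eqref{bound on c}, \eqref{bound on zetab and etab} and \eqref{precise bound on chibt} (applied to $\widetilde\chi$ through the analogous bound, which mirrors the argument for $\chibt$) give the remaining three terms bounds of $O(\delta^{1/2}M^3)$ or better. Hence the entire right-hand side is $\lesssim \delta^{1/2}M^3$ uniformly.

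Integrating along the integral curves of $\Lb$ (lines of constant $\ub,\theta$) from $-r_0$ to $t$, and using $|t+r_0|\leq 1$ in $W^*_\delta$, yields
\begin{equation*}
\bigl|(\ub-t)L\psi_0(t,\ub,\theta) - (\ub+r_0)L\psi_0(-r_0,\ub,\theta)\bigr| \lesssim \delta^{1/2}M^3.
\end{equation*}
Finally, since $\ub\in[0,\delta]$ and $\|L\psi_0\|_\infty \lesssim \delta^{-1/2}M$, the difference $(\ub-t)L\psi_0(t) - |t|L\psi_0(t) = \ub L\psi_0(t)$ is bounded by $\delta\cdot\delta^{-1/2}M = \delta^{1/2}M$, and similarly at $t=-r_0$. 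Replacing $(\ub-t)\mapsto |t|$ and $(\ub+r_0)\mapsto r_0$ costs only $O(\delta^{1/2}M)$, giving the claimed estimate.

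The main subtlety is verifying that the angular Laplacian and $\tr\widetilde\chi$ terms are genuinely $O(\delta^{1/2}M^3)$ after multiplication by $(\ub-t)$: the former requires passing from $R_iR_j$ to $\slashed\nabla^2$ (controlled by the preliminaries on commutators and $\lambda_i$), while the latter uses the analogue of \eqref{precise bound on chibt} for the outgoing null second fundamental form, which is not stated explicitly in this excerpt but follows by an identical transport argument using \eqref{outgoing null}. Apart from this bookkeeping, the proof is a standard weighted Grönwall along the $\Lb$-flow, with the choice of weight $(\ub-t)$ dictated precisely by the need to cancel the $\frac{1}{2}\tr\chibt\cdot L\psi_0$ singular coefficient.
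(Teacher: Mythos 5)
Your proof is correct and follows essentially the same route as the paper: both treat \eqref{box psi_0 in null frame} as a transport equation for $L\psi_0$ along $\Lb$, bound the parenthetical terms by $\delta^{1/2}M^3$ via (B.1), use \eqref{precise bound on chibt} to identify the singular coefficient with $-\frac{1}{\ub-t}$, integrate $\Lb\big((\ub-t)L\psi_0\big)$ from $-r_0$ to $t$, and convert the weight using $\ub\leq\delta$ together with $\|L\psi_0\|_{L^\infty}\lesssim\delta^{-1/2}M$. Your side remark about $\tr\widetilde{\chi}$ is handled in the paper simply by the relation $\chi_{AB}=-c^{-2}\mu\,\chib_{AB}$ and the established bounds on $\mu$, $c$, and $\chib$, so no separate transport argument is needed.
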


\begin{proof}
We regard \eqref{box psi_0 in null frame} as a transport equation for $L\psi_0$. According to (B.1) and the estimates from previous sections,
the $L^\infty$ norm of the terms in the big parenthesis in \eqref{box psi_0 in null frame} is bounded by $\delta^{\frac{1}{2}}M^{3}$. Hence,
\begin{equation*}
|\Lb(L\psi_0)(t,\ub, \theta) + \frac{1}{2}\tr\chibt(t,\ub, \theta)\cdot L\psi_0(t,\ub,\theta) | \lesssim  \delta^\frac{1}{2}M^{3}.
\end{equation*}
By virtue of \eqref{precise bound on chibt}, this implies $|\Lb\big(L\psi_0\big)(t,\ub, \theta) -\frac{1}{\ub-t}  L\psi_0(t,\ub,\theta) | \lesssim \delta^\frac{1}{2}M^{3}$. Therefore, we obtain
\begin{equation*}
|\Lb\big((\ub-t)L\psi_0\big)(t,\ub, \theta)| \lesssim \delta^\frac{1}{2}M^{3}.
\end{equation*}
Since $|\ub| \leq \delta$, we integrate from $-r_0$ to $t$ and this yields the desired estimates.
\end{proof}

\begin{remark}
The estimates \ref{expansion of L psi_0} also hold for $R_{i}L\psi_{0}$ or $R_{i}R_{j}\psi_{0}$, e.g., see \eqref{expansion of L R_i psi_0}. To derive these estimates, we commute $R_{i}$'s with \eqref{box psi_0 in null frame} and follow the same way as in the above proof.
\end{remark}

Since $L = c^{-2}\mu \Lb + 2 T$, as a corollary, we have
\begin{corollary}
For sufficiently small $\delta$, we have
\begin{equation}\label{expansion of T psi_0}
||t| T\psi_0(t,\ub, \theta)-r_0 T \psi_0(-r_0,\ub, \theta)| \lesssim \delta^{\frac{1}{2}} M^3,
\end{equation}
\begin{equation}\label{expansion of psi_0}
||t| \psi_0(t,\ub, \theta)-r_0 \psi_0(-r_0,\ub, \theta)| \lesssim \delta^{\frac{3}{2}} M^3.
\end{equation}
\end{corollary}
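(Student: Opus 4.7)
The plan is to handle the two estimates separately, with the first being a short algebraic manipulation from the previous lemma, and the second requiring an integration in $\ub$ that exploits the triviality of the data in the inner region.

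For \eqref{expansion of T psi_0}, I would use the null decomposition $L = c^{-2}\mu\Lb + 2T$, rewritten as $2T\psi_0 = L\psi_0 - c^{-2}\mu\Lb\psi_0$. Multiplying by $|t|$ at the general time and by $r_0$ at $t=-r_0$, and using (B.1) to bound $\Lb\psi_0$ by $\delta^{1/2}M$ together with \eqref{bound on c}, \eqref{bound on mu} (to control $c^{-2}\mu$) and the initial estimates $c(-r_0) = 1 + O(\delta M^2)$, $\mu(-r_0) = 1 + O(\delta)$, one gets $|t|c^{-2}\mu\Lb\psi_0(t) = r_0 \Lb\psi_0(-r_0) + O(\delta^{1/2}M^3)$ (the initial slice error is even smaller, of size $\delta^{3/2}M^3$). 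Subtracting and invoking \eqref{expansion of L psi_0} gives the claim.

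For \eqref{expansion of psi_0}, the key structural observation is that the solution and all its derivatives vanish on $\Cb_0 = \{\ub=0\}$, since the data is identically zero in $B_{-2}$ and uniqueness forces $\phi \equiv 0$ inside $\Cb_0$. In optical coordinates $T = \partial/\partial\ub - \Xi$ with $\Xi$ tangent to $S_{t,\ub}$, so integrating $\partial\psi_0/\partial\ub = T\psi_0 + \Xi\psi_0$ from $0$ to $\ub$ at fixed $(t,\theta)$ yields
\[
\psi_0(t,\ub,\theta) = \int_0^{\ub} \bigl(T\psi_0 + \Xi\psi_0\bigr)(t,\ub',\theta)\, d\ub'.
\]
On $\Sigma_{-r_0}$ we have $T = \partial_r$ and hence $\Xi = 0$, so
\[
|t|\psi_0(t,\ub,\theta) - r_0\psi_0(-r_0,\ub,\theta) = \int_0^{\ub}\bigl[|t|T\psi_0(t) - r_0 T\psi_0(-r_0)\bigr]d\ub' + \int_0^{\ub} |t|\Xi\psi_0(t,\ub',\theta)\, d\ub'.
\]
The first integrand is $O(\delta^{1/2}M^3)$ by \eqref{expansion of T psi_0}, so the first integral is $O(\delta^{3/2}M^3)$ since $\ub\leq\delta$. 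For the second integral, since $[\Lb,T] = -(\zetab+\etab)^A X_A$ (equivalently $\partial_t\Xi^A = -\Lambda^A$ in optical coordinates) and $\Xi$ vanishes at $t=-r_0$, integration in $t$ together with the bounds $|\zetab|\lesssim \delta M^2$, $|\etab|\lesssim M^2$ gives $|\Xi|_{\slashed{g}} \lesssim M^2$, and then (B.1) provides $|\Xi\psi_0|\lesssim |\Xi|_{\slashed{g}}|\slashed{d}\psi_0|_{\slashed{g}}\lesssim \delta^{1/2}M^3$. Thus the second integral is also $O(\delta^{3/2}M^3)$.

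The only subtle point is where the extra factor of $\delta$ (relative to the naive estimate obtained from integrating in $t$) comes from: it arises precisely from integrating across the thin annulus of $\ub$-width $\delta$, made possible by the vanishing of $\psi_0$ on $\Cb_0$. The bookkeeping of the $M$-powers must be careful, but no new analytic input is needed beyond the previous lemma, the bootstrap (B.1), and the connection/coefficient bounds already established.
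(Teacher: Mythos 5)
Your proof is correct and follows essentially the same route as the paper: \eqref{expansion of T psi_0} comes from $T=\tfrac12\left(L-c^{-2}\mu\Lb\right)$ combined with \eqref{expansion of L psi_0} and the (B.1)-based bound $|c^{-2}\mu\,\Lb\psi_0|\lesssim \delta^{1/2}M^3$, while \eqref{expansion of psi_0} comes from integrating in $\ub$ over the $\delta$-thin annulus and using $\psi_0(t,0,\theta)=0$. The only difference is that you make explicit the angular drift term $\Xi$ in $T=\partial_{\ub}-\Xi$ (which the paper's one-line integration leaves implicit), and your bound on it via $\partial_t\Xi^A=(\zetab+\etab)^A$ together with the established estimates on $\zetab,\etab$ is consistent with the paper's framework, so no gap results.
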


\begin{proof}
For \eqref{expansion of psi_0}, we integrate \eqref{expansion of T psi_0} for $\ub'$ from $0$ to $\ub$ and use the fact that $\psi_0(t, 0, \theta)=0$.
\end{proof}

We turn to the behavior of $\Lb \mu$.
\begin{lemma}\label{lemma on the expansion of Lb mu}
For sufficiently small $\delta$, we have
\begin{equation}\label{expansion of Lb mu}
||t|^2\Lb \mu(t,\ub, \theta)-r_0^2\Lb \mu(-r_0,\ub, \theta)|\lesssim \delta M^4.
\end{equation}
\end{lemma}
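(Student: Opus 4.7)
I will use the decomposition $\Lb\mu = m + \mu e$ from \eqref{Structure Equation Lb mu} and handle the two summands separately. Writing $\psi_0=\partial_t\phi$ and $\rho=\psi_0^2$, we have $m=-\frac{d(c^2)}{d\rho}\,\psi_0 T\psi_0$ and $e=\frac{1}{c^2}\frac{d(c^2)}{d\rho}\,\psi_0 \Lb\psi_0$. The $\mu e$ piece is a pure error: the bootstrap bounds (B.1) give $|\psi_0|+|\Lb\psi_0|\lesssim\delta^{1/2}M$, hence $|e|\lesssim \delta M^2$, which combined with $|\mu|\lesssim M^2$ from \eqref{bound on mu} yields $|\mu e|\lesssim\delta M^4$ pointwise. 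Since $|t|\leq r_0$, both endpoint terms $|t|^2(\mu e)(t,\ub,\theta)$ and $r_0^2(\mu e)(-r_0,\ub,\theta)$ are of size $\delta M^4$, and their difference is $O(\delta M^4)$ without any cancellation needed. So it suffices to estimate the difference of $|t|^2 m$ between the two time slices.

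For the main term, set $G(t)=|t|\psi_0(t,\ub,\theta)$, $H(t)=|t|T\psi_0(t,\ub,\theta)$, and $K(t)=-\frac{d(c^2)}{d\rho}(t,\ub,\theta)$, so that $|t|^2 m(t,\ub,\theta)=K(t)G(t)H(t)$. The propagation estimates \eqref{expansion of psi_0} and \eqref{expansion of T psi_0} give $|G(t)-G(-r_0)|\lesssim \delta^{3/2}M^3$ and $|H(t)-H(-r_0)|\lesssim \delta^{1/2}M^3$, while the initial data bounds $|\psi_0(-r_0)|\lesssim\delta^{1/2}M$ and $|T\psi_0(-r_0)|\lesssim\delta^{-1/2}M$ imply $|G|\lesssim \delta^{1/2}M$ and $|H|\lesssim \delta^{-1/2}M$ at both endpoints. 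Finally, the formula $K=-3G''(0)(1+3G''(0)\psi_0^2)^{-2}$ combined with $|\psi_0(t)^2-\psi_0(-r_0)^2|\lesssim \delta M^2$ (from the uniform bound $|\psi_0|\lesssim \delta^{1/2}M$ applied to the identity $\psi_0(t)^2-\psi_0(-r_0)^2=(\psi_0(t)-\psi_0(-r_0))(\psi_0(t)+\psi_0(-r_0))$) yields $|K(t)-K(-r_0)|\lesssim\delta M^2$, together with the uniform bound $|K|\lesssim 1$.

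Inserting these estimates into the telescoping identity
\begin{align*}
&K(t)G(t)H(t)-K(-r_0)G(-r_0)H(-r_0)\\
&\quad=\bigl(K(t)-K(-r_0)\bigr)G(t)H(t)+K(-r_0)\bigl(G(t)-G(-r_0)\bigr)H(t)+K(-r_0)G(-r_0)\bigl(H(t)-H(-r_0)\bigr),
\end{align*}
each of the three contributions is of size $\delta M^4$: the first is bounded by $\delta M^2\cdot\delta^{1/2}M\cdot\delta^{-1/2}M$, the second by $1\cdot\delta^{3/2}M^3\cdot\delta^{-1/2}M$, and the third by $1\cdot\delta^{1/2}M\cdot\delta^{1/2}M^3$. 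Adding the $\mu e$ contribution from Step 1 completes the proof of \eqref{expansion of Lb mu}. The only subtlety I anticipate is the careful bookkeeping of the powers of $\delta$ in the product expansion — the factor $\delta^{-1/2}$ hidden in $H$ must be exactly compensated by $\delta^{1/2}$-gains elsewhere — but once tracked these match up in each of the three terms.
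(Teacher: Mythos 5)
Your proof is correct and takes essentially the same route as the paper: the decomposition $\Lb\mu=m+\mu e$, the direct bound $|\mu e|\lesssim\delta M^4$ from \eqref{bound on e} and \eqref{bound on mu}, and the reduction of the $m$-difference to the propagation estimates \eqref{expansion of psi_0} and \eqref{expansion of T psi_0}; your telescoping identity merely spells out the bookkeeping the paper declares immediate. (One immaterial slip: $-\frac{d(c^2)}{d\rho}=+3G''(0)\big(1+3G''(0)\psi_0^2\big)^{-2}$, not $-3G''(0)(\cdots)^{-2}$, but only $|K|$ and its Lipschitz bound enter the estimate.)
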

\begin{proof}
According to \eqref{Structure Equation Lb mu}, we write $|t|^2\Lb \mu(t,\ub, \theta)-r_0^2\Lb \mu(-r_0,\ub, \theta)$ as
\begin{align*}
\big(|t|^2 m(t,\ub, \theta)-r_0^2 m(-r_0,\ub, \theta)\big) +\big[ |t|^2\ (\mu \cdot e)(t,\ub, \theta)-r_0^2(\mu \cdot e)(-r_0,\ub, \theta)\big].
\end{align*}
In view of \eqref{bound on e}, we bound the terms in the bracket by $\delta M^4$ up to a universal constant. Therefore,
\begin{align*}
|t|^2\Lb \mu(t,\ub, \theta)-r_0^2\Lb \mu(-r_0,\ub, \theta) &=\big(|t|^2 m(t,\ub, \theta)-r_0^2 m(-r_0,\ub, \theta)\big) +O( \delta M^4).
\end{align*}
Since
\begin{equation*}
|t|^2 m(t,\ub, \theta)-r_0^2 m(-r_0,\ub, \theta)=\tau^2 \frac{3G''(0)(\psi_0(\tau,\ub, \theta) \cdot T \psi_0(\tau,\ub, \theta))}{(1+3G''(0)\psi_0^2(\tau,\ub, \theta))^{2}}\bigg|_{\tau=-r_{0}}^{\tau=t}.
\end{equation*}
It is clear that the estimates follow immediately after \eqref{expansion of T psi_0} and \eqref{expansion of psi_0}.
\end{proof}

We are now able to prove an accurate estimate on $\mu$.
\begin{proposition}\label{proposition on expansion for mu}
For sufficiently small $\delta$, we have
\begin{equation}\label{expansion of mu}
\big|\mu(t,\ub, \theta)-1 + r_0^2(\frac{1}{t}+\frac{1}{r_0})\Lb\mu(-r_0,\ub, \theta) \big|\lesssim \delta M^4.
\end{equation}
In particular, we have $\mu \leq C_0$ where $C_0$ is a universal constant depending only on the initial data.
\end{proposition}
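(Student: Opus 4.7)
The plan is to obtain \eqref{expansion of mu} by integrating $\Lb\mu$ along the integral curves of $\Lb$, which in the optical coordinates is simply $\partial/\partial t$ (see \eqref{Lb op}). The main input is already established in Lemma \ref{lemma on the expansion of Lb mu}, so what remains is essentially an elementary ODE computation plus the initial condition.

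First, I would rewrite the estimate \eqref{expansion of Lb mu} in the form
\[
\Lb\mu(\tau,\ub,\theta) \;=\; \frac{r_0^2}{\tau^2}\,\Lb\mu(-r_0,\ub,\theta) \;+\; \frac{E(\tau,\ub,\theta)}{\tau^2},
\qquad |E(\tau,\ub,\theta)|\lesssim \delta M^4,
\]
which is valid for all $\tau \in [-r_0, t]$ with $t \leq t^* \leq -1$. Since $\Lb = \partial_t$ at fixed $(\ub,\theta)$, integrating from $-r_0$ to $t$ gives
\[
\mu(t,\ub,\theta) - \mu(-r_0,\ub,\theta) \;=\; \Lb\mu(-r_0,\ub,\theta)\int_{-r_0}^{t}\!\frac{r_0^2}{\tau^2}\,d\tau \;+\; \int_{-r_0}^{t}\!\frac{E(\tau,\ub,\theta)}{\tau^2}\,d\tau.
\]
A direct computation yields $\displaystyle\int_{-r_0}^{t}\frac{r_0^2}{\tau^2}\,d\tau = -\frac{r_0^2}{t} - r_0 = -r_0^2\!\left(\tfrac{1}{t}+\tfrac{1}{r_0}\right)$, while the error integral is bounded above by $C\delta M^4$ because $|\tau|\geq 1$ on the range of integration and the length of the interval is at most $r_0$.

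Combining this with the initial estimate \eqref{initial estimates on mu}, which gives $\mu(-r_0,\ub,\theta) = 1 + O(\delta)$, yields
\[
\mu(t,\ub,\theta) \;=\; 1 \;-\; r_0^2\!\left(\tfrac{1}{t}+\tfrac{1}{r_0}\right)\Lb\mu(-r_0,\ub,\theta) \;+\; O(\delta M^4),
\]
which is exactly \eqref{expansion of mu}. For the upper bound, observe that $\Lb\mu(-r_0,\ub,\theta)$ is determined purely by the initial data (via \eqref{Structure Equation Lb mu}, \eqref{bound on m}, \eqref{bound on e}, and the initial bound $\mu(-r_0) = c(-r_0)$), hence is bounded by a constant depending only on the seed data; and for $t\in[-r_0,-1]$ the coefficient $|\tfrac{1}{t}+\tfrac{1}{r_0}|$ is uniformly bounded (in fact vanishes at $t=-r_0$ and is at most $1/r_0$ for $t\in[-r_0,-1]$). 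Thus $\mu \leq C_0$ for a universal constant $C_0$ once $\delta$ is small enough that the $O(\delta M^4)$ error is absorbed.

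There is no real obstacle here: the entire nontrivial content of the proposition was already packaged into Lemma \ref{lemma on the expansion of Lb mu}, and the present result is the corresponding consequence at the level of $\mu$ itself, obtained by one integration. The only minor point requiring attention is keeping the error estimate $O(\delta M^4)$ uniform in $(\ub,\theta)$, which is immediate since the pointwise bound on $E$ holds uniformly.
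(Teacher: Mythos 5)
Your proposal is correct and follows essentially the same route as the paper: the paper also rewrites the integrand via \eqref{expansion of Lb mu} as $\tau^{-2}\big(r_0^2\Lb\mu(-r_0,\ub,\theta)+O(\delta M^4)\big)$, integrates $\Lb\mu=\partial_t\mu$ from $-r_0$ to $t$, and concludes with the initial bound \eqref{initial estimates on mu}. Your additional remarks on the uniform boundedness of $\Lb\mu(-r_0,\ub,\theta)$ and of the coefficient $\frac{1}{t}+\frac{1}{r_0}$ for the bound $\mu\leq C_0$ are consistent with what the paper leaves implicit.
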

\begin{proof}
According to the previous lemma, we integrate $\Lb \mu$:
\begin{align*}
\mu(t,\ub,\theta)-\mu(-r_0,\ub,\theta)&=\int_{-r_0}^t \Lb \mu(\tau,\ub, \theta)d\tau = \int_{-r_0}^t \frac{\tau^2\Lb \mu(\tau,\ub, \theta)}{\tau^2}d\tau\\
&\stackrel{\eqref{expansion of Lb mu}}{=}\int_{-r_0}^t \frac{r_0^2\Lb \mu(-r_0,\ub, \theta)}{\tau^2} + \frac{O (\delta M^4)}{\tau^{2}}d\tau\\
&=-(\frac{1}{t}+\frac{1}{r_0})r_0^2\Lb\mu(-r_0,\ub, \theta) +O(\delta M^4).
\end{align*}
Therefore, we can use \eqref{initial estimates on mu} to conclude.
\end{proof}

We are ready to derive two key properties of the inverse density function $\mu$. The first asserts that the shock wave region is
trapping for $\mu$, namely, once $p \in W_{shock}$, then all the points after $p$ along the incoming null geodesic stay in $W_{shock}$.
\begin{proposition}\label{Proposition C3}
For sufficiently small $\delta$ and for all $(t,\ub, \theta) \in W_{shock}$, we have
\begin{equation}\label{C3}
\Lb \mu (t,\ub,\theta) \leq -\frac{1}{4|t|^{2}} \lesssim -1.
\end{equation}
\end{proposition}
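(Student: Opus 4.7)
The plan is to combine the two asymptotic expansions from the preceding subsection, \eqref{expansion of mu} and \eqref{expansion of Lb mu}: both express the quantity at a point $(t,\ub,\theta)$ in terms of the single scalar $\Lb\mu(-r_0,\ub,\theta)$. I would use \eqref{expansion of mu} together with the defining smallness of $\mu$ on $W_{shock}$ (that is, $\mu$ below a fixed threshold $\mu_0<1$) to force $\Lb\mu(-r_0,\ub,\theta)$ to be sufficiently negative, and then feed the resulting estimate back into \eqref{expansion of Lb mu} to read off the desired bound at time $t$.

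Concretely, \eqref{expansion of Lb mu} rewrites as $\Lb\mu(t,\ub,\theta)=|t|^{-2}\bigl(r_0^2\Lb\mu(-r_0,\ub,\theta)+O(\delta M^4)\bigr)$, so the task reduces to proving $r_0^2\Lb\mu(-r_0,\ub,\theta)\leq -\tfrac{1}{4}$ with a small amount of slack to absorb the error. For this, rearranging \eqref{expansion of mu} and using the hypothesis $\mu(t,\ub,\theta)\leq\mu_0$ gives
$$-r_0^2\bigl(\tfrac{1}{t}+\tfrac{1}{r_0}\bigr)\Lb\mu(-r_0,\ub,\theta)\;\leq\;\mu_0-1+O(\delta M^4).$$
On the time interval $t\in[-r_0,-1]$ the coefficient $-\bigl(\tfrac{1}{t}+\tfrac{1}{r_0}\bigr)=\tfrac{1}{|t|}-\tfrac{1}{r_0}$ lies in $(0,\tfrac{1}{2}]$; dividing through yields $r_0^2\Lb\mu(-r_0,\ub,\theta)\leq -2(1-\mu_0)+O(\delta M^4)$. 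Since $\mu_0<1$ is a fixed threshold (one expects it quite small in the paper), $2(1-\mu_0)$ is a fixed constant exceeding $\tfrac{1}{4}$, and then taking $\delta$ small compared to $M^{-4}$ produces $r_0^2\Lb\mu(-r_0,\ub,\theta)\leq -\tfrac{1}{4}$. Substituting into the previous display gives the target $\Lb\mu(t,\ub,\theta)\leq-\tfrac{1}{4|t|^2}$; the trailing $\lesssim -1$ follows from $|t|\leq r_0=2$.

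The only delicate point is the strict positivity of the geometric factor $-\bigl(\tfrac{1}{t}+\tfrac{1}{r_0}\bigr)$ used in the division. This is not an additional hypothesis but a consequence of being in $W_{shock}$: the expansion \eqref{expansion of mu} shows $\mu=1+O(\delta M^4)$ on a neighbourhood of the slice $t=-r_0$ where the factor degenerates, so once $\delta$ is small $W_{shock}$ is automatically disjoint from that slice. Beyond this observation the whole argument is just algebraic manipulation of the two expansions; the real content is that the future evolution of $\mu$ and $\Lb\mu$ is dictated, modulo $O(\delta M^4)$, by the single scalar $\Lb\mu|_{t=-r_0}$, and the shock-trapping inequality is a direct algebraic consequence of that fact.
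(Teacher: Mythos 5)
Your proposal is correct and is essentially the paper's own argument: both deduce from \eqref{expansion of mu} and the smallness $\mu<\frac{1}{10}$ in $W_{shock}$ a quantitative negative upper bound on $r_0^2\Lb\mu(-r_0,\ub,\theta)$, and then transfer it to time $t$ via \eqref{expansion of Lb mu}. The only difference is cosmetic: the paper argues by contradiction and keeps the $t$-dependent factor $\tfrac{r_0 t}{r_0+t}$, while you divide directly using $\tfrac{1}{|t|}-\tfrac{1}{r_0}\le\tfrac12$, which is equally valid.
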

\begin{proof}
For $(t,\ub, \theta) \in W_{shock}$, we have $\mu(t,\ub, \theta) < \frac{1}{10}$. In view of \eqref{expansion of mu},
we claim that $r_0^2\Lb\mu (-r_0,\ub, \theta)<0$. Otherwise, since $\frac{1}{t}+\frac{1}{r_0}<0$, we would have $\mu(t,\ub, \theta) \geq 1 + O(\delta M^2)>\frac{1}{10}$,
provided $\delta$ is sufficiently small. This contradicts the fact that $\mu(t,\ub, \theta) < \frac{1}{10}$.

We can also use this argument to show that $(\frac{1}{t}+\frac{1}{r_0})r_0^2\Lb\mu(-r_0,\ub, \theta) \geq \frac{1}{2}$. Otherwise, for sufficiently small $\delta$, we would have $\mu(t,\ub, \theta) \geq \frac{1}{2} + O(\delta M^2)>\frac{1}{10}$.

Therefore, we obtain $r_0^2\Lb\mu(-r_0,\ub, \theta) \leq \frac{1}{2}\frac{r_0 t}{r_0+t}$. In view of \eqref{expansion of Lb mu}, we have
\begin{equation*}
t^2 \Lb\mu(t,\ub,\theta)\leq  \frac{1}{2}\frac{r_0 t}{r_0+t}+O(\delta M^2).
\end{equation*}
Here we write $M^{2}$ instead of $M^{4}$ because we already know that $\mu\leq C_{0}$, where $C_{0}$ depends only on initial data.
By taking a sufficiently small $\delta$ and noticing that $\frac{r_{0}t}{r_{0}+t}$ is bounded from above by a negative number, this yields the desired estimates.
\end{proof}

\begin{remark}\label{2nd derivative blow up}
In the case when shock forms, i.e. $\mu\rightarrow0$, by the previous proposition and \eqref{Structure Equation Lb mu}, $m=-\frac{1}{2}\frac{dc^{2}}{d\rho}T\rho\lesssim-1$. In other words, $$T\rho\geq c_{0}>0$$ for some absolute constant $c_{0}$. On the other hand, $\widehat{T}\rho=c\mu^{-1}T\rho$ and $\|\widehat{T}\|=1$, therefore as $\mu\rightarrow0$, $\nabla\rho$ blows up, so does $\nabla\partial_{t}\phi$.
\end{remark}

\begin{remark}
We compare the estimates \eqref{bound on mu} and \eqref{expansion of Lb mu}. \eqref{bound on mu} is rough:
$|\Lb \mu| \lesssim M^2$; \eqref{expansion of Lb mu} is precise: $|\Lb \mu| \leq C_0 + \delta M^2$,
where $C_0$ depends only on the initial data. The improvement comes from integrating the wave equation $\Box_{\widetilde{g}}\psi_0 =0$
or equivalently \eqref{box psi_0 in null frame}.
\end{remark}

\subsection{The asymptotic expansion for derivatives of $\mu$}
We start with an estimate on derivatives of $\tr\chibt$.
\begin{lemma}
For sufficiently small $\delta \leq \varepsilon$, we have
\begin{equation}\label{bound on L trchibt}
 \|L \tr_{\widetilde{g}}\widetilde{\chib} \|_{L^{\infty}} \lesssim  M^2,
\end{equation}
\begin{equation}\label{bound on d trchibt}
 \|\ds \tr \widetilde{\chib} \|_{L^{\infty}} \lesssim \delta  M^2.
\end{equation}
\end{lemma}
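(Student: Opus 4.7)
The plan is to reduce both estimates to statements about $\tr\chib$ and then use the available structure equations. From $\chibt_{AB}-\chib_{AB}=-\tfrac{1}{2}c^{-2}(\Lb c)\slashed{g}_{AB}$ and $\gt^{AB}=c\,\slashed{g}^{AB}$ I obtain
\begin{equation*}
\tr_{\gt}\chibt \;=\; c\,\tr\chib \;-\; c^{-1}\Lb c,
\end{equation*}
so that every derivative of the left-hand side splits into a derivative of $\tr\chib$ plus derivatives of $c$. Since $c$ is a quadratic function of $\psi_0$, its $L$- and $\ds$-derivatives are controlled by \eqref{bound on c} together with the bootstrap (B.1) (the piece $L\Lb c$ being retrievable from \eqref{box psi_0 in null frame}). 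The two assertions therefore reduce to proving $\|L\tr\chib\|_{L^\infty}\lesssim M^2$ and $\|\ds\tr\chib\|_{L^\infty}\lesssim \delta M^2$.

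For the angular estimate I would invoke the Codazzi equation \eqref{Structure Equation div chib} in the form
\begin{equation*}
\ds\tr\chib \;=\; \divslash\chib \;+\; \mu^{-1}\zetab\cdot\chib \;-\; \mu^{-1}\zetab\,\tr\chib,
\end{equation*}
and observe that $\mu^{-1}\zetab=-c^{-1}\ds c = O(\delta M^2)$ by \eqref{bound on c}, so the last two terms are $O(\delta M^4)$. Writing $\chib=-(\ub-t)^{-1}\slashed{g}+\chib'$ and noting $\divslash\bigl((\ub-t)^{-1}\slashed{g}\bigr)=0$, it suffices to show $|\divslash\chib'|\lesssim\delta M^2$. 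I plan to get this by commuting $\slashedLRi$ through the $\mu$-regular transport equation \eqref{transport equation for chib prime}; the initial data gives $\slashedLRi\chib'\big|_{\Sigma^\delta_{-2}}=O(\delta)$, and a Gr\"onwall argument along $\Lb$ combined with the equivalence $R_i\sim\nablaslash$ from Section 3.3 propagates the smallness to yield $\|\nablaslash\chib'\|_{L^\infty}\lesssim\delta M^2$.

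For $L\tr\chib$ I decompose $L=c^{-2}\mu\Lb+2T$. Tracing the non-singular structure equation \eqref{Structure Equation Lb chibAB nonsingular} (using $\alphab=\mu^{-1}m\chib+\alphab'$ to eliminate the $\mu^{-1}$ factor) gives $\Lb\tr\chib = e\,\tr\chib - |\chib|^2_{\slashed{g}} - \tr\alphab'$, which is $O(M^2)$ by \eqref{bound on e}, \eqref{precise bound on chib'}, and the explicit formula for $\alphab'$. Tracing \eqref{Structure Equation T chib} while using $\slashedL_T\slashed{g}_{AB}=-2c^{-2}\mu\chib_{AB}$ produces
\begin{equation*}
T\tr\chib \;=\; 2c^{-2}\mu|\chib|^2 + \divslash\etab + \mu^{-1}\zetab\cdot\etab - c^{-1}\Lb(c^{-1}\mu)\tr\chib + c^{-1}\mu\,(\theta\cdot\chib).
\end{equation*}
The term $\mu^{-1}\zetab\cdot\etab$ is $O(\delta M^4)$ as above, and the remaining zero-order terms are $O(M^2)$ by \eqref{bound on mu}, \eqref{precise bound on chib'} and \eqref{bound on zetab and etab}. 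The only dangerous contribution is $\divslash\etab=\divslash\zetab+\divslash\ds\mu$, which requires second angular derivatives of $c$ and $\mu$; I expect to obtain these by commuting $R_i$ through $\Lb\mu=m+\mu e$ (and through $\zetab=-c^{-1}\mu\,\ds c$) in parallel to the scheme used for $\chib'$. The Gr\"onwall step in these commuted transport equations is the main obstacle, since it is where the bootstrap constant $M$ is potentially amplified; smallness of $\delta$ closes the loop and yields the two stated $L^\infty$ bounds.
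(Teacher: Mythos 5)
Your reduction of $\tr_{\gt}\chibt$ to $\tr\chib$ plus derivatives of $c$ is fine, and your treatment of the angular bound is essentially the paper's: the paper also commutes $R_i$ with \eqref{transport equation for chib prime}, handles the commutator $[\Lb,R_i]$ with the deformation-tensor bounds, and runs the same Gr\"onwall argument as for \eqref{precise bound on chib'} to get $\|\slashedLRi\chib'\|_{L^\infty}\lesssim\delta M^2$, from which $\|R_i\tr\chib\|_{L^\infty}\lesssim\delta M^2$ follows at once. Your Codazzi detour through \eqref{Structure Equation div chib} is therefore redundant (once $\slashedLRi\chib'$ is bounded you already have $\ds\tr\chib$ via $|R_i f|\sim|\ds f|$), and note that the error terms there are $O(\delta M^2)$, not $O(\delta M^4)$, because $|\chib|,|\tr\chib|\lesssim 1$; if they really were $\delta M^4$ your decomposition would not return the stated $\delta M^2$. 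Where you genuinely diverge is \eqref{bound on L trchibt}: the paper never computes $T\tr\chib$; it commutes $L$ directly with \eqref{transport equation for chib prime} and integrates along $\Lb$, which costs only $Le$, $L\alphab'$, $\Lb(c^{-2}\mu)$, $\zetab+\etab$ and the just-proven angular bound \eqref{estimate for L chib} --- in particular no second derivatives of $\mu$ ever appear. Your route through $L=c^{-2}\mu\Lb+2T$ and the trace of \eqref{Structure Equation T chib} makes $\divslash\etab$, hence $\slashed{\Delta}\mu$, the heart of the matter, and this is precisely the expensive step that you only sketch: bounding $\slashed{\Delta}\mu$ in $L^\infty$ by commuting two rotations through $\Lb\mu=m+\mu e$ requires, beyond \eqref{bound on T mu and slashed d mu}, $L^\infty$ control of first derivatives of the rotation deformation tensors (so of $\slashedLRi\chib'$, $R_iy^j$, $\slashedL_{R_i}R_j$, $\slashedLRi\zetab$, $\ds R_i x^k$), i.e.\ a slice of what the paper assembles only in Propositions \ref{L infity estimates on lot} and \ref{prop mu L infty estimates}. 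This is not circular --- those transport arguments do not use $T\tr\chib$ --- so your plan can be carried out and, with care about which factors carry a $\delta$, it reproduces the stated $M^2$ bound; but as written it trades the paper's one-step commutation for a noticeably heavier package whose key estimate is asserted rather than proved, so you would need to actually execute that second-order $\mu$ estimate (or switch to the paper's direct $L$-commutation) before the lemma can be considered established.
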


\begin{proof}
We derive a transport equation for $R_i \chib'_{AB}$ by commuting $R_i$ with \eqref{transport equation for chib prime}:
\begin{equation*}
\Lb (R_i \chib'_{AB}) = [\Lb ,R_i] \chib'_{AB} + e R_i \chib'_{AB} + 2 \chib'_{A}{}^{C}R_i \chib'_{BC}+(R_i e) \cdot \chib'_{AB} - \frac{e}{\ub-t}R_i \slashed{g}_{AB} - R_i\big(\frac{e}{\ub-t}\big) \slashed{g}_{AB} -R_i\alphab'_{AB}.
\end{equation*}
Since $[\Lb,R_i]^A = {}^{(R_i)}\pi_{\Lb}{}^A$, the commutator term $[\Lb ,R_i] \chib'_{AB}$ can be bounded by the estimates
on the deformation tensors. We then multiply both sides by $R_{i} \chib'_{AB}$ and repeat the procedure that we used to
 derive \eqref{precise bound on chib'}. Since it is routine, we omit the details and only give the final result
\begin{equation}\label{estimate for L chib}
 \|R_i(\chib_{AB}+\frac{\slashed{g}_{AB}}{\ub-t})\|_{L^{\infty}} \lesssim \delta M^2.
\end{equation}
In particular, this yields $\|R_i \tr\chib \|_{L^{\infty}} \lesssim \delta  M^2$ which is equivalent to \eqref{bound on d trchibt}.

We derive a transport equation for $L\chib'_{AB}$ by commuting $L$ with \eqref{transport equation for chib prime}:
\begin{align*}
\Lb (L \chib'_{AB}) &= e L\chib'_{AB} + 2 \chib'_{A}{}^{C}L \chib'_{BC}+ L e \chib'_{AB} - \frac{e}{\ub-t}L \slashed{g}_{AB}\\
 &\ \ + L\big(\frac{e}{\ub-t}\big) \slashed{g}_{AB} -L \alphab'_{AB}+\Lb(c^{-2}\mu)\Lb\chib'_{AB}+\slashed{g}^{CD}(\zetab_{C}+\etab_{D})X_{C}(\chib'_{AB}).
\end{align*}
We then use Gronwall to derive
\begin{equation}\label{expansion for L chib}
 \|L(\chib_{AB}+\frac{\slashed{g}_{AB}}{\ub-t})\|_{L^{\infty}} \lesssim M^2.
\end{equation}
In particular, this yields $\|L \tr\chib \|_{L^{\infty}} \lesssim  M^2$. This is equivalent to \eqref{bound on L trchibt}.
\end{proof}

We now derive estimates for $R_i\psi_{0}$.
\begin{lemma}
For sufficiently small $\delta$, we have
\begin{equation}\label{expansion of L R_i psi_0}
\big||t| L R_i \psi_0(t,\ub, \theta)-r_0 LR_i \psi_0(-r_0,\ub, \theta)\big| \lesssim \delta^{\frac{1}{2}} M^3.
\end{equation}
\end{lemma}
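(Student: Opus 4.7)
The plan is to mirror the proof of \eqref{expansion of L psi_0} by commuting $R_i$ through the wave equation \eqref{box psi_0 in null frame} and deriving a transport equation for $L R_i \psi_0$ whose inhomogeneity is bounded in $L^\infty$ by $\delta^{1/2}M^3$. Write $R_i L = L R_i + [R_i, L]$ and $R_i \Lb = \Lb R_i + [R_i,\Lb]$ and apply $R_i$ to \eqref{box psi_0 in null frame}, which produces
\begin{equation*}
\Lb(L R_i \psi_0) + \tfrac{1}{2}\tr\widetilde{\chib}\cdot L R_i \psi_0 = \mathcal{E},
\end{equation*}
where $\mathcal{E}$ gathers the terms arising from the two commutators, from $R_i$ falling on the coefficient $\tfrac{1}{2}\tr\widetilde{\chib}$, and from $R_i$ falling on the parenthesis $-\mu\laplacianslash\psi_0+\tfrac{1}{2}\tr\widetilde{\chi}\cdot\Lb\psi_0 + 2\zetab\cdot\ds\psi_0 + \mu\,\ds\log c\cdot\ds\psi_0$.

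The task is then to show $\|\mathcal{E}\|_{L^\infty(\Sigma^\delta_t)}\lesssim \delta^{1/2}M^3$. The commutators $[R_i,L]$ and $[R_i,\Lb]$ are $S_{t,\ub}$-tangent vectorfields whose components are expressible in terms of ${}^{(R_i)}\pi_{L A}$ and ${}^{(R_i)}\pi_{\Lb A}$, which by \eqref{estimates on the deformation tensor of Rotational R_i in g metric} are of size $\delta M^2$ and $\delta M^2$ respectively; pairing these with the $\ds$-type derivatives of $L\psi_0$ and $\psi_0$ controlled by (B.1) yields bounds of order at worst $\delta^{1/2}M^3$. The $R_i \tr\widetilde{\chib}$ contribution is estimated by \eqref{bound on d trchibt} combined with $\|L\psi_0\|_{L^\infty}\lesssim \delta^{1/2}M$, giving $\delta^{3/2}M^3$. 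For the remaining pieces, $R_i$ applied to each of $-\mu\laplacianslash\psi_0$, $\tfrac{1}{2}\tr\widetilde{\chi}\Lb\psi_0$, $2\zetab\cdot\ds\psi_0$, $\mu\,\ds\log c\cdot\ds\psi_0$ is handled exactly as in the proof of \eqref{expansion of L psi_0} but with one extra $R_i$: the only additional ingredients are (B.1) applied to $R_iR_j\psi_0$ and $R_iR_jR_k\psi_0$, which is admissible since $\Ninfty$ is large, together with the analogue of \eqref{bound on L trchibt}--\eqref{bound on d trchibt} for $\tr\widetilde{\chi}$ and the bound $\|R_i\zetab\|_{L^\infty}\lesssim \delta M^2$ (obtained from the formula $\zetab_A=-c^{-1}\mu X_A(c)$ by commuting $R_i$). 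Every term is $\lesssim \delta^{1/2}M^3$, the worst coming from $R_i(\tr\widetilde{\chi}\cdot\Lb\psi_0)$ and from the principal commutator $[R_i,L]$ hit by $\Lb$.

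Having established the bound on $\mathcal{E}$, I invoke \eqref{precise bound on chibt} to replace $\tfrac{1}{2}\tr\widetilde{\chib}$ by $-(\ub-t)^{-1}$ with an error of order $\delta M^2$; multiplying the transport equation by $(\ub-t)$ then yields
\begin{equation*}
\bigl|\Lb\bigl((\ub-t)\, L R_i\psi_0\bigr)\bigr|\lesssim \delta^{1/2}M^3.
\end{equation*}
Integrating from $-r_0$ to $t$ along integral curves of $\Lb$ (which have constant $\ub,\theta$) and using $|\ub|\leq\delta$ and $\ub-(-r_0)=r_0+\ub$, so that $(\ub-t)\sim|t|$ and $(\ub+r_0)\sim r_0$, gives the conclusion \eqref{expansion of L R_i psi_0}.

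The main obstacle is the bookkeeping: each commutator and each derivative of a coefficient must be shown to cost at most a factor of $M^2$ and to preserve the $\delta^{1/2}$ gain. The crucial structural input is that $R_i$ is approximately Killing, encoded quantitatively in \eqref{estimates on the deformation tensor of Rotational R_i in g tilde}, and that the sharp angular-derivative estimate \eqref{bound on d trchibt} on $\tr\widetilde{\chib}$ is already available; without these refined bounds one would only obtain a qualitatively weaker estimate in which the $\delta^{1/2}$ gain is lost.
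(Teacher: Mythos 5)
Your proposal is essentially the paper's own argument: commute $R_i$ through \eqref{box psi_0 in null frame}, bound the resulting inhomogeneity in $L^\infty$ by $\delta^{1/2}M^3$ using (B.1), the deformation tensor estimates and \eqref{bound on d trchibt}, then use \eqref{precise bound on chibt} to pass to $\Lb\big((\ub-t)\,\cdot\big)$ and integrate along $\Lb$. The only organizational difference is that the paper transports $R_iL\psi_0$ (so the only commutator appearing under the integral is $[\Lb,R_i]$ acting on $L\psi_0$) and converts to $LR_i\psi_0$ at the very end via the zeroth-order identity $R_iL\psi_0-LR_i\psi_0=[R_i,L]\psi_0\lesssim\delta^{1/2}M^3$, whereas you transport $LR_i\psi_0$ directly and therefore must also control $\Lb\big([R_i,L]\psi_0\big)$, i.e.\ $\Lb$-derivatives of deformation-tensor components. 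One detail in your justification is wrong: $[R_i,L]$ is \emph{not} $S_{t,\ub}$-tangent. Since $[L,R_i]\,t=-R_i(c^{-2}\mu)$ while $[L,R_i]\,\ub=0$, one has $[L,R_i]=-R_i(c^{-2}\mu)\,\Lb+\Pi[L,R_i]$, and the $\Lb$-component is of size $M^2$, not $\delta$-small; only $[\Lb,R_i]$ is tangential with components ${}^{(R_i)}\pi_{\Lb}{}^{A}\lesssim\delta M^2$ (and ${}^{(R_i)}\pi_{LA}$ is $\delta M^4$, not $\delta M^2$, in \eqref{estimates on the deformation tensor of Rotational R_i in g metric}). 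This does not break your proof: the extra terms $R_i(c^{-2}\mu)\,\Lb\psi_0$, and in your arrangement $\Lb\big(R_i(c^{-2}\mu)\big)\Lb\psi_0+R_i(c^{-2}\mu)\,\Lb^2\psi_0$, are all $\lesssim\delta^{1/2}M^3$ by (B.1) (with $\Lb^2\psi_0$ recovered from $Q$), the bounds on $R_i m$, $e$, $\ds\mu$ and $\ds c$ — but they must be accounted for explicitly, and this is precisely the bookkeeping the paper's ordering (commute first, convert last) is designed to avoid.
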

\begin{proof}
We commute $R_i$ with \eqref{box psi_0 in null frame} and we obtain that $\Lb(R_i L  \psi_0) + \frac{1}{2}\tr\chibt\cdot R_i L \psi_0 = N$ with
\begin{align*}
N=  R_i\big(\mu\laplacianslash \psi_0 -\frac{1}{2}\tr\widetilde{\chi}\cdot \Lb \psi_0 - 2 \zetab \cdot \slashed{d} \psi_0 - \mu \slashed{d}\log(c)\cdot \slashed{d}\psi_0\big)  - \frac{1}{2}R_i \tr\chibt \cdot  L \psi_0+ [\Lb,R_i] L  \psi_0.
\end{align*}
 According to  (B.1) and the previous lemma, $N$ is bounded by $\delta^{\frac{1}{2}} M^3$. Hence, $\big|\Lb(R_i L  \psi_0) + \frac{1}{2}\tr\chibt\cdot R_i L  \psi_0\big| \lesssim  \delta^{\frac{1}{2}}M^3$.
 We then integrate to derive
\begin{equation*}
\big||t| R_i L \psi_0(t,\ub, \theta)-r_0 R_i L \psi_0(-r_0,\ub, \theta)\big| \lesssim \delta^{\frac{1}{2}} M^3.
\end{equation*}
The commutator $[R_i, \Lb] \psi_0$ is bounded by $\delta M^3$ thanks to the estimates on deformation tensors. This completes the proof.
\end{proof}

We can obtain a better estimate for $\ds\mu$. The idea is to bound for $R_i \mu$ and use the fact that
 $|\ds \mu| \sim |R_i \mu|$.
\begin{lemma}
For sufficiently small $\delta$, we have
\begin{equation}\label{precise bound on d mu}
\|\ds\mu\|_{L^\infty(\Sigma_t)} \lesssim 1 + \delta M^4.
\end{equation}
\end{lemma}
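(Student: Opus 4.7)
The plan is to commute $R_i$ with the transport equation $\Lb\mu = m + \mu e$, integrate along the integral curves of $\Lb$, and use the sharp asymptotic expansions of the previous section to eliminate the factor of $M^2$ present in the crude bound \eqref{bound on T mu and slashed d mu}. Once the sharper bound on $R_i\mu$ is in hand, the conclusion follows from the equivalence $|\ds\mu|\sim\sum_i |R_i\mu|$ established in the preceding subsection. Starting from
\[
\Lb(R_i\mu) \;=\; e\,R_i\mu \;+\; R_i m \;+\; \mu\,R_i e \;+\; [\Lb,R_i]\mu,
\]
the strategy has three ingredients: (i) identify the commutator $[\Lb,R_i]$ as $S_{t,\ub}$-tangential and bound $[\Lb,R_i]\mu$ in terms of $|\ds\mu|$; (ii) obtain sharp bounds on $R_i m$ and $\mu R_i e$ by exploiting the asymptotic expansions of $\psi_0$ and its derivatives; (iii) apply Gronwall.

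\textbf{The commutator.} Since $R_i$ is tangent to $S_{t,\ub}$ we have $R_i t=R_i\ub=0$, while $\Lb t=1$, $\Lb\ub=0$; hence $[\Lb,R_i]t=[\Lb,R_i]\ub=0$, so $[\Lb,R_i]$ is tangent to $S_{t,\ub}$ and $[\Lb,R_i]\mu$ is a linear combination of components of $\ds\mu$. Its coefficients are expressible through ${}^{(R_i)}\widetilde\pi$ and connection coefficients already bounded in \eqref{estimates on the deformation tensor of Rotational R_i in g metric} and \eqref{precise bound on chib'}. Using the crude bound $\|\ds\mu\|_{L^\infty}\lesssim M^2$ already available from \eqref{bound on T mu and slashed d mu}, this gives $\|[\Lb,R_i]\mu\|_{L^\infty}\lesssim M^2$, to be inserted in the Gronwall step.

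\textbf{Sharp bounds on $R_i m$ and $\mu R_i e$.} By $m=-\tfrac{1}{2}\tfrac{d(c^2)}{d\rho}T\rho$ and $e=\tfrac{1}{2c^2}\tfrac{d(c^2)}{d\rho}\Lb\rho$ with $\rho=\psi_0^2$, the derivatives $R_i m$ and $R_i e$ are polynomial expressions in $\psi_0$, $T\psi_0$, $\Lb\psi_0$, $R_i\psi_0$, $TR_i\psi_0$ and $\Lb R_i\psi_0$. The bootstrap (B.1) alone yields only $|R_i m|\lesssim M^2$. To improve this, one appeals to the expansions \eqref{expansion of psi_0}, \eqref{expansion of T psi_0}, \eqref{expansion of L R_i psi_0}, and to their analogue for $TR_i\psi_0$ obtained by commuting $R_i$ into \eqref{box psi_0 in null frame} exactly as in the proof of \eqref{expansion of L R_i psi_0}: each factor appearing in $R_i m$ equals its value at $t=-r_0$ (suitably rescaled in $|t|$) up to a remainder of size $\delta^{1/2}M^3$. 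Since every initial factor is prescribed by the seed data and hence bounded by a constant depending only on $(\phi_1,\phi_2)$, one obtains $\|R_i m\|_{L^\infty(\Sigma_t^\delta)}\lesssim 1+\delta^{1/2}M^3$ and, because $\Lb\psi_0$ is already smaller by a factor of $\delta$, the analogous computation produces $\|\mu R_i e\|_{L^\infty(\Sigma_t^\delta)}\lesssim \delta M^2$.

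\textbf{Gronwall and conclusion.} Since $\|e\|_{L^\infty}\lesssim \delta M^2$, integrating the transport equation for $R_i\mu$ along the integral curves of $\Lb$ gives
\[
|R_i\mu(t,\ub,\theta)| \;\le\; e^{\int_{-r_0}^{t}|e|\,d\tau}\Bigl(|R_i\mu(-r_0,\ub,\theta)|+\int_{-r_0}^{t}\bigl(|R_i m|+|\mu R_i e|+|[\Lb,R_i]\mu|\bigr)d\tau\Bigr).
\]
The Gronwall factor is $1+O(\delta M^2)$. On $\Sigma_{-r_0}$ we have $\mu=c$, so $|R_i\mu(-r_0,\ub,\theta)|=|R_i c(-r_0,\ub,\theta)|\lesssim 1$ by \eqref{bound on c}. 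The $R_i m$ integral contributes $\lesssim 1+\delta^{1/2}M^3$, the $\mu R_i e$ integral contributes $\lesssim \delta M^2$, and the commutator integral contributes $\lesssim M^2$ initially but, once the improved bound on $\ds\mu$ is bootstrapped back in (or absorbed by a standard continuity argument), only $\lesssim 1+\delta M^4$. Combining yields $|R_i\mu|\lesssim 1+\delta M^4$, and the equivalence $|\ds\mu|\sim|R_i\mu|$ completes the proof. The main obstacle is the sharp bound on $R_i m$: the $M^2$ coming from the bootstrap on $T\psi_0$ and $TR_i\psi_0$ must be traded for an $O(1)$ initial-data constant by invoking the integrated form of the commuted wave equation, which is precisely where the ``short pulse'' structure pays off.
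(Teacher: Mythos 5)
Your argument is correct and is essentially the paper's proof: the paper likewise commutes $R_i$ with $\Lb\mu=m+\mu e$, uses the expansions of $\psi_0$, $T\psi_0$ and $LR_i\psi_0$ to trade the bootstrap size $M^2$ of $R_i m$ for an initial-data constant (phrased there as the intermediate expansion $\big||t|^2\Lb(R_i\mu)(t,\ub,\theta)-r_0^2\Lb(R_i\mu)(-r_0,\ub,\theta)\big|\lesssim\delta M^4$, which is then integrated in $t$), and concludes via $|\ds\mu|\sim|R_i\mu|$. One small simplification of your commutator step: since $[\Lb,R_i]^A={}^{(R_i)}\pi_{\Lb}{}^{A}$ and $\|{}^{(R_i)}\pi_{\Lb A}\|_{L^\infty(\Sigma^\delta_t)}\lesssim\delta M^2$, the crude bound $\|\ds\mu\|_{L^\infty}\lesssim M^2$ already gives $|[\Lb,R_i]\mu|\lesssim\delta M^4$ directly, so no continuity or bootstrap argument is needed there.
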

\begin{proof}
We commute $R_i$ with $\Lb \mu= m+ e\mu$ to derive
\begin{equation*}
\Lb(R_i \mu) = R_i m + \big(e R_i \mu+\mu R_i e +[\Lb,R_i]\mu\big).
\end{equation*}
According to (B.1) and the estimates on $L R_i \psi_{0}$ (needed to bound $R_i m$) from the previous lemma,
it is straightforward to bound the terms in the parenthesis by $\delta M^2$. Similar to \eqref{expansion of Lb mu}, we obtain
\begin{equation}\label{expansion of Lb R_i mu}
||t|^2\Lb\big(R_i\mu \big)(t,\ub,\theta)-|r_0|^2\Lb\big(R_i\mu \big)(-r_0,\ub,\theta)| \lesssim \delta M^4.
\end{equation}
Since $\|[\Lb,R_{i}]\|_{L^{\infty}}\lesssim\delta M^{2}$, we bound $R_i \mu$ as
\begin{align*}
\quad R_i\mu(t,\ub,\theta)-R_i\mu(-r_0,\ub,\theta)&\stackrel{\eqref{expansion of Lb R_i mu}}{=}\int_{-r_0}^t \frac{r_0^2\Lb\big( R_i\mu(\-r_0,\ub, \theta)\big)}{\tau^2} + O ( \delta M^4)d\tau\\
&=-(\frac{1}{t}+\frac{1}{r_0})r_0^2\Lb\big(R_i\mu(-r_0,\ub, \theta)\big) +O(\delta M^4).
\end{align*}
This inequality yields \eqref{precise bound on d mu} for sufficiently small $\delta$.
\end{proof}

We can also obtain a better estimate for $L\mu$.
\begin{lemma}
For sufficiently small $\delta$, we have
\begin{equation}\label{precise bound on L mu}
\|L\mu\|_{L^\infty(\Sigma_t)} \lesssim \delta^{-1} + M^4.
\end{equation}
\end{lemma}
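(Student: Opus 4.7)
The plan is to decompose $L\mu = c^{-2}\mu\,\Lb\mu + 2T\mu$. The first piece is immediately bounded by $M^2$ using the established bound $\|\Lb\mu\|_{L^\infty} \lesssim M^2$ from \eqref{bound on mu} together with the uniform bounds on $c$ and $\mu$, and so contributes harmlessly to the $M^4$ error. The whole task is therefore to sharpen the rough estimate $\|T\mu\|_{L^\infty} \lesssim \delta^{-1}M^2$ of \eqref{bound on T mu and slashed d mu} to $\|T\mu\|_{L^\infty} \lesssim \delta^{-1} + M^4$, where the $\delta^{-1}$ is driven purely by the seed data and the $M^4$ comes from error terms in the evolution.

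To do this I would commute $T$ with the transport equation $\Lb\mu = m + e\mu$ and use $\Lambda := [\Lb,T] = -(\zetab^A + \etab^A)X_A$ (which is tangential to $S_{t,\ub}$, with $|\Lambda| \lesssim M^2$ by \eqref{bound on zetab and etab}) to obtain
\begin{equation*}
\Lb(T\mu) = Tm + \mu\, Te + e\,T\mu + \Lambda\mu.
\end{equation*}
The non-trivial contribution is $Tm$. Writing $m = -\tfrac12 (c^2)' T\rho$ with $\rho = \psi_0^2$ and differentiating, the leading term is $(c^2)'(T\psi_0)^2$, together with a dangerous-looking term $(c^2)'\psi_0\,T^2\psi_0$. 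The latter is of naive size $\delta^{1/2}M \cdot \delta^{-3/2}M = \delta^{-1}M^2$; to eliminate it at this order, I would use $\Box_{\widetilde g}\psi_0 = 0$ in the null-frame form \eqref{box psi_0 in null frame} to substitute for $T^2\psi_0$ in terms of $L\psi_0$, $\Lb\psi_0$, $\slashed d\psi_0$, $\laplacianslash\psi_0$ and connection coefficients already controlled in $L^\infty$, precisely as in the derivation of \eqref{expansion of L psi_0}. After this substitution the $\psi_0 T^2\psi_0$ term reduces to $O(M^4)$. For the remaining leading piece $(T\psi_0)^2$, the expansion \eqref{expansion of T psi_0} gives $T\psi_0(t,\ub,\theta) = \tfrac{r_0}{|t|}T\psi_0(-r_0,\ub,\theta) + O(\delta^{1/2}M^3/|t|)$, and on $\Sigma_{-r_0}$ one has $T\psi_0 = \partial_r\psi_0 = \delta^{-1/2}\partial_s\phi_1$, which is purely a function of the seed data and of size $\delta^{-1/2}$. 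Squaring this yields $(T\psi_0)^2 = O(\delta^{-1}) + O(M^4)$, and hence $|Tm| \lesssim \delta^{-1} + M^4$. The remaining terms are routine: using $|e| \lesssim \delta M^2$ and $|Te| \lesssim M^2$ (proved as in \eqref{bound on e}) one has $|\mu Te| \lesssim M^4$, while $|eT\mu| \lesssim \delta M^2 \cdot \delta^{-1}M^2 = M^4$ and $|\Lambda\mu| \lesssim M^2(1+\delta M^4) \lesssim M^4$ by \eqref{precise bound on d mu}.

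Integrating the transport equation along $\Lb$ from $-r_0$ to $t$ (a time interval of length $\lesssim 1$) and using the initial bound $|T\mu(-r_0,\ub,\theta)| = |\partial_r c(-r_0)| = 3|G''(0)|\,|c^3\psi_0\partial_r\psi_0| \lesssim \delta^{1/2}\cdot\delta^{-1/2} = 1$ from the short-pulse structure on $\Sigma_{-r_0}$, one concludes $\|T\mu\|_{L^\infty(\Sigma_t)} \lesssim \delta^{-1} + M^4$. Combining with the $c^{-2}\mu\,\Lb\mu$ piece then gives the advertised bound on $L\mu$. The main obstacle in the argument is the second-order $T^2\psi_0$ term in $Tm$: without the wave equation it produces a loss of a factor $\delta^{-1/2}$ coupled to $M^2$, and the whole point of the proof is that this loss is erased by using $\Box_{\widetilde g}\psi_0 = 0$, leaving only a $M^4$ error which is subordinate to the data-driven $\delta^{-1}$ contribution coming from $(T\psi_0)^2$.
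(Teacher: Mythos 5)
Your overall strategy is not the paper's (the paper commutes $L$ directly with $\Lb\mu=m+e\mu$ and shows that $|t|^2\Lb(L\mu)$ is conserved along $\Lb$ up to $O(M^4)$, so that the $\delta^{-1}$ enters only through the initial value $r_0^2\Lb(L\mu)(-r_0,\ub,\theta)$), but the decisive step of your route contains a genuine gap. You claim that the term $\psi_0\,T^2\psi_0$ in $Tm$ can be ``eliminated'' by substituting for $T^2\psi_0$ from $\Box_{\widetilde g}\psi_0=0$ in the null-frame form \eqref{box psi_0 in null frame}, leaving only $O(M^4)$. This is not possible: the wave operator contains only the mixed derivative $\Lb L\psi_0$ together with tangential second derivatives ($\laplacianslash\psi_0$) and first-order terms; it contains no doubly transversal derivative. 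Indeed, writing $\partial_t=\Lb+c^2\mu^{-1}T$ and $\Delta=\widehat{T}^2+\dots$ with $\widehat{T}=c\mu^{-1}T$, the $T^2$ contributions of $-c^{-2}\partial_t^2$ and $\Delta$ cancel identically, so the equation gives no relation determining $T^2\psi_0$ (equivalently $L^2\psi_0$ or $LT\psi_0$). That quantity is genuinely of size $\delta^{-3/2}$ (under (B.1) it is only $\lesssim\delta^{-3/2}M$), so after your substitution the irreducible piece $\tfrac12\psi_0\,LT\psi_0$ remains of size $\delta^{-1}M^2$, and your argument then yields nothing better than $\|T\mu\|_{L^\infty}\lesssim\delta^{-1}M^2$, i.e.\ exactly the rough bound \eqref{bound on T mu and slashed d mu} that the lemma is meant to sharpen. (Your treatment of the other pieces, $(T\psi_0)^2$ via \eqref{expansion of T psi_0}, the bracket terms, and the initial value $T\mu(-r_0,\ub,\theta)=O(1)$, is fine.)

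The missing idea is that the dangerous $\delta^{-1}$-size quantity must be shown to be \emph{data-driven}, i.e.\ propagated along $\Lb$ from $\Sigma_{-r_0}$, rather than bounded by bootstrap constants. This is what the paper does: commuting $L$ with $\Lb\mu=m+\mu e$, bounding the commutator bracket by $M^4$ via (B.1), and then arguing as in Lemma \ref{lemma on the expansion of Lb mu} (using the expansions \eqref{expansion of L psi_0}, \eqref{expansion of T psi_0}, \eqref{expansion of psi_0}, together with a transport-equation expansion for the second-order transversal derivative of $\psi_0$ obtained by commuting with \eqref{box psi_0 in null frame}) to get $\big||t|^2\Lb(L\mu)(t)-r_0^2\Lb(L\mu)(-r_0)\big|\lesssim M^4$; integrating in $t$ then produces $\|L\mu\|_{L^\infty}\lesssim\delta^{-1}+M^4$ with the $\delta^{-1}$ carrying a data constant only. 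Your $T\mu$ route could be repaired in the same spirit, not by eliminating $LT\psi_0$, but by deriving an expansion of the form $\big||t|\,TL\psi_0(t,\ub,\theta)-r_0\,TL\psi_0(-r_0,\ub,\theta)\big|\lesssim\delta^{-1/2}M^3$ (commute $T$ with \eqref{box psi_0 in null frame}, exactly as done for $R_iL\psi_0$ in \eqref{expansion of L R_i psi_0}), and then feeding this into $Tm$ so that the $\delta^{-1}$ contribution again comes from initial data; as written, however, the proof does not close.
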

\begin{proof}
By commuting $L$ with $\Lb \mu= m+ e\mu$, we obtain
\begin{equation*}
\Lb(L\mu) =Lm + \big[-2(\zetab^A+\etab^A)X_A(\mu)+\Lb(c^{-2}\mu)\Lb\mu + eL\mu + \mu L e\big].
\end{equation*}
According to (B.1), we can bound the terms in the bracket by $M^4$. Hence,
\begin{align*}
&\quad |t|^2\Lb\big(L\mu \big)(t,\ub,\theta)-|r_0|^2\Lb\big(L\mu \big)(-r_0,\ub,\theta) =|t|^2 L m(t,\ub,\theta)-|r_0|^2L m(-r_0,\ub,\theta) + O(M^4).
\end{align*}
By the explicit formula of $m$, we can proceed exactly as in Lemma \ref{lemma on the expansion of Lb mu} and we obtain
\begin{equation}\label{expansion of Lb L mu}
||t|^2\Lb\big(L\mu \big)(t,\ub,\theta)-|r_0|^2\Lb\big(L\mu \big)(-r_0,\ub,\theta)| \lesssim M^4.
\end{equation}
We then integrate along $\Lb$ and we have
\begin{align*}
L\mu(t,\ub,\theta)-L\mu(-r_0,\ub,\theta)&= \int_{-r_0}^t \frac{\tau^2\Lb\big( L\mu(\tau,\ub, \theta)\big)}{\tau^2}d\tau\stackrel{\eqref{expansion of Lb L mu}}{=}\int_{-r_0}^t \frac{r_0^2\Lb\big( L\mu(\-r_0,\ub, \theta)\big)}{\tau^2} + \frac{O ( M^4)}{\tau^{2}}d\tau\\
&=-(\frac{1}{t}+\frac{1}{r_0})r_0^2\Lb\big(L\mu(-r_0,\ub, \theta)\big) +O(M^4).
\end{align*}
For sufficiently small $\delta$, this implies \eqref{precise bound on L mu}.
\end{proof}

We now relate $L^2 \psi_0(t,\ub,\theta)$ to its initial value.
\begin{lemma}
For sufficiently small $\delta$, we have
\begin{equation}\label{expansion of L L psi_0}
\big||t| L^2\psi_0(t,\ub, \theta)-r_0 L^2 \psi_0(-r_0,\ub, \theta)\big| \lesssim \delta^{-\frac{1}{2}} M^3.
\end{equation}
\end{lemma}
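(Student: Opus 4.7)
The approach mirrors the proofs of \eqref{expansion of L psi_0} and \eqref{expansion of L R_i psi_0}: commute $L$ through the wave equation \eqref{box psi_0 in null frame} to obtain a transport equation for $L^2\psi_0$ along $\Lb$, then integrate using the near-cancellation $\frac{1}{2}\tr\chibt = -\frac{1}{\ub-t}+O(\delta M^2)$ provided by \eqref{precise bound on chibt}. More precisely, applying $L$ to \eqref{box psi_0 in null frame} and using $[L,\Lb] = \Lb(c^{-2}\mu)\Lb-2(\zetab^A+\etab^A)X_A$, one arrives at
\begin{equation*}
\Lb(L^2\psi_0)+\tfrac{1}{2}\tr\chibt\cdot L^2\psi_0 = N',
\end{equation*}
where $N'$ collects $L$ acting on the right hand side of \eqref{box psi_0 in null frame}, the commutator term $[L,\Lb](L\psi_0)$, and the error $-\tfrac{1}{2}L(\tr\chibt)\cdot L\psi_0$.

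The main task is to show $\|N'\|_{L^\infty(\Sigma_t^\delta)}\lesssim\delta^{-1/2}M^3$. The potentially dangerous contributions come from $L(\mu\laplacianslash\psi_0)=L\mu\cdot\laplacianslash\psi_0+\mu L(\laplacianslash\psi_0)$. For the first, \eqref{precise bound on L mu} gives $|L\mu|\lesssim\delta^{-1}$, while bootstrap assumption (B.1) applied to $R_iR_j\psi_0$ yields $|\laplacianslash\psi_0|\lesssim\delta^{1/2}M$; the product is $\lesssim\delta^{-1/2}M$. For the second, one commutes $L$ past $\slashed{\Delta}$; the resulting third-order term $LR_iR_j\psi_0$ has precisely one $T$-derivative (since $L=c^{-2}\mu\Lb+2T$) and hence is of size $\delta^{-1/2}M$ by (B.1), giving $|\mu L(\laplacianslash\psi_0)|\lesssim\delta^{-1/2}M^3$ (one $M$ from $\mu$, using \eqref{bound on mu}). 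All remaining terms in $N'$, namely those coming from $L(\tr\widetilde{\chi}\cdot\Lb\psi_0)$, $L(\zetab\cdot\ds\psi_0)$, $L(\mu\ds\log c\cdot\ds\psi_0)$, the commutator $[L,\Lb](L\psi_0)$, and $L(\tr\chibt)\cdot L\psi_0$, are handled by combining (B.1), \eqref{bound on c}, \eqref{bound on zetab and etab}, \eqref{bound on L trchibt}, and the expansion \eqref{precise bound on chibt} applied to $\tr\widetilde{\chi}$; these yield bounds of order $\delta^{1/2}M^k$ or better, all absorbed in $\delta^{-1/2}M^3$.

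Given the bound $\|N'\|_{L^\infty}\lesssim\delta^{-1/2}M^3$, we substitute $\frac{1}{2}\tr\chibt=-\frac{1}{\ub-t}+O(\delta M^2)$ and use $\Lb(\ub-t)=-1$ to rewrite
\begin{equation*}
\Lb\big((\ub-t)L^2\psi_0\big) = (\ub-t)N' + O(\delta M^2)\cdot(\ub-t)L^2\psi_0.
\end{equation*}
Since $\ub-t\sim |t|\lesssim 1$ on $W_\delta^*$, the first term contributes $O(\delta^{-1/2}M^3)$ upon integrating from $-r_0$ to $t$, while the second is absorbed by Gronwall (the exponential factor is $1+O(\delta M^2)$ for small $\delta$). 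Finally, the comparison $\ub-t=|t|+O(\delta)$ converts $(\ub-t)L^2\psi_0$ into $|t|L^2\psi_0$ up to errors of order $\delta\cdot|L^2\psi_0|\lesssim\delta^{1/2}M^3$ (using the rough a priori bound $|L^2\psi_0|\lesssim\delta^{-1/2}M$ read off from (B.1)), which is consistent with the claimed inequality.

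The principal obstacle is controlling the term $\mu L(\laplacianslash\psi_0)$: one must carefully commute $L$ past $\laplacianslash$, which produces commutator pieces involving $\Lieb_L\slashed{g}$ and $[L,\nablaslash]$, and verify that every resulting factor respects the $\delta$-counting so that no worse than $\delta^{-1/2}$ appears. Once this is established, the rest is routine integration along the null generators, entirely analogous to \eqref{expansion of L psi_0} and \eqref{expansion of L R_i psi_0}.
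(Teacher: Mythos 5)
Your proposal is correct and follows essentially the same route as the paper: commute $L$ with \eqref{box psi_0 in null frame}, bound the resulting source by $\delta^{-\frac{1}{2}}M^3$ using (B.1), \eqref{precise bound on L mu}, \eqref{bound on L trchibt} and the bound $|L\zetab|\lesssim M^2$, and then integrate along $\Lb$ using $\tfrac{1}{2}\tr\chibt=-\tfrac{1}{\ub-t}+O(\delta M^2)$. Two bookkeeping slips (the rough bound on $L^2\psi_0$ from (B.1) is $\delta^{-\frac{3}{2}}M$ rather than $\delta^{-\frac{1}{2}}M$ because of the $T^2\psi_0$ piece, and terms like $\tr\chibt\,L\Lb\psi_0$ are of size $\delta^{-\frac{1}{2}}M$ rather than $\delta^{\frac{1}{2}}M^k$) do not affect the stated conclusion, since all such contributions are still absorbed in $\delta^{-\frac{1}{2}}M^3$.
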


\begin{proof}
We commute $L$ with \eqref{box psi_0 in null frame} and we obtain the following transport equation for $L^2 \psi_0$:
\begin{equation}\label{box L psi_0 in null frame}
\begin{split}
 \Lb(L^2\psi_0) + \frac{1}{2}\tr\chibt\cdot L^2\psi_0  &=-\frac{1}{2}L\big(\tr\chibt\big) \cdot L\psi_0 + [\Lb,L]L\psi_0 \\
&+ L\Big(\mu\laplacianslash \psi_0 -\frac{1}{2}\tr\widetilde{\chi}\cdot \Lb\psi_0 - 2 \zetab \cdot \slashed{d}\psi_0 -\mu \slashed{d}\log(c)\cdot \slashed{d}\psi_0 \Big).
\end{split}
\end{equation}
The righthand side of the above equation can be expanded as
\begin{equation*}
\begin{split}
&L\tr\chibt L\psi_0 + \Lb(c^{-2}\mu)\Lb L\psi_0 + (\etab+\zetab)\slashed{d}L\psi_0 +\Big( L\mu \laplacianslash \psi_0 +\mu L \laplacianslash \psi_0 +L\tr\chibt \Lb \psi_0 + \tr\chibt L\Lb \psi_0 \\
& + L\zetab \slashed{d}\psi_0 + \zetab L\slashed{d}\psi_0 + L\mu \cdot \slashed{d}\log(c)\cdot \slashed{d}\psi_0 + \mu L\big(\slashed{d}\log(c)\big)\cdot \slashed{d}\psi_0 + \mu \slashed{d}\log(c)\cdot L\slashed{d}\psi_0\Big).
\end{split}
\end{equation*}
Since the exact numeric constants and signs for the coefficients are irrelevant for estimates, we replace all of them by $1$ in the above expressions.

Since $\zetab_A = -c^{-1}\mu X_A(c)$, by applying $L$ and using \eqref{precise bound on L mu}, we obtain $|L\zetab| \lesssim M^2$. Therefore, according \eqref{bound on L trchibt}, (B.1) and the estimates derived previously in this section, we can bound all the terms on the right hand side and we obtain
\begin{equation*}
|\Lb(L^2\psi_0) + \frac{1}{2}\tr\chibt\cdot L^2\psi_0| \lesssim \delta^{-\frac{1}{2}}M^3.
\end{equation*}
We then integrate from $-r_0$ to $t$ to obtain \eqref{expansion of L L psi_0}.
\end{proof}

We can commute $L$ twice with \eqref{transport equation for chib prime} or with \eqref{box psi_0 in null frame} to obtain following estimates.
\begin{lemma}
For sufficiently small $\delta$, we have
\begin{equation}\label{bound on L L trchibt}
 \|L^2 \tr_{\widetilde{g}}\widetilde{\chib} \| \lesssim  M^2\delta^{-1},
\end{equation}
\begin{equation}\label{expansion of L L L psi_0}
||t| L^3 \psi_0(t,\ub, \theta)-r_0 L^3 \psi_0(-r_0,\ub, \theta)| \lesssim \delta^{-\frac{3}{2}} M^3.
\end{equation}
\end{lemma}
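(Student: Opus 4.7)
The plan is to proceed exactly parallel to the derivations of \eqref{bound on L trchibt} and \eqref{expansion of L L psi_0}, applying one more $L$ each time. Concretely, to bound $L^2 \tr\chibt$, I would commute $L$ with the transport equation derived in the proof of \eqref{expansion for L chib}, namely the $\Lb$--equation satisfied by $L\chib'_{AB}$, to obtain a transport equation of the form
\begin{equation*}
\Lb(L^2 \chib'_{AB}) = e\, L^2 \chib'_{AB} + [\text{lower-order in }L\text{ but containing one extra }L\text{ derivative of data}],
\end{equation*}
where ``data'' refers to $e,\mu,\zetab,\etab,\alphab'$ and $\chib'$ itself. Each of these quantities has already been controlled after one $L$ (via \eqref{precise bound on L mu}, \eqref{expansion of L L psi_0}, \eqref{expansion for L chib} and direct differentiation of the algebraic formulas). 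The second $L$-derivative of $\alphab'_{AB}$ (which is a second-order expression in $\rho = (\partial_t\phi)^2$) reduces, via the Leibniz rule, to terms involving $L^2\psi_0$, $(L\psi_0)^2$, $L\slashed{\nabla}^2\psi_0$ etc., all controlled by (B.1) and \eqref{expansion of L L psi_0}; the worst power of $\delta$ arises from $L^2 (TT\phi)$--type factors and gives $\delta^{-1}M^2$. Gronwall along $\Lb$ then yields $\|L^2\chib'\|_{L^\infty} \lesssim \delta^{-1}M^2$, and taking the trace produces \eqref{bound on L L trchibt}.

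For \eqref{expansion of L L L psi_0}, I would commute $L$ with the transport equation \eqref{box L psi_0 in null frame} for $L^2\psi_0$ to obtain
\begin{equation*}
\Lb(L^3\psi_0) + \tfrac{1}{2}\tr\chibt \cdot L^3\psi_0 = -\tfrac{1}{2}\bigl(L^2\tr\chibt\bigr)L\psi_0 - \bigl(L\tr\chibt\bigr)L^2\psi_0 + [\Lb,L]L^2\psi_0 + L\,(\text{RHS of \eqref{box L psi_0 in null frame}}).
\end{equation*}
Using the newly established \eqref{bound on L L trchibt}, \eqref{expansion of L L psi_0}, \eqref{bound on L trchibt}, \eqref{precise bound on L mu}, (B.1) and the bounds on the deformation tensor of $T$ (which controls $[\Lb,L]$ through $L=c^{-2}\mu\Lb + 2T$), every term on the right-hand side is bounded in $L^\infty$ by a constant multiple of $\delta^{-3/2}M^3$. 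In particular the critical term $(L^2\tr\chibt)\cdot L\psi_0$ contributes $\delta^{-1}M^2 \cdot \delta^{1/2}M = \delta^{-1/2}M^3$, while the terms $L^2(\mu\laplacianslash\psi_0)$ and $L^2(\tr\chibt\,\Lb\psi_0)$ (after expansion) produce the worst power $\delta^{-3/2}M^3$. Substituting $\tfrac{1}{2}\tr\chibt = -\frac{1}{\ub-t} + O(\delta M^2)$ from \eqref{precise bound on chibt}, the transport inequality becomes
\begin{equation*}
\bigl|\Lb\bigl((\ub-t)\, L^3\psi_0\bigr)\bigr| \lesssim \delta^{-3/2}M^3,
\end{equation*}
and integrating from $-r_0$ to $t$ along the integral curves of $\Lb$ (which have $|\ub| \leq \delta$, so $\ub - t \sim |t|$) yields \eqref{expansion of L L L psi_0}.

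The main obstacle is bookkeeping of $\delta$-powers: each additional $L$ derivative tends to cost a factor of $\delta^{-1}$, but this must be verified term by term since some derivatives (e.g.\ $L$ applied to $\ub - t$ or to quantities that are already of good size) do not cost anything. The most dangerous terms in the $L^3\psi_0$ equation come from $L^2(\mu\laplacianslash\psi_0)$ — one must distribute the two $L$'s among $\mu$, $\laplacianslash$, and $\psi_0$, commute $L$ with the angular Laplacian (producing terms involving $L\slashed{g}^{-1}$ and hence $\chi$), and verify that no term exceeds the target $\delta^{-3/2}M^3$. A similar care is needed for $L^2(\tr\chibt \cdot \Lb\psi_0)$, where the new ingredient $L^2\tr\chibt \sim \delta^{-1}M^2$ is multiplied only by $\Lb\psi_0 = O(\delta^{1/2}M)$, giving a contribution just at the allowed threshold.
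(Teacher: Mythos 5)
Your proposal follows exactly the route the paper indicates (and omits as ``routine''): commute $L$ a second time with \eqref{transport equation for chib prime} and with \eqref{box psi_0 in null frame}, then run the same Gronwall/integration-along-$\Lb$ argument using the previously established bounds \eqref{bound on L trchibt}, \eqref{precise bound on L mu}, \eqref{expansion of L L psi_0} and (B.1), so it is essentially the paper's proof. One bookkeeping slip worth fixing: $L\psi_0$ is of size $\delta^{-1/2}M$, not $\delta^{1/2}M$ (since $L=c^{-2}\mu\Lb+2T$ and $T\psi_0\sim\delta^{-1/2}M$), so the term $\bigl(L^2\tr\chibt\bigr)L\psi_0$ actually contributes $\delta^{-1}M^2\cdot\delta^{-1/2}M=\delta^{-3/2}M^3$ — right at the allowed threshold rather than better than it — which does not affect your conclusion.
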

We omit the proof since it is routine. Similarly, we commute $L$ twice with $\Lb \mu= m+ e\mu$, we can use \eqref{bound on L L trchibt} and \eqref{expansion of L L L psi_0} to obtain
\begin{lemma}
There exists $\varepsilon = \varepsilon(M)$ so that for all $\delta \leq \varepsilon$, we have
\begin{equation}\label{precise bound on L L mu}
\|L^2\mu\|_{L^\infty(\Sigma_t)} \lesssim \delta^{-2} + \delta^{-1}M^2,
\end{equation}
\begin{equation}\label{precise bound on T T mu}
\|T^2\mu\|_{L^\infty(\Sigma_t)} \lesssim \delta^{-2} + \delta^{-1}M^2.
\end{equation}
\end{lemma}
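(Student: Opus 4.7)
The plan is to mirror the pattern used for the earlier first–order bound $\|L\mu\|_{L^\infty}\lesssim\delta^{-1}+M^4$ in \eqref{precise bound on L mu}: derive a transport equation for $L^2\mu$ along $\Lb$ by commuting $L$ twice with the fundamental identity $\Lb\mu = m+e\mu$, and then convert the $L^2\mu$ bound into a $T^2\mu$ bound via the algebraic relation $2T = L-c^{-2}\mu\Lb$ from Lemma \ref{lemma mu kappa}. For the first step, commuting once gives $\Lb(L\mu)=Lm+(Le)\mu+e(L\mu)+[L,\Lb]\mu$, and commuting again produces
\[
\Lb(L^2\mu)=L^2m+(L^2e)\mu+2(Le)(L\mu)+e(L^2\mu)+\mathcal{C},
\]
where $\mathcal{C}$ collects the commutator terms $[L,\Lb]\mu$ and $[L,\Lb]L\mu$. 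Since $[L,\Lb]$ is $S_{t,\ub}$–tangential with coefficients built from $\etab,\zetab,\Lb(c^{-2}\mu)$, all of which have been controlled, $\mathcal{C}$ is dominated in $L^\infty$ by the already-established bounds \eqref{precise bound on d mu} and \eqref{precise bound on L mu}.

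The core estimate is on $L^2m$ (and analogously $L^2e$). Since $m=-\tfrac{1}{2}\tfrac{d(c^2)}{d\rho}T\rho$ with $\rho=\psi_0^2$, $L^2m$ is a polynomial in $\psi_0$, $L^j\psi_0$ and $L^j T\psi_0$ ($j\le 2$) with coefficients rational in $c$. Rewriting every $T\psi_0$ through $2T\psi_0=L\psi_0-c^{-2}\mu\Lb\psi_0$ converts all $T$–derivatives into combinations of $L^k\psi_0$, $\Lb\psi_0$ and $L^k\mu$ for $k\le 2$. Feeding in the asymptotic expansions \eqref{expansion of L psi_0}, \eqref{expansion of L L psi_0} and \eqref{expansion of L L L psi_0} for $L^k\psi_0$, together with $\|\Lb\psi_0\|_{L^\infty}\lesssim\delta^{1/2}M$ from (B.1) and the established $\|L\mu\|_{L^\infty}\lesssim\delta^{-1}+M^4$, yields a pointwise bound of the form
\[
\|L^2m\|_{L^\infty(\Sigma_t)}\lesssim \delta^{-2}+\delta^{-1}M^2+\delta^{1/2}\|L^2\mu\|_{L^\infty(\Sigma_t)}.
\]
The potentially problematic circular term on the right comes from $\psi_0\cdot L^2(c^{-2}\mu\Lb\psi_0)$, but it carries a fortuitous $\delta^{1/2}$ factor stemming from $|\psi_0|,|\Lb\psi_0|\lesssim\delta^{1/2}M$, which makes it absorbable. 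On the initial slice $\Sigma_{-r_0}$, where $L=\partial_t+c\partial_r$, a direct calculation from the short pulse data of Lemma \ref{lemma constraint} shows $\|L^2\mu\|_{L^\infty(\Sigma^\delta_{-r_0})}\lesssim\delta^{-2}$. Integrating the transport equation along the integral curves of $\Lb$ from $-r_0$ to $t$ and applying Gronwall (the absorbing factor $\delta^{1/2}$ produces an exponential bounded by $1+O(\delta^{1/2})$) delivers \eqref{precise bound on L L mu}.

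For \eqref{precise bound on T T mu} I would avoid commuting $T^2$ through $\Lb\mu=m+e\mu$ directly and instead use the identity $2T=L-c^{-2}\mu\Lb$ to expand
\[
4T^2\mu=L^2\mu-L(c^{-2}\mu\Lb\mu)-c^{-2}\mu\,\Lb L\mu+c^{-2}\mu\,\Lb(c^{-2}\mu\Lb\mu).
\]
Each of the second–order derivatives on the right, namely $L^2\mu$, $L\Lb\mu$, $\Lb L\mu=L\Lb\mu-[L,\Lb]\mu$ and $\Lb^2\mu$, is already controlled: $L^2\mu$ by the bound just established; the others by one application of $L$ or $\Lb$ to $\Lb\mu=m+e\mu$, inserting $\|Lm\|,\|\Lb m\|,\|Le\|,\|\Lb e\|$ from the estimates of Section 3 together with \eqref{precise bound on L mu} and \eqref{bound on mu}. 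All coefficients $c^{-2}\mu$, $L(c^{-2}\mu)$, $\Lb(c^{-2}\mu)$ are bounded thanks to \eqref{bound on c}, \eqref{bound on mu} and \eqref{precise bound on L mu}, so the resulting bound matches \eqref{precise bound on L L mu}.

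The main obstacle is the bookkeeping for $L^2m$: one must verify that every term produced by repeatedly applying $L$ to $m=m(\psi_0,T\psi_0)$ falls inside the $\delta^{-2}+\delta^{-1}M^2$ budget after converting $T$–derivatives via $L$ and $\Lb$. The critical observation is that the single term forcing $L^2\mu$ back into the estimate is multiplied by $\delta^{1/2}|\Lb\psi_0|\cdot|\psi_0|\sim\delta$, so Gronwall closes and no other term threatens the advertised size.
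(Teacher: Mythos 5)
Your proposal follows essentially the same route the paper indicates for its (omitted, ``routine'') proof: commute $L$ twice with $\Lb\mu=m+e\mu$, bound $L^{2}m$ and $L^{2}e$ through the expansions \eqref{expansion of L psi_0}, \eqref{expansion of L L psi_0}, \eqref{expansion of L L L psi_0} (the last of which encodes the needed \eqref{bound on L L trchibt}), absorb the small circular $L^{2}\mu$ term and integrate along $\Lb$ with Gronwall, then recover \eqref{precise bound on T T mu} from $2T=L-c^{-2}\mu\Lb$. The one imprecision is your claim that the commutator contribution is controlled by \eqref{precise bound on d mu} and \eqref{precise bound on L mu} alone: since $[L,\Lb]=-\Lb(c^{-2}\mu)\Lb+2(\zetab^{A}+\etab^{A})X_{A}$, the term $[L,\Lb]L\mu$ also produces $\Lb L\mu$ and $\slashed{d}L\mu$, which require the once-commuted transport identity and a short bound on $\slashed{d}T\mu$, $\slashed{d}\Lb\mu$ (via $L=c^{-2}\mu\Lb+2T$) — still within the same routine scheme and the stated budget.
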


We turn to another key property of $\mu$ which reflects the behavior of $\mu^{-1}T\mu$. 
\eqref{expansion of Lb mu} suggests that if shock forms before $t=-1$, $\Lb \mu$ to behave as a constant near $s^*$. Hence, $\mu$ is proportional to $|t-s^*|$. In view of \eqref{bound on T mu and slashed d mu}, we expect $\mu^{-1} T\mu$ behaves as ${|t-s^*|}^{-1}\delta^{-1}$ near shocks (It is \emph{not} integrable in $t$). The next proposition suggest a much better bound for $\mu^{-1}(T\mu)_{+}$ by affording one more derivative in $T$ and we can improve ${|t-s^*|}^{-1}$ to ${|t-s^*|^{-\frac{1}{2}}}$.

\begin{proposition}\label{Proposition C2}
For $p=(t,\ub, \theta) \in W_\delta$, 
let $\big(\mu^{-1}T \mu\big)_+$ be the nonnegative part of $\mu^{-1}T \mu$. For sufficiently small $\delta $ and for all $p \in W_{shock}$, we have
\begin{equation}\label{C2}
\big(\mu^{-1}T \mu\big)_+ (t,\ub,\theta) \lesssim \frac{1}{|t-s^*|^{\frac{1}{2}}}\delta^{-1}.
\end{equation}
\end{proposition}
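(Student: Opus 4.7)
The plan is to establish a pointwise Landau--Kolmogorov-type bound $(T\mu)_+^2(p) \lesssim \|T^2\mu\|_{L^\infty}\cdot \mu(p)$ by a one-sided barrier argument, and then to combine it with the asymptotic expansion for $\mu$ from Proposition \ref{proposition on expansion for mu} to replace $\mu(p)$ by $|t-s^*|$ in $W_{shock}$. For the barrier, I would fix $p = (t, \ub_0, \theta_0) \in W_{shock}$ and integrate $T$ on the spacelike slice $\Sigma_t$. Since $Tt = 0$ and $T\ub = 1$, the integral curve $\gamma$ of $T$ through $p$ stays on $\Sigma_t$ and can be parametrized by $\ub'\in[0,\ub_0]$, with $\gamma(\ub_0) = p$ and $\gamma(0) \in \Sigma_t \cap \Cb_0$. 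Setting $f(\ub') := \mu(\gamma(\ub'))$, one has $f' = T\mu$ and $f'' = T^2\mu$ along $\gamma$. The essential boundary value is $f(0) = 1$: since the seed data vanishes for $\ub \leq 0$ on $\Sigma_{-r_0}$, finite speed of propagation forces $\phi \equiv 0$ throughout the interior of $\Cb_0$, so $c = \kappa = 1$ on $\Cb_0$ and $\mu = c\kappa = 1$ there. Moreover $f \geq 0$ on $[0,\ub_0]$ by the definition of $W^{*}_{\delta}$.

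Next I would apply Taylor's inequality to $f$. Write $a := (T\mu)(p)$, which may be assumed positive (else the estimate is trivial), and $K := \|T^2\mu\|_{L^\infty(\Sigma_t^\delta)}$. Taylor's theorem yields $f(\ub_0 - s) \leq \mu(p) - as + \tfrac{1}{2} K s^2$ for all $s \in [0,\ub_0]$. In the case $a \leq K\ub_0$, choosing $s = a/K \in [0,\ub_0]$ and using $f \geq 0$ produces $a^2 \leq 2K\mu(p)$. In the case $a > K\ub_0$, setting $s = \ub_0$ produces $1 = f(0) \leq \mu(p) - a\ub_0 + \tfrac{1}{2} K\ub_0^2 < \mu(p) - \tfrac{1}{2} K\ub_0^2 < \tfrac{1}{10}$, a contradiction. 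Combined with the bound $K \lesssim \delta^{-2} + \delta^{-1}M^2 \lesssim \delta^{-2}$ from \eqref{precise bound on T T mu}, this produces $(T\mu)_+(p) \lesssim \delta^{-1}\sqrt{\mu(p)}$, hence $(\mu^{-1}T\mu)_+(p) \lesssim \delta^{-1}\mu(p)^{-1/2}$.

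To convert $\mu(p)^{-1/2}$ into $|t-s^*|^{-1/2}$ I would invoke Proposition \ref{proposition on expansion for mu}: setting $q := -\Lb\mu(-r_0,\ub_0,\theta_0)$, the leading-order profile of $\mu$ along the null generator through $p$ vanishes at some $t^\sharp = t^\sharp(\ub_0,\theta_0) \geq s^*$ (by the very definition of $s^*$ as an infimum over $(\ub,\theta)$) and equals $r_0^2 q (t^\sharp - t)/(t \cdot t^\sharp)$. The hypothesis $\mu(p) < \tfrac{1}{10}$ forces $q$ to be bounded below by a positive constant depending only on the initial data for $t \in [-r_0,-1]$; together with $|t|, |t^\sharp| \sim 1$, this yields $\mu(p) \gtrsim t^\sharp - t \geq |t-s^*|$, and \eqref{C2} follows. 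The main obstacle has already been dealt with in the derivation of \eqref{precise bound on T T mu} for $\|T^2\mu\|_{L^\infty}$ earlier in the section; the Taylor/barrier argument is elementary, and the only conceptually delicate point is the recognition that $\mu \equiv 1$ along the \emph{entire} incoming cone $\Cb_0$ (not merely on $\Sigma_{-r_0}$), which is exactly what allows the one-sided Taylor inequality to close at $\ub = 0$.
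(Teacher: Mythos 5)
Your argument is sound in outline and reaches the same two pillars the paper rests on, namely the second-derivative bound \eqref{precise bound on T T mu} and the lower bound $\mu \gtrsim |t-s^*|$ in $W_{shock}$ coming from \eqref{expansion of Lb mu} and the $W_{shock}$ condition. Where you differ is the interpolation mechanism: the paper runs a maximum-principle argument on $f(\ub)=T(\log\mu)$ along the $T$-integral curve, getting at the maximum point $\mu^{-1}T^2\mu-\mu^{-2}(T\mu)^2\geq 0$ and hence $\sup_\gamma(\mu^{-1}T\mu)_+\leq\big(\|T^2\mu\|_{L^\infty}/\inf_\gamma\mu\big)^{1/2}$, whereas you use a two-point Taylor (Landau-type) inequality on $\mu$ itself, anchored at the boundary value $\mu=1$ on $\Cb_0$, which yields the pointwise bound $(T\mu)_+^2(p)\lesssim\|T^2\mu\|_{L^\infty}\,\mu(p)$. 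Your version is slightly cleaner in that it involves $\mu(p)$ rather than $\inf_\gamma\mu$, and it sidesteps the endpoint issue in the maximum-principle step; both then conclude identically.

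Two points need tightening. First, ``$c=\kappa=1$ on $\Cb_0$ by finite speed of propagation'' is too quick: $\kappa$ depends on the eikonal foliation coming from the region $\ub>0$, where the solution is nontrivial, so it is not a local consequence of $\phi\equiv 0$ inside $\Cb_0$. The correct (and easy) justification of $f(0)=1$ is via the transport equation \eqref{Structure Equation Lb mu}: on $\Cb_0$ one has $\psi_0=0$, hence $m=e=0$, so $\Lb\mu=0$ along the generators of $\Cb_0$ and $\mu\equiv\mu(-r_0,0,\theta)=1$; alternatively, \eqref{expansion of mu} with $\Lb\mu(-r_0,0,\theta)=0$ gives $\mu=1+O(\delta)$ on $\Cb_0$, which is all your Case 2 contradiction needs. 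Second, your conversion of $\mu(p)^{-1/2}$ into $|t-s^*|^{-1/2}$ via the root $t^\sharp$ of the leading-order profile is loose about the $O(\delta)$ errors: both ``$t^\sharp\geq s^*$'' and ``$\mu(p)\gtrsim t^\sharp-t$'' hold only up to additive $O(\delta)$ corrections, so as written the bound degenerates precisely when $|t-s^*|\lesssim\delta$, which is the regime one cannot discard (there is no trivial bound there). The robust route is the paper's: since $p\in W_{shock}$, \eqref{C3} and \eqref{expansion of Lb mu} give $r_0^2\Lb\mu(-r_0,\ub_0,\theta_0)\leq-\tfrac16$, hence $\Lb\mu(\tau,\ub_0,\theta_0)\leq-c_0<0$ uniformly for $\tau\in[t,s^*]$ and small $\delta$, and integrating from $t$ to $s^*$ yields $\mu(t,\ub_0,\theta_0)\geq c_0|t-s^*|$ with no additive loss. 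With these two repairs your proof closes.
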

\begin{proof}
We use a maximal principle type argument. Let $\gamma: [0, \delta] \rightarrow W^*_\delta$ be the integral curve of $T$ through the point $p$. 
We may choose the $\theta$ coordinates to be constant along $\gamma$ on the given $\Sigma_{t}$ and study the function $f(\ub) = T(\log(\mu))(\ub)$. We assume $f(\ub)$ attains its maximum at a point $\ub^* \in [0, \delta]$. We may also assume that this maximum is positive (Otherwise, \eqref{C2} is automatically true). Since $\ub^*$ is a maximum point, we have $\dfrac{d}{d \ub}(f)(\ub^*) \geq 0$. In other words, $T(T(\log(\mu)))(\ub^*) \geq 0$. Therefore, at the point $(t,\ub^*, \theta)$, we have $\mu^{-1}T^2 \mu - \frac{1}{\mu^2}(T\mu)^2\geq 0$. Hence,
\begin{equation}\label{eq 1}
\|\mu^{-1}(T\mu)_+ \|_{L^\infty([0,\delta])}\leq \sqrt{\frac{\|T^2\mu\|_{L^\infty([0,\delta])}}{\displaystyle \inf_{\ub \in[0,\delta]}\mu(\ub)}}.
\end{equation}
It suffices to bound the denominator and the numerator on the righthand side. For $T^2\mu$, according to \eqref{precise bound on T T mu}, we have
\begin{equation}\label{eq 2}
\|T^2\mu\|_{L^\infty([0,\delta])} \lesssim \frac{1}{|t|}\delta^{-2}\lesssim \delta^{-2}.
\end{equation}
For $\inf \mu$, we can assume $(t,\ub,\theta)\in W_{shock}$. Otherwise we have the lower bound $\mu\geq\frac{1}{10}$. According to \eqref{C3}, the condition $(t,\ub,\theta)\in W_{shock}$ implies $|t|^{2}(\Lb\mu)(t,\ub,\theta)\leq -\frac{1}{4}$. Therefore, by \eqref{expansion of Lb mu}, we have
\begin{align*}
r^{2}_{0}\Lb \mu(-r_0,\ub, \theta)=t^{2}\Lb \mu(t,\ub, \theta)+O(\delta M^{2})\leq -\frac{1}{4}+O(\delta M^{2})\leq -\frac{1}{6}
\end{align*}
provided that $\delta$ is sufficiently small. We now integrate from $t$ to $s^*$ to derive
\begin{align*}
\mu(t,\ub, \theta) &= \mu (s^*, \ub,\theta)- \int_{t}^{s^*}\Lb \mu(\tau,\ub, \theta)\geq - \int_{t}^{s^*}\Lb \mu(\tau,\ub, \theta)=- \int_{t}^{s^*}\left(\frac{r_0^2 \Lb \mu(-r_0,\ub, \theta)}{|\tau|^2} + O( \delta M^2)\right)\\
&\geq -r_0^2 \Lb \mu(-r_0,\ub, \theta)\frac{1}{|s^*||t|}|t-s^*| +O(\delta M^2)|t-s^{*}|.
\end{align*}
Therefore, for sufficiently small $\delta$, we obtain $|t-s^*|\lesssim \mu$. Together with \eqref{eq 1} and \eqref{eq 2}, this completes the proof.
\end{proof}

\section{The mechanism for shock formations}
In this section, we assume Theorem \ref{theorem main estimates} stated in Section \ref{section Bootstrap Assumptions} and we use knowledge on $\mu$ from last section to analyze a mechanism of shock formations. In particular, we have to use condition \eqref{shock condition} in the \textbf{Main Theorem}. This is the only place in the paper where we use \eqref{shock condition}.

The transport equation $\Lb \mu = m + \mu e$ is responsible for the shock formations. We first give precise bounds on each term on the righthand side. Since $m=-\frac{1}{2}\frac{d}{d\rho} (c^{2})T\rho$, we have $m =  \frac{3G''(0)}{\big(1+3G''(0)\psi^2_0\big)^2}\psi_0 \cdot T \psi_0$. In view of \eqref{expansion of T psi_0}, up to an error of size $\delta^{\frac{1}{2}}$, we can replace $T\psi_0(t,\ub, \theta)$ by $\frac{r_0}{|t|} T \psi_0(-r_0,\ub, \theta)$; in view of  \eqref{expansion of psi_0}, up to an error of size $\delta^{\frac{3}{2}}$, we can replace $\psi_0(t,\ub, \theta)$ by $\frac{r_0}{|t|} \psi_0(-r_0,\ub, \theta)$. Therefore, we obtain
\begin{align*}
m &=  3G''(0) \frac{r_0 ^2}{|t|^2} \psi_0(-r_0,\ub, \theta) T \psi_0(-r_0,\ub, \theta) + O(\delta)= 3 \frac{r_0 ^2}{|t|^2} \Big(G''(0)\phi_1(\frac{r-r_0}{\delta}, \theta) \partial_{r} \phi_1(\frac{r-r_0}{\delta}, \theta)\Big) + O(\delta).
\end{align*}
Since $e$ is of size $\delta$ and $|\mu|\lesssim 1$, we regard $\mu e$ as an error term. We then obtain
\begin{align*}
\Lb \mu (t,\ub, \theta)= 3 \frac{r_0 ^2}{|t|^2} \Big(G''(0)\phi_1(\frac{r-r_0}{\delta}, \theta) \partial_{r} \phi_1(\frac{r-r_0}{\delta}, \theta)\Big) + O(\delta).
\end{align*}
We then integrate this equation and we obtain
\begin{align*}
\mu (t,\ub, \theta) - \mu (-r_0,\ub, \theta) &= 3 \int_{-r_0}^t \frac{r_0 ^2}{|\tau|^2} d\tau \cdot \Big(G''(0)\phi_1(\frac{r-r_0}{\delta}, \theta) \partial_{r} \phi_1(\frac{r-r_0}{\delta}, \theta)\Big) + O(\delta)\\
&= 3 r_0^2\big(\frac{1}{|t|}-\frac{1}{r_0}\big) \Big(G''(0)\phi_1(\frac{r-r_0}{\delta}, \theta) \partial_{r} \phi_1(\frac{r-r_0}{\delta}, \theta)\Big) + O(\delta)
\end{align*}
Since $|\mu (-r_0,\ub, \theta)-1| \lesssim \delta$, according to condition \eqref{shock condition},
we have (recall that $r_0=2$)
\begin{align*}
\mu (t,\ub, \theta)  &\leq 1 - 3 \cdot 2^2 \big(\frac{1}{|t|}-\frac{1}{2}\big)\frac{1	}{6} + O(\delta)=1-2\left(\frac{1}{|t|}-\frac{1}{2}\right)+O(\delta)
\end{align*}
Therefore, for sufficiently small $\delta$, $t$ can not be greater than $-1$, otherwise $\mu$ would be negative. In other words, shock forms before $t=-1$.
\begin{corollary}
If we introduce the vectorfield $\widehat{T}:=c\mu^{-1}T$, then when shock forms, the second derivative of $\phi$, $\widehat{T}^{i}\partial_{i}\partial_{t}\phi$, blows up.
\end{corollary}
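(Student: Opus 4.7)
The plan is to follow the route laid out in Remark \ref{2nd derivative blow up} and then translate the blow-up of $\Th\rho$ into blow-up of $\Th^i\partial_i\partial_t\phi$ using only the $L^\infty$ bootstrap bound on $\partial_t\phi$. First, at any shock point $\mu=0$, so the evolution equation $\Lb\mu=m+\mu e$ reduces to $m=\Lb\mu$, and Proposition \ref{Proposition C3} gives $m\leq -\tfrac{1}{4|t|^2}\lesssim -1$. Since $m=-\tfrac{1}{2}\tfrac{d(c^2)}{d\rho}T\rho$ with $\tfrac{d(c^2)}{d\rho}$ uniformly bounded above and away from zero (by \eqref{bound on c} and the bootstrap assumption (B.1)), this yields a uniform lower bound $T\rho\geq c_0>0$ at the shock.

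Next I convert the $T$-derivative into a $\Th$-derivative. Since $\Th=c\mu^{-1}T$ is the outward $g$-unit (equivalently, Euclidean-unit) normal of $S_{t,\ub}$ inside $\Sigma_t$, we have $\Th\rho=c\mu^{-1}T\rho$. Combining $c\sim 1$ with $T\rho\geq c_0$, we conclude $\Th\rho\to +\infty$ as $\mu\to 0$. Expanding $\rho=(\partial_t\phi)^2$ gives $\Th\rho=2(\partial_t\phi)\cdot\Th(\partial_t\phi)$. The bootstrap bound $|\partial_t\phi|\lesssim\delta^{1/2}M$ shows that $\partial_t\phi$ remains bounded on $W^*_\delta$, so $|\Th(\partial_t\phi)|=|\Th\rho|/(2|\partial_t\phi|)$ must diverge at the shock. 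Because $\Th$ is tangential to $\Sigma_t$, its rectangular time component vanishes and hence $\Th(\partial_t\phi)=\Th^i\partial_i\partial_t\phi$, which is precisely the quantity in the statement.

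The one technical subtlety, and the only place requiring care, is the division by $\partial_t\phi$: one must rule out $\partial_t\phi = 0$ at the shock. This is automatic from $T\rho\geq c_0>0$. Indeed, if $\partial_t\phi$ vanished at a shock point, then $\rho=(\partial_t\phi)^2=0$ would be a local minimum of the nonnegative function $\rho$, so every first-order directional derivative of $\rho$ would vanish there, contradicting $T\rho\geq c_0>0$. Hence $\partial_t\phi$ stays bounded away from $0$ in a neighborhood of the shock set, which makes the quotient $\Th\rho/(2\partial_t\phi)$ unambiguously divergent and completes the argument.
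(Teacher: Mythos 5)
Your argument is correct and follows essentially the same route as the paper: extract $|m|\gtrsim 1$ near the shock from $\Lb\mu=m+\mu e$ together with Proposition \ref{Proposition C3} and the smallness of $\mu e$, deduce that $\Th\rho=c\mu^{-1}T\rho$ blows up as $\mu\to 0$, and then divide the identity $\Th\rho=2\psi_0\,\Th\psi_0$ using the uniform bound $|\psi_0|\lesssim\delta^{1/2}$. The only substantive addition is your justification that $\partial_t\phi$ does not vanish near the shock; the paper leaves this implicit, and it can also be sidestepped by the contrapositive (if $\Th\psi_0$ were bounded then $|\Th\rho|\leq 2|\psi_0||\Th\psi_0|$ would be bounded, contradicting $\Th\rho\to\infty$), which avoids the local-minimum argument and the division entirely.
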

\begin{proof}
When shock forms, $\mu\longrightarrow0$, which means $\mu<\frac{1}{10}$. Then by \eqref{C3}, $\Lb\mu$ is negative and bounded from above. In other words, there is an absolute positive constant $C$ such that $|\Lb\mu|\geq C$. While from the propagation equation $\Lb\mu=m+\mu e$ and the pointwise estimates $|e|\lesssim\delta, |\mu|\lesssim 1$, the estimate $|m|\geq C$ follows if $\delta$ is sufficiently small. By the definition $m=-\frac{1}{2}\frac{d(c^{2})}{d\rho}T\rho$ and $\widehat{T}=c\mu^{-1}T$, the derivative of $\rho$, $\widehat{T}\rho=c\mu^{-1}T\rho$, blows up when $\mu\longrightarrow 0$. Since $|\psi_{0}|\lesssim \delta^{1/2}$, 
$\widehat{T}\psi_{0}=\widehat{T}^{i}\partial_{i}\partial_{t}\phi$ blows up when $\mu\longrightarrow0$ from the definition $\rho=\psi_{0}^{2}$. 
\end{proof}
\begin{corollary}
When shock forms, the only nonzero curvature component in optical coordinates, $\underline{\alpha}_{AB}$, blows up.
\end{corollary}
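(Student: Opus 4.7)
The plan is to exploit the decomposition \eqref{alpha expansion in mu and alpha prime}, namely
$$\alphab_{AB} = \mu^{-1} m\, \chib_{AB} + \alphab'_{AB},$$
and show that the first (singular in $\mu$) term is genuinely of order $\mu^{-1}$, while the second stays bounded. The previous corollary already hands us the most important input: when shock forms, $\mu\to 0$ and $|m|\geq C$ for an absolute constant $C>0$ (obtained in that proof by reading off $|\Lb\mu|\gtrsim 1$ from Proposition \ref{Proposition C3} and the pointwise bound $|\mu e|\lesssim \delta$ in $\Lb\mu=m+\mu e$). Hence the numerical factor in front of $\chib_{AB}$ has magnitude $\gtrsim \mu^{-1}$.

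Next I would argue that $\chib_{AB}$ itself does not degenerate so as to kill this blow-up. By \eqref{precise bound on chib'},
$$\chib_{AB}= -\frac{\slashed{g}_{AB}}{\ub-t} + O(\delta M^{2}),$$
and since $t\in[-2,-1]$ and $\ub\in[0,\delta]$, the factor $|\ub-t|\sim 1$. Taking a $\slashed{g}$-trace,
$$\tr_{\slashed{g}}\chib = -\tfrac{2}{\ub-t}+O(\delta M^{2}),$$
so $|\tr_{\slashed{g}}\chib|\geq \tfrac{1}{2}$ for $\delta$ sufficiently small. Therefore
$$\tr_{\slashed{g}}\alphab \;=\; \mu^{-1} m\,\tr_{\slashed{g}}\chib + \tr_{\slashed{g}}\alphab',$$
and the leading term has absolute value bounded below by $\tfrac{C}{2}\mu^{-1}$.

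It remains to check that $\alphab'_{AB}$ stays bounded so that it cannot absorb this divergence. From the explicit formula
$$\alphab'_{AB}= \tfrac{1}{2}\tfrac{d(c^{2})}{d\rho}\slashed{\nabla}^{2}_{X_A,X_B}\rho +\tfrac{1}{2}\big(\tfrac{d^{2}(c^{2})}{d\rho^{2}}-\tfrac{1}{2}c^{-2}|\tfrac{d(c^{2})}{d\rho}|^2\big)X_{A}(\rho)\,X_{B}(\rho),$$
each factor is estimated using \eqref{bound on c}, (B.1), and the comparisons between $R_i$ and $\nablaslash$: the coefficients depending on $c,\rho$ are bounded, and $X_A(\rho)$ as well as $\slashed{\nabla}^2\rho$ are controlled in $L^{\infty}$ using the $L^{\infty}$ bootstrap bounds on $\psi$ and its angular/commuted derivatives (which involve no $T$, hence have no $\mu^{-1}$ factors). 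Thus $\|\alphab'\|_{L^{\infty}(\Sigma_{t}^{\delta})}\lesssim 1$ uniformly as $\mu\to 0$.

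Combining these, $|\tr_{\slashed{g}}\alphab|\gtrsim \mu^{-1}-O(1)\to\infty$ as $\mu\to 0$, hence $\alphab_{AB}$ blows up. The only genuinely delicate point is the lower bound $|m|\geq C$ at the shock — and that is precisely what the preceding corollary supplies — everything else is bookkeeping with the already-established pointwise estimates on $\chib'$, $c$, $\rho$, and the bootstrap (B.1).
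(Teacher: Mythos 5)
Your proof is correct and follows essentially the same route as the paper: bound $\alphab'_{AB}$ pointwise via the bootstrap estimates on optical-coordinate derivatives of $\rho$, then show the singular piece $\mu^{-1}m\,\chib_{AB}$ diverges by combining the lower bound $|m|\gtrsim 1$ (coming from Proposition~\ref{Proposition C3} together with $\Lb\mu=m+\mu e$ and $|\mu e|\lesssim\delta$) with the lower bound on $\chib_{AB}$ from \eqref{precise bound on chib'}. Taking the $\slashed{g}$-trace to extract a scalar quantity with an explicit lower bound $\gtrsim\mu^{-1}$ is a slightly more explicit packaging of the paper's ``the pointwise norm is bounded from below'' step, but it is the same argument.
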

\begin{proof}
In the expression:
\begin{align*}
\alphab_{AB}= \frac{1}{2}\frac{d(c^{2})}{d\rho}\slashed{\nabla}^{2}_{X_A,X_B}\rho -\frac{1}{2}\mu^{-1}\frac{d(c^{2})}{d\rho}T(\rho) \chib_{AB}+\frac{1}{2}\big(\frac{d^{2}(c^{2})}{d\rho^{2}}
-\frac{1}{2}c^{-2}\big|\frac{d(c^{2})}{d\rho}\big|^2\big)X_{A}(\rho) X_{B}(\rho),
\end{align*}
the terms $\frac{1}{2}\frac{d(c^{2})}{d\rho}\slashed{\nabla}^{2}_{X_A,X_B}\rho$, $\frac{1}{2}\big(\frac{d^{2}(c^{2})}{d\rho^{2}}-\frac{1}{2}c^{-2}\big|\frac{d(c^{2})}{d\rho}\big|^2\big)X_{A}(\rho) X_{B}(\rho)$, being derivatives of $\rho$ in optical coordinates, are bounded pointwisely. While by \eqref{C3} and \eqref{precise bound on chib'}, the pointwise norm of $\frac{d(c^{2})}{d\rho}T(\rho)\chib_{AB}$ is bounded from below, therefore the term $\mu^{-1}\frac{d(c^{2})}{d\rho}T(\rho)\chib_{AB}$ blows up when $\mu\longrightarrow0$.
\end{proof}
The focus of the rest of the paper is to prove Theorem \ref{theorem main estimates}.

\section{Energy estimates for linear wave equations}
We study energy estimates for the linear wave equation
\begin{equation}\label{model linear wave equation}
 \Box_{\widetilde{g}} \psi = \rho
\end{equation}
where $\rho$ is a smooth function defined on $W_\delta$. The energy momentum tensors for a solution $\psi$ of \eqref{model linear wave equation} are the same for both $g$ and $\gt$, i.e.,
\begin{equation*}
 \widetilde{T}_{\mu\nu} = T_{\mu\nu} = \partial_\mu \psi \partial_\nu \psi - \frac{1}{2}g_{\mu\nu}\partial^\alpha \psi \partial_\alpha \psi.
\end{equation*}
In the null frame $(X_1, X_2, L, \Lb)$, $\widetilde{T}_{\mu\nu}$ is decomposed as
\begin{equation}\label{energy momentum tensor in null frame}
 \begin{split}
  T_{LL} &= (L\psi)^2,\ T_{\Lb\Lb} = (\Lb\psi)^2, \ T_{\Lb L} = \mu |\slashed{d}\psi|^2, \ T_{LA} = L\psi \cdot X_A(\psi),\\
T_{\Lb A} &= \Lb \psi \cdot X_A(\psi),\ T_{AB} = X_A(\psi)X_B(\psi)-\frac{1}{2}\slashed{g}_{AB}(-\mu^{-1}L\psi \Lb \psi + |\slashed{d} \psi|^2).
 \end{split}
\end{equation}
We use two \emph{multiplier vectorfields} $K_0 = L$ and $K_1 = \Lb$. The associated energy currents are defined by
\begin{equation}\label{definition for energy currents of K_0 and K_1}
 \widetilde{P}_0{}^\mu = -\widetilde{T}^{\mu}{}_{\nu}K_0{}^\nu, \ \ \widetilde{P}_1{}^\mu = -\widetilde{T}^{\mu}{}_{\nu}K_1{}^\nu.
\end{equation}
The corresponding deformation tensors are denoted by $\widetilde{\pi}_0 = \,{}^{(K_0)}\widetilde{\pi}$ and $\widetilde{\pi}_1 = \,{}^{(K_1)}\widetilde{\pi}$ respectively. Since the divergence of $\widetilde{T}_{\mu\nu}$ is $\nablat^\mu \widetilde{T}_{\mu\nu} = \rho\cdot \partial_\nu \psi$, we have
\begin{equation}\label{divergence of currents}
 \begin{split}
  \nablat_\mu \widetilde{P}_0{}^\mu&=\widetilde{Q}_0 := -\rho\cdot K_0 \psi-\frac{1}{2}\widetilde{T}^{\mu\nu}\widetilde{\pi}_{0,\mu\nu},\\
  \nablat_\mu \widetilde{P}_1{}^\mu&=\widetilde{Q}_1 :=-\rho\cdot K_1 \psi-\frac{1}{2}\widetilde{T}^{\mu\nu}\widetilde{\pi}_{1,\mu\nu}.
 \end{split}
\end{equation}
Let  $W_t^{\ub}$ be the spacetime region enclosed by $\Sigma_{-r_0}^{\ub}$, $\Cb_{0}^t$, $\Sigma_{t}^{\ub}$ and $\Cb_{\ub}^t$. We integrate \eqref{divergence of currents} on $W_t^{\ub}$ under the condition that $\psi$ and its derivatives vanish on $\Cb_{0}^t$ (This is always the case in later applications), we obtain
\begin{equation}\label{energy identities}
 \begin{split}
  E^{0}(t,\ub)-E^{0}(-r_0,\ub)+F^{0}(t,\ub) &= \int_{W_t^{\ub}} c^{-2} \widetilde{Q_0},\\
 E^{1}(t,\ub)-E^{1}(-r_0,\ub)+F^{1}(t,\ub) &= \int_{W_t^{\ub}} c^{-2} \widetilde{Q_1}.
 \end{split}
\end{equation}
where the associated energy $E^{i}(t,\ub)$ and flux $F^i(t,\ub)$ are defined (naturally from the Stokes formula) as
\begin{equation}\label{definition for energy}
 \begin{split}
  E^0(t,\ub)&=\int_{\Sigma_{t}^{\ub}}\frac{1}{2c}\Big((L\psi)^2 + c^{-2}\mu^2 |\slashed{d} \psi|^2\Big),\ \  F^0(t,\ub)=\int_{\Cb_{\ub}{}^{t}}c^{-1}\mu |\slashed{d} \psi|^2,\\
E^1(t,\ub)&=\int_{\Sigma_{t}^{\ub}}\frac{1}{2c}\Big( c^{-2}\mu(\Lb \psi)^2 + \mu |\slashed{d}\psi|^2\Big),\ \ F^1(t,\ub)=\int_{\Cb_{\ub}{}^{t}} c^{-1}(\Lb \psi)^2.
 \end{split}
\end{equation}
We emphasize that the integral on $W_t^{\ub}$ defined as below on the spacetime slab contains a factor $\mu$:
\begin{equation*}
\int_{W_t^{\ub}}f  = \int_{-r_0}^{t} \int_{0}^{\ub} \left(\int_{S_{\tau,\ub'}} \mu \cdot f(\tau,\ub',\theta)\dmug\right) d\ub' d\tau.
\end{equation*}

We remark that, by \eqref{definition for energy and flux} and \eqref{bound on c}, we have
\begin{equation}\label{comparisons between energy and reduced energy}
\begin{split}
 E^0(t,\ub)&\sim {E}(\psi)(t,\ub),\ \  F^0(t,\ub)\sim {F}(\psi)(t,\ub), \\
 E^1(t,\ub) &\sim {\Eb}(\psi)(t,\ub),\ \ F^1(t,\ub)\sim{\Fb}(\psi)(t,\ub).
\end{split}
\end{equation}
For the sake of simplicity, we use $ {E}(t,\ub)$, ${F}(t,\ub)$, ${\Eb}(t,\ub)$ and ${\Fb}(t,\ub)$ as shorthand notations for $ {E}(\psi)(t,\ub)$, ${F}(\psi)(t,\ub)$, ${\Eb}(\psi)(t,\ub)$ and ${\Fb}(\psi)(t,\ub)$ in the rest of the section.

We need to compute the so called \emph{error integrals} or \emph{error terms}, i.e., $\displaystyle\int_{W_t^{\ub}} \widetilde{Q_0}$ and $\displaystyle \int_{W_t^{\ub}} \widetilde{Q_1}$ in \eqref{energy identities}. This requires an explicit formula for $\widetilde{\pi}_{0,\mu\nu}$ or $\widetilde{\pi}_{1,\mu\nu}$. The deformation tensor $\widetilde{\pi}_{0,\mu\nu}$ is given by
\begin{equation}\label{deformation tensor for K_0}
 \begin{split}
  \widetilde{\pi}_{0,LL}&=0,\ \ \widetilde{\pi}_{0,\Lb \Lb}=0, \ \ \widetilde{\pi}_{0,L\Lb}=-2c^{-1}\mu\Big(\mu^{-1}L \mu  + L\log(c^{-1}) + \Lb(c^{-2}\mu)\Big),\\
\widetilde{\pi}_{0,LA}&=-{2c^{-1}\mu} X_A(c^{-2}\mu),\ \ \widetilde{\pi}_{0,\Lb A} =-{2}{c}^{-1}(\zetab_A + \etab_A),\ \ \widetilde{\pi}_{0,AB}= 2\widetilde{\chi}_{AB}.
 \end{split}
\end{equation}
The deformation tensor $\widetilde{\pi}_{1,\mu\nu}$ is given by
\begin{equation}\label{deformation tensor modified for K_1}
 \begin{split}
 \widetilde{\pi}_{1,\Lb \Lb}&=0,\ \ 	\widetilde{\pi}_{1,LL}={4c^{-1}\mu}\Lb(c^{-2}\mu), \ \  \widetilde{\pi}_{1,L\Lb}=-{2c^{-1}\mu}\Big(\mu^{-1}\Lb \mu+ \Lb\log(c^{-1})\Big),\\
\widetilde{\pi}_{1,\Lb A} &=0, \ \  \widetilde{\pi}_{1,LA}={2}{c}^{-1}\big(\zetab_A+\etab_A\big),\ \ \widetilde{\pi}_{1,AB}=2 \chibt_{AB}.
 \end{split}
\end{equation}
We also need an explicit formula for the energy momentum tensor $T^{\mu\nu}$:
\begin{equation}\label{energy momentum tensor in null frame upper indices}
 \begin{split}
  T^{LL} &= \frac{(\Lb\psi)^2}{4\mu^2},\ T^{\Lb\Lb} = \frac{(L\psi)^2}{4\mu^2}, \ T^{\Lb L} = \frac{(\slashed{d}\psi)^2}{4\mu}, \ T^{LA} =-\frac{ \Lb\psi X_A(\psi)}{2\mu},\\
T^{\Lb A} &= -\frac{L \psi X_A(\psi)}{2\mu},\ T^{AB} = \slashed{g}^{AC}\slashed{g}^{BD}X_C(\psi)X_D(\psi)-\frac{1}{2}\slashed{g}^{AB}(-\frac{L\psi \Lb \psi}{\mu} + |\slashed{d} \psi|^2).
 \end{split}
\end{equation}
Finally, we can compute the integrands $\widetilde{Q_0}$ and $\widetilde{Q_1}$ explicitly. For $\widetilde{Q_0}$, we have
\begin{equation}\label{Q_0}
\begin{split}
 {c}^{-2} \widetilde{Q}_0 &= -{c}^{-2}\rho\cdot K_0 \psi-\frac{1}{2}{T}^{\mu\nu}\widetilde{\pi}_{0,\mu\nu}= Q_{0,0} + Q_{0,1}+Q_{0,2}+Q_{0,3}+ Q_{0,4}\\
&=-{c}^{-2}\rho \cdot K_0\psi  -T^{L\Lb}\widetilde{\pi}_{0,L\Lb}-T^{LA}\widetilde{\pi}_{0,LA}-T^{\Lb A}\widetilde{\pi}_{0,\Lb A}-\frac{1}{2}T^{AB}\widetilde{\pi}_{0,AB}.
\end{split}
\end{equation}
The $Q_{0,i}$'s are given explicitly by
\begin{equation}\label{Q_0,i}
\begin{split}
  Q_{0,1}& =   \frac{1}{2c}\Big(\mu^{-1}L\mu+L\log(c^{-1}) + \Lb(c^{-2}\mu)\Big)|\slashed{d}\psi|^2, \\
Q_{0,2}&=-{c}^{-1}X_A(c^{-2}\mu) \Lb\psi \cdot X_A(\psi),\ \ Q_{0,3}=-{c^{-1} \mu^{-1}}\Big(\zetab_A+\etab_A\Big)L\psi X_A(\psi),\\
Q_{0,4}&=-\frac{1}{2}\big(\widehat{\chit}_{AB}X^A \psi X^B \psi +\frac{c}{2} \mu^{-1}\tr_{\widetilde{g}}\chit\, {L\psi \cdot \Lb \psi}\big).
\end{split}
\end{equation}
For $\widetilde{Q_1}$, we have
\begin{equation}\label{Q_1}
\begin{split}
  {c}^{-2} \widetilde{Q}_1 &= -{c}^{-2}\rho\cdot K_1 \psi-\frac{1}{2}{T}^{\mu\nu}\widetilde{\pi}_{1,\mu\nu}= Q_{1,0} + Q_{1,1}+Q_{1,2}+Q_{1,3}+ Q_{1,4}\\
&=-{c}^{-2}\rho \cdot K_1 \psi-\frac{1}{2}T^{LL}\widetilde{\pi}_{1,LL}-T^{L\Lb}\widetilde{\pi}_{1,L\Lb} -T^{LA}\widetilde{\pi}_{1,LA} -\frac{1}{2}T^{AB}\widetilde{\pi}_{1,AB}.
\end{split}
\end{equation}
The $Q_{1,i}$'s are given explicitly by
\begin{equation}\label{Q_1,i}
\begin{split}
Q_{1,1}&=-\frac{1}{2c\mu}\Lb(c^{-2}\mu)|\Lb \psi|^2, \ \ Q_{1,2}=\frac{1}{2c}\Big(\mu^{-1}\Lb\mu+ \Lb\log(c^{-1}) \Big)|\slashed{d}\psi|^2,\\
Q_{1,3}&=\frac{1}{c \mu}\big(\zetab_A+\etab_A\big)\Lb\psi \cdot X^A \psi,\ \ Q_{1,4}=-\frac{1}{2}\big(\widehat{\chibt}_{AB}X^A \psi X^B \psi +\frac{c}{2} \mu^{-1}\tr_{\widetilde{g}}\chibt \, {L\psi \cdot \Lb \psi} \big).
\end{split}
\end{equation}

\subsection{Estimates on $Q_{1,2}$: the coercivity of energy estimates in shock region}

We separate the principal terms and lower order terms of $Q_{1,2}$ as follows
\begin{equation*}
\begin{split}
 Q_{1,2}&=\frac{1}{2c}\big(\mu^{-1}\Lb\mu + \Lb\log(c^{-1})\big)|\slashed{d}\psi|^2 =(-\frac{1}{2c}\mu^{-1} \Lb \mu + \text{l.o.t.})|\slashed{d} \psi|^2,
\end{split}
\end{equation*}
where the lower order terms $\lot$, thanks to \eqref{bound on c}, are bounded by $\|\lot\|_{L^{\infty}(\Sigma_t)}\lesssim \delta M^2$. We rewrite the principal term $\displaystyle\int_{W_t^{\ub}}\frac{1}{2c}\mu^{-1} \Lb\mu|\slashed{d}\psi|^{2}$ as
\begin{equation}\label{a Q_1,2 0}
 \int_{W_t^{\ub}}\frac{1}{2c}\mu^{-1}\Lb\mu |\slashed{d}\psi|^2 = \int_{W_t^{\ub}\cap W_{shock}}\frac{1}{2c}\mu^{-1}\Lb\mu |\slashed{d}\psi|^2+ \int_{W_t^{\ub}\cap W_{rare}}\frac{1}{2c}\mu^{-1}\Lb\mu|\slashed{d}\psi|^2.
\end{equation}
In view of the estimates \eqref{expansion of Lb mu} and the fact that $\mu \geq \dfrac{1}{10}$ in the rarefaction wave region $W_{rare}$, for sufficiently small $\delta$, the second term is bounded by	
\begin{align*}
  \int_{W_t^{\ub}\cap W_{rare}} \frac{1}{2c} \mu^{-1}\Lb\mu|\slashed{d}\psi|^2 &\lesssim  \int_{W_t^{\ub}\cap W_{rare}} |\Lb\mu| |\slashed{d}\psi|^2= \int_{W_t^{\ub}\cap W_{rare}} |\frac{|r_0|^2}{|t|^2} \Lb\mu(-r_0,\ub,\theta)+\delta M^2| |\slashed{d}\psi|^2\\
&\lesssim  \int_{W_t^{\ub}\cap W_{rare}} |\slashed{d}\psi|^2.
\end{align*}
Since $\int_{W_t^{\ub}\cap W_{rare}} |\slashed{d}\psi|^2 \leq \int_{W_t^{\ub}} |\slashed{d}\psi|^2 =\int_{-r_0}^t \big(\int_{\Sigma_\tau} \mu |\slashed{d}\psi|^2\big)d\tau$, we obtain
\begin{equation}\label{a Q_1,2 1}
  \int_{W_t^{\ub}\cap W_{rare}} \frac{1}{2c} \mu^{-1}\Lb\mu|\slashed{d}\psi|^2 \lesssim \int_{-r_0}^t {\Eb}(\tau,\ub)d\tau.
\end{equation}

In the shock region, we make use of the key estimate \eqref{C3}. Therefore, we have
\begin{align*}
\int_{W_t^{\ub}\cap W_{shock}} \frac{1}{2c}\mu^{-1}\Lb\mu|\slashed{d}\psi|^2 &\leq -\int_{W_t^{\ub}\cap W_{shock}} \frac{1}{8c|t|}\mu^{-1}|\slashed{d}\psi|^2 \lesssim -\int_{W_t^{\ub}\cap W_{shock}} \mu^{-1}|\slashed{d}\psi|^2.
\end{align*}
We define
\begin{equation}\label{definition for K}
 K(t,\ub) = \int_{W_t^{\ub}\cap W_{shock}} \mu^{-1}|\slashed{d}\psi|^2,
\end{equation}
Therefore,
\begin{equation}\label{a Q_1,2 2}
\int_{W_t^{\ub}\cap W_{shock}} \frac{1}{2c}\mu^{-1}\Lb\mu|\slashed{d}\psi|^2 \lesssim -K(t,\ub).
\end{equation}

Finally, by combining \eqref{a Q_1,2 0}, \eqref{a Q_1,2 1} and \eqref{a Q_1,2 2}, for sufficiently small $\delta$, we obtain
\begin{equation}\label{bound on Q_1,2}
 \int_{W_t^{\ub}} Q_{1,2} \lesssim -K(t,\ub) +\int_{-r_0}^t  {\Eb}(\tau,\ub)d\tau.
\end{equation}
The negative term $-K(t,\ub)$ in above estimates plays a key role to control $\slashed{d} \psi$ in error terms. It will compensate the degeneracy of the $\mu$ factor in front of $\ds\psi$ in the energy.

\subsection{Estimates on $Q_{0,1}$}
The estimates on  $Q_{0,1}$ relies on the second key property \eqref{C2} of $\mu$. 
 We first separate the principal terms and lower order terms of $Q_{0,1}$ as follows:
\begin{equation*}
\begin{split}
  Q_{0,1} &=   \frac{1}{4c}\Big(\mu^{-1}L\mu+L\log(c^{-1}) + \Lb(c^{-1}\kappa)\Big)|\slashed{d}\psi|^2 = \frac{1}{2c}\mu^{-1}T\mu |\slashed{d}\psi|^2 + \lot,
\end{split}
\end{equation*}
where $ \lot = \frac{1}{4c} \big(c^{-2} \Lb \mu + L\log(c^{-1})+\Lb(c^{-1}\kappa)\big)|\slashed{d} \psi|^2$.
According to the estimates derived in previous sections, the terms in the parentheses are bounded by $M^2$. Hence,
\begin{equation}\label{a1}
\begin{split}
 \int_{W_t^{\ub}} \lot &\lesssim \int_{W_t^{\ub}}  M^2 |\slashed{d} \psi|^2 =M^2 \int_{0}^{\ub}\big(\int_{\Cb_{\ub'}}\mu |\slashed{d}\psi|^2\big)d\ub' \leq M^2 \int_{0}^{\ub} {F}(t,\ub') d\ub'.
 \end{split}
\end{equation}

For the principal term whose integrand is $\dfrac{1}{2c}\mu^{-1}T\mu |\slashed{d}\psi|^2 \sim \mu^{-1}T\mu |\slashed{d}\psi|^2$, due to the positivity of $|\ds \psi|^2$, one ignores the contribution from the negative part $\dfrac{1}{c}(\mu^{-1}T\mu)_{-} |\slashed{d}\psi|^2$. Therefore, it is bounded by
\begin{align*}
\lesssim \int_{W_t^{\ub}} (\mu^{-1}T\mu)_+ |\slashed{d}\psi|^2 = \int_{W_t^{\ub} \cap W_{rare}} (\mu^{-1}T\mu)_+ |\slashed{d}\psi|^2 + \int_{W_t^{\ub}\cap
  W_{shock}} (\mu^{-1}T\mu)_+ |\slashed{d}\psi|^2.
\end{align*}

In $W_{rare}$, since $\mu^{-1} \lesssim 1$, we have $\int_{W_t^{\ub}\cap W_{rare}}(\mu^{-1}T\mu)_+	 |\slashed{d}\psi|^2 \lesssim \int_{W_t^{\ub}\cap W_{rare}}|T\mu|	 |\slashed{d}\psi|^2 \leq \int_{W_t^{\ub}}|T\mu|	 |\slashed{d}\psi|^2$.
According to  \eqref{precise bound on L mu}, we have $T\mu \lesssim \delta^{-1}$, therefore,
\begin{equation}\label{a2}
\begin{split}
 \int_{W_t^{\ub}\cap W_{rare}}(\mu^{-1}T\mu)_+	 |\slashed{d}\psi|^2 \lesssim \int_{W_t^{\ub}}  \delta^{-1}|\slashed{d} \psi|^2 \leq \delta^{-1}\int_{0}^{\ub} {F}(t,\ub') d\ub'.
 \end{split}
\end{equation}

In $W_{shock}$, the argument relies on Proposition \ref{Proposition C2}. Indeed, we have
\begin{align*}
 \int_{W_t^{\ub}\cap W_{shock}}(\mu^{-1}T\mu)_+ |\slashed{d}\psi|^2 &\lesssim \int_{W_t^{\ub}\cap W_{shock}} \frac{1}{|t-s^*|^\frac{1}{2}} \delta^{-1}|\slashed{d} \psi|^2  \leq \delta^{-1} \int_{W_t^{\ub}} \frac{1}{|t-t^*|^\frac{1}{2}} |\slashed{d} \psi|^2.
\end{align*}
By definition, the last integral is equal to $\delta^{-1} \int_{-r_0}^{t}\frac{1}{|\tau-t^*|^\frac{1}{2}} \big( \int_{\Sigma\tau} \mu |\slashed{d} \psi|^2d\mu_{\slashed{g}}\big)d\tau$. In view of the definition of ${\Eb}(t,\ub)$, we obtain
\begin{equation}\label{a3}
 \int_{W_t^{\ub}\cap W_{shock}} (\mu^{-1}T\mu)_+ |\slashed{d}\psi|^2  \lesssim \delta^{-1} \int_{-r_0}^{t}\frac{1}{|\tau-t^*|^\frac{1}{2}} {\Eb}(\tau,\ub)d\tau
\end{equation}
We remark that the key feature of above estimates is that the factor $\dfrac{1}{|t-t^*|^\frac{1}{2}}$ is integrable in $t$. It will allow us to use Gronwall's inequality.

Finally, taking into account of the estimates \eqref{a1}, \eqref{a2} and \eqref{a3}, for sufficiently small $\delta$, we obtain

\begin{equation}\label{bound on Q_0,1}
 \int_{W_t^{\ub}} Q_{0,1} \lesssim \delta^{-1} \int_{-r_0}^{t}\frac{1}{|\tau-t^*|^\frac{1}{2}} {\Eb}(\tau,\ub)d\tau +\delta^{-1}\int_{0}^{\ub} {F}(t,\ub') d\ub'.
\end{equation}

\subsection{Estimates on other error terms} We deal with $Q_{0,2}$, $Q_{0,3}$, $Q_{0,4}$, $Q_{1,1}$, $Q_{1,3}$ and $Q_{1,4}$ one by one.

For $Q_{0,2}$, we have $|Q_{0,2}|= |-\frac{1}{c} X^A(c^{-2}\mu) \Lb\psi \cdot X_A(\psi)|\lesssim  |\slashed{d}\mu| |\Lb\psi||\slashed{d}\psi|$. According to \eqref{precise bound on d mu}, $|\slashed{d}\mu| \lesssim 1+ \delta M^4$. Hence, for sufficiently small $\delta$, we have
\begin{align*}
   \big|\int_{W_t^{\ub}} Q_{0,2}\big| &\lesssim  \int_{W_t^{\ub}} (1+\delta M^4)|\Lb\psi||\slashed{d}\psi| \lesssim \int_{W_t^{\ub}}|\Lb\psi|^2+|\slashed{d}\psi|^2
\end{align*}
Therefore, we obtain
\begin{equation}\label{estimates on Q_0,2}
  \big|\int_{W_t^{\ub}} Q_{0,2}\big| \lesssim   \int_{-r_0}^t {\Eb}(\tau,\ub) d\tau. 
\end{equation}

For $Q_{0,3}$,  we first recall that $|\zetab|\lesssim 1$ and $\etab = \zetab + \ds \mu$, therefore, $|\zetab| + |\etab| \lesssim 1+ \delta M^4$. We break $Q_{0,3}$ into two parts as follows:
\begin{align*}
 \big|\int_{W_t^{\ub}} Q_{0,3} \big| &\lesssim \int_{W_t^{\ub}} \mu^{-1}(|\zetab|+|\etab|)|L \psi||\slashed{d}\psi| \lesssim  \int_{W_t^{\ub}} \mu^{-1}|L \psi||\slashed{d}\psi|  = I_1 + I_2 \\
 &= \int_{W_t^{\ub}\cap W_{rare}}\mu^{-1}|L \psi||\slashed{d}\psi|+ \int_{W_t^{\ub}\cap W_{shock}} \mu^{-1}|L \psi||\slashed{d}\psi|.
\end{align*}
The integral $I_1=\int_{W_t^{\ub}\cap W_{rare}}\mu^{-1}|L \psi||\slashed{d}\psi|$ is taken in $W_{rare}$ where $\mu \geq \frac{1}{10}$, therefore, we obtain
\begin{align*}
 I_1 &\leq  \int_{W_t^{\ub}\cap W_{rare}}\!\!\!\!\!\! \mu^{-1}(|L \psi|^2+|\slashed{d}\psi|^2) \lesssim  \int_{W_t^{\ub}\cap W_{rare}}\!\!\!\!\!\!\mu^{-1}(|L\psi|^2+\mu|\slashed{d}\psi|^2)\lesssim\int_{W_t^{\ub}} \mu^{-1}(|L\psi|^2+\mu|\slashed{d}\psi|^2)\\
&\lesssim \int_{-r_0}^t {E}(\tau,\ub)d\tau + \int_{0}^{\ub} {F}(t,\ub') d\ub'.
\end{align*}
To control the integral $I_2=\int_{W_t^{\ub}\cap W_{shock}} \mu^{-1}|L\psi||\slashed{d}\psi|$,  we use the coercive term $K(t,\ub)$ from $Q_{1,2}$ to control the loss of $\mu^{-1}$:
\begin{align*}
 I_2 &\leq  \big(\int_{W_t^{\ub}\cap W_{shock}}\mu^{-1}|\slashed{d}\psi|^2\big)^{\frac{1}{2}}\big(\int_{W_t^{\ub}\cap W_{shock}} \mu^{-1}|L\psi|^2\big)^{\frac{1}{2}} \lesssim  K(t,\ub)^{\frac{1}{2}} \big(\int_{-r_0}^{t}  {E}(\tau,\ub) d\tau \big)^{\frac{1}{2}}.
\end{align*}
Hence, we obtain
\begin{equation}\label{estimates on Q_0,3}
    \big|\int_{W_t^{\ub}} Q_{0,3} \big| \lesssim \int_{-r_0}^t {E}(\tau,\ub)d\tau + \int_{0}^{\ub} {F}(t,\ub') d\ub'+ K(t,\ub)^{\frac{1}{2}} \big(\int_{-r_0}^{t}  {E}(\tau,\ub) d\tau \big)^{\frac{1}{2}}.
\end{equation}

For $Q_{0,4}$,  we have
\begin{align*}
 \big|\int_{W_t^{\ub}} Q_{0,4} \big| &\lesssim \int_{W_t^{\ub}} |\widehat{\chit}| |\slashed{d}\psi|^2 +\mu^{-1}|\tr_{\gt} {\chit}||L\psi||\Lb \psi| \lesssim \int_{W_t^{\ub}} \delta M^2 |\slashed{d}\psi|^2 +\mu^{-1}|L\psi||\Lb \psi| =I_1+I_2.
\end{align*}
The bound on $I_1$ is immediate: $I_1=  \delta M^2 \int_{W_t^{\ub}} |\slashed{d}\psi|^2 \leq \delta M^2 \int_{0}^{\ub} {F}(t,\ub')d\ub'$.
The bound on $I_2 = \int_{W_t^{\ub}}\mu^{-1}|L\psi||\Lb \psi|$ relies on the energy ${E}(t,\ub)$ and on the flux ${\Fb}(t,\ub)$:
\begin{align*}
 I_2 &\lesssim \int_{W_t^{\ub}}  \mu^{-1}|L\psi|^2 + \int_{W_t^{\ub}}\mu^{-1}|\Lb \psi|^2 =\int_{-r_0}^t {E}(\tau,\ub)d\tau + \int_{0}^{\ub} {\Fb}(t,\ub')d\ub'.
\end{align*}
Hence, we obtain
\begin{equation}\label{estimates on Q_0,4}
  \big| \int_{W_t^{\ub}} Q_{0,4} \big| \lesssim  \int_{-r_0}^t {E}(\tau,\ub)d\tau + \delta M^2 \int_{0}^{\ub} {F}(t,\ub')d\ub' +\int_{0}^{\ub} {\Fb}(t,\ub') d\ub'.
\end{equation}

For $Q_{1,1}$, according to \eqref{expansion of Lb mu}, we have $\big|Q_{1,1}\big| =\big|\frac{1}{2c\mu} \Lb(c^{-2}\mu)|\Lb \psi|^2\big| \lesssim \big|\mu^{-1}\Lb\mu|\Lb \psi|^2\big| \lesssim \mu^{-1} |\Lb \psi|^2$. Hence,
\begin{equation}\label{estimates on Q_1,1}
   \big| \int_{W_t^{\ub}} Q_{1,1} \big| \lesssim  \int_{0}^{\ub} {\Fb}(t,\ub')d\ub'.
\end{equation}

For $Q_{1,3}$, since $|\zetab| + |\etab| \lesssim 1$, we break the integral into two parts:
\begin{align*}
 \big|\int_{W_t^{\ub}} Q_{1,3} \big| &\lesssim \int_{W_t^{\ub}} \mu^{-1}|\Lb \psi||\slashed{d}\psi| = I_1 + I_2 \\
 &= \int_{W_t^{\ub}\cap W_{rare}}\mu^{-1}|\Lb \psi||\slashed{d}\psi|+ \int_{W_t^{\ub}\cap W_{shock}} \mu^{-1}|\Lb \psi||\slashed{d}\psi|.
\end{align*}
In the rarefaction wave region, since $\mu^{-1} \sim 1$,  we obtain
\begin{align*}
 I_1 &\lesssim  \int_{W_t^{\ub}\cap W_{rare}} \mu^{-1}(|\Lb \psi|^2+|\slashed{d}\psi|^2)  \approx \int_{W_t^{\ub}\cap W_{rare}} |\Lb\psi|^2+|\slashed{d}\psi|^2\\
 &\lesssim\int_{W_t^{\ub}}|\Lb\psi|^2+|\slashed{d}\psi|^2 \lesssim \int_{-r_0}^t {\Eb}(\tau,\ub)d\tau.
\end{align*}
In the shock wave region, we still use $K(t,\ub)$ to control the loss of $\mu^{-1}$:
\begin{align*}
 I_2 &\leq  \big(\int_{W_t^{\ub}\cap W_{shock}}\mu^{-1}|\slashed{d}\psi|^2\big)^{\frac{1}{2}}\big(\int_{W_t^{\ub}\cap W_{shock}} \mu^{-1}|\Lb\psi|^2\big)^{\frac{1}{2}} \lesssim  K(t,\ub)^{\frac{1}{2}} \big(\int_{0}^{\ub}  {\Fb}(t,\ub') d\ub' \big)^{\frac{1}{2}}.
\end{align*}
Hence,
\begin{equation}\label{estimates on Q_1,3}
    \big|\int_{W_t^{\ub}} Q_{1,3} \big| \lesssim \int_{-r_0}^t {\Eb}(\tau,\ub)d\tau + K(t,\ub)^{\frac{1}{2}} \big(\int_{0}^{\ub}  {\Fb}(t,\ub') d\ub' \big)^{\frac{1}{2}}.
\end{equation}

For $Q_{1,4}$,  we have
\begin{align*}
 \big|\int_{W_t^{\ub}} Q_{1,4} \big| &\lesssim \int_{W_t^{\ub}} |\widehat{\chibt}| |\slashed{d}\psi|^2 +\mu^{-1}|\tr_{\gt}{\chibt}||L\psi||\Lb \psi| \lesssim \int_{W_t^{\ub}} \delta M^2 |\slashed{d}\psi|^2 +\mu^{-1}|L\psi||\Lb \psi| \\
& \lesssim \delta M^2 \int_{0}^{\ub} {F}(t,\ub')d\ub' + \Big( \delta \int_{W_t^{\ub}}  \mu^{-1}|L\psi|^2 + \delta^{-1}\int_{W_t^{\ub}}\mu^{-1}|\Lb \psi|^2\Big).
\end{align*}
Therefore, we obtain
\begin{equation}\label{estimates on Q_1,4}
  \big| \int_{W_t^{\ub}} Q_{1,4} \big| \lesssim \delta \int_{-r_0}^t {E}(\tau,\ub)d\tau + \delta M^2 \int_{0}^{\ub} {F}(t,\ub')d\ub' + \delta^{-1}\int_{0}^{\ub} {\Fb}(t,\ub')d\ub'.
\end{equation}

\subsection{Summary}

In view of \eqref{energy identities} and \eqref{comparisons between energy and reduced energy}, since $\psi$ vanishes to infinite order on $\Cb_{0}$, we have
\begin{equation*}
\begin{split}
 \Big({E}(t,\ub) + {F}(t,\ub)\Big) &+ \delta^{-1}\Big({\Eb}(t,\ub) + {\Fb}(t,\ub)\Big) \lesssim {E}(-r_0,\ub)  + \delta^{-1}{\Eb}(-r_0,\ub)  \\
& +  \sum_{i=1}^4\int_{W_t^{\ub}} Q_{0,i} +  \delta^{-1}\sum_{i=1}^4 \int_{W_t^{\ub}}Q_{1,i} +  \Big|\int_{W_t^{\ub}}\rho\cdot L\psi \Big| + \delta^{-1}\Big|\int_{W_t^{\ub}}\rho \cdot \Lb\psi\Big|.
\end{split}
\end{equation*}
We bound sums $\sum_{i=1}^4\int_{W_t^{\ub}} Q_{0,i}$ and $\sum_{i=1}^4 \int_{W_t^{\ub}}Q_{1,i}$ by \eqref{bound on Q_1,2}, \eqref{bound on Q_0,1}, \eqref{estimates on Q_0,2}, \eqref{estimates on Q_0,3}, \eqref{estimates on Q_0,4}, \eqref{estimates on Q_1,1}, \eqref{estimates on Q_1,3} and \eqref{estimates on Q_1,4}. Therefore, we have
\begin{equation*}
\begin{split}
&\quad \Big({E}(t,\ub) + {F}(t,\ub)\Big) + \delta^{-1}\Big({\Eb}(t,\ub) + {\Fb}(t,\ub)\Big) \\
&\lesssim {E}(-r_0,\ub)  + \delta^{-1}{\Eb}(-r_0,\ub)  +  \Big|\int_{W_t^{\ub}}\rho\cdot L\psi \Big| + \delta^{-1}\Big|\int_{W_t^{\ub}}\rho \cdot \Lb\psi \Big| \\
&\quad+ \delta^{-1}\Big(\underbrace{-K(t,\ub) +\int_{-r_0}^t  {\Eb}(\tau,\ub)d\tau}_{Q_{1,2}}\Big) + \underbrace{\Big( \delta^{-1} \int_{-r_0}^{t}\frac{1}{|\tau-t^*|^\frac{1}{2}} {\Eb}(\tau,\ub)d\tau +\delta^{-1}\int_{0}^{\ub} {F}(t,\ub') d\ub'\Big)}_{Q_{0,1}}\\
&\quad + \underbrace{\Big( \int_{-r_0}^t  [{E}(\tau,\ub)+ {\Eb}(\tau,\ub)] d\tau + \int_{0}^{\ub} [{F}(t,\ub')+ {\Fb}(t,\ub')] d\ub' + K(t,\ub)^{\frac{1}{2}} \big(\int_{-r_0}^{t}  {E}(\tau,\ub) d\tau \big)^{\frac{1}{2}}\Big)}_{Q_{0,2}+Q_{0,3} +Q_{0,4}} \\
&\quad +\delta^{-1}\underbrace{\Big( \int_{-r_0}^t  [\delta {E}(\tau,\ub)+ {\Eb}(\tau,\ub) ]d\tau + \int_{0}^{\ub} [\delta M^2 {F}(t,\ub')+ \delta^{-1}{\Fb}(t,\ub')] d\ub' + K(t,\ub)^{\frac{1}{2}} \big(\int_{-r_0}^{t}  {\Fb}(t,\ub') d\ub' \big)^{\frac{1}{2}}\Big)}_{Q_{1,1}+Q_{1,3} +Q_{1,4}}.
\end{split}
\end{equation*}
For sufficiently small $\delta$, we can rewrite the estimates as
\begin{equation*}
\begin{split}
&\quad  \Big({E}(t,\ub) + {F}(t,\ub)\Big) + \delta^{-1}\Big({\Eb}(t,\ub) + {\Fb}(t,\ub)\Big) \\
&\lesssim  {E}(-r_0,\ub) +\delta^{-1}{\Eb}(-r_0,\ub)  +   \Big|\int_{W_t^{\ub}}\rho\cdot L\psi \Big| + \delta^{-1}\Big|\int_{W_t^{\ub}}\rho \cdot \Lb\psi \Big| \\
&\quad  -\delta^{-1}K(t,\ub) + \delta^{-1}\int_{-r_0}^{t}\frac{1}{|\tau-t^*|^\frac{1}{2}} {\Eb}(\tau,\ub)d\tau\\ &+\underbrace{\delta^{-1}K(t,\ub)^{\frac{1}{2}}\Big(\delta \big(\int_{-r_0}^{t}  {E}(\tau,\ub) d\tau \big)^{\frac{1}{2}}+ \big(\int_{0}^{t}  {\Fb}(t,\ub') d\ub' \big)^{\frac{1}{2}}\Big)}_{I}\\
&\quad + \Big[ \int_{-r_0}^t    [{E}(\tau,\ub)+ \delta^{-1}{\Eb}(\tau,\ub) ]d\tau + \delta^{-1}\int_{0}^{\ub}[ {F}(t,\ub')+ \delta^{-1} {\Fb}(t,\ub') ]d\ub'\Big]
\end{split}
\end{equation*}
For the term involving $I$, we use Cauchy-Schwarz inequality and put a small parameter $\varepsilon_0$ in front of the $K(t,\ub)$, i.e, $
I \lesssim \delta^{-1}\varepsilon_0 K(t,\ub) + \delta^{-1}\frac{1}{\varepsilon_0}\Big(\delta \int_{-r_0}^{t}  {E}(\tau,\ub) d\tau + \int_{-r_0}^{t}  {\Fb}(t,\ub') d\ub'\Big)$. Therefore, the resulting $K(t,\ub)$ term can be absorbed by the coercive term $-K(t,\ub)$ and we obtain
\begin{equation*}
\begin{split}
&\quad  \Big({E}(t,\ub) + {F}(t,\ub)\Big) + \delta^{-1}\Big({\Eb}(t,\ub) + {\Fb}(t,\ub)\Big) \\
&\lesssim  {E}(-r_0,\ub) + \delta^{-1}{\Eb}(-r_0,\ub)  +   \Big|\int_{W_t^{\ub}}\rho\cdot L\psi \Big| + \delta^{-1}\Big|\int_{W_t^{\ub}}\rho \cdot \Lb\psi \Big|  \\
&-\delta^{-1}K(t,\ub) + \delta^{-1}\int_{-r_0}^{t}\frac{1}{|\tau-t^*|^\frac{1}{2}} {\Eb}(\tau,\ub)d\tau \\
&\quad + \Big[ \int_{-r_0}^t    {E}(\tau,\ub)+ \delta^{-1}{\Eb}(\tau,\ub) d\tau + \delta^{-1}\int_{0}^{\ub} {F}(t,\ub')+ \delta^{-1} {\Fb}(t,\ub') d\ub'\Big]
\end{split}
\end{equation*}
By Gronwall's inequality (the factor $\frac{1}{|\tau-t^*|^\frac{1}{2}}$ is integrable in $\tau$!), we can remove all the integral terms on the last line, this proves the Fundamental Energy Estimates (\textbf{F.E.E}) for $\Box_{\gt}\psi = \rho$:
\begin{equation}
\begin{split}
&\quad  \Big({E}(t,\ub) + {F}(t,\ub)\Big) + \delta^{-1}\Big({\Eb}(t,\ub) + {\Fb}(t,\ub)+K(t,\ub)\Big) \\
&\lesssim   {E}(-r_0,\ub) + \delta^{-1}{\Eb}(-r_0,\ub)  +  \Big|\int_{W_t^{\ub}}\rho\cdot L\psi \Big| +\delta^{-1} \Big|\int_{W_t^{\ub}}\rho \cdot \Lb\psi \Big|.
\end{split}\tag{\textbf{F.E.E}}
\end{equation}
\begin{remark}
We see that the energy estimates only imply
\begin{align*}
\Eb(t,\ub)+K(t,\ub)\lesssim \delta,
\end{align*}
which does not recover the full regularity of $\Lb\psi$ and $\ds psi$ with respect to $\delta$, as indicated by bootstrap assumption. We will see finally that this full regularity in $\delta$ is recovered by using the estimates for $E(t,\ub)$ together with Lemma \ref{Calculus Inequality} and the commutation of $Q$.
\end{remark}

\section{Comparisons between Euclidean and optical geometries}\label{Appendix_Estimates on higher order derivatives of deformation tensors for Ri}

There are two different geometries coming into play on $W^{*}_{\delta}$, namely, the Minkowski geometry and the optical geometry.
The knowledge on two metrics $g_{\mu\nu}$ (or $\gt_{\mu\nu}$) and $m_{\mu\nu}$ is essentially tied to the estimates on the solution of $(\star)$.
There are many ways to compare two geometries, e.g., we may 
consider the Cartesian coordinates $x^k$ as functions of the optical coordinates $(t,\ub, \theta)$. In what follows, we also study other quantities as $y^k$, $z^k$, $\lambda_i$, etc. As a by product, we will also obtain estimates for the lower order objects, i.e., with order $< \Ntop+1$.

Given a vectorfield $V$, we define the null components of its deformation tensor as
\begin{equation}\label{definition for Zb and pislash}
{}^{(V)}\Zb_A = {}^{(V)}\pi (\Lb, X_A), \ \ {}^{(V)}Z_A = {}^{(V)}\pi (L, X_A), \ \  {}^{(V)}\slashed{\pi}_{AB} =  {}^{(V)}\pi (X_A, X_B).
\end{equation}
The projection of Lie derivative $\mathcal{L}_{V}$ to $S_{t,\ub}$ is denoted as $\slashedL_{V}$.
The shorthand notation $\slashedL_{Z_i}^{\, \alpha}$ to denote $\slashedL_{Z_{i_1}} \slashedL_{Z_{i_2}}\cdots \slashedL_{Z_{i_k}} $
for a multi-index $\alpha =(i_1,\cdots,i_k)$. We will show that,
for all $|\alpha| \leq \Ninfty$, we have $\slashedL_{Z_i}^{\, \alpha-1} \chib', \slashedL_{Z_i}^{\, \alpha-1}  {}^{(Z_j)}\Zb, \slashedL_{Z_i}^{\, \alpha-1}  {}^{(Z_j)}\slashed{\pi}, \slashedL_{Z_i}^{\, \alpha-1}  {}^{(Q)}\left(\slashed{\pi}+4\right) \in \O^{|\alpha|}_{2-2l}$,
where $l$ is the number of $T$'s in $Z_i$'s, $|\alpha|\geq 1$ and $Z_{j}\slashed{=}T$. If $Z_{j}=T$, then we have $\slashedL_{Z_i}^{\, \alpha-1}  {}^{(T)}\Zb, \slashedL_{Z_i}^{\, \alpha-1}  {}^{(T)}\slashed{\pi} \in \O^{|\alpha|}_{-2l}$.
The idea of the proof is to compare $g_{\mu\nu}$ and $m_{\mu\nu}$ via quantities such as $x^i$, $y^i$, $z^i$, $\widehat{T}^i$ and $\lambda_i$.
Similarly, we will derive $L^2$-estimates on objects of order $\leq \Nmu$. The $L^2$ estimates depend on the $L^\infty$ estimates up to order $\Ninfty+2$.
In the course of the proof, it will be clear why $\Ninfty$ is chosen to be approximately $\frac{1}{2} N_{top}$.

\subsection{$L^\infty$ estimates}
We assume (B.1): for all $\ |\alpha| \leq N_{\infty}$, $Z_i^{\alpha+2} \psi \in \Psi^{|\alpha|+2}_{1-2l}$.
\begin{proposition}\label{L infity estimates on lot}
For sufficiently small $\delta$, for all $|\alpha| \leq \Ninfty$ and $t\in [-r_0, s^*]$, we can bound $\big\{\slashedL_{Z_i}^{\, \alpha} \chib'$, $\slashedL_{Z_i}^{\, \alpha}  {}^{(Z_j)}\Zb$, $\slashedL_{Z_i}^{\, \alpha}  {}^{(Z_j)}\slashed{\pi}, \slashedL_{Z_i}^{\, \alpha}  {}^{(Q)}\left(\slashed{\pi}+4\right), Z_i^{\,\alpha+1} y^j, Z_i^{\,\alpha+1} \lambda_j\} \subset \O_{2-2l}^{|\alpha|+1}$ and\\ $\left\{
Z_i^{\,\alpha+1} x^j,  Z_i^{\,\alpha+1} \widehat{T}^j, \slashedL_{Z_i}^{\, \alpha+1} Z_j, \slashedL_{Z_i}^{\, \alpha}  {}^{(T)}\Zb, \slashedL_{Z_i}^{\, \alpha}  {}^{(T)}\slashed{\pi} \right\} \subset \O_{-2l}^{|\alpha|+1}$ in terms of $Z_{i}^{\alpha+2}\psi\in\Psi^{|\alpha|+2}_{1-2l}$. Here $l$ is the number of $T$'s in $Z_i$'s.
\end{proposition}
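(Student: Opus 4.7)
The plan is to prove Proposition~\ref{L infity estimates on lot} by induction on $|\alpha|$, simultaneously for all the quantities listed, with the base case $|\alpha|=0$ already covered by the pointwise estimates in Section 3.3 (for $\chib'$, the deformation tensors of $T$, $R_i$, $Q$, as well as $y^i$, $\lambda_i$, $z^i$). The inductive step is organized in the following order, chosen so that each estimate depends only on quantities whose bound has been previously established at the same or a lower order.

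First I would treat the Cartesian quantities $x^j$, $\widehat T^j$, $\lambda_j$, $y^j$ by commuting $\slashedL_{Z_i}^{\alpha}$ (or $Z_i^{\alpha+1}$) through the transport equations $\Lb x^j = \Lb^j = -c\widehat T^j$, $\Lb \widehat T^j = -X^A(c)\,X_A{}^j$, $\Lb\lambda_i = (\Omega_i)^k X^A(c)\,X_A{}^k$, and then integrating along $\Lb$. At each step the commutator $[Z_i^{\alpha+1},\Lb]$ is expanded via the null decomposition of the deformation tensors of lower order (which are already controlled by the inductive hypothesis), while the source terms are polynomial expressions in $c$, $\mu$, $\slashed g$, $\widehat T^j$ with derivatives of $\psi$, which are $\O$-terms by (B.1). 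The factor $\delta^{2-2l}$ (versus $\delta^{-2l}$) enters for $\lambda_j,y^j$ because they measure the deviation from Minkowski data and hence vanish identically on $\Sigma_{-2}$ up to an $O(\delta)$-correction, so the contribution of the initial data in Gronwall's inequality carries an extra $\delta$. The vectorfields $\slashedL_{Z_i}^{\alpha+1}Z_j$ are then obtained by expressing $Z_j$ in the rectangular frame $\{\partial_i\}$ via $R_i = \Omega_i - \lambda_i \widehat T^k\partial_k$, $T = c^{-1}\mu\,\widehat T$, and $Q = t\Lb$.

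Once the scalars are under control, the tensor $\slashedL_{Z_i}^{\alpha}\chib'$ is handled by commuting $\slashedL_{Z_i}^{\alpha}$ with the transport equation \eqref{transport equation for chib prime}: this yields a linear system of the form
\begin{equation*}
\slashedL_\Lb\bigl(\slashedL_{Z_i}^{\alpha}\chib'\bigr) = e\,\slashedL_{Z_i}^{\alpha}\chib' + \bigl[\slashedL_{Z_i}^{\alpha},\slashedL_\Lb\bigr]\chib' + (\text{source}),
\end{equation*}
where the source contains $\slashedL_{Z_i}^{\alpha}\alphab'$ (algebraic in derivatives of $\rho=\psi_0^2$ and $\slashedL_{Z_i}^{\alpha}\slashed g^{-1}$), and the commutator produces only terms with $\leq |\alpha|-1$ applications of $\slashedL_{Z_i}$ on $\chib'$ paired with deformation tensor components (already known). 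An $|\alpha|$-fold Gronwall inequality along $\Lb$, mimicking the derivation of \eqref{precise bound on chib'}, closes the loop. All remaining deformation tensor derivatives $\slashedL_{Z_i}^{\alpha}{}^{(Z_j)}\!\Zb$, $\slashedL_{Z_i}^{\alpha}{}^{(Z_j)}\!\slashed\pi$, and $\slashedL_{Z_i}^{\alpha}{}^{(T)}\!\Zb$, $\slashedL_{Z_i}^{\alpha}{}^{(T)}\!\slashed\pi$ are then obtained purely algebraically from \eqref{deformation tensor of T}, \eqref{deformation tensor of Q}, \eqref{deformation tensor of Rotational R_i in T,Lb,X_A}, since every ingredient ($\chib$, $\chi$, $\zetab$, $\etab$, $\mu$, $c$, $\lambda_i$, $y^i$, $z^i$ and their $Z$-derivatives) is now in hand. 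The estimate $\slashedL_{Z_i}^{\alpha}{}^{(Q)}(\slashed\pi+4)\in\O^{|\alpha|+1}_{2-2l}$ is the only place where the sharper bound \eqref{Improved estimate for trQ} is needed: one rewrites $\tr{}^{(Q)}\!\widetilde{\slashed\pi}+4 = 2c^{-1}t\,\tr_{\widetilde{\slashed g}}\widetilde{\chib}'+4\bigl(\tfrac{c^{-1}\ub}{\ub-t}-(c^{-1}-1)\bigr)$ and applies $\slashedL_{Z_i}^\alpha$, each factor now being controlled at the required $\delta^{2-2l}$ scale.

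The main obstacle is the bookkeeping of the two parameters simultaneously: the differentiation order $|\alpha|$ and the $T$-count $l$. Each time a $T$ falls on $\mu$, $\chib'$ or the connection coefficients it costs a factor $\delta^{-1}$, and the commutators $[\slashedL_{Z_i},\slashedL_\Lb]$ rearrange $T$'s among the various factors; one must check that the accounting always respects the target exponent $2-2l$ (resp.\ $-2l$). A related difficulty is the commutator $[Q,\Lb] = -\Lb$, which threatens to produce a term whose $\Lb$-derivative has already been counted; this is handled by splitting $Q = t\Lb$ and using that $t$ is bounded on $W^*_\delta$, together with the observation that $Q$ appears at most twice in any string of $Z$'s. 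Modulo these combinatorial checks, the proof amounts to a disciplined iteration of the argument already carried out at order zero, and no new analytic ideas beyond Gronwall and the bootstrap estimates from Section 3.3 are required.
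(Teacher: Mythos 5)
Your overall scheme (induction on $|\alpha|$, Gronwall along $\Lb$ for $\chib'$, algebraic extraction of the deformation tensors at the end) is the right shape, and your treatment of $\slashedL_{Z_i}^{\alpha}\chib'$ and of $\tr{}^{(Q)}\widetilde{\slashed\pi}+4$ matches the paper. But your first step — bounding $x^j$, $\widehat T^j$, $\lambda_j$, $y^j$ by commuting $Z_i^{\alpha+1}$ through the transport equations $\Lb x^j=-c\widehat T^j$, $\Lb\widehat T^j=X^A(c)\,X_A^j$, $\Lb\lambda_i=(\Omega_i)^k X^A(c)\,X_A^k$ — has a genuine derivative loss and does not close. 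The right-hand sides of the $\widehat T^j$- and $\lambda_i$-equations contain the factor $X_A^j=\slashed d_A x^j$, which is already \emph{one} tangential derivative of $x^j$. So $Z_i^{\alpha+1}$ applied to these sources produces $\slashed d\,Z_i^{\alpha+1}x^j$ plus commutators, which is an order-$(|\alpha|+2)$ quantity in $x^j$. You are in the middle of proving the order-$(|\alpha|+1)$ estimates for $x^j$, so this term is not yet controlled, and it is not made harmless by the smallness of $X^A(c)$ — the smallness controls the amplitude, not the order. The same obstruction recurs in the $y^j$-equation once $\widehat T^j$ or $\lambda_j$ is eliminated. Your coupled Gronwall system therefore never closes at the claimed order.

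The paper avoids this entirely by not using transport for the rectangular quantities. It exploits the explicit structure $R_i=\Omega_i-\lambda_i\widehat T^k\partial_k$ together with the fact that $\Omega_i^{\alpha+1}x^j$ is an \emph{exactly computable} linear combination of the $x^k$'s: writing $\delta_{\alpha+1,i}^j=\Omega_i^{\alpha+1}x^j-R_i^{\alpha+1}x^j=R_i^\alpha\bigl(\lambda_i\widehat T^j\bigr)$, one sees that $\delta_{\alpha+1,i}^j$ is a purely order-$\leq|\alpha|$ expression in $\lambda,\widehat T,x,y$, all of which the induction hypothesis already controls; this bounds $R_i^{\alpha+1}x^j$ with no integration and no derivative loss. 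Then one proves the $\chib'$ estimate (your Step, correctly set up), and finally obtains $R_i^{\alpha+1}y^j$ \emph{algebraically} from the identity $R_i y^j=\bigl(-c^{-1}\chib^A_B-\tfrac{\delta^A_B}{\ub-t}\bigr)R_i{}^A\slashed d_B x^j$, applying $R_i^\alpha$ and noting that $\slashed d x^j$ is only one derivative of $x^j$ so $R_i^\alpha\slashed d x^j$ is order $|\alpha|+1$ — exactly what Step 1 provides, with $\lambda_j=\epsilon_{jkl}x^ky^l$ then following for free. If you replace your transport treatment of the rectangular quantities by this algebraic descent and keep the rest of your scheme, the argument closes.
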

The estimates on $x^j$ should be viewed as a good comparison between Euclidean and optical geometries on each slice $\Sigma_t$. 
\begin{proof}
We do induction on the order. When $|\alpha|=0$, the estimates are treated in Section 3. Here we only treat the estimates when $l=0$, when $l\geq1$, we can use the structure equation \eqref{Structure Equation T chib} to reduce the problem to the estimates for $\mu$, which will be treated in Proposition \ref{prop mu L infty estimates}. Here the loss of $\delta$ in the estimates for $\slashedL_{Z_i}^{\, \alpha}  {}^{(T)}\Zb, \slashedL_{Z_i}^{\, \alpha}  {}^{(T)}\slashed{\pi}$ comes from applying $T$ to $\dfrac{2}{t-\ub}$, which is the principal part of $\tr\chib$. Given $|\alpha| \leq \Ninfty$, 
we assume that estimates hold for terms of order $\leq |\alpha|$. In particular, 
we have $\slashedL_{R_i}^{\, \beta} \chib', R_i^{\,\beta+1} y^j \in \O^{|\beta|}_{2}$ for all $|\beta| \leq |\alpha|$. We prove the proposition for $|\alpha|+1$.

\underline{Step 1 \ Bounds $R_i^{\,\alpha+1} x^j$}. \ \  Let $\delta_{\alpha+1,i}^{j} = \Omega_i^{\alpha+1} x^j - R_i^{\,\alpha+1} x^j$  where $\Omega_i$'s are the standard rotational vectorfields on Euclidean space. It is obvious that $\Omega_i^{\alpha+1} x^j$ is equal to some $x^k$, therefore, bounded by $r$ hence by a universal constant. Since $R_i = \Omega_i -\lambda_i \widehat{T}^j \partial_j$, $R_{i}x^{j}\in\O^{0}_{0}$ and by ignoring all the numerical constants, we have
\begin{equation*}
 \delta_{\alpha+1,i}^{j} =  R_{i}^{\alpha}\left(\lambda_i \widehat{T}^j\right)= R_{i}^{\alpha} \left(\lambda_i \left(\frac{x^j}{\ub-t}+y^j\right)\right).
\end{equation*}
Here the index $i$ is not a single index. It means we apply a string of different $R_{i}$s. This notation applies in the following when a string of $R_{i}$s are considered. 

Since the above expression has total order  $\leq |\alpha|$, by the induction hypothesis 
we obtain immediately that $\delta_{\alpha+1,i}^{j} \in \O_{2}^{|\alpha|}$. In view of the definition of $\delta_{\alpha+1,i}^{j}$, we then have $R_i^{\,\alpha+1} x^j \in \O_{0}^{|\alpha|+1}$.

\underline{Step 2 \ Bounds on $\slashedL_{R_i}^{\, \alpha} \chib'$}. \ \ We commute $\slashedL_{R_i}^{\, \alpha}$ with \eqref{transport equation for chib prime} to derive
\begin{equation}\label{b2}
\begin{split}
\slashedL_{\Lb} \slashedL_{R_i}^{\, \alpha}  \chib' &= \left[ \slashedL_{\Lb}, \slashedL_{R_i}^{\, \alpha} \right]\chib' + e \cdot \slashedL_{R_i}^{\, \alpha} \chib' + \chib' \cdot  \slashedL_{R_i}^{\, \alpha} \chib' + \sum_{|\beta_1|+|\beta_2| = |\alpha| \atop |\beta_2|<|\alpha|}  {R_i}^{\beta_1} e \cdot \slashedL_{R_i}^{\beta_2}\chib' \\
&\quad +   \sum_{|\beta_1|+|\beta_2| +|\beta_3| = |\alpha| \atop |\beta_1|<|\alpha|, |\beta_3|<|\alpha|}  \slashedL_{R_i}^{\beta_1} \chib' \cdot \slashedL_{R_i}^{\beta_2}\slashed{g}^{-1} \cdot \slashedL_{R_i}^{\beta_3}\chib'+ \slashedL_{R_i}^{\, \alpha}\left(\frac{e\slashed{g}_{AB}}{t-\ub} -\alphab'_{AB}\right).
\end{split}
\end{equation}
Since $e =c^{-1}\dfrac{d c}{d\rho}\Lb\rho$, ${R_i}^{\alpha} e$ is of order $\leq |\alpha|+1$. By (B.1), we have $R_{i}^\alpha e \in \Psi_{2}^{|\alpha|+1}$.
Similarly, by the explicit formula of $\alphab'_{AB}$, we have $\slashedL_{R_i}^\beta \alphab'_{AB} \in \Psi_{2}^{|\alpha|+2}$. Since ${}^{(R_i)}\pi_{AB} = 2c^{-1}\lambda_i \chib_{AB}$ and $\slashedL_{R_i}^{\beta_2}\slashed{g} = \slashedL_{R_i}^{\beta_2-1}\,{}^{(R_i)}\slashed{\pi}_{AB}$, by the estimates derived in previous sections and by the induction hypothesis, we can rewrite \eqref{b2} as
\begin{equation*}
\slashedL_{\Lb} \slashedL_{R_i}^{\, \alpha}  \chib' = \big[ \slashedL_{\Lb}, \slashedL_{R_i}^{\, \alpha} \big]\chib' +\O_{2}^{1}\cdot \slashedL_{R_i}^{\, \alpha-1} \chib'+\Psi_{2}^{\leq |\alpha|+2}.
\end{equation*}
The commutator can be computed as $\big[ \slashedL_{\Lb}, \slashedL_{R_i}^{\, \alpha} \big]\chib' = \sum_{|\beta_1|+|\beta_2| = |\alpha|-1} \slashedL_{R_i}^{\beta_1}\slashedL_{{}^{(R_i)}\Zb}\slashedL_{R_i}^{\beta_2}\chib'$.
Since ${}^{(R_i)}\Zb_A = -\chib'_{AB}R_i{}^B + \varepsilon_{ijk}z^{j}X_A{}^k + \lambda_i \slashed{d}_A (c)$ and $z^j=-\frac{(c-1)x^j}{\ub-t}-cy^j$, the commutator term is of type $\mathcal{O}^{1}_{2}\cdot\slashed{\mathcal{L}}^{\alpha}_{R_{i}}\chib'$. Therefore, we have
\begin{equation*}
\slashedL_{\Lb} \slashedL_{R_i}^{\, \alpha}  \chib' = \O_{2}^{1}\cdot \slashedL_{R_i}^{\, \alpha} \chib'+\Psi_{2}^{\leq|\alpha|+2}.
\end{equation*}
By integrating this equation from $-r_0$ to $t$, the Gronwall's inequality yields $\|\slashedL_{R_i}^{\, \alpha} \chib'\|_{L^\infty(\Sigma_t)} \lesssim_M \delta$.

\underline{Step 3 \ Bounds on $R_i^{\,\alpha+1} y^j$ and $R_i^{\,\alpha+1}\lambda_{j}$}.  Since $R_i y^j = (-c^{-1}\chib^{A}_{B}-\frac{\delta^{A}_{B}}{\ub-t}) R_{i}^A \slashed{d}_B x^j$, schematically we have
\begin{equation*}
 R_i^{\,\alpha+1} y^j = R_{i}^{\alpha}R_k y^j = R_i^{\alpha}\left( \left(c^{-1}\chib+\frac{\delta}{\ub-t}\right)\cdot R_k \cdot \slashed{d} x^j \right).
\end{equation*}
 We distribute $R_i^\alpha$ inside the parenthesis by Leibniz rule. (Here again, the index $i$ is not a single index, so we use index $k$ to distinct the last rotation vectorfield.) Therefore, a typical term would be either $\slashedLRi^{\beta_1}\chib\cdot \slashedLRi^{\beta_2}\slashed{g}^{-1}\cdot \slashedLRi^{\beta_3} R_k \cdot \slashed{d} R_i^{\beta_4} x^j$ or $\slashedLRi^{\beta_1}\slashed{g}\cdot \slashedLRi^{\beta_2}\slashed{g}^{-1}\cdot \slashedLRi^{\beta_3} R_k \cdot \slashed{d} R_i^{\beta_4} x^j$ with $|\beta_1|+|\beta_2|+|\beta_3|+|\beta_4| = |\alpha|$.

There are only two terms where are not included in the induction hypothesis: $\slashedLRi^{\beta_2} \slashed{g}_{AB}$ and $\slashedLRi^{\beta_3} R_j$. The first term is in fact easy to handle by induction hypothesis and estimates derived in Step 1 and Step 2, since $\slashedLRi \slashed{g}_{AB} = {}^{(R_i)}\slashed{\pi}_{AB} = 2\lambda_i c^{-1}\chib_{AB}$. For the second one, we use the following expression:
\begin{align*}
\slashedLRi R_j = -\sum_{k=1}^3 \varepsilon_{ijk} R_k + \lambda_i\big( \frac{c^{-1}-1}{\ub-t}\slashed{g}&-c^{-1}(\chib+\frac{\slashed{g}}{\ub-t})\big)R_j-\lambda_j\big( \frac{c^{-1}-1}{\ub-t}\slashed{g}-c^{-1}(\chib+\frac{\slashed{g}}{\ub-t})\big)R_i\\
 & -\lambda_i \varepsilon_{jkl}y^k\slashed{d}x^l \cdot \slashed{g}^{-1}+ \lambda_j \varepsilon_{ikl}y^k\slashed{d}x^l\cdot \slashed{g}^{-1}.
\end{align*}
Therefore, $\slashedLRi^{\beta_3} R_k =\slashedLRi^{\beta_3-1} \slashedLRi R_k = \O_{0}^{|\beta_3|-1} +\O_{\geq2}^{|\beta_3|-1} R_i^{\beta_3}x^j $. Finally, we obtain that
\begin{equation*}
 R_i^{\,\alpha+1} y^j = \O^{\leq |\alpha|}_{2} + \O^{\leq |\alpha|}_{2} \cdot \slashed{d} R_i^{\beta_4} x^j.
\end{equation*}
Although $ \slashed{d} R_i^{\beta_4} x^j$ and $\slashedLRi^{\beta_1}\chib$ may have order $|\alpha|+1$, they have been controlled from previous steps. This gives the bounds on $R_i^{\,\alpha+1} y^j$. Then by the fact that $\lambda_{j}=\epsilon_{jkl}x^{k}y^{l}$, the estimate for $R^{\,\alpha+1}_{i}\lambda_{j}$ follows. This completes the proof of the proposition.
\end{proof}

\begin{proposition}\label{prop mu L infty estimates}
For sufficiently small $\delta$, for all $|\alpha|\leq \Ninfty, t \in [-r_{0},s^{*}]$, we can bound $Z_i^{\alpha+1} \mu \in \O_{-2l}^{|\alpha|+1}$ in terms of $Z^{\alpha+2}_{i}\psi\in\Psi^{|\alpha|+2}_{1-2l}$.
\end{proposition}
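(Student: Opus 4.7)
The proof will go by induction on $|\alpha|$. The base case $|\alpha|=0$ is already contained in Section 3: the estimates \eqref{bound on mu}, \eqref{bound on T mu and slashed d mu}, and \eqref{precise bound on d mu} together with $Q\mu = t\Lb\mu = t(m+\mu e) \in \O^{1}_{0}$ give exactly $Z_{i}\mu \in \O^{1}_{-2l}$ for $l$ the number of $T$'s in $Z_{i}$. Assume the proposition is proved for all orders up to $|\alpha|$.

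For the inductive step, fix a string $Z_{i}^{\alpha+1}=Z_{i_{1}}\cdots Z_{i_{|\alpha|+1}}$ containing $l$ copies of $T$. My plan is to derive a transport equation for $Z_{i}^{\alpha+1}\mu$ along $\Lb$ by commuting with \eqref{Structure Equation Lb mu}:
\begin{equation*}
\Lb(Z_{i}^{\alpha+1}\mu) = Z_{i}^{\alpha+1} m + Z_{i}^{\alpha+1}(\mu e) + [\Lb, Z_{i}^{\alpha+1}]\mu.
\end{equation*}
Then I would integrate along the integral curves of $\Lb$ from $t=-r_{0}$ and close by Gronwall. Each of the three terms on the right will be bounded by combining (B.1), Proposition~\ref{L infity estimates on lot}, and the inductive hypothesis. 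For $Z_{i}^{\alpha+1}m$: since $m = -\tfrac{1}{2}(dc^{2}/d\rho)\,T\rho$ is a smooth function of $\psi_{0}$ multiplied by $T\psi_{0}$, every distributed derivative is schematically a product of $Z^{\beta}_{i}\psi$ with total order $\leq |\alpha|+2$ and exactly one extra $T$ coming from the $T\rho$ factor, hence lies in $\Psi^{|\alpha|+2}_{-2l}$. For $Z_{i}^{\alpha+1}(\mu e)$: the Leibniz expansion produces products $Z^{\beta}_{i}\mu\cdot Z^{\gamma}_{i}e$ with $|\beta|+|\gamma|=|\alpha|+1$; since $e\in\O^{1}_{2}$ gains a factor of $\delta$, these terms are strictly better than the target, and whenever $|\beta|=|\alpha|+1$ the factor in front of $Z_{i}^{\alpha+1}\mu$ is of type $\O^{0}_{2}$ and can be absorbed by Gronwall.

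The commutator term is the most delicate. I would expand
\begin{equation*}
[\Lb, Z_{i}^{\alpha+1}] = \sum_{j} Z_{i_{1}}\cdots Z_{i_{j-1}}[\Lb, Z_{i_{j}}] Z_{i_{j+1}}\cdots Z_{i_{|\alpha|+1}},
\end{equation*}
and use that $[\Lb, Q]=\Lb$ (so $Q$ contributes no new commutator), that $[\Lb,T]=\Lambda$ is $S_{t,\ub}$-tangential with components controlled by ${}^{(T)}\Zb$, and that $[\Lb,R_{k}]$ is expressed through ${}^{(R_{k})}\Zb$ and the scalar $^{(R_k)}\pi_{L\Lb}$. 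Each resulting summand is then a product of a $Z^{\beta}_{i}$-derivative of a deformation tensor component (bounded in $\O^{|\beta|+1}_{2-2l'}$ or $\O^{|\beta|+1}_{-2l'}$ by Proposition~\ref{L infity estimates on lot}) with $Z^{\gamma}_{i}\mu$ where $|\gamma|\leq |\alpha|$ (bounded by the inductive hypothesis). The initial data $Z_{i}^{\alpha+1}\mu|_{t=-r_{0}}$ has the correct $\O^{|\alpha|+1}_{-2l}$ bound because on $\Sigma_{-2}$ we have $\mu = c(\psi_{0})$, $T=\partial_{r}$, and the $L^{\infty}$ bounds of Section~\ref{section short pulse data} give the required control directly via the chain rule.

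The principal obstacle will be the sharp $\delta$-bookkeeping with respect to the $T$-count $l$. Each $T$-derivative carries a forced loss of $\delta^{-1}$, and this must exactly match the $\delta^{-l}$ target on the left. The key observation that makes the induction close is that (i) the ``extra'' $T$ arising from $m = -\tfrac{1}{2}(dc^{2}/d\rho)T\rho$ is already charged to the $-2l$ weight of the output space, (ii) $e$ carries an extra $\delta$ which cushions every product involving it, and (iii) the commutator $[\Lb, Z_{i_{j}}]$ for $Z_{i_{j}} \in \{T, R_{k}, Q\}$ never introduces a genuinely new $T$-derivative acting on $\mu$: the $T$-like components of these commutators (namely the $\Lb L$ components of the deformation tensors and the action of $[\Lb,Q]$) reduce to $\Lb\mu$ which is non-singular in $\delta$ by \eqref{bound on mu}, and the tangential components preserve the $T$-count of the ambient string. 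Thus the estimates of Proposition~\ref{L infity estimates on lot} are sharp precisely where they need to be.
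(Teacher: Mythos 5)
Your proposal is correct and takes essentially the same route as the paper's proof: induction on the order, commuting $Z^{\alpha+1}_{i}$ with the transport equation $\Lb\mu=m+\mu e$, bounding $Z^{\alpha+1}_{i}m$ and the Leibniz terms through (B.1) and Proposition \ref{L infity estimates on lot}, and absorbing the remaining same-order terms (whose coefficients are either $\delta$-small or reduce the $T$-count) by Gronwall along the integral curves of $\Lb$. The paper records exactly this argument in the schematic form $\Lb\,\delta^{l}Z^{\alpha+1}_{i}\mu=\O^{\leq1}_{2}\cdot\delta^{l}Z^{\alpha+1}_{i}\mu+\Psi^{\leq|\alpha|+2}_{0}$, so your extra $\delta$-bookkeeping remarks merely make explicit what the paper leaves implicit.
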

\begin{proof}
We use an induction argument on the order of derivatives. The base case $|\alpha|=0$ has be treated in Section 3 and Section 4.  We assume the proposition holds with order of derivatives on $\mu$ at most $|\alpha|$. For $|\alpha|+1$, by commuting $Z^{\alpha+1}_{i}$ with $\Lb\mu=m+\mu e$, we have
\begin{align*}
\Lb \delta^{l} Z^{\alpha+1}_{i}\mu = (e + \leftexp{(Z_{i})}{\Zb}) \delta^{l}Z^{\alpha+1}_{i}\mu + \delta^{l}Z^{\alpha+1}_{i}m +\sum_{|\beta_1|+|\beta_2|\leq |\alpha|+1 \atop|\beta_{1}|<|\alpha|}\delta^{l_{1}}Z_{i}^{\beta_{1}}\mu \delta^{l_{2}}Z_{i}^{\beta_{2}}e
\end{align*}
where $l_{a}, a=1,2$ is the number of $T$'s in $Z^{\beta_{a}}$'s and $l$ is the number of $T$'s in $Z^{\alpha}$'s.
By the induction hypothesis, the above equation can be written as:
\begin{align*}
\Lb \delta^{l}Z^{\alpha+1}_{i}\mu=\O^{\leq1}_{2}\cdot \delta^{l} Z^{\alpha+1}_{i}\mu+\Psi^{\leq |\alpha|+2}_{0}
\end{align*}
Similar to the estimates derived in the Step 3 in previous section, we can use induction hypothesis and Gronwall's inequality to conclude that $\|Z^{\alpha+1}_{i}\mu\|_{L^{\infty}(\Sigma_{t})}\lesssim_M \delta^{-l}$. 
\end{proof}

\subsection{$L^2$ estimates}
$\Ntop$ will be the total number of derivatives commuted with $\Box_{\widetilde{g}} \psi = 0$. The highest order objects will be of order $\Ntop +1$. In this subsection, based on (B.1) and (B.2),  we will derive $L^2$ estimates on the objects of order $\leq \Ntop$ in terms of the $L^{2}$ norms of $Z^{\alpha+2}_{i}\psi\in\Psi^{|\alpha|+2}_{1-2l}$ with $|\alpha|\leq N_{top}-1$. We start with the following lemma:
\begin{lemma}
For a $\psi$ which vanishes on $\Cb_{0}$, we have
\begin{equation}\label{Calculus Inequality}
\begin{split}
\int_{S_{t,\ub}} \psi^2 &\lesssim \delta \int_{\Sigma_t^{\ub}} (L\psi)^2 +\mu(\Lb \psi)^2, \ \ \int_{\Sigma_{t}^{\ub}} \psi^2 \lesssim \delta^2 \int_{\Sigma_t^{\ub}} (L\psi)^2 +\mu(\Lb \psi)^2.
\end{split}
\end{equation}
\end{lemma}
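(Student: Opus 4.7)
My plan is to propagate $\psi$ from $\Cb_0$ across the $\delta$-annulus by integrating along integral curves of the transversal vectorfield $T$. Since $T$ is tangent to $\Sigma_t$, satisfies $T\ub=1$, and generates a flow from $S_{t,0}$ to $S_{t,\ub}$ for each $\ub\in[0,\delta]$, every point of $\Sigma_t^{\ub}$ lies on a unique integral curve $\gamma_{\theta_0}(\ub')$ of $T$ with $\gamma_{\theta_0}(0)\in S_{t,0}$ and $\gamma_{\theta_0}(\ub)\in S_{t,\ub}$. Because $\psi$ vanishes on $\Cb_0 \supset S_{t,0}$, the fundamental theorem of calculus and Cauchy--Schwarz give the pointwise bound
\begin{equation*}
\psi^2\bigl(\gamma_{\theta_0}(\ub)\bigr) \;\leq\; \ub \int_0^{\ub} (T\psi)^2\bigl(\gamma_{\theta_0}(\ub')\bigr)\, d\ub' \;\leq\; \delta \int_0^{\delta} (T\psi)^2\bigl(\gamma_{\theta_0}(\ub')\bigr)\, d\ub'.
\end{equation*}

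Next I would integrate this over $\theta_0\in S_{t,0}$ and convert the line integrals into the coordinate-invariant integrals $\int_{S_{t,\ub}}$ and $\int_{\Sigma_t^{\ub}}$. The required geometric input is that the $T$-flow has bounded Jacobian: from the identity $\slashedL_T \slashed{g}_{AB}=-2c^{-2}\mu\,\chib_{AB}$ used earlier in the isoperimetric analysis, together with the pointwise bounds on $c$, $\mu$, and $\chib$ proved in Section~3, the quantity $\log\sqrt{\det\slashed{g}}$ changes by at most an absolute constant along each integral curve of $T$ of length $\leq \delta$. Hence $d\mu_{\slashed{g}(t,\ub)}\sim d\mu_{\slashed{g}(t,0)}\sim d\mu_{\slashed{g}(t,\ub')}$ under the pullbacks by the $T$-flow, and the two-step change of variables produces
\begin{equation*}
\int_{S_{t,\ub}} \psi^2 \;\lesssim\; \delta \int_{\Sigma_t^{\ub}} (T\psi)^2.
\end{equation*}

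Finally I would convert $T\psi$ into the null derivatives using $L=c^{-2}\mu \Lb+2T$, which gives $T\psi=\tfrac{1}{2}(L\psi-c^{-2}\mu\Lb\psi)$ and therefore $(T\psi)^2\lesssim (L\psi)^2+\mu^2(\Lb\psi)^2$. By Proposition \ref{proposition on expansion for mu}, $\mu$ is bounded above by a universal constant, so $\mu^2\lesssim \mu$ and the first inequality follows. The second is a direct consequence: integrating the first inequality in $\ub'$ from $0$ to $\ub\leq \delta$ and using the monotonicity $\Sigma_t^{\ub'}\subset \Sigma_t^{\ub}$ to pull the right-hand side outside the $\ub'$-integral produces the extra factor of $\delta$. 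No serious obstacle arises; the only mildly technical point is the Jacobian control under the $T$-flow, which reduces to the $L^\infty$ bounds on $c^{-2}\mu\chib_{AB}$ that have already been established.
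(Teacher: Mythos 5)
Your proof is correct and takes essentially the same route as the paper: integrate $T\psi$ along the $T$-flow from $\Cb_0$, apply Cauchy--Schwarz to produce the factor $\delta$, control the $\slashed{g}$-area Jacobian under the $T$-flow via the bound on $\slashedL_T\slashed{g}_{AB}$ (equivalently the second fundamental form $\theta$), and convert $T\psi$ to $L\psi,\Lb\psi$ via $2T=L-c^{-2}\mu\Lb$ with the upper bound $\mu\lesssim 1$. You are slightly more explicit about tracking integral curves of $T$ rather than writing the coordinate formula $\psi=\int_0^{\ub}T\psi\,d\ub'$ directly, but the mathematical content is identical, and the derivation of the second inequality from the first by a final $\ub'$-integration also matches the paper.
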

\begin{proof} Since $\psi(t,\ub,\theta) = \displaystyle \int_{0}^{\ub} T\psi(t,\ub',\theta) d\ub'$, we have
\begin{align*}
\int_{S_{t,\ub}} \psi^2 d\mu_{\slashed{g}}&= \int_{S_{t,\ub}} \big(\int_{0}^{\ub} T\psi(t,\ub',\theta) d\ub' \big)^2 d\mu_{\slashed{g}(t,\ub)}\\ &\lesssim \delta  \int_{S_{t,\ub}} \int_{0}^{\ub} \big(T\psi(t,\ub',\theta)\big)^2 d\ub' d\mu_{\slashed{g}(t,\ub)}\\
&\lesssim\delta  \int_{S_{t,\ub}} \int_{0}^{\ub} \big(T\psi(t,\ub',\theta)\big)^2 d\mu_{\slashed{g}(t,\ub')}d\ub'
\end{align*}
Here we have used the fact: $\sqrt{\det\slashed{g}(t,\ub)}\lesssim \sqrt{\det\slashed{g}(t,\ub')}\lesssim \sqrt{\det\slashed{g}(t,\ub)} $ due to the bound of the second fundamental form $\theta$.
On the other hand,  $(T\psi) \lesssim (L\psi)^2 + \mu^2 (\Lb\psi)^2$ and $\mu \lesssim 1$, the first inequality follows immediately. The second is an immediate consequence of the first one.
\end{proof}
As a corollary, for $k\leq \Ntop-1$, we have
\begin{equation}\label{L 2 estimates on Ri alpha psi}
\sum_{|\beta| \leq k}\int_{S_{t,\ub}}(R_i{}^{\beta}\psi)^2 \lesssim \delta   E_{ \leq k+1}(t,\ub).
\end{equation}

\begin{proposition}\label{Proposition lower order L2}
 For sufficiently small $\delta$, for all $\alpha$ with $|\alpha| \leq \Ntop-1$ and $t\in [-r_0, s^*]$, the $L^2(\Sigma_t^{\ub})$ norms of all the quantities listed below
\begin{equation*}
\slashedL_{Z_i}^{\, \alpha} \chib', \slashedL_{Z_i}^{\, \alpha}  {}^{(Z_j)}\Zb, \slashedL_{Z_i}^{\, \alpha}  {}^{(Z_j)}\slashed{\pi}, Z_i^{\,\alpha+1} y^j, Z_i^{\,\alpha+1} \lambda_j, 
\end{equation*}
are bounded \footnote{The inequlaity is up to a constant depending only on the bootstrap constant $M$.} by 
$\delta^{1/2-l}\int_{-r_{0}}^{t}\mu_{m}^{-1/2}(t')\sqrt{\Eb_{\leq|\alpha|+2}(t',\ub)}dt'$, where $l$ is the number of $T$'s in $Z_i$'s.
\end{proposition}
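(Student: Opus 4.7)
The plan is to argue by induction on $|\alpha|$, mirroring the $L^\infty$ argument in Proposition \ref{L infity estimates on lot} and Proposition \ref{prop mu L infty estimates}, but now controlling the $L^{2}(\Sigma_{t}^{\ub})$-norms by propagating along the integral curves of $\Lb$. The base case $|\alpha|=0$ follows from the pointwise estimates already proven in Section 3. For the inductive step, I would treat each family of quantities in turn; the workflow is identical in each case: write a transport equation $\slashedL_{\Lb}\,\slashedL_{Z_i}^{\alpha}\Theta = F_{\alpha}$, show that $F_{\alpha}$ decomposes schematically as $\mathcal{O}^{\le 1}_{2}\cdot \slashedL_{Z_i}^{\alpha}\Theta + G_{\alpha}$, where $G_{\alpha}$ is a source of order $\le |\alpha|+2$ containing at most one factor with all possible order (the remaining factors being controlled by (B.1) and the previous $L^\infty$ estimates), and then Gronwall in $t$ along $\Lb$. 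The $\delta^{1/2-l}\mu_{m}^{-1/2}\sqrt{\Eb_{\le|\alpha|+2}}$ structure is produced when $G_{\alpha}$ absorbs the worst factor from $\psi$, which must be a $\Lb$-derivative (controlled by $\Eb$) possibly multiplied by $Z_{j}^{\le|\alpha|+1}\mu$.

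Concretely, for $\slashedL_{Z_i}^{\alpha}\chib'$ I would commute $\slashedL_{Z_i}^{\alpha}$ with the regular equation \eqref{Structure Equation Lb chibAB nonsingular}: the commutator $[\slashedL_{\Lb},\slashedL_{Z_i}^{\alpha}]$ unfolds via the null components of $^{(Z_i)}\pi$, all of which are controlled at order $\le|\alpha|$ by the induction hypothesis; the source comes from $\slashedL_{Z_i}^{\alpha}\alphab'$, whose worst term is $\slashedL_{Z_i}^{\alpha}\slashed{\nabla}^{2}\rho$, producing $\Lb$- and $T$-derivatives of $\psi$ bounded in $L^{2}(\Sigma_{t'}^{\ub})$ by $\delta^{-l}\mu_{m}(t')^{-1/2}\sqrt{\Eb_{\le|\alpha|+2}(t',\ub)}$. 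Integrating along $\Lb$ from $-r_{0}$, taking $L^{2}$-norms on the sphere first and then on $\Sigma_{t}^{\ub}$ via Minkowski's integral inequality, and noting that the data-side contribution is $\lesssim \delta^{3/2-l}$ (absorbed into the integral since $|t-(-r_0)|\lesssim 1$ and $\mu_{m}^{-1/2}\gtrsim 1$), one obtains the claimed bound. The quantities $Z_{i}^{\alpha+1}y^{j}, Z_{i}^{\alpha+1}\lambda_{j}, Z_{i}^{\alpha+1}x^{j}$ are then handled as in Steps 1 and 3 of Proposition \ref{L infity estimates on lot}, since $R_{i}y^{j}$ and $\slashedL_{R_i}R_{j}$ are expressible algebraically in $\chib'$, $y$, $\lambda$, and $\slashed{d}x$; the estimate for $\slashedL_{Z_i}^{\alpha}\,^{(Z_j)}\Zb$ and $\slashedL_{Z_i}^{\alpha}\,^{(Z_j)}\slashed{\pi}$ reduces to those of $\chib'$, $z^{k}$, $y^{k}$, $\lambda_{i}$ and $Z_{i}^{\alpha+1}\mu$ by the explicit formulas \eqref{deformation tensor of T}, \eqref{deformation tensor of Q}, \eqref{deformation tensor of Rotational R_i in T,Lb,X_A}, so those follow by applying the estimate for $\mu$ established in parallel from $\Lb\mu=m+\mu e$ (a direct analogue of Proposition \ref{prop mu L infty estimates} in $L^{2}$).

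The principal obstacle is bookkeeping at top order: when $|\alpha|$ is near $N_{\mathrm{top}}-1$, several factors appearing in $F_{\alpha}$ are themselves of order $|\alpha|+1$ or $|\alpha|+2$, so one must guarantee that in every product at most one factor sits at top order and the remaining factors are controlled in $L^{\infty}$. This forces the use of the Sobolev inequalities \eqref{Sobolev on S_t ub}--\eqref{Sobolev on S_t ub L4} on $S_{t,\ub}$, which in turn need uniform control of the isoperimetric constant -- exactly the reason $N_{\infty}\approx \tfrac{1}{2}N_{\mathrm{top}}$ must be enforced. A second delicate point is the appearance of $\mu^{-1/2}$: the bound $\|\Lb\psi\|_{L^{2}(\Sigma_{t}^{\ub})}\le \mu_{m}^{-1/2}\sqrt{\Eb(\psi)(t,\ub)}$ is the only acceptable conversion from energy to $L^{2}$ of $\Lb$-derivatives, and it is the loss of $\mu_{m}^{-1/2}$ here that one must propagate carefully, since it cannot be improved away but is, fortunately, integrable in $t$ along $\Lb$ thanks to Proposition \ref{Proposition C2}. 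The $\delta^{1/2-l}$ factor is tracked by keeping account of each $T$ that is applied: each $T$-derivative on $\psi$ costs a factor $\delta^{-1/2}$ (from (B.1)) in $L^{\infty}$ and the same factor in the $L^{2}$-estimate via \eqref{Calculus Inequality}; all other $Z$-derivatives preserve the $\delta^{1/2}$ amplitude coming from the short pulse ansatz. Once this factor-tracking is in place, Gronwall closes the induction.
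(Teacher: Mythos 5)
Your proposal follows essentially the same route as the paper: induction on $|\alpha|$, commuting $\slashedL_{Z_i}^{\,\alpha}$ with the regular transport equation \eqref{Structure Equation Lb chibAB nonsingular}, placing the single factor of order above $\Ninfty$ in $L^2$ (converted into $\delta^{1/2}\mu_m^{-1/2}(t)\sqrt{\Eb_{\leq|\alpha|+2}}$) while bounding the remaining factors in $L^\infty$ via (B.1) and Proposition \ref{L infity estimates on lot}, closing with Gronwall along $\Lb$, and then reducing $Z_i^{\,\alpha+1}y^j$, $Z_i^{\,\alpha+1}\lambda_j$ and the deformation-tensor components algebraically to $\chib'$ and previously controlled quantities, exactly as in the paper's Steps 1--2. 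Two harmless inaccuracies: the worst top-order source coming from $\alphab'$ is the angular derivative $\ds R_i^{\,\alpha+1}\psi$ (controlled by the $\mu|\ds\psi|^2$ part of $\Eb$), not an $\Lb$- or $T$-derivative, and the integrability in $t$ of $\mu_m^{-1/2}$ is not needed for this proposition (and would in any case come from the expansion of $\Lb\mu$ and Lemma \ref{lemma on mu to power a}, not from Proposition \ref{Proposition C2}).
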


\begin{proof}
We do induction on the order. When $|\alpha|=0$, the result follows from the estimates in Section 3 and 4. Again, here we only treat the case $l=0$. By assuming the proposition holds for terms with order $\leq |\alpha|$, 
we show it holds for $|\alpha|+1$.

\underline{Step 1 \ Bounds on $\slashedL_{R_i}^{\, \alpha} \chib'$}. \ \  By affording a $\Lb$-derivative,  we have
\begin{equation}\label{b3}
\|\slashedL_{R_i}^{\, \alpha}  \chib'\|_{L^2(\Sigma_t^{\ub})} \lesssim \|\slashedL_{R_i}^{\, \alpha}  \chib'\|_{L^2(\Sigma_{-r_0}^{\ub})} + \int_{-r_0}^t \||\chib'||\slashedL_{R_i}^{\, \alpha}  \chib'| + |\slashedL_{\Lb} \slashedL_{R_i}^{\, \alpha}  \chib'|\|_{L^2(\Sigma_\tau^{\ub})} d\tau.
\end{equation}
We use formula \eqref{b2} to replace $\slashedL_{\Lb} \slashedL_{R_i}^{\, \alpha}  \chib'$ by the  terms with lower orders. Each nonlinear term has at most one factor with order $>\Ninfty$. We bound this factor in $L^2(\Sigma_t)$ and the rest in $L^\infty$. We now indicate briefly how the estimates on the factors involving $e$ and $\alphab'$ work.

For $\alphab'$, since 
\begin{align*}
\alphab'_{AB} = c\frac{d c}{d\rho}\slashed{D}^2_{A,B}\rho + \frac{1}{2}\big[\dfrac{d^2(c^2)}{d\rho^2}-\dfrac{1}{2c^2}\big(\dfrac{d c^2}{d\rho}\big)^2 \big]X_A(\rho)X_B(\rho),
\end{align*}
 in view of the definition of $\Eb(t,\ub)$, for sufficiently small $\delta$, we have 
 \begin{align*}
\|\slashedL_{R_i}^{\alpha} \alphab'_{AB}\|_{L^2(\Sigma_t^{\ub})} \lesssim\sum_{|\alpha|\leq k}\|\ds R^{\alpha+1}_{i}\psi\|_{L^{2}(\Sigma_{t}^{\ub})}\lesssim_M \delta^\frac{1}{2}\mu_{m}^{-1/2}(t) \displaystyle\sqrt{\Eb_{\leq |\alpha|+2}(t,\ub)}.
\end{align*}

For $e$, since $e =c^{-1}\dfrac{d c}{d\rho}\Lb\rho$,  we have
\begin{align*}
\|R_{i}^{\alpha} e\|_{L^2(\Sigma_t^{\ub})} &\lesssim_M \sum_{|\beta|\leq |\alpha|} \left(\delta^{1/2} \|\ds R^{|\beta|-1}_{i}\psi\|_{L^{2}(\Sigma_{t}^{\ub})}+\delta^{1/2} \|\ds R^{|\beta|-1}_{i}\Lb \psi\|_{L^{2}(\Sigma_{t}^{\ub})}\right)\\
&\lesssim_M \delta^{1/2}\mu_{m}^{-1/2}(t) \displaystyle\sqrt{\Eb_{\leq |\alpha|+1}(t,\ub)}.
\end{align*}

By applying Gronwall's inequality to \eqref{b3}, we obtain immediately that
\begin{align*}
\|\slashedL_{R_i}^{\, \alpha} \chib'\|_{L^2(\Sigma_t^{\ub})} \lesssim_M  \delta^{1/2}\int_{-r_{0}}^{t}  \displaystyle\mu_{m}^{-1/2}(\tau)\sqrt{\Eb_{\leq |\alpha|+2}(\tau,\ub)}d\tau
\end{align*}

\underline{Step 2 \ Bounds on $R_i^{\,\alpha+1} y^j$}. \ \ By the computations in the Step 2 of the proof of Proposition 
\ref{L infity estimates on lot}, $R_i^{\,\alpha+1} y$ is a linear combination of the terms such as 
$\slashedLRi^{\beta_1}\chib\cdot \slashedLRi^{\beta_2}\slashed{g}^{-1}\cdot \slashedLRi^{\beta_3} R_j \cdot \slashed{d} R_i^{\beta_4} x^j$ 
or as $\slashedLRi^{\beta_1}\slashed{g}\cdot \slashedLRi^{\beta_2}\slashed{g}^{-1}\cdot \slashedLRi^{\beta_3} R_j \cdot \slashed{d} R_i^{\beta_4} x^j$,
 where $|\beta_1|+|\beta_2|+|\beta_3|+|\beta_4| = |\alpha|$. Similarly, we bound all factors with order $\leq \Ninfty$ by 
 the $L^\infty$ estimates in Proposition \ref{L infity estimates on lot}. By the induction hypothesis, this yields the bound on $R_i^{\,\alpha+1} y^j$ immediately. The estimates for other quantities follow from the estimates of $\chib'$ and $y^{j}$. In this process, the terms like $R_{i}^{\beta}x^{j}$ and the leading term in $\slashed{\mathcal{L}}_{R_{i}}R_{j}$, which can be bounded by a constant $C$ disregarding the order of the derivatives, are bounded in $L^{\infty}$. The rest terms in $\slashed{\mathcal{L}}_{R_{i}}R_{j}$, which depend on $\chib'$ and $y^{j}$ as well as their derivatives, are bounded in $L^{2}$ based on the $L^{2}$ estimates for $\slashed{\mathcal{L}}_{R_{i}}^{\beta}\chib'$.
\end{proof}

We also have $L^2$ estimates for derivatives of $\mu$.
\begin{proposition}\label{Proposition lower order L2 mu}
For sufficiently small $\delta$, for all $\alpha$ with $|\alpha| \leq \Ntop-1$ and $t\in [-r_0, s^*]$, we have
\begin{align*}
\delta^{l}\|Z_{i}^{\alpha+1}\mu\|_{L^{2}(\Sigma_{t})}
\lesssim_M \delta^{l}\|Z_{i}^{\alpha+1}\mu\|_{L^{2}(\Sigma_{-r_{0}}^{\ub})}+\delta^{1/2}\int_{-r_{0}}^{t}
\sqrt{E_{\leq |\alpha|+2}(\tau,\ub)}+\mu_{m}^{-1/2}(\tau)\sqrt{\Eb_{\leq |\alpha|+2}(\tau,\ub)}d\tau.
\end{align*}
\end{proposition}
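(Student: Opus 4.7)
The plan is to mirror the inductive structure of Proposition~\ref{prop mu L infty estimates} (the $L^{\infty}$ case for $\mu$) and Proposition~\ref{Proposition lower order L2} (the $L^{2}$ case for $\chib'$ and its companions), applied to the transport equation $\Lb\mu = m+\mu e$. The base case $|\alpha|=0$ is treated in Sections~3 and~4. For the inductive step, I commute $Z_{i}^{\alpha+1}$ with $\Lb\mu=m+\mu e$ to obtain
\begin{equation*}
\Lb\bigl(\delta^{l}Z_{i}^{\alpha+1}\mu\bigr) = \delta^{l}Z_{i}^{\alpha+1}m+\delta^{l}Z_{i}^{\alpha+1}(\mu e)+\delta^{l}[\Lb,Z_{i}^{\alpha+1}]\mu,
\end{equation*}
take $L^{2}(\Sigma_{t}^{\ub})$ norms, integrate in $t$ from $-r_{0}$ (the boundary term giving precisely $\delta^{l}\|Z_{i}^{\alpha+1}\mu\|_{L^{2}(\Sigma_{-r_{0}}^{\ub})}$), and close with Gronwall.

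For the commutator, iterated expansion via $[\Lb,Z_{j}]$ produces terms of the schematic form $\slashedL_{Z_{i}}^{\,\beta_{1}}(\leftexp{(Z_{j})}\pi)\cdot Z_{i}^{\beta_{2}}\mu$ with $|\beta_{1}|+|\beta_{2}|\leq|\alpha|$; by Propositions~\ref{L infity estimates on lot} and~\ref{prop mu L infty estimates} all but one factor can be placed in $L^{\infty}$, and the remaining $L^{2}$ factor is handled either by the inductive hypothesis (when the top derivative lands on $\mu$) or by Proposition~\ref{Proposition lower order L2} (when it lands on a deformation coefficient). The single term that is linear in $\delta^{l}Z_{i}^{\alpha+1}\mu$ itself is absorbed as a Gronwall coefficient.

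The principal source terms are $Z_{i}^{\alpha+1}m$ and $Z_{i}^{\alpha+1}(\mu e)$, and their analysis produces the two distinct contributions on the right-hand side of the estimate. Using $m=-\frac{1}{2}\frac{d(c^{2})}{d\rho}T\rho$ with $\rho=\psi_{0}^{2}$, the top-order Leibniz piece of $Z_{i}^{\alpha+1}m$ is $\psi_{0}\cdot Z_{i}^{\alpha+1}(T\psi_{0})$. I would rewrite $T=\frac{1}{2}(L-c^{-2}\mu\Lb)$ and split: the $L$-part satisfies $\|LZ_{i}^{\alpha+1}\psi_{0}\|_{L^{2}(\Sigma_{t}^{\ub})}\lesssim \delta^{-l}\sqrt{E_{\leq|\alpha|+2}(t,\ub)}$ directly from the definition of $E$, while the $\mu\Lb$-part satisfies $\|\mu\Lb Z_{i}^{\alpha+1}\psi_{0}\|_{L^{2}(\Sigma_{t}^{\ub})}\lesssim \delta^{-l}\sqrt{\Eb_{\leq|\alpha|+2}(t,\ub)}$ by pulling $\|\mu\|_{L^{\infty}}^{1/2}\lesssim 1$ out of $\int\mu(\Lb\cdot)^{2}$. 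Multiplying by $\|\psi_{0}\|_{L^{\infty}}\lesssim\delta^{1/2}$ and the prefactor $\delta^{l}$ yields the $\delta^{1/2}\sqrt{E_{\leq|\alpha|+2}}$ contribution. For $Z_{i}^{\alpha+1}(\mu e)$, since $e$ carries $\Lb\rho$ rather than $T\rho$, the top-order piece is $\mu\psi_{0}\cdot\Lb Z_{i}^{\alpha+1}\psi_{0}$; here only $\|\Lb Z_{i}^{\alpha+1}\psi_{0}\|_{L^{2}(\Sigma_{t}^{\ub})}\leq \mu_{m}^{-1/2}\delta^{-l}\sqrt{\Eb_{\leq|\alpha|+2}}$ is available, since $\Eb$ controls $\Lb\psi$ only weighted by $\mu^{1/2}$, and together with the $\delta^{1/2}$ from $\psi_{0}$ and the $\delta^{l}$ prefactor this produces the $\delta^{1/2}\mu_{m}^{-1/2}\sqrt{\Eb_{\leq|\alpha|+2}}$ contribution.

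The main obstacle is bookkeeping the subleading Leibniz terms. Each has an order decomposition $|\beta_{1}|+|\beta_{2}|+\cdots=|\alpha|+1$ with at most one factor of order $>\Ninfty$; that factor is placed in $L^{2}$ using either $\mu_{m}^{-1/2}\sqrt{\Eb}$ (for $\ds\cdot$ or $\Lb\cdot$ on $\psi_{0}$), $\sqrt{E}$ (for $L\cdot$ on $\psi_{0}$), the inductive hypothesis (for $\mu$), or Proposition~\ref{Proposition lower order L2} (for $\chib'$, $y^{j}$, $\lambda_{j}$, or deformation coefficients), while all remaining factors go into $L^{\infty}$ via Propositions~\ref{L infity estimates on lot} and~\ref{prop mu L infty estimates}. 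The subtle point is to verify that every appearance of $\mu_{m}^{-1/2}$ arises from exactly those terms bounded by $\sqrt{\Eb}$ and never multiplies a $\sqrt{E}$ factor; once this is confirmed, Gronwall applied to the resulting integral inequality yields the stated bound.
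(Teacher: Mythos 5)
Your proposal is correct and follows essentially the same route as the paper: the paper's own proof simply commutes $Z_i^{\alpha+1}$ with $\Lb\mu=m+\mu e$ to get the pointwise transport inequality $\delta^{l}\Lb|Z^{\alpha+1}_{i}\mu|\lesssim \delta^{l}|Z^{\alpha+1}_{i}m|+\delta^{l}(|e|+|\leftexp{(R_{i})}{\Zb}|)|Z^{\alpha+1}_{i}\mu|+\sum\delta^{l_1}|Z^{\beta_1}_i\mu|\,\delta^{l_2}|R^{\beta_2}_i e|$ and then repeats the $L^2$/Gronwall argument of the preceding proposition, which is exactly your scheme, with your decomposition $T=\tfrac12(L-c^{-2}\mu\Lb)$ correctly accounting for why both $\sqrt{E_{\leq|\alpha|+2}}$ and $\mu_m^{-1/2}\sqrt{\Eb_{\leq|\alpha|+2}}$ appear. (The only cosmetic difference is that your bound for the $\mu\psi_0\Lb Z^{\alpha+1}_i\psi_0$ piece could dispense with the $\mu_m^{-1/2}$ by using the explicit $\mu$ factor, but keeping it is harmless and matches the stated estimate.)
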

\begin{proof}
According to the proof of Proposition \ref{prop mu L infty estimates}, we have
\begin{equation*}
 \delta^{l}\Lb|Z^{\alpha+1}_{i}\mu|\lesssim \delta^{l}|Z^{\alpha+1}_{i}m|+\delta^{l}(|e|+|\leftexp{(R_{i})}{\Zb}|) |Z^{\alpha+1}_{i}\mu|+\sum_{|\beta_{1}+\beta_2|\leq|\alpha|}\delta^{l_{1}}|Z_{i}^{\beta_{1}}\mu| \delta^{l_{2}}|R_{i}^{\beta_{2}}e|.
\end{equation*}
Then the result follows in the same way as Proposition 7.2.
\end{proof}

\section{Estimates on top order terms}
The highest possible order of an object in the paper will be $\Ntop+1$. The current section is devoted to the $L^2$ estimates of $\ds R_i^{\,\alpha}\tr\chib$ and $Z_i^{\,\alpha+2} \mu$ with $|\alpha| = \Ntop-1$.

\subsection{Estimates on $\tr\chib$}
We first sketch the idea of the proof. Since we deal with top order terms, we can not use the transport equation \eqref{Structure Equation Lb trchib} directly as in the previous section (which loses one derivative). Roughly speaking, we derive an elliptic system coupled with a transport equation for $\widehat{\chib}$ and $ \slashed{d}\tr\chib$:
\begin{equation*}
\Lb (\slashed{d}R_i^{\alpha}\tr\chib) = \widehat{\chib}\cdot \slashed{\nabla}\mathcal{L}_{R_i}^\alpha \widehat{\chib}+ \cdots, \ \ \slashed{\text{div}} \mathcal{L}_{R_i}^\alpha \widehat{\chib} = \slashed{d}R_i^\alpha\tr\chib + \cdots.
\end{equation*}
The new idea is using elliptic estimates and rewriting the right hand side of the transport equations to avoid the loss of derivatives.

Given $\Box_{\gt} \psi_0 =0$, since $\rho = \psi_0^2$, we can derive a wave equation for $\rho$:
\begin{equation}\label{wave equation for rho}
\Box_g \rho = \frac{d(\log(c))}{d \rho}g^{\mu\nu}\partial_\mu \rho \partial_\nu \rho + 2g^{\mu\nu}\partial_\mu \psi_0 \partial_\nu \psi_0
\end{equation}
Therefore, in the null frame, we can rewrite $\laplacianslash \rho$ as $\slashed{\Delta}\rho = \mu^{-1} \Lb \big(L\rho\big) + \lot$
where $\lot$ represents all the terms with order at most $1$. 
On the other hand, according to the definition of $\alphab_{AB}$, we can rewrite \eqref{Structure Equation Lb trchib} as $
\Lb \tr\chib = -\frac{1}{2}\frac{d (c^{2})}{d \rho}\slashed{\Delta} \rho + \lot$ where the lower order terms $\lot$ standard for terms with order at most $1$. By substituting to the previous expression on $\slashed{\Delta} \rho$, we obtain:
\begin{equation}\label{renormalized equation for tr chib}
\Lb \big(\mu \tr\chib -\check{f} \big) = 2 \Lb\mu \tr\chib -\frac{1}{2}\mu (\tr\chib)^2-\mu |\widehat{\chib}|^2+\check{g},
\end{equation}
where $\check{f} = -\dfrac{1}{2}\dfrac{d(c^2)}{d\rho}L\rho$ and $\check{g}$ is given by
\begin{equation*}
\check{g} = \left( 2 \left(\frac{d(c)}{d\rho}\right)^2 + c\frac{d^2(c)}{d \rho^2} \right)\left(\Lb\rho L\rho-\mu|\slashed{d}\rho|^2\right)+ 2 c\frac{d(c)}{d\rho}\left( \left(L\psi_0 \Lb
\psi_0 -\mu |\slashed{d}\psi_0|^2\right)+ \left(\frac{1}{4}\frac{\mu|\slashed{d}\rho|^2}{c^2}-\zetab^A\slashed{d}_A\rho\right)\right).
\end{equation*}

We observe two main features of \eqref{renormalized equation for tr chib}: the order of the righthand side terms are one less than that of the lefthand side; It is regular in $\mu$, i.e. there is no $\mu^{-1}$ factor. In order to commute $R_i$'s with \eqref{renormalized equation for tr chib} and to control the $\alpha^{\text{th}}$ derivatives of $\slashed{d}(\tr\chib)$, for a given multi-index $\alpha$, we introduce
\begin{equation*}
F_{\alpha}=  \mu\slashed{d}\left(R_i{}^\alpha\tr\chib\big)- \slashed{d}\big(R_i{}^\alpha \check{f}\right).
\end{equation*}

For $\alpha=0$ and ${F}=F_0 = \mu \slashed{d}\tr\chib - \slashed{d}\check{f}$, by commuting $\slashed{d}$ with \eqref{renormalized equation for tr chib}, we obtain
\begin{equation*}
\slashedL_{\Lb} {F} + (\tr\chib-2\mu^{-1}\Lb \mu){F} =\big(-\frac{1}{2}\tr\chib + 2\mu^{-1}\Lb\mu\big)\slashed{d}\check{f}-\mu\slashed{d}\big(|\widehat{\chib}|^2\big) + g_0
\end{equation*}
with $g_0 = \slashed{d}\check{g}-\frac{1}{2}\tr\chib \,\slashed{d}\big(\check{f}-2\Lb\mu\big)-(\slashed{d}\mu)\big(\Lb \tr\chib+|\widehat{\chib}|^2\big)$. Similarly, for $|\alpha|\neq 0$, we first commute $R_i^{\, \alpha}$ and then commute $\slashed{d}$ with \eqref{renormalized equation for tr chib}.  This leads to
\begin{equation}\label{transport equation for F alpha for chib}
\slashedL_{\Lb} F_\alpha + \left(\tr\chib-2\mu^{-1}\Lb \mu\right)F_\alpha =\left(-\frac{1}{2}\tr\chib + 2\mu^{-1}\Lb\mu\right)\slashed{d}\left(R_i{}^\alpha\check{f}\right)
-\mu\slashed{d}\left(R_i{}^{\alpha}\left(|\widehat{\chib}|^2\right)\right) + g_\alpha,
\end{equation}
with $g_\alpha$ in the following schematic expression (by setting all the numerical constants to be $1$):
\begin{align*}
g_\alpha &= \slashedL_{R_i}{}^\alpha g_0 + \sum_{|\beta_1|+|\beta_2| = |\alpha|-1}\!\!\!\!\!\!\!\!\!\slashedL_{R_i}^{\beta_1}\slashedL_{^{(R_i)}\Zb}F_{\beta_2} + \sum_{|\beta_1|+|\beta_2| = |\alpha|-1} \!\!\!\!\!\!\!\!\!\slashedL_{R_i}^{\beta_1} \Big(\big(\mu R_i\tr\chib + R_i\Lb\mu + {}^{(R_i)}\Zb \mu\big)\slashed{d}\big(R_i{}^{\beta_2}\tr\chib\big)\Big)\\
&\quad + \sum_{|\beta_1|+|\beta_2| = |\alpha|-1}\!\!\!\!\!\!\!\!\!\slashedL_{R_i}^{\beta_1} \Big(R_i \mu \big[\slashedL_{\Lb}\slashed{d}\big(R_i{}^{\beta_2}\tr\chib\big)+\tr\chib\slashed{d}\big(R_i{}^{\beta_2}\tr\chib \big) + \slashed{d}\big(R_i{}^{\beta_2}(|\widehat{\chib}|^2)\big)\big] + R_i\tr\chib \slashed{d}\big(R_{i}{}^{\beta_2}\check{f}\big)\Big).
\end{align*}
We remark that $F_\alpha$ is of order $\Ntop+1$ so that $\alpha=\Ntop-1$.

We rewrite \eqref{Structure Equation div chib} as $\divslash{\widehat{\chib}}=\frac{1}{2}\slashed{d}\tr\chib - \big(\mu^{-1}\zetab\cdot \widehat{\chib}-\frac{1}{2}\mu^{-1}\zetab \, \tr\chib\big)$. By commuting $\slashedLhat_{R_i}\!\!{}^\alpha$, we obtain the following schematic expression:
\begin{equation}\label{elliptic equation for L R_i alpha}
\divslash\big  (\slashedLhat_{R_i}{}^\alpha \widehat{\chib}\big) = \frac{1}{2}\slashed{d} \big( R_i{}^{\alpha} \tr\chib \big) +H_\alpha,
\end{equation}
with $\displaystyle H_\alpha =\big(\slashedL_{R_i}+\frac{1}{2}\tr {}^{(R_i)}\slashed{\pi}\big)^\alpha \big(\mu^{-1}\zetab\cdot \widehat{\chib}-\frac{1}{2}\mu^{-1}\zetab \, \tr\chib\big) + \!\!\!\!\!\!\!\!\!\sum_{|\beta_1|+|\beta_2| = |\alpha|-1}\!\!\!\!\!\!\!\!\!\big(\slashedL_{R_i}+\frac{1}{2}\tr {}^{(R_i)}\slashed{\pi}\big)^{\beta_1}\Big(\tr{}^{(R_i)} \slashed{\pi}\cdot \slashed{d}(R_i{}^{\beta_2}\tr\chib) + \big( \divslash {}^{(R_i)}\widehat{\slashed{\pi}}\big)\cdot \slashedLhat_{R_i}{}^{\beta_2} \widehat{\chib}\Big) +\!\!\!\!\!\!\!\!\! \sum_{|\beta_1|+|\beta_2| = |\alpha|-1}\!\!\!\!\!\!\!\!\!\big(\slashedL_{R_i}+\frac{1}{2}\tr {}^{(R_i)}\slashed{\pi}\big)^{\beta_1} \Big({}^{(R_i)}\widehat{\slashed{\pi}}\cdot \nablaslash \slashedLhat_{R_i}{}^{\beta_2} \widehat{\chib} \Big)$.
By applying the elliptic estimates \eqref{elliptic estimates} to \eqref{elliptic equation for L R_i alpha}, in view of the definition of $F_\alpha$, we obtain (the $L^2$ norms are taken at $S(t,\ub)$)
\begin{equation}\label{cc1}
\|\mu\slashed{\nabla} \slashedLhat_{R_i}{}^\alpha \widehat{\chib} \|_{L^2(S_{t,\ub})}\lesssim \|F_\alpha \|_{L^2}+ \|\slashed{d} R_i{}^\alpha \check{f}\|_{L^2}+\|\slashed{d}\mu\|_{L^\infty}\|\slashedLhat_{R_i}{}^\alpha \widehat{\chib} \|_{L^2} +\|\mu H_\alpha\|_{L^2}.
\end{equation}

For any form $\xi$, since $|\xi| \Lb |\xi| = (\xi,\slashedL_{\Lb}\xi) -\xi \cdot\widehat{\chib}\cdot \xi-\dfrac{1}{2}\tr\chib|\xi|^2$, we have $\Lb |\xi| \leq  |\slashedL_{\Lb}\xi| +|\widehat{\chib}||\xi|-\dfrac{1}{2}\tr\chib|\xi|$. Applying this inequality to \eqref{transport equation for F alpha for chib}, we obtain
\begin{equation}\label{eq 11}
\Lb |F_\alpha| \leq (\mu^{-1}\Lb \mu - \frac{3}{2}\tr\chib +|\widehat{\chib}|)|F_\alpha| + \big(2\mu^{-1}|\Lb\mu| -\tr\chib\big)|\slashed{d} R_i{}^\alpha\check{f}|+|\mu\slashed{d} R_i{}^{\alpha}\big(|\widehat{\chib}|^2\big)| + |g_\alpha|.
\end{equation}
To obtain the $L^2(\Sigma_t)$ bound on $F_\alpha$, we first integrate \eqref{eq 11} from $-r_0$ to $t$ and then take the $L^2$ norm on $\Sigma_t$. We claim that we can ignore the first term on the right hand side: The term $\big(|\tr\chib|+|\widehat{\chib}|\big)|F_\alpha|$ can be removed immediately by Gronwall's inequality. For $\mu^{-1}\Lb \mu \cdot |F_\alpha|$, for a fixed $(\ub, \theta)$, if $\mu \geq \frac{1}{10}$ for all $\ub$, it can be also removed by Gronwall's inequality; otherwise, $\mu < \frac{1}{10}$, therefore, according to the argument in Proposition \ref{Proposition C3}, the sign of $\Lb\mu$ is negative so that this term can be ignored. As a result, \eqref{eq 11} yields
\begin{equation*}
\begin{split}
\|F_\alpha \|_{L^2(\Sigma_t)}	
&\lesssim \|F_\alpha \|_{L^2(\Sigma_{-r_0})} + \int_{-r_0}^t \|(\mu^{-1}|\Lb\mu|+1)\slashed{d} R_i{}^\alpha\check{f}\|_{L^2(\Sigma_\tau)} + \| \mu\slashed{d} R_i{}^{\alpha}\big(|\widehat{\chib}|^2\big)\|_{L^2(\Sigma_\tau)}+\| g_\alpha\|_{L^2(\Sigma_\tau)} d\tau \\
&= \|F_\alpha \|_{L^2(\Sigma_{-r_0})}+ I_1+I_2+I_3.
\end{split}
\end{equation*}
where the $I_i$'s are defined in the obvious way.

We first bound $I_2$. According to the Leibniz rule, we have
\begin{align*}
I_2 
&=  \!\!\!\!\!\!\!\!\!\sum_{|\beta_1|+|\beta_2|+|\beta_3| +|\beta_4|= |\alpha|} \!\!\!\int_{-r_0}^t \| \mu\slashed{d}\big(  \slashedLhat_{R_i}{}^{\beta_1}  \slashed{g} \cdot  \slashedLhat_{R_i}{}^{\beta_2}  \slashed{g} \cdot  \slashedLhat_{R_i}{}^{\beta_3} \widehat{\chib}  \cdot \slashedLhat_{R_i}{}^{\beta_4} \widehat{\chib}\big)\|_{L^2(\Sigma_\tau)} d\tau.	
\end{align*}
Therefore, at least three indices of the $\beta_i$'s are at most $\Ninfty$. According to Proposition \ref{Proposition lower order L2}, we have
\begin{align*}
I_2 &\lesssim \int_{-r_0}^t  \| \mu\slashed{d}  \slashedLhat_{R_i}{}^{\alpha} \widehat{\chib}\|_{L^2(\Sigma_\tau)} \|\widehat{\chib}\|_{L^\infty(\Sigma_\tau)}+ \delta^{3/2}  \sqrt{\Eb_{ \leq |\alpha|+2}(\tau,\ub)}d\tau \\
&\stackrel{\eqref{cc1}}{\lesssim} \delta  \int_{-r_0}^t  \|F_\alpha \|_{L^2(\Sigma_\tau)}+ \|\slashed{d} R_i{}^\alpha \check{f}\|_{L^2(\Sigma_\tau)} +  \delta^{1/2}\sqrt{\Eb_{\leq |\alpha|+2}(\tau,\ub)}d\tau.
\end{align*}
Since $\check{f} = - \frac{1}{2} \frac{d c^2}{d\rho} L\rho$
, we have $\displaystyle
\slashed{d} R_i{}^\alpha \check{f} =\frac{d c^2}{d\rho}\psi_0 \slashed{d} R_i{}^\alpha L \psi_0  + \slashed{d}\Big(\sum_{\beta_1+\beta_2 = \alpha, \atop |\beta_1| \geq 1} R_i{}^{\beta_1}\big(\frac{d c^2}{d\rho}\psi_0 \big) R_i{}^{\beta_2} \big( L\psi_0 \big)\Big)$.
This leads to
\begin{align*}
\int_{-r_0}^t \|\slashed{d} R_i{}^\alpha \check{f}\|_{L^2(\Sigma_\tau)}
&\lesssim\int_{-r_0}^t \delta^{\frac{1}{2}} \sqrt{E_{\leq |\alpha|+2}(\tau,\ub)}d\tau.
\end{align*}
Therefore, we have the following bound for $I_2$:
\begin{align}\label{I2 chib}
I_2 \lesssim   \int_{-r_0}^t  \delta\|F_\alpha \|_{L^2(\Sigma_\tau)}+ \delta^{3/2} \sqrt{E_{\leq |\alpha|+2}(\tau,\ub)}
+\delta^{3/2}\sqrt{\Eb_{\leq |\alpha|+2}(\tau,\ub)}d\tau.
\end{align}
We remark that, as long as the terms under consideration are not top order terms, i.e. not a term of order $\Ntop+1$, we can simply use the estimates from previous section to get the estimates. The reason is as follows: each term is a product of $\O_k^{\leq l}$ with $l \leq |\alpha|$. Only one of the factor is of order $l>\Ninfty$. We can bound the rest in $L^\infty$ and the highest order one in $L^2$, thanks to the estimates derived in the previous section.

To bound $I_3$, as we pointed out above, we only have to take care of the terms appearing in $g_\alpha$ whose orders are possibly $\Ntop+1$. The rest of them can be easily bounded in $L^{2}$ by a universal constant times the sum of $\int_{-r_0}^t \delta^{\frac{1}{2}} \sqrt{{E}_{\leq |\alpha|+2}(\tau,\ub)} + \delta^{1/2}\mu_{m}^{-1/2}(t)\sqrt{\Eb_{\leq |\alpha|+2}(\tau,\ub)}d\tau$.

We now investigate the possible highest order terms in $g_{\alpha}$. There are three possiblities: the first one are the terms of the form $\displaystyle\sum_{|\beta_1|+|\beta_2| = |\alpha|-1}\!\!\!\!\!\!\!\!\!\slashedL_{R_i}^{\beta_1}\slashedL_{^{(R_i)}\Zb}F_{\beta_2}$. They can be bounded by $|\leftexp{(R_{i})}{\Zb}||F_{\alpha}|\lesssim \delta^{1/2}|F_{\alpha}|$ provided that $\delta$ is suitably small. The second possiblity is from the $(\Lb\text{tr}\chib+|\chib|^{2})$ term of $g_0$. However, equation \eqref{Structure Equation Lb trchib} says that $\Lb\text{tr}\chib+|\chib|^{2}= e-\text{tr}\alphab'$, so although it is of the highest order, the highest order part constists
only $\nablaslash$ derivatives of $\psi$ (thanks to the expression of $\alpha'$), hence can be bounded in the same way as lower order terms. The last possiblity is from the term $\Lb\psi_{0}L\psi_{0}$ appearing in $\check{g}$. They are of top orders and they can not be converted into terms involving only $\nablaslash$ derivatives. These terms contribute to $g_\alpha$ the terms of the form $\O_{0}^{\leq 1}\cdot L\psi_{0} \cdot \Lb R^{\alpha+1}_{i}\psi_{0}$ in $g_{\alpha}$. 
 \begin{align*}
 \int_{-r_{0}}^{t}\|L\psi_{0}\Lb R^{\alpha+1}_{i}\psi_{0}\|_{L^{2}(\Sigma_{\tau}^{\ub})}d\tau
 &\lesssim \delta^{-1/2}\Big(\int_{-r_{0}}^{t}\int_{0}^{\ub}\int_{S_{\tau,\ub'}}\big(\Lb R^{\alpha'+1}_{i}\psi\big)^{2}
 d\mu_{\slashed{g}}d\ub'd\tau\Big)^{1/2}\\
 & \lesssim \delta^{-1/2}\Big(\int_{0}^{\ub}
 \underline{F}_{\leq |\alpha|+2}(t,\ub')d\ub'\Big)^{1/2}
\end{align*}
Therefore, we have the following estimates for $I_{3}$:
\begin{align}\label{I3 chib}
I_3 \lesssim \int_{-r_0}^t \delta^{\frac{1}{2}} \sqrt{{E}_{\leq |\alpha|+2}(\tau)}+\delta^{1/2}\mu_{m}^{-1/2}(\tau)\sqrt{\Eb_{\leq |\alpha|+2}(t)}d\tau+\delta^{-1/2}\Big(\int_{0}^{\ub}
\Fb_{\leq |\alpha|+2}(t,\ub')d\ub'\Big)^{1/2}
\end{align}
We now study the estimates on $I_{1}$. When $\mu\geq\dfrac{1}{10}$, the estimates are straightforward. To study the case when $\mu\leq\dfrac{1}{10}$, we introduce a few notations (where $a$ is a positive constant):
\begin{equation*}
\begin{split}
 t_{0} &= \inf \big\{\tau \in [-2,t^*) \big| \mu_m (t)<\frac{1}{10}\big\},\ \ M(t)=\!\!\!\!\!\!\!\!\max_{(\ub,\theta), \atop (t,\ub,\theta)\in W_{shock}} \!\!\!\!\!\!\!\!\big|\big(\Lb(\log\mu)\big)_{-}(t,\ub,\theta)\big|, \ \ I_{a}(t)=\int_{t_{0}}^{t}\mu_{m}^{-a}(\tau)M(\tau)d\tau.
\end{split}
\end{equation*}
\begin{lemma}\label{lemma on mu to power a} 
We assume that $a$ is at least $4$.
\begin{itemize}
\item[(1)] For sufficiently large $a$ and for all $t\in[t_{0},t^{*})$, we have
\begin{align}\label{key lemma}
I_{a}(t) \lesssim a^{-1}\mu^{-a}_{m}(t). 
\end{align}

\item[(1')] For sufficiently large $a$ and for all $t\in[t_{0},t^{*})$, we have 
\begin{align*}
\int_{t_{0}}^{t}\mu^{-a-1}_{m}(t')dt'\lesssim\frac{1}{a}\mu_{m}^{-a}(t)
\end{align*}

\item[(2)] For $a\geq 4$ and sufficiently small $\delta$, there is a constant $C_0$ independent of $a$ and $\delta$, so that for all $\tau\in[-r_{0},t]$, we have
\begin{align}\label{mu is decreasing}
\mu^{a}_{m}(t)\leq C_0 \mu^{a}_{m}(\tau)
\end{align}
\end{itemize}
\end{lemma}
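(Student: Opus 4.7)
My plan is to prove the three parts in the order (2), then (1'), then (1), since the approximate monotonicity of $\mu_m$ recorded in (2) is what justifies the change of variable in (1'), and (1) follows from (1') together with a pointwise bound on $M(\tau)$.

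For (2), the main tools are the asymptotic expansion in Proposition \ref{proposition on expansion for mu} and the trapping property in Proposition \ref{Proposition C3}. If a minimizer $(\ub_\tau,\theta_\tau)$ of $\mu(\tau,\cdot,\cdot)$ satisfies $\mu_m(\tau)<1/10$, then $(\tau,\ub_\tau,\theta_\tau)\in W_{shock}$ and the trapping property keeps the $\Lb$-generator inside $W_{shock}$ on $[\tau,t^{*})$, so by $\Lb\mu<0$ there, $\mu(t,\ub_\tau,\theta_\tau)\leq\mu(\tau,\ub_\tau,\theta_\tau)=\mu_m(\tau)$ for all $t\geq\tau$, hence $\mu_m(t)\leq\mu_m(\tau)$. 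If $\mu_m(\tau)\geq 1/10$ but $\mu_m(t)<1/10$ (the shock develops between $\tau$ and $t$) the inequality $\mu_m(t)\leq 1/10\leq\mu_m(\tau)$ is immediate. In the remaining regime $\mu_m(\tau),\mu_m(t)\geq 1/10$, I would partition $[0,\ub]\times\mathbb{S}^{2}$ by the sign of $A(\ub,\theta):=\Lb\mu(-r_{0},\ub,\theta)$: the expansion implies $\mu(t,\cdot)\geq 1-O(\delta)$ on $\{A\geq 0\}$ and $\mu(t,\cdot)$ is non-increasing up to $O(\delta)$ on $\{A<0\}$, which forces $\mu_m(t)/\mu_m(\tau)\leq 1+O(\delta)$. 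Raising to the power $a$ gives $(1+O(\delta))^{a}\leq e^{aC\delta}\leq 2$ whenever $\delta\lesssim 1/a$, which is the content of ``sufficiently small $\delta$''. Thus $C_{0}=2$ works independently of $a$.

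For (1'), the essential input is Proposition \ref{Proposition C3}, which yields $\Lb\mu\leq -c_{0}<0$ throughout $W_{shock}$. Rather than differentiating the a priori non-smooth minimizer, I would establish a one-sided Dini bound on $\mu_m$ directly: for $\tau\in[t_{0},t^{*})$ and any minimizer $(\ub_{\star},\theta_{\star})$ of $\mu(\tau,\cdot,\cdot)$, the trapping property keeps the generator through $(\ub_{\star},\theta_{\star})$ inside $W_{shock}$ on $[\tau,t^{*})$, so for small $h>0$ one has
\begin{equation*}
\mu_m(\tau+h)\leq \mu(\tau+h,\ub_{\star},\theta_{\star})\leq \mu(\tau,\ub_{\star},\theta_{\star})-\tfrac{c_{0}}{2}h=\mu_m(\tau)-\tfrac{c_{0}}{2}h.
\end{equation*}
Hence $\mu_m$ is Lipschitz with $d\mu_m/d\tau\leq -c_{0}/2$ on $[t_{0},t^{*})$, and the coarea formula applied to the substitution $u=\mu_m(\tau)$ gives
\begin{equation*}
\int_{t_{0}}^{t}\mu_m(\tau)^{-a-1}\,d\tau\;\leq\;\frac{2}{c_{0}}\int_{\mu_m(t)}^{\mu_m(t_{0})}u^{-a-1}\,du\;\leq\;\frac{2}{a\,c_{0}}\,\mu_m(t)^{-a},
\end{equation*}
which is (1') with constant $2/c_{0}$.

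For (1), I bound $M(\tau)=\max_{W_{shock}}|\Lb(\log\mu)|_{-}\leq \sup_{W_{shock}}|\Lb\mu|/\mu\lesssim 1/\mu_m(\tau)$, using $|\Lb\mu|\lesssim 1$ from \eqref{bound on mu}. Consequently $\mu_m(\tau)^{-a}M(\tau)\lesssim \mu_m(\tau)^{-a-1}$ and (1) follows at once from (1'). The main technical obstacle is the non-smoothness of the minimizer $(\ub_{\star},\theta_{\star})$ as a function of $\tau$; my remedy is to work directly with the Lipschitz function $\mu_m$ via the one-sided Dini estimate rather than select a measurable family of minimizers. A secondary subtlety is the $O(\delta)$ error in the expansion used for (2): it is innocuous because whenever $\mu_m(\tau)$ is so small as to be comparable with $O(\delta)$ one is already inside the shock regime where exact monotonicity holds, whereas in the complementary regime both sides are of order $1$ and the perturbation is absorbed upon choosing $\delta\lesssim 1/a$.
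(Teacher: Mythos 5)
Your proof is correct, but it runs on a genuinely different engine than the paper's. For (1) and (1') the paper does not argue through a differential inequality for $\mu_{m}$: it uses the expansion of $\Lb\mu$ (Lemma \ref{lemma on the expansion of Lb mu}) to derive sharp two-sided bounds $\mu_{m}(t)\approx\mu_{m}(s)+\left(\eta_{m}\mp\tfrac{1}{2a}\right)\left(\tfrac{1}{t}-\tfrac{1}{s}\right)$, with $\eta_{m}=-\min r_{0}^{2}\Lb\mu(-r_{0},\cdot,\cdot)$, and then computes the integral $I_{a}$ explicitly, tracking the ratio $\big(\eta_{m}-\tfrac{1}{2a}\big)^{-a}/\big(\eta_{m}+\tfrac{1}{2a}\big)^{-a}\to e^{1/\eta_{m}}$ to keep the constant absolute; this is why the paper needs ``$a$ sufficiently large'' and $\delta\lesssim 1/a$ already in part (1). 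You instead use only the uniform negativity $\Lb\mu\leq-\tfrac{1}{4|t|^{2}}\leq-c_{0}$ in $W_{shock}$ (Proposition \ref{Proposition C3}) to get a one-sided derivative bound on the Lipschitz function $\mu_{m}$ and then change variables $u=\mu_{m}(\tau)$; this is more elementary, avoids the exact asymptotics, and works for all $a\geq1$ without smallness of $\delta$ in (1)/(1'). For (2) the paper works at the threshold $1-\tfrac{1}{a}$, showing $\mu\leq1-\tfrac{1}{a}$ forces $\Lb\mu\lesssim-a^{-1}$ (requiring $\delta^{1/4}\leq a^{-1}$), whereas you work at the threshold $\tfrac{1}{10}$ plus a multiplicative near-monotonicity $\mu_{m}(t)\leq(1+O(\delta))\mu_{m}(\tau)$ from Proposition \ref{proposition on expansion for mu}, with $\delta\lesssim 1/a$; both yield $C_{0}$ independent of $a,\delta$ with $\delta$ small depending on $a$, consistent with how the lemma is applied. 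Two small details you should make explicit: (i) that $\mu_{m}(\tau)<\tfrac{1}{10}$ persists for all $\tau\in(t_{0},t^{*})$ (one line from the trapping property, needed so the minimizer lies in $W_{shock}$ throughout the interval of integration in (1')); and (ii) in (1) you should bound $|\Lb\mu|$ by the data-dependent constant coming from \eqref{expansion of Lb mu} rather than by \eqref{bound on mu}, since the latter brings in the bootstrap constant $M$, while Sections 8--10 rely on the constant in \eqref{key lemma} being independent of the bootstrap bookkeeping so that $C/b_{|\alpha|+2}$ can be made small by the choice of the blow-up indices.
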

\begin{proof}
(1) By Proposition \ref{Proposition C3}, for $t\geq t_{0}$, the minimum of $r_{0}^{2}(\Lb\mu)(-r_{0},\ub,\theta)$ on $[0,\delta]\times\mathbb{S}^{2}$ is negative and we denote it by
\begin{align}\label{min Lbmu initial}
-\eta_{m}=\min_{(\ub,\theta)\in[0,\delta]\times \mathbb{S}^{2}}\{r_{0}^{2}(\Lb\mu)(-r_{0},\ub,\theta)\}.
\end{align}
We notice that $1\leq \eta_{m}\leq C_{m}$ where $C_{m}$ is a constant depending on the initial data. In view of the asymptotic expansion for $(\Lb\mu)(t,\ub,\theta)$ in Lemma \ref{lemma on the expansion of Lb mu}, we have
\begin{align}\label{mu expansion in use}
\begin{split}
\mu(t,\ub,\theta)=&1-\left(\frac{1}{t}+\frac{1}{r_{0}}\right)r_{0}^{2}(\Lb\mu)(-r_{0},\ub,\theta)+O\left(\delta M^{4}\right)\left(\frac{1}{t^{2}}-\frac{1}{r_{0}^{2}}\right).
\end{split}
\end{align}
We fix an $s\in (t_{0}, t^{*})$ in such a way that $t_{0}\leq t<s<t^{*}$. There exists $(\ub_{s},\theta_{s})\in[0,\delta]\times \mathbb{S}^{2}$ and $(\ub_{m},\theta_{m})\in[0,\delta]\times\mathbb{S}^{2}$ so that 
\begin{align}\label{achieve min}
\mu(s,\ub_{s},\theta_{s})=\mu_{m}(s),\quad r_{0}^{2}(\Lb\mu)(-r_{0},\ub_{m},\theta_{m})=-\eta_{m}.
\end{align}
We claim that
\begin{align}\label{mins are close}
\left|\eta_{m}+r_{0}^{2}(\Lb\mu)(-r_{0},\ub_{s},\theta_{s})\right|\leq O(\delta M^{4}).
\end{align}
Indeed, one can apply \eqref{mu expansion in use} to $\mu(s,\ub_{m},\theta_{m})$ and $\mu(s,\ub_{s},\theta_{s})$ to derive
\begin{align}
\begin{split}
\mu(s,\ub_{s},\theta_{s})=&1-\left(\frac{1}{s}+\frac{1}{r_{0}}\right)(-\eta_{m}+d_{ms})+O(\delta M^{4})\left(\frac{1}{t^{2}}-\frac{1}{r_{0}^{2}}\right)\\
\mu(s,\ub_{m},\theta_{m})=&1-\left(\frac{1}{s}+\frac{1}{r_{0}}\right)(-\eta_{m})+O(\delta M^{4})\left(\frac{1}{t^{2}}-\frac{1}{r_{0}^{2}}\right),
\end{split}
\end{align}
where the quantity $d_{ms}>0$ is defined as 
\begin{align}\label{dms}
d_{ms}:=\eta_{m}+r_{0}^{2}(\Lb\mu)(-r_{0},\ub_{s},\theta_{s}).
\end{align}
Since $\mu(s,\ub_{s},\theta_{s})\leq \mu(s,\ub_{m},\theta_{m})$, we have
\begin{align*}
0<-\left(\frac{1}{s}+\frac{1}{r_{0}}\right)d_{ms}\leq O(\delta M^{4})\left(\frac{1}{t^{2}}-\frac{1}{r_{0}^{2}}\right).
\end{align*}
Hence,
\begin{align}\label{dms esti}
d_{ms}\leq O(\delta M^{4}).
\end{align}
The constants in the above inequalities depend on $t_{0}$ therefore on $\eta_{m}$ and  they are absolute constants. With this preparation, one can derive precise upper and lower bounds for $\mu_{m}(t)$. 

We pick up a $(\ub'_{m},\theta'_{m})\in[0,\delta]\times\mathbb{S}^{2}$ in such a way that $\mu(t,\ub'_{m},\theta'_{m})=\mu_{m}(t)$.  For the lower bound, by virtue of Lemma \ref{lemma on the expansion of Lb mu}, we have
\begin{equation}\label{mu m lower bound temp 1}
\begin{split}
\mu_{m}(t)=&\mu(t,\ub'_{m},\theta'_{m})=\mu(s,\ub'_{m},\theta'_{m})+\int_{s}^{t}(\Lb\mu)(t',\ub'_{m},\theta'_{m})dt'\\
\geq &\mu_{m}(s)+\int_{s}^{t}\frac{\eta_{m}}{-t^{\prime2}}+\frac{O(\delta M^{4})}{(-t')^{3}}dt'\\
\geq &\mu_{m}(s)+\left(\eta_{m}-\frac{1}{2a}\right)\left(\frac{1}{t}-\frac{1}{s}\right).
\end{split}
\end{equation}
In the last step, we take sufficiently small $\delta$ so that $O(\delta M^{4})\leq \frac{1}{2a}$. 

For the upper bound, in view of Lemma \ref{lemma on the expansion of Lb mu} and \eqref{dms esti}, we have
\begin{align}\label{mu m upper bound temp 1}
\begin{split}
\mu_{m}(t)\leq& \mu(t,\ub_{s},\theta_{s})=\mu_{m}(s)+\int_{s}^{t}(\Lb\mu)(t',\ub_{s},\theta_{s})dt'\\
=& \mu_{m}(s)+\int_{s}^{t}\frac{\eta_{m}-d_{ms}}{-t^{\prime2}}+\frac{O(\delta M^{4})}{(-t')^{3}}dt'\\
\leq &\mu_{m}(s)+\int_{s}^{t}\frac{\eta_{m}}{-t^{\prime2}}+\frac{O(\delta M^{4})}{t^{\prime2}}dt'\\
\leq &\mu_{m}(s)+\left(\eta_{m}+\frac{1}{2a}\right)\left(\frac{1}{t}-\frac{1}{s}\right)
\end{split}.
\end{align}
In the last step, we also take sufficiently small $\delta$ so that $O(\delta M^{4})\leq \frac{1}{2a}$. 

For $I_{a}(t)$, first of all, we have
\begin{align*}
I_{a}(t)\lesssim& \int_{t_{0}}^{t}\left(\mu_{m}(s)+\left(\eta_{m}-\frac{1}{2a}\right)\left(\frac{1}{t}-\frac{1}{s}\right)\right)^{-a-1}t^{\prime-2}dt'\\
=&\int_{\tau}^{\tau_{0}}\left(\mu_{m}(s)+\left(\eta_{m}-\frac{1}{2a}\right)\left(\tau-\tau_{s}\right)\right)^{-a-1}d\tau'\\
\leq &\frac{1}{\eta_{m}-\frac{1}{2a}}\frac{1}{a}\left(\mu_{m}(s)+\left(\eta_{m}-\frac{1}{2a}\right)\left(\tau-\tau_{s}\right)\right)^{-a}.
\end{align*}
Hence,
\begin{align}\label{la temp 1}
\begin{split}
I_{a}(t) \lesssim &\frac{1}{a}\left(\mu_{m}(s)+\left(\eta_{m}-\frac{1}{2a}\right)\left(\frac{1}{t}-\frac{1}{s}\right)\right)^{-a}\\
\leq&\frac{1}{a}\frac{\left(\mu_{m}(s)+\left(\eta_{m}-\frac{1}{2a}\right)\left(\frac{1}{t}-\frac{1}{s}\right)\right)^{-a}}{\left(\mu_{m}(s)+\left(\eta_{m}+\frac{1}{2a}\right)\left(\frac{1}{t}-\frac{1}{s}\right)\right)^{-a}}\mu^{-a}_{m}(t)\\
\leq &\frac{1}{a}\frac{\left(\left(\eta_{m}-\frac{1}{2a}\right)\left(\frac{1}{t}-\frac{1}{s}\right)\right)^{-a}}{\left(\left(\eta_{m}+\frac{1}{2a}\right)\left(\frac{1}{t}-\frac{1}{s}\right)\right)^{-a}}\mu^{-a}_{m}(t).
\end{split}
\end{align}
Since as $a\rightarrow\infty$, one has
\begin{align*}
\frac{\left(\eta_{m}-\frac{1}{2a}\right)^{-a}}{\left(\eta_{m}+\frac{1}{2a}\right)^{-a}}\rightarrow e^{\frac{1}{\eta_{m}}}.
\end{align*}
The limit is an absolute constant. Therefore, \eqref{la temp 1} yields the proof for part (1) of the lemma. The proof for part (1') is exactly the same.

(2) We start with an easy observation: if $\mu(t,\ub,\theta)\leq 1-\dfrac{1}{a}$, then $\Lb \mu(t,\ub,\theta)\lesssim -a^{-1}$. In fact, we claim that $(\frac{1}{t}+\frac{1}{r_{0}})r_{0}^{2}\Lb\mu(-r_{0},\ub,\theta)\geq \frac{1}{2}a^{-1}$.
Otherwise, for sufficiently small $\delta$ (say $\delta^{1/4}\leq a^{-1}$), according to the expansion for $\mu(t,\ub,\theta)$, i.e. $\mu(t,\ub,\theta)=\mu(-r_{0},\ub,\theta)-(\frac{1}{t}+\frac{1}{r_{0}})r^{2}_{0}\Lb\mu(-r_{0},\ub,\theta)+O(\delta)$, we have
\begin{align*}
\mu(t,\ub,\theta)> 1-\frac{1}{2a}-C\delta\geq 1-\frac{1}{a}.
\end{align*}
which is a contradiction. Therefore Lemma \ref{lemma on the expansion of Lb mu} implies $\Lb \mu(t,\ub,\theta)\lesssim -a^{-1}$. In particular, this observation implies that, if there is a $t' \in[-r_{0},s^{*}]$, so that $\mu_{m}(t')\leq 1-a^{-1}$, then for all $t\geq t'$, we have $\mu_{m}(t)\leq 1-a^{-1}$. This allows us to define a time $t_1$, such that it is the minimum of all such $t'$ with $\mu_{m}(t')\leq 1-a^{-1}$.

We now prove the lemma. If $\tau \leq t_{1}$, since $\mu_m(t) \leq 2$, we have
\begin{align*}
\mu_{m}^{-a}(\tau)\leq (1-\frac{1}{a})^{-a}\leq C_0\leq C_0\mu^{-a}_{m}(t).
\end{align*}
If $\tau \geq t_{1}$, then $\mu_{m}(\tau)\leq 1-\dfrac{1}{a}$. Let $\mu_{m}(\tau)=\mu(\tau,\ub_{\tau},\theta_{\tau})$. We know that $\mu(t,\ub_\tau,\theta_\tau)$ is decreasing in $t$ for $t\geq \tau$. Therefore, we have
\begin{align*}
\mu_{m}(t)\leq \mu(t,\ub_{\tau},\theta_{\tau})\leq \mu(\tau,\ub_{\tau},\theta_{\tau})=\mu_{m}(\tau).
\end{align*}
The proof now is complete.
\end{proof}
For $I_1$, according to the above lemma with $a=b_{|\alpha|+2}$, we then have
\begin{align}\label{I1 chib}
\begin{split}
 I_1 
 &\lesssim \delta^{1/2}\sum_{\beta \leq |\alpha|}\int_{-r_0}^t \||\Lb\big(\log\mu\big)| R_i{}^{\beta+1}L\psi\|_{L^2(\Sigma_\tau)}\\
 &\lesssim \delta^{1/2} I_{b_{|\alpha|+2}}(t) \sqrt{\widetilde{E}_{\leq|\alpha|+2}(t,\ub)}\\
 &\lesssim\delta^{1/2}\mu_{m}^{-b_{|\alpha|+2}}(t)\sqrt{\widetilde{E}_{\leq|\alpha|+2}(t,\ub)}.
 \end{split}
\end{align}

Finally, the estimates \eqref{I1 chib}, \eqref{I2 chib}, \eqref{I3 chib} on $I_1$, $I_2$ and $I_3$ together yield
\begin{align}\label{L2 Falpha}
\begin{split}
\|F_{\alpha}\|_{L^{2}(\Sigma_{t})} \lesssim&\|F_{\alpha}\|_{L^{2}(\Sigma_{-r_{0}})}+
\int_{-r_{0}}^{t}\delta^{1/2}\mu_{m}^{-b_{|\alpha|+2}}(\tau)\Big( \sqrt{\Et_{\leq |\alpha|+2}(\tau,\ub)}+\mu^{-1/2}_{m}(\tau)\sqrt{\Ebt_{\leq |\alpha|+2}(\tau,\ub)} \big)d\tau\\
&+\delta^{-\frac{1}{2}}\mu_{m}^{-b_{|\alpha|+2}}(t)\sqrt{\int_{0}^{\ub}\Fbt_{\leq |\alpha|+2}(t,\ub')d\ub'}
+\delta^{1/2} \mu_{m}^{-b_{|\alpha|+2}}(t)\sqrt{\Et_{\leq|\alpha|+2}(t,\ub)}.
\end{split}
\end{align}
This implies
\begin{equation}\label{top order chib}
\begin{split}
\|\mu&\ds(R^{\alpha}_{i}\text{tr}\chib)\|_{L^{2}(\Sigma_{t})} \lesssim \delta^{1/2}\mu_{m}^{-b_{|\alpha|+2}}(t)\sqrt{\Et_{\leq |\alpha|+2}(t,\ub)}\\
&+\int_{-r_{0}}^{t}\delta^{1/2}\mu^{-b_{|\alpha|+2}-1/2}_{m}(\tau)\sqrt{\Ebt_{\leq |\alpha|+2}(\tau,\ub)}d\tau +\delta^{-1/2}\mu^{-b_{|\alpha|+2}}_{m}(t)\sqrt{\int_{0}^{\ub}
\Fbt_{\leq|\alpha|+2}(t,\ub')d\ub'}.
\end{split}
\end{equation}

\subsection{Estimates on $\mu$}
The top order estimates on $\mu$ depend on the equation $\Lb\mu=m+\mu e$. Since $m=-\frac{1}{2}\frac{d(c^{2})}{d\rho}T\rho$ and $e\mu=\frac{1}{2c^{2}}\frac{d(c^{2})}{d\rho}\Lb\rho\cdot\mu$, it is visible that $\mu$ can be bounded by the total energy  on $\psi$, i.e. the $E_{k}$'s. However, to avoid loss of derivatives, we should not integrate $\Lb \mu$ directly.

In view of the following commutation formulas,
\begin{equation*}
 \begin{split}
[\Lb,\slashed{\Delta}]\phi+\text{tr}\chib\slashed{\Delta}\phi &=-2\widehat{\chib}\cdot\widehat{\slashed{D}}^{2}\phi
-2\slashed{\text{div}}\widehat{\chib}\cdot\slashed{d}\phi,\\
[T,\slashed{\Delta}]\phi+c^{-1}\mu\text{tr}\theta
\slashed{\Delta}\phi&=-2c^{-1}\mu\widehat{\theta}
\cdot\widehat{\slashed{D}}^{2}\phi-2\slashed{\text{div}}(c^{-1}\mu\widehat{\theta})\cdot\slashed{d}\phi,
\end{split}
\end{equation*}
we have
\begin{align}\label{equaiton for Lb Laplacian mu}
\begin{split}
\Lb \slashed{\Delta} \mu = &-\dfrac{1}{2}\dfrac{d c^2}{d\rho}\slashed{\Delta} T\rho + \mu \slashed{\Delta} e + e \slashed{\Delta} \mu\\+ &\ds\mu\cdot\ds e-\tr\chib\slashed{\Delta}\mu-2\widehat{\chib}
\cdot\widehat{\slashed{D}}^{2}\mu
-2\slashed{\text{div}}\widehat{\chib}\cdot\ds\mu.
\end{split}
\end{align}
According to \eqref{wave equation for rho}, 
\begin{align*}
\Box_{g}\rho=\frac{d\log(c)}{d\rho}\left(\mu^{-1}\Lb\rho L\rho+\ds\rho\cdot\ds\rho\right)+2\mu^{-1}L\psi_{0}\Lb\psi_{0}
+2\ds\psi_{0}\cdot\ds\psi_{0}.
\end{align*}
 Therefore, by multiplying $\mu$, we have 
\begin{align*}
\mu \slashed{\Delta} \rho = \Lb(L \rho) + \frac{1}{2}\Lb\rho\tr\chi+\frac{1}{2}L\rho\tr\chib +\frac{d\log(c)}{d\rho}\left(\Lb\rho L\rho+\mu\ds\rho\cdot\ds\rho\right)+2L\psi_{0}\Lb\psi_{0}
+2\mu\ds\psi_{0}\cdot\ds\psi_{0}.
\end{align*} 
  We commute $T$ and obtain
  \begin{align}\label{Laplacian Trho}
  \begin{split}
\mu \slashed{\Delta} T \rho = &\Lb(T L \rho) + \frac{1}{2}\Lb\rho(T\tr\chi)+\frac{1}{2}L\rho(T\tr\chib)\\+&\{T,\Lb]L\rho+\frac{1}{2}\tr\chi T\Lb\rho+\frac{1}{2}TL\rho\tr\chib\\
+&T\left(\frac{d\log(c)}{d\rho}\left(\Lb\rho L\rho+\mu\ds\rho\cdot\ds\rho\right)
+2L\psi_{0}\Lb\psi_{0}
+2\mu\ds\psi_{0}\cdot\ds\psi_{0}\right)\\
+&c^{-1}\mu\tr\theta\slashed{\Delta}\rho
+2c^{-1}\mu\widehat{\theta}\cdot\widehat{\slashed{D}}^{2}\rho
+2\slashed{\text{div}}(c^{-1}\mu\widehat{\theta})\cdot\ds\rho
-(T\mu)\slashed{\Delta}\rho.
\end{split}
\end{align}
Therefore
\begin{align}\label{Laplacian m}
\begin{split}
-\frac{1}{2}\frac{dc^{2}}{d\rho}\slashed{\Delta}T\rho=&\Lb\left(-\frac{1}{2}\frac{dc^{2}}{d\rho}TL\rho\right)-\frac{1}{2}\frac{dc^{2}}{d\rho}\left(\frac{1}{2}\Lb\rho(T\tr\chi)+\frac{1}{2}L\rho(T\tr\chib)\right)\\
-&\frac{1}{2}\frac{dc^{2}}{d\rho}\left(\{T,\Lb]L\rho+\frac{1}{2}\tr\chi T\Lb\rho+\frac{1}{2}TL\rho\tr\chib\right)\\
-&\frac{1}{2}\frac{dc^{2}}{d\rho}T\left(\frac{d\log(c)}{d\rho}\left(\Lb\rho L\rho+\mu\ds\rho\cdot\ds\rho\right)
+2L\psi_{0}\Lb\psi_{0}
+2\mu\ds\psi_{0}\cdot\ds\psi_{0}\right)\\
-&\frac{1}{2}\frac{dc^{2}}{d\rho}\left(c^{-1}\mu\tr\theta\slashed{\Delta}\rho
+2c^{-1}\mu\widehat{\theta}\cdot\widehat{\slashed{D}}^{2}\rho
+2\slashed{\text{div}}(c^{-1}\mu\widehat{\theta})\cdot\ds\rho
-(T\mu)\slashed{\Delta}\rho\right)\\+&\Lb\left(\frac{1}{2}\frac{dc^{2}}{d\rho}\right)TL\rho.
\end{split}
\end{align}
In view of the commutator formula, we also have 
\begin{align}\label{Laplacian e}
\begin{split}
\mu^{2}\slashed{\Delta}e=&\Lb\left(\frac{\mu^{2}}{2c^{2}}\frac{dc^{2}}{d\rho}\slashed{\Delta}\rho\right)+\frac{\mu^{2}}{c^{2}}\frac{dc^{2}}{d\rho}\left(\underline{\chi}\cdot\slashed{D}^{2}\rho
+\slashed{\textrm{div}}\hat{\underline{\chi}}
\cdot\slashed{d}\rho\right)-\Lb\left(\frac{\mu^{2}}{2c^{2}}\frac{dc^{2}}{d\rho}\right)\slashed{\Delta}\rho\\
+&\mu^{2}\frac{d}{d\rho}\left(\frac{1}{c^{2}}\frac{dc^{2}}{d\rho}\right)\slashed{d}\rho\cdot\slashed{d}\underline{L}\rho+
\mu^{2}\left(\frac{d}{d\rho}\left(\frac{1}{2c^{2}}\frac{dc^{2}}{d\rho}\right)\slashed{\Delta}\rho+\frac{d^{2}}{d\rho^{2}}\left(\frac{1}{2c^{2}}\frac{dc^{2}}{d\rho}\right)|\slashed{d}\rho|^{2}\right)\underline{L}\rho.
\end{split}
\end{align} 
Let us define 
\begin{align}\label{modified top mu}
\check{f}':=-\frac{1}{2}\frac{dc^{2}}{d\rho}TL\rho
+\frac{\mu^{2}}{2c^{2}}\frac{dc^{2}}{d\rho}\slashed{\Delta}\rho,\quad F':=\mu\slashed{\Delta}\mu-\check{f}'.
\end{align}
Then in view of \eqref{equaiton for Lb Laplacian mu}, \eqref{Laplacian m} and \eqref{Laplacian e}, we obtain the following propagation equation for $F'$:
\begin{align}\label{transport equation for Lb laplaican mu}
\underline{L}F^{\prime}
+(\textrm{tr}\underline{\chi}
-2\mu^{-1}\underline{L}\mu)F^{\prime}=-(\frac{1}{2}\textrm{tr}\underline{\chi}
-2\mu^{-1}\underline{L}\mu)\check{f}^{\prime}
-2\mu\hat{\underline{\chi}}\cdot\hat{\slashed{D}}^{2}\mu
+\check{g}^{\prime}.
\end{align}
where
\begin{align}\label{g prime check}
\check{g}^{\prime}=\left(-\ds\mu+\frac{\mu}{c^{2}}\frac{dc^{2}}{d\rho}\ds\rho\right)\cdot(\mu\ds\tr\chib)+\Psi^{\leq2}_{\geq-2}
+\O^{\leq1}_{0}\Psi^{\leq2}_{\geq-2}+\Psi^{\leq2}_{\geq0}.
\end{align}
Here we have used the structure equation \eqref{Structure Equation T chib} to cancel the contribution from the term $\dfrac{1}{2}\Lb\rho(T\tr\chi)+\dfrac{1}{2}L\rho(T\tr\chib)$ in \eqref{Laplacian Trho} and the term $(\Lb\mu)\slashed{\Delta}\mu$ when we write $\Lb(\mu\slashed{\Delta}\mu)
=\mu\Lb(\slashed{\Delta}\mu)+\Lb(\mu)\slashed{\Delta}\mu$. We also remark that the $L^2$ norm of all derivatives on $\slashed{\textrm{div}}\chibh$ has been estimated from previous subsection. In such a sense, it can also be considered as a $\Psi_{2}^{\leq 2}$ term and we use \eqref{Structure Equation div chib} to replace $\divslash \chib$ by $\slashed{d}\tr\chib + \cdots$. The term $\Psi_{\geq-2}^{\leq2}$ comes from the contribution of $L\psi_{0}\Lb\psi_{0}$ and $\O^{\leq1}_{0}\Psi^{\leq2}_{\geq-2}$ comes from $-\dfrac{1}{4}\dfrac{dc^{2}}{d\rho}TL\rho\tr\chib$ in \eqref{Laplacian m}. Since we already applied $T$ to $L\psi_{0}\Lb\psi_{0}$ once in \eqref{Laplacian m}, instead of using flux $\Fb(t,\ub)$ as we did in the last subsection, we only need to use the energy $E(t,\ub)$ to control the contribution of this term.

We set $F'_{\alpha,l} = \mu R_i{}^{\alpha'} T^{l}\slashed{\Delta} \mu - R_i{}^{\alpha'} T^{l}\check{f'}$ and $|\alpha'|+l=|\alpha|$. According to \eqref{transport equation for Lb laplaican mu}, we have
\begin{equation}\label{transport equation for F alpha}
\Lb F'_{\alpha,l} + \left(\tr \chib - 2\mu^{-1}\Lb \mu\right)F'_{\alpha,l} = -\left(\frac{1}{2}\tr\chib - 2\mu^{-1}\Lb\mu\right)R_i{}^{\alpha'}T^{l} \check{f}'-2\mu \widehat{\chib}\cdot \slashed{\mathcal{L}}_{R_{i}}^{\alpha'}
\slashed{\mathcal{L}}_{T}^{l}\widehat{\slashed{D}^2} \mu + \check{g}'_{\alpha',l}.
\end{equation}
where $\check{g}'_{\alpha',l}$ is given by
\begin{equation*}
\begin{split}
\check{g}'_{\alpha',l} = &\left(-\slashed{d}\mu + \frac{\mu}{c^2}\frac{d c^2}{d\rho} \slashed{d}\rho\right)\cdot\mu \slashed{d}\left(R_i{}^{\alpha'}T^{l}\tr\chib\right) + l\cdot\Lambda F'_{\alpha,l-1} +\,^{(R_i)} \Zb F'_{\alpha-1,l}\\
+&\O^{\leq|\alpha'|+1}_{-k}\O^{\leq|\alpha|-|\alpha'|+1}_{\geq2-2l+k}+\Psi_{\geq -2l-2}^{\leq |\alpha|+2}+\O^{\leq|\alpha'|+1}_{-k}\Psi^{\leq|\alpha|+2-|\alpha'|}_{\geq-2l-2+k}+\Psi_{\geq-2l}^{\leq|\alpha|+2}.
\end{split}
\end{equation*}
We remark that the second term of $\check{g}'_{\alpha',l}$ vanishes when $l=0$ and $\Lambda =[\Lb, T]$.
According to \eqref{transport equation for F alpha}, we have
\begin{align*}
\|F'_{\alpha,l} \|_{L^2(\Sigma_t)}&\lesssim \|F'_{\alpha,l} \|_{L^2(\Sigma_{-r_0})} + \int_{-r_0}^t \|\mu^{-1} \Lb\mu\|_{L^{\infty}(\Sigma_\tau)}\| R_i{}^{\alpha'}T^{l}\check{f}'\|_{L^2(\Sigma_\tau)} d\tau\\
&\ \ +\int_{-r_0}^t \| \mu \widehat{\chib}\cdot \slashedLRi^{\alpha'}\slashed{\mathcal{L}}_{T}^{l}\,\widehat{\slashed{D}^2} \mu \|_{L^2(\Sigma_{\tau})} d\tau +\int_{-r_0}^t \| \check{g}'_{\alpha',l}\|_{L^2(\Sigma_{\tau})} d\tau\\
&= \|F'_\alpha \|_{L^2(\Sigma_{-r_0})}+ I_1+I_2+I_3.
\end{align*}
We remark that we must multiply both sides $\delta^{l}$ to get the correct estimates.
We first deal with $I_1$.  Since $\check{f}' = \frac{1}{2} \frac{d c^2}{d \rho}\left(-T L\rho + \frac{\mu^2}{c^2}\slashed{\Delta} \rho\right)$, therefore, we have
\begin{equation*}
R_i{}^{\alpha'}T^{l} \check{f} = -\frac{d c^2}{d \rho}\psi_0 R_i{}^{\alpha'} T^{l+1} L\psi_0 + \frac{\mu^2}{2 c^2}\frac{d c^2}{d \rho}\psi_0 R^{\alpha'}_{i}T^{l}\slashed{\Delta} \psi_0 +\Psi^{\leq |\alpha|+2}_{\geq -2(l+2)}.
\end{equation*}
Compared to the first two terms, the last term on the right hand side above is of lower order with respect to the order of derivatives. Hence,
\begin{align}\label{L2 check f'}
\delta^{l+1}\|R_i{}^{\alpha'}T^{l} \check{f}'\|_{L^{2}(\Sigma_{\tau})}
\lesssim\delta^{1/2}\sqrt{E_{\leq |\alpha|+2}(\tau,\ub)}+\delta^{3/2}\sqrt{\Eb_{\leq |\alpha|+2}(\tau,\ub)} 
\end{align}
This together with Lemma \ref{lemma on mu to power a} yields
\begin{equation}\label{I1}
\delta^{l+1}I_1 \lesssim \mu^{-b_{|\alpha|+2}}_{m}(t)\delta^{1/2}\left(\sqrt{\Et_{\leq |\alpha|+2}(t,\ub)} +\delta\sqrt{\Ebt_{\leq |\alpha|+2}(t,\ub)}\right). 
\end{equation}

For $I_2$, we can use elliptic estimates, i.e. to bound $\widehat{\slashed{D}^2} \mu$ by $\slashed{\Delta} \mu$. This leads to
\begin{align}\label{I2}
\begin{split}
\delta^{l+1}I_2  &\lesssim \delta^{l+2}\int_{-r_0}^t \| \mu R_i{}^{\alpha'} T^{l} \slashed{\Delta} \mu \|_{L^2(\Sigma_{\tau})}d\tau\\
& + \int_{-r_0}^t \delta^{3/2}\sqrt{E_{\leq|\alpha|+2}(\tau,\ub)}+\delta^{3/2}\mu^{-1/2}_{m}(\tau)
 \sqrt{\Eb_{\leq|\alpha|+2}(\tau,\ub)}d\tau\\
 &\lesssim \delta\int_{-r_0}^t \delta^{l+1}\| F_{\alpha,l}\|_{L^2(\Sigma_{\tau})} +\delta^{l+1}\| R_i{}^{\alpha'}T^{l}\check{f}\|_{L^2(\Sigma_\tau)}\\
 &+\delta^{1/2}\left(\sqrt{E_{\leq |\alpha|+2}(\tau,\ub)}+\mu^{-1/2}_{m}(\tau)
 \sqrt{\Eb_{\leq|\alpha|+2}(\tau,\ub)}\right)d\tau.
 \end{split}
\end{align}
We can skip the first two terms: The second term is already controlled by $I_1$. While the first term will be eventually absorbed by Gronwall's inequality. The last two terms come from the commutator between $\slashed{\Delta}$ and $T, R_{i}$ as well as using Proposition \ref{Proposition lower order L2 mu}.

For $I_3:=I'_{3}+I''_{3}$, we first consider the contributions from the first line in the expression of $\check{g}'_{\alpha',l}$, which are denoted by $I'_{3}$. We consider the cases $l=0$ and $l>0$ separately. For $l=0$, we have
\begin{align*}
\delta I'_3 &\lesssim \delta \int_{-r_0}^t \| \left(-\slashed{d}\mu + \frac{\mu}{c^2}\frac{d c^2}{d\rho} \slashed{d}\rho\right)\left(\mu \slashed{d}\left(R_i{}^\alpha\tr\chib\right) \right) +\left(|\,^{(R_i)} \Zb|+|\Lambda| \right)\slashed{d}F'_{|\alpha|-1} \|_{L^2(\Sigma_{\tau})}d\tau\\
&\lesssim  \delta\int_{-r_0}^t \|\mu \slashed{d}\left(R_i{}^\alpha\tr\chib\right) \|_{L^2(\Sigma_{\tau})} +\delta\|\slashed{d} F'_{|\alpha|-1} \|_{L^2(\Sigma_{\tau})} d\tau\\
& +\int_{-r_{0}}^{t}\left(\delta^{3/2}\sqrt{E_{\leq |\alpha|+2}(\tau,\ub)}+\delta^{3/2}\mu^{-1/2}_{m}(\tau)\sqrt{\Eb_{\leq |\alpha|+2}(\tau,\ub)}\right)d\tau
\end{align*}
The first two terms are bound by the top order estimates on $\tr\chib$ in the previous subsection, therefore, for $l=0$, we have
\begin{align}\label{I'3 l=0}
\begin{split}
\delta I'_3 \lesssim &\int_{-r_{0}}^{t} \delta^{3/2} \mu_{m}^{-b_{|\alpha|+2}}(\tau)\sqrt{\Et_{\leq |\alpha|+2}(\tau,\ub)}+\delta^{3/2}\mu_{m}^{-b_{|\alpha|+2}-1/2}(\tau)\sqrt{\Ebt_{\leq |\alpha|+2}(\tau,\ub)}d\tau\\
+&\delta^{1/2}\int_{-r_{0}}^{t}\mu_{m}^{-b_{|\alpha|+2}}(\tau)\sqrt{\int_{0}^{\ub}
\Fbt_{\leq |\alpha|+2}(\tau,\ub')d\ub'}d\tau.
\end{split}
\end{align}

For the $l\geq1$ case, we use \eqref{Structure Equation T chib}, i.e. $\slashed{\mathcal{L}}_{T}\chib=
\slashed{\nabla}\widehat{\otimes}\etab
+\mu^{-1}\zetab\widehat{\otimes}\etab-c^{-1}\Lb
(c^{-1}\mu)\chib +c^{-1}\mu\theta\widehat{\otimes}\chib$
to rewrite $T\tr\chib$. By taking the trace in \eqref{Structure Equation T chib}, we obtain
\begin{align*}
\mu\slashed{d}(R_{i}^{\alpha'-1}T^{l}\text{tr}\chib)=\mu R_{i}^{\alpha'}T^{l-1}\slashed{\Delta}\mu+\O_{\geq -2l+2}^{\leq |\alpha|+1},
\end{align*}
where
\begin{align*}
\delta^{l+1}\|\O_{\geq -2l+2}^{\leq |\alpha|+1}\|_{L^{2}(\Sigma_{\tau})}\lesssim\delta^{3/2}\sqrt{{E}_{\leq|\alpha|+2}(t',\ub)}+\delta^{3/2}\mu_{m}^{-1/2}(\tau)\sqrt{\Eb_{\leq|\alpha|+2}(\tau,\ub)}.
\end{align*}
We then conclude that (for $l\geq1$)
\begin{align}\label{I'3}
\begin{split}
\delta^{l+1}I'_{3}
\lesssim \int_{-r_{0}}^{t}\delta^{3/2}\mu_{m}^{-b_{|\alpha|+2}}(\tau)\sqrt{\Et_{\leq |\alpha|+2}(\tau,\ub)}+\delta^{3/2}\mu_{m}^{-b_{|\alpha|+2}-\frac{1}{2}}(\tau)\sqrt{\Ebt_{\leq|\alpha|+2}(\tau,\ub)}d\tau.	
\end{split}
\end{align}
Now we discuss the contributions from the last three terms in the expression of $\check{g}'_{\alpha',l}$, which are denoted by $I''_{3}$. In view of Proposition \ref{Proposition lower order L2} and \ref{Proposition lower order L2 mu}, the terms $\O^{\leq|\alpha'|+1}_{-k}\O^{\leq|\alpha|-|\alpha'|+1}_{2-k}$ can be absorbed by $\O^{\leq|\alpha'|+1}_{-k}\Psi^{\leq|\alpha|+2-|\alpha'|}_{\geq-2l-2+k}$. We can bound the last three terms of $\check{g}'_{\alpha',l}$ as follows
\begin{align}\label{last three terms in checkg'alpha}
\begin{split}
\delta^{l+1}\|\Psi_{\geq -2l-2}^{\leq |\alpha|+2}\|_{L^{2}(\Sigma_{t})}\lesssim&\delta^{1/2}\sqrt{{E}_{\leq |\alpha|+2}(t,\ub)},\\
\delta^{l+1}\|\O^{\leq|\alpha'|+1}_{-k}\Psi^{\leq|\alpha|+2-|\alpha'|}_{\geq-2l-2+k}\|_{L^{2}(\Sigma_{t}^{\ub})}\lesssim&\delta^{1/2}\sqrt{E_{\leq|\alpha|+2}(t,\ub)}+\delta^{1/2}\int_{-r_{0}}^{t}\mu_{m}^{-1/2}(t')\sqrt{\Eb_{\leq|\alpha|+2}(t',\ub)}dt',\\
 \delta^{l+1}\|\Psi_{\geq -2l}^{\leq |\alpha|+2}\|_{L^{2}(\Sigma_{t})}\lesssim &\delta^{3/2}\sqrt{{E}_{\leq |\alpha|+2}(t,\ub)}+\delta^{3/2}\mu_{m}^{-1/2}(t)\sqrt{\Eb_{\leq|\alpha|+2}(t,\ub)}.
 \end{split}
\end{align}
Therefore we have the following estimates for $I''_{3}$:
\begin{align}\label{I''3}
\delta^{l+1}I''_{3}\lesssim\delta^{1/2}\int_{-r_{0}}^{t}\mu_{m}^{-b_{|\alpha|+2}}(t')\sqrt{\Et_{\leq|\alpha|+2}(t',\ub)}dt'+\delta^{1/2}\int_{-r_{0}}^{t}\mu^{-b_{|\alpha|+2}-1/2}_{m}(t')\sqrt{\Ebt_{\leq|\alpha|+2}(t',\ub)}dt'.
\end{align}
By combining the estimate \eqref{L2 check f'} for $\check{f}'$ and the estimates for \eqref{I1}, \eqref{I2}, \eqref{I'3 l=0}, 
\eqref{I'3} and \eqref{I''3} for $I_{1}, I_{2}, I_{3}$, we obtain 
\begin{equation}\label{top order mu}
\begin{split}
 \delta^{l+1}\|R^{\alpha'}_{i}T^{l}\slashed{\Delta}\mu\|_{L^{2}(\Sigma_{t})}&\lesssim \delta^{l+1}\|F_{\alpha,l}\|_{L^{2}(\Sigma_{-r_0})}\\
& +\delta^{1/2}\mu_{m}^{-b_{|\alpha|+2}}(t)\left(\sqrt{\Et_{\leq |\alpha|+2}(t,\ub)}+\mu_{m}^{-b_{|\alpha|+2}}(t)\sqrt{\Ebt_{\leq|\alpha|+2}(t,\ub)}\right) \\
& + \int_{-r_{0}}^{t}\delta^{1/2}\mu_{m}^{-b_{|\alpha|+2}}(\tau)\sqrt{\Et_{\leq |\alpha|+2}(\tau,\ub)}\\
&+\delta^{1/2}\mu_{m}^{-b_{|\alpha|+2}-1/2}(\tau)\sqrt{\Ebt_{\leq|\alpha|+2}(\tau,\ub)}d\tau\\
& +\delta^{1/2}\int_{-r_{0}}^{t}\mu_{m}^{-b_{|\alpha|+2}}(\tau)\sqrt{\int_{0}^{\ub}
\Fbt_{\leq |\alpha|+2}(\tau,\ub')d\ub'}d\tau.
\end{split}
\end{equation}

\section{Commutator estimates}
In this section, we shall estimate the error spacetime integrals for the contributions of commutators.

Let $\psi$ be a solution of the inhomogeneous wave equation $\Box_{\gt} \psi = \rho$ and $Z$ be a vector field, one can commute $Z$ with the equation to derive
\begin{equation}\label{commute an arbitary vector field with inhomogeneous wave equation}
\Box_{\gt} \left( Z \psi \right) = Z \rho + \frac{1}{2}\tr_{\gt}{}^{(Z)}\widetilde{\pi} \cdot \rho + c^2 \text{div}_{g} \, {}^{(Z)} J
\end{equation}
where the vector field ${}^{(Z)}J$ is defined by
\begin{equation*}
{}^{(Z)}J^{\mu} = \left( {}^{(Z)}\widetilde{\pi}^{\mu\nu} - \frac{1}{2}g^{\mu\nu} \tr_{g}{}^{(Z)}\widetilde{\pi} \right)\partial_\nu \psi.
\end{equation*}
We remark that the raising indices for ${}^{(Z)}\widetilde{\pi}^{\mu\nu}$ are with respect to the optic metric $g$.

In applications, we use the above formulas for homogeneous wave equations $\Box_{\gt} \psi =0$ and commute some commutation vector fields $Z_i$'s several times. Therefore, we need the following recursion formulas:
\begin{equation}\label{commute vector fields with equations}
\begin{split}
\Box_{\gt} \psi_n &= \rho_n. \ \ \psi_n = Z \psi_{n-1}, \ \ \rho_1 = 0,\\
\rho_n &= Z \rho_{n-1} + \frac{1}{2}\tr_{\gt}{}^{(Z)}\widetilde{\pi} \cdot \rho_{n-1} + c^2 \text{div}_{g} \, {}^{(Z)} J_{n-1},\\
{}^{(Z)}J_{n-1}^{\mu}&=\left( {}^{(Z)}\widetilde{\pi}^{\mu\nu} - \frac{1}{2}g^{\mu\nu} \tr_{g}{}^{(Z)}\widetilde{\pi} \right)\partial_\nu \psi_{n-1}.
\end{split}
\end{equation}

\begin{remark}
When we derive energy estimates for $\Box_{\gt} \psi_n = \rho_n$, due to the volume form of the conformal optic metric $\gt$, the integrands $\widetilde{\rho}_n$ appearing in the error terms is slightly different from $\rho_n$. The rescaled source terms $\widetilde{\rho}_n$ are defined as follows:
\begin{equation}\label{rescaled source terms}
\begin{split}
\widetilde{\rho}_n &= \frac{1}{c^2}\mu \rho_n = Z \widetilde{\rho}_{n-1} +{}^{(Z)} \delta \cdot \widetilde{\rho}_{n-1} + {}^{(Z)} \sigma_{n-1}, \\
 \widetilde{\rho}_1 &=0, \ \ {}^{(Z)} \sigma_{n-1} = \mu\cdot \text{div}_{g} \, {}^{(Z)} J_{n-1}, \ \  {}^{(Z)} \delta =\frac{1}{2}\tr_{\gt}{}^{(Z)}\widetilde{\pi}-\mu^{-1}Z\mu +2Z\left(\log(c)\right).
\end{split}
\end{equation}
In view of \eqref{deformation tensor of Q}, \eqref{deformation tensor of Q}, \eqref{deformation tensor of Rotational R_i in T,Lb,X_A} as well as the formula
\begin{align*}
\widetilde{\tr}{}^{(Z)}\widetilde{\pi}=c\tr{}^{(Z)}\widetilde{\pi}
=c\left(-\mu^{-1}{}^{(Z)}{\widetilde{\pi}}_{L\Lb}+\tr{}^{(Z)}\widetilde{\slashed{\pi}}\right)
\end{align*}
we have:
\begin{align}\label{boundedness for delta tensor}
\left|\leftexp{(T)}{\delta}\right|\lesssim 1,\quad \left|\leftexp{(Q)}{\delta}\right|\lesssim 1,\quad \left|\leftexp{(R_{j})}{\delta}\right|\lesssim \delta.
\end{align}
\end{remark}
Then error spacetime integral corresponding to $K_{0}=L$ and $K_{1}=\Lb$ containing $\rho_{n}$ are as follows:
\begin{align*}
-\int_{W^{t}_{\ub}}\frac{1}{c^{2}}\rho_{n}L\psi_{n}d\mu_{g}
=-\int_{W^{t}_{\ub}}\widetilde{\rho}_{n}L\psi_{n}dtd\ub d\mu_{\slashed{g}}\\
-\int_{W^{t}_{\ub}}\frac{1}{c^{2}}\rho_{n}\Lb\psi_{n}d\mu_{g}
=-\int_{W^{t}_{\ub}}\widetilde{\rho}_{n}\Lb\psi_{n}dtd\ub d\mu_{\slashed{g}}
\end{align*}

We first consider the contribution of $\leftexp{(Z)}{\sigma}_{n-1}$ in $\widetilde{\rho}_{n}$. We write $\leftexp{(Z)}{\sigma}_{n-1}$ in null frame $(\Lb,L,\dfrac{\partial}{\partial\theta^{A}})$:
\begin{align*}
\leftexp{(Z)}{\sigma}_{n-1}=&-\frac{1}{2}L(\leftexp{(Z)}{J}_{n-1,\underline{L}})-\frac{1}{2}\underline{L}
(\leftexp{(Z)}{J}_{n-1,L})+\slashed{\textrm{div}}(\mu\leftexp{(Z)}{\slashed{J}}_{n-1})\\\notag
&-\frac{1}{2}\underline{L}(c^{-2}\mu)\leftexp{(Z)}{J}_{n-1,\underline{L}}-\frac{1}{2}\textrm{tr}\chi
\leftexp{(Z)}{J}_{n-1,\underline{L}}-\frac{1}{2}\textrm{tr}\underline{\chi}\leftexp{(Z)}{J}_{n-1,L},
\end{align*}
Then with the following expressions for  the components of $\leftexp{(Z)}{J}_{n-1}$ in the null frame:
\begin{align*}
\leftexp{(Z)}{J}_{n-1,\underline{L}}=&-\frac{1}{2}\textrm{tr}\leftexp{(Z)}{\tilde{\slashed{\pi}}}
(\underline{L}\psi_{n-1})+\leftexp{(Z)}{\tilde{\underline{Z}}}\cdot\slashed{d}\psi_{n-1}\\
\leftexp{(Z)}{J}_{n-1,L}=&-\frac{1}{2}\textrm{tr}\leftexp{(Z)}{\tilde{\slashed{\pi}}}(L\psi_{n-1})
+\leftexp{(Z)}{\tilde{Z}}\cdot\slashed{d}\psi_{n-1}-\frac{1}{2\mu}\leftexp{(Z)}{\tilde{\pi}}_{LL}
(\underline{L}\psi_{n-1})\\
\mu\leftexp{(Z)}{\slashed{J}}^{A}_{n-1}=&-\frac{1}{2}\leftexp{(Z)}{\tilde{Z}}^{A}(\underline{L}\psi_{n-1})
-\frac{1}{2}\leftexp{(Z)}{\tilde{\underline{Z}}}^{A}(L\psi_{n-1})\\\notag
&+\frac{1}{2}(\leftexp{(Z)}{\tilde{\pi}}_{L\underline{L}}-\mu\textrm{tr}\leftexp{(Z)}{\tilde{\slashed{\pi}}})
\slashed{d}^{A}\psi_{n-1}+\mu\leftexp{(Y)}{\tilde{\slashed{\pi}}}^{A}_{B}\slashed{d}^{B}\psi_{n-1}
\end{align*}
Based on the above expressions, we decompose:
\begin{align*}
\leftexp{(Z)}{\sigma}_{n-1}=\leftexp{(Z)}{\sigma}_{1,n-1}+\leftexp{(Z)}{\sigma}_{2,n-1}+\leftexp{(Z)}{\sigma}_{3,n-1}
\end{align*}
where $\leftexp{(Z)}{\sigma}_{1,n-1}$ contains the products of components of $\leftexp{(Z)}{\widetilde{\pi}}$ with the 2nd derivatives of $\psi_{n-1}$, $\leftexp{(Z)}{\sigma}_{2,n-1}$ contains the products of the 1st derivatives of $\leftexp{(Z)}{\widetilde{\pi}}$ with the 1st derivatives of
$\psi_{n-1}$, and $\leftexp{(Z)}{\sigma}_{3,n-1}$ contains the other lower order terms. More specifically, we have:
\begin{align}\label{sigma1}
\begin{split}
\leftexp{(Z)}{\sigma}_{1,n-1}=&\frac{1}{2}\text{tr}\leftexp{(Z)}{\tilde{\slashed{\pi}}}(\Lb L\psi_{n-1}+\frac{1}{2}\text{tr}_{\tilde{\slashed{g}}}\widetilde{\chib}L\psi_{n-1})\\
&+\frac{1}{4}(\mu^{-1}\leftexp{(Z)}{\tilde{\pi}}_{LL})\Lb^{2}\psi_{n-1}\\
&-\leftexp{(Z)}{\tilde{Z}}\cdot\ds\Lb\psi_{n-1}-\leftexp{(Z)}{\tilde{\Zb}}\cdot\ds L\psi_{n-1}\\
&+\frac{1}{2}\leftexp{(Z)}{\tilde{\pi}}_{L\Lb}\slashed{\Delta}\psi_{n-1}+\mu\leftexp{(Z)}{\widehat{\tilde{\slashed{\pi}}}}\cdot\slashed{D}^{2}\psi_{n-1}
\end{split}
\end{align}
\begin{align}\label{sigma2}
\begin{split}
\leftexp{(Z)}{\sigma}_{2,n-1}=&\frac{1}{4}\Lb(\text{tr}\leftexp{(Z)}{\tilde{\slashed{\pi}}})L\psi_{n-1}+\frac{1}{4}L(\text{tr}\leftexp{(Z)}{\tilde{\slashed{\pi}}})\Lb\psi_{n-1}\\
&+\frac{1}{4}\Lb(\mu^{-1}\leftexp{(Z)}{\tilde{\pi}}_{LL})\Lb\psi_{n-1}\\
&-\frac{1}{2}\slashed{\mathcal{L}}_{\Lb}\leftexp{(Z)}{\tilde{Z}}\cdot\ds\psi_{n-1}-\frac{1}{2}\slashed{\mathcal{L}}_{L}\leftexp{(Z)}{\tilde{\Zb}}\cdot\ds\psi_{n-1}\\
&-\frac{1}{2}\slashed{\text{div}}\leftexp{(Z)}{\tilde{Z}}\Lb\psi_{n-1}-\frac{1}{2}\slashed{\text{div}}\leftexp{(Z)}{\tilde{\Zb}}L\psi_{n-1}\\
&+\frac{1}{2}\ds\leftexp{(Z)}{\tilde{\pi}}_{L\Lb}\ds\psi_{n-1}+\slashed{\text{div}}(\mu\leftexp{(Z)}{\widehat{\tilde{\slashed{\pi}}}})\cdot\ds\psi_{n-1}
\end{split}
\end{align}
and
\begin{align}\label{sigma3}
\begin{split}
\leftexp{(Z)}{\sigma}_{3,n-1}=&\big(\frac{1}{4}\text{tr}\chi\text{tr}\leftexp{(Z)}{\tilde{\slashed{\pi}}}+\frac{1}{4}\text{tr}\chib(\mu^{-1}\leftexp{(Z)}{\tilde{\pi}}_{LL})\\&+\frac{1}{2}\leftexp{(Z)}{\tilde{\Zb}}\cdot\ds(c^{-2}\mu)\big)\Lb\psi_{n-1}-\frac{1}{4}(\Lb\log(c^{-1}))\text{tr}\leftexp{(Z)}{\tilde{\slashed{\pi}}}L\psi_{n-1}\\
&-\big((\frac{1}{2}\text{tr}\chi+\Lb(c^{-2}\mu))\leftexp{(Z)}{\tilde{\Zb}}+\frac{1}{2}\text{tr}\chib\leftexp{(Z)}{\tilde{Z}}\big)\ds\psi_{n-1}
\end{split}
\end{align}
With these expressions for $\leftexp{(Z)}{\sigma}_{n-1}$, we are able to investigate the structure of $\widetilde{\rho}_{n}$. Basically, we want to use the recursion formulas in \eqref{rescaled source terms} to obtain a relatively explicit expression for $\widetilde{\rho}_{n}$.

On the other hand, for the energy estimates, we consider the following possible $\psi_{n}$:
\begin{align*}
\psi_{n}=R_{i}^{\alpha+1}\psi, \quad \psi_{n}=R^{\alpha'}_{i}T^{l+1}\psi,\quad \psi_{n}=QR^{\alpha'}_{i}T^{l}\psi
\end{align*}
Here $\psi_{n}$ is the $n$th order variation and $n=|\alpha|+1=|\alpha'|+l+1$. While $\psi$ is any first order variation. The reason that we can always first apply $T$, then $R_{i}$, and finally a possible $Q$ is that the commutators $[R_{i},T]$, $[R_{i},Q]$ and $[T,Q]$ are one order lower than $R_{i}T, TR_{i}$; $Q R_{i}, R_{i}Q$; $QT,TQ$ respectively. Moreover, the commutators $[R_{i},T], [R_{i},Q]$ and $[T,Q]$ are tangent to $S_{t,\ub}$. Since we let $Q$ be the last possible commutator, there will be no $Q$'s in $\psi_{n-1}$ in the second term on the right hand side of \eqref{sigma1}. Therefore we only need to commute $Q$ once. 
Now suppose that we consider the $n=|\alpha|+2$th order variations of the following form:
\begin{align*}
\psi_{|\alpha|+2}:=Z_{|\alpha|+1}...Z_{1}\psi
\end{align*}
We have the inhomogeneous wave equation:
\begin{align*}
\Box_{\tilde{g}}\psi_{|\alpha|+2}=\rho_{|\alpha|+2}
\end{align*}
As we pointed out in Remark 9.1, we define:
\begin{align*}
\widetilde{\rho}_{|\alpha|+2}=\frac{\mu}{c^{2}}\rho_{|\alpha|+2}
\end{align*}
Then by a induction argument, the corresponding inhomogeneous term $\widetilde{\rho}_{|\alpha|+2}$ is given by:
\begin{align}\label{rho tilde n}
\widetilde{\rho}_{|\alpha|+2}=\sum_{k=0}^{|\alpha|}\big(Z_{|\alpha|+1}+\leftexp{(Z_{|\alpha|+1})}{\delta}\big)...\big(Z_{|\alpha|-k+2}+\leftexp{(Z_{|\alpha|-k+2})}{\delta}\big)\leftexp{(Z_{|\alpha|-k+1})}{\sigma}_{|\alpha|-1+k}
\end{align}

\subsection{Error Estimates for the lower order terms}
Consider an arbitrary term in this sum. There is a total of $k$ derivatives with respect to the commutators acting on $\leftexp{(Z)}{\sigma}_{|\alpha|-1+k}$. In view of the fact that $\leftexp{(Z)}{\sigma}_{|\alpha|-1+k}$ has the structure described in \eqref{sigma1}, \eqref{sigma2} and \eqref{sigma3}, in considering the partial contribution of each term in $\leftexp{(Z)}{\sigma}_{1,|\alpha|-1+k}$, if the factor which is a component of $\leftexp{(Z)}{\tilde{\pi}}$ receives more than $\big[\dfrac{|\alpha|+1}{2}\big]$ derivatives with respect to the commutators, then the factor which is a $2$nd order derivative of $\psi_{|\alpha|+1-k}$ receives at most $k-\big[\dfrac{|\alpha|+1}{2}\big]-1$ order derivatives of commutators, thus corresponds to a derivative of the $\psi$ of order at most: $k-\big[\dfrac{|\alpha|+1}{2}\big]+1+|\alpha|-k=\big[\dfrac{|\alpha|}{2}\big]+1$, therefore this factor is bounded in $L^{\infty}(\Sigma_{t}^{\ub})$ by the bootstrap assumption. Also, in considering the partial contribution of each
term in
$\leftexp{(Z)}{\sigma}_{2,|\alpha|+1-k}$, if the factor which is a 1st derivative of $\leftexp{(Z)}{\tilde{\pi}}$ receives more than $\big[\dfrac{|\alpha|+1}{2}\big]-1$ derivatives with respect to the
commutators, then the factor which is a 1st derivative of $\psi_{|\alpha|+1-k}$ receives at most $k-\big[\dfrac{|\alpha|+1}{2}\big]$ derivatives with respect to the commutators, thus
corresponds to a derivative of the $\psi_{\alpha}$ of order at most $ k-\big[\dfrac{|\alpha|+1}{2}\big]+1+|\alpha|-k=\big[\dfrac{|\alpha|}{2}\big]+1$, therefore this factor is again bounded in $L^{\infty}(\Sigma_{t}^{\ub})$ by the bootstrap assumption. Similar considerations apply to $\leftexp{(Z)}{\sigma}
_{3,|\alpha|+1-k}$. We conclude that for all the terms in the sum in \eqref{rho tilde n} of which one factor is a derivative of the $\leftexp{(Z)}{\tilde{\pi}}$ of order more than
$\big[\dfrac{|\alpha|+1}{2}\big]$, the other factor is then a derivative of the $\psi_{\alpha}$ of order at most $\big[\dfrac{|\alpha|}{2}\big]+1$ and is thus bounded in $L^{\infty}(\Sigma_{t}^{\ub})$ by
the bootstrap assumption. Of these terms we shall estimate the contribution of those containing the top order spatial derivatives of the optical entities in the next subsection. Before we give the estimates for the contribution of the lower order optical terms to the spacetime integrals:
\begin{align}\label{spacetime error lower order}
-\delta^{2k}\int_{W^{t}_{\ub}}\widetilde{\rho}_{\leq|\alpha|+2}L\psi_{\leq|\alpha|+2}dtd\ub d\mu_{\slashed{g}},\quad -\delta^{2k}\int_{W^{t}_{\ub}}\widetilde{\rho}_{\leq|\alpha|+2}\Lb\psi_{\leq|\alpha|+2}dtd\ub d\mu_{\slashed{g}},
\end{align}
we investigate the behavior of these integrals with respect to $\delta$. Here $k$ is the number of $T$s in string of commutators. For the multiplier $K_{1}=\Lb$, the associated energy inequality is 
\begin{align}\label{energy inequality delta behavior K1}
\Eb_{\leq|\alpha|+2}(t,\ub)+\Fb_{\leq|\alpha|+2}(t,\ub)+K_{\leq|\alpha|+2}(t,\ub)\lesssim\Eb_{\leq|\alpha|+2}(-r_{0},\ub)+\int_{W^{t}_{\ub}}\widetilde{Q}_{1,\leq|\alpha|+2}.
\end{align}
The quantities $K_{\leq|\alpha|+2}(t,\ub)$ 
are defined similar as $K(t,\ub)$:
\begin{align*}
K_{\leq|\alpha|+2}(t,\ub):=\sum_{|\alpha'|\leq |\alpha|+1}\delta^{l'}K(t,\ub)[Z^{\alpha'}\psi],
\end{align*}
Again, $l'$ is the number of $T$'s in $Z^{\alpha'}$.

In $\widetilde{Q}_{1,\leq|\alpha|+2}$, there are contributions from the deformation tensors of two multipliers, which has been treated in Section 6. There are also contributions from the deformation tensors of commutators, which are given by \eqref{rho tilde n}. Now we investigate the terms which are not top order optical terms, namely, the terms containing $\chib$ and $\mu$ of order less than $|\alpha|+2$. In view of the discussion in Section 6, the left hand side of \eqref{energy inequality delta behavior K1} is of order $\delta$, so we expect these lower order terms in the second integral of \eqref{spacetime error lower order} is of order $\delta$. In fact, the integration on $W^{t}_{\ub}$ gives us a $\delta$ and the multiplier $\delta^{k}\Lb\psi_{\leq|\alpha|+2}$ is of order $\delta^{1/2}$. 
To see the behavior of $\delta^{k}\sigma$, we look at $\sigma_{1}$ as an example. Let $k'$ be the number of $T$s applied to $\left(\Lb L\psi_{l}+\frac{1}{2}\widetilde{\tr}\widetilde{\chib}L\psi_{l}\right)$. \eqref{box psi_0 in null frame} implies
\begin{align*}
\delta^{k'}\left(\Lb L\psi_{l}+\frac{1}{2}\widetilde{\tr}\widetilde{\chib}L\psi_{l}\right)\sim\delta^{1/2}
+\delta^{k'}\rho_{l}.
\end{align*}
Since $\rho_{1}=0$, an induction argument implies that 
 \begin{align*}
 \delta^{k'}\left(\Lb L\psi_{l}+\frac{1}{2}\widetilde{\tr}\widetilde{\chib}L\psi_{l}\right)\sim\delta^{1/2}.
 \end{align*}
 Then in view of \eqref{deformation tensor of T}, \eqref{deformation tensor of Q} and \eqref{deformation tensor of Rotational R_i in T,Lb,X_A}, the first term in $\sigma_{1}$ behaves like $\delta^{1/2}$. Following the same procedure, one sees straightforwardly that all the other terms in $\sigma_{1}, \sigma_{2}$ and $\sigma_{3}$ behave like $\delta^{1/2}$ (one keeps in mind that if $Z=T$, then we multiplier a $\delta$ with the corresponding deformation tensor.) except
the term $\Lb\left(\tr{}^{(Z)}\widetilde{\slashed{\pi}}\right)L\psi_{l}$. For this term we use the argument deriving \eqref{Improved estimate for trQ} and Proposition \ref{L infity estimates on lot} to see actually we have:
\begin{align*}
\|\Lb\big(\text{tr}\leftexp{(Q)}{\tilde{\slashed{\pi}}}\big)\|_{L^{\infty}(\Sigma_{t}^{\ub})}\lesssim\delta.
\end{align*}
This completes the discussions for $\sigma$ associated to $K_{1}$.

The same argument applies to the energy inequality associated to $K_{0}$: 
\begin{align}\label{energy inequality delta behavior K0}
E_{\leq|\alpha|+2}(t,\ub)+F_{\leq|\alpha|+2}(t,\ub)\lesssim E_{\leq|\alpha|+2}(-r_{0},\ub)+\int_{W^{t}_{\ub}}\widetilde{Q}_{0,\leq|\alpha|+2}.
\end{align}
and we conclude that the lower order optical terms in the error spacetime integrals have one more power in $\delta$ than the energies on the left hand side. 

Now we summarize the spacetime error estimates for the terms which come from the $L^{2}$ norms of the lower order optical quantities. 
In the proof of Proposition \ref{Proposition lower order L2} and Proposition \ref{Proposition lower order L2 mu}, we use $\mu_{m}^{-1/2}(t)\sqrt{\Eb_{\leq|\alpha|+2}(t,\ub)}$ to control the $L^{2}$ norm $\sum_{\alpha'\leq|\alpha|+1}\|\ds Z^{\alpha'}\psi\|_{L^{2}(\Sigma_{t}^{\ub})}$. Now we just keep this $L^{2}$ norm as it is. This together with the bootstrap assumptions on the $L^{\infty}$ norms of the variations implies that the contributions from the $L^{2}$ norms of lower order optical terms are bounded as:
\begin{align*}
&\int_{-r_{0}}^{t}\left(\sum_{|\alpha'|\leq |\alpha|+1}\delta^{1/2+l'}\|\ds Z^{\alpha'}_{i}\psi\|_{L^{2}(\Sigma_{t'}^{\ub})}+\delta\sqrt{E_{\leq|\alpha|+2}(t',\ub)}\right)dt'\cdot\delta^{1/2}
\int_{-r_{0}}^{t}\|\Lb\psi_{|\alpha|+2}\|_{L^{2}(\Sigma_{t'}^{\ub})}dt'\\
&\lesssim\int_{-r_{0}}^{t}\left(\sum_{|\alpha'|\leq|\alpha|+1}\delta^{2+2l'}\|\ds Z^{\alpha'}_{i}\psi\|^{2}_{L^{2}(\Sigma_{t'}^{\ub})}+\delta^{2} E_{\leq|\alpha|+2}(t',\ub)\right)dt'+\int_{0}^{\ub}\Fb_{\leq|\alpha|+2}(t,\ub')d\ub'
\quad
\text{for} \quad K_{1}
\end{align*}
and
\begin{align*}
&\int_{-r_{0}}^{t}\left(\sum_{|\alpha'|\leq |\alpha|+1}\delta^{1/2+l'}\|\ds
Z^{\alpha'}_{i}\psi\|_{L^{2}(\Sigma_{t'}^{\ub})}+\delta\sqrt{E_{\leq|\alpha|+2}(t',\ub)}\right)dt'\cdot\delta^{1/2}
\int_{-r_{0}}^{t}\|L\psi_{|\alpha|+2}\|_{L^{2}(\Sigma^{\ub}_{t'})}dt'\\
&\lesssim \int_{-r_{0}}^{t}\left(\sum_{|\alpha'|\leq|\alpha|+1}\delta^{1+2l'}\|\ds Z^{\alpha'}_{i}\psi\|^{2}_{L^{2}(\Sigma_{t'}^{\ub})}+\delta E_{\leq|\alpha|+2}(t',\ub)\right)dt'+\delta\int_{-r_{0}}^{t}E_{\leq|\alpha|+2}(t',\ub)dt'\quad
\text{for} \quad K_{0}
\end{align*}
where $l'$ is the number of $T$'s in the string of $Z^{\alpha'}_{i}$.
Therefore we obtain the following error estimates for the lower order optical terms:
\begin{align}\label{lower order optical error estimates K1 a}
\int_{-r_{0}}^{t}\delta^{2} E_{\leq|\alpha|+2}(t',\ub)dt'+\int_{0}^{\ub}\Fb_{\leq|\alpha|+2}(t,\ub')d\ub'+\delta^{2} K_{\leq|\alpha|+2}(t,\ub)\quad
\text{for}\quad K_{1}
\end{align}
and
\begin{align}\label{lower order optical error estimates K0 a}
\int_{-r_{0}}^{t}\delta E_{\leq|\alpha|+2}(t',\ub)dt'+\delta K_{\leq|\alpha|+2}(t,\ub)\quad
\text{for}\quad K_{0}
\end{align}
Next we consider the case in which the deformation tensors receive less derivatives with respect to the commutators than the variations in the expression for $\leftexp{(Z)}{\sigma}_{l}$. More specifically, we consider the terms in the sum \eqref{rho tilde n} in which there are at most $\big[\dfrac{|\alpha|+1}{2}\big]$ derivatives hitting the deformation tensor $\leftexp{(Z)}{\tilde{\slashed{\pi}}}$, thus the spatial derivatives on $\chib$ is at most $\big[\dfrac{|\alpha|+1}{2}\big]$ and the spatial derivatives on $\mu$ is at most $\big[\dfrac{|\alpha|+1}{2}\big]+1$, which are bounded in $L^{\infty}(\Sigma_{t}^{\ub})$ by virtue of Proposition 7.1 and Proposition 7.2. 
Using the inequality $ab\leq \epsilon a^{2}+\frac{1}{\epsilon}b^{2}$,
we have the following estimates for these contributions:
\begin{align}\label{lower order optical terms separately K1 a}
\begin{split}
\int_{-r_{0}}^{t}\delta^{2} E_{\leq|\alpha|+2}(t',\ub)dt'+\delta^{-1/2}\int_{0}^{\ub}\Fb_{\leq|\alpha|+2}(t,\ub')d\ub'\\
+ \delta^{1/2}K_{\leq|\alpha|+2}(t,\ub)+\int_{-r_{0}}^{t}\Eb_{\leq|\alpha|+2}(t',\ub)dt'\quad
\text{for}\quad K_{1}
\end{split}
\end{align}
\begin{align}\label{lower order optical terms separately K0 a}
\int_{-r_{0}}^{t}\delta^{1/2}E_{\leq|\alpha|+2}(t',\ub)dt'+\delta^{-1/2}\int_{0}^{\ub}\Fb_{\leq|\alpha|+2}(t,\ub')d\ub'+ \delta^{1/2}K_{\leq|\alpha|+2}(t,\ub)\quad
\text{for}\quad K_{0}
\end{align}
Here we estimate the terms involving $\Lb^{2}\psi_{n-1}$ and $\Lb\psi_{n-1}$ in terms of flux.

\subsection{Top Order Optical Estimates}
Now we estimate the contributions from the top order optical terms to the error spacetime integrals. In estimating the top order optical terms, we need to choose the power of $\mu_{m}(t)$ large enough. Therefore from this subsection on, we will use $C$ to denote an absolute positive constant so that one can see the largeness of the power of $\mu_{m}(t)$ more clearly.

The top order optical terms come from the term in which all the commutators hit the deformation tensors in the expression of $\leftexp{(Z_{1})}{\sigma}_{1}$, namely, the term:
\begin{align*}
(Z_{|\alpha|+1}+\leftexp{(Z_{|\alpha|+1})}{\delta})...(Z_{2}+\leftexp{(Z_{2})}{\delta})\leftexp{(Z_{1})}{\sigma}_{1}
\end{align*}
more precisely, in:
\begin{align*}
Z_{|\alpha|+1}...Z_{2}\big(-\frac{1}{2}L(\leftexp{(Z_{1})}{J}_{1,\Lb})-\frac{1}{2}\Lb(\leftexp{(Z_{1})}{J}_{1,L})+\slashed{\text{div}}(\mu\leftexp{(Z_{1})}{\slashed{J}})\big)
\end{align*}
when the operators $L, \Lb, \slashed{\text{div}}$ hit the deformation tensors in the expression of $\leftexp{(Z_{1})}{J}$.

Now we consider the top order variations:
\begin{align*}
R_{i}^{\alpha+1}\psi,\quad R_{i}^{\alpha'}T^{l+1}\psi,\quad QR^{\alpha'}_{i}T^{l}\psi
\end{align*}
where $|\alpha|=\Ntop-1$ and $|\alpha'|+l+1=|\alpha|+1$. 
Then the corresponding principal optical terms are:
\begin{align*}
\widetilde{\rho}_{|\alpha|+2}(R_{i}^{\alpha+1}\psi):=\frac{1}{c}(R_{i}^{\alpha+1}\text{tr}\chib' )\cdot T\psi,\quad \widetilde{\rho}_{|\alpha|+2}(R_{i}^{\alpha'}T^{l+1}\psi):=\frac{1}{c}(R_{i}^{\alpha'}T^{l}\slashed{\Delta}\mu)\cdot T\psi\\
\widetilde{\rho}_{|\alpha|+2}(QR^{\alpha'}_{i}T^{l}\psi):=\frac{t\mu}{c}\big(\slashed{d}R^{\alpha'}_{i}\text{tr}
\chib'\big)\cdot\ds\psi+\frac{t\mu}{c}(\ds R^{\alpha'}_{i}\slashed{\Delta}\mu)\cdot\Lb\psi,\quad\text{if}\quad l=0\\
\widetilde{\rho}_{|\alpha|+2}(QR^{\alpha'}_{i}T^{l}\psi):=\frac{t\mu}{c}\big(\slashed{d}R^{\alpha'}_{i}T^{l-1}
\slashed{\Delta}\mu\big)\cdot\ds\psi+\frac{t\mu}{c}(\ds R^{\alpha'}_{i}T^{l}\slashed{\Delta}\mu)\cdot\Lb\psi,\quad\text{if}\quad l\geq1
\end{align*}
Here we used the structure equation \eqref{Structure Equation T chib}
\begin{align*}
\slashed{\mathcal{L}}_{T}\text{tr}\chib
=\slashed{\Delta}\mu+\mathcal{O}^{\leq1}_{\geq0}
\end{align*}
Now we briefly investigate the behavior of the above terms with respect to $\delta$. Note that $R^{\alpha}_{i}Q\psi$ has the same behavior as $R^{\alpha+1}_{i}\psi$ with respect to $\delta$, while for their corresponding top order optical terms $\dfrac{t\mu}{c}\big(\ds R^{\alpha}_{i}\text{tr}\chib'\big)
\cdot\ds\psi$ and $\dfrac{1}{c}\big(R^{\alpha+1}_{i}\text{tr}\chib'
\big)\cdot T\psi$, the former behaves better than the latter with respect to $\delta$:
\begin{align*}
|\dfrac{t\mu}{c}\big(\ds R^{\alpha}_{i}\text{tr}\chib'\big)
\cdot\ds\psi|\sim |\mu \big(R^{\alpha+1}_{i}\text{tr}\chib'\big)|\delta^{1/2},\quad |\frac{1}{c}\big(R^{\alpha+1}_{i}\text{tr}
\chib'\big)\cdot T\psi|\sim|\big(R^{\alpha+1}_{i}\text{tr}\chib'\big)|\delta^{-1/2}
\end{align*}
 We see that not only the former behaves better with respect to $\delta$, but also has an extra $\mu$, which makes the behavior even better when $\mu$ is small. This means that we only need to estimate the contribution of $\dfrac{1}{c}\big(R^{\alpha+1}_{i}\text{tr}\chib'
\big) \cdot T\psi$. The same analysis applies to the terms involving $\Lb\psi$ as well as the comparison between $R^{\alpha'}_{i}T\psi$ and $QR^{\alpha'}_{i}T\psi$, which correspond to $\dfrac{1}{c}\big(R^{\alpha}_{i}\slashed{\Delta}
\mu\big)\cdot T\psi$ and $\dfrac{t\mu}{c}\big(R^{\alpha'+1}_{i}
\slashed{\Delta}\mu\big)\cdot\ds\psi$. So in the following, we do not need to estimate the contributions corresponding to the variations containing a $Q$.
\subsubsection{Contribution of $K_{0}$}
In this subsection we first estimate the spacetime integral:
\begin{align}\label{spacetime K_0 chib}
&\int_{W^{t}_{\ub}}\frac{1}{c}|R_{i}^{\alpha+1}\text{tr}\chib'||T\psi||LR^{\alpha+1}_{i}\psi|dt'd\ub' d\mu_{\slashed{g}}\lesssim\\\notag &\int_{-r_{0}}^{t}\sup_{\Sigma_{t'}^{\ub}}(\mu^{-1}|T\psi|)\|\mu\ds R_{i}^{\alpha}\text{tr}\chib'\|_{L^{2}(\Sigma_{t'}^{\ub})}\|LR^{\alpha+1}_{i}\psi\|_{L^{2}(\Sigma_{t'}^{\ub})}dt'
\end{align}
By \eqref{top order chib}, we have:
\begin{align*}
\|\mu\slashed{d}R^{\alpha}_{i}\text{tr}\chib'\|_{L^{2}(\Sigma^{\ub}_{t})}&\lesssim\delta^{1/2}\mu_{m}^{-b_{|\alpha|+2}}(t)\sqrt{\widetilde{E}_{\leq|\alpha|+2}(t,\ub)}\\&+\int_{-r_{0}}^{t}\delta^{1/2}\mu_{m}^{-b_{|\alpha|+2}-1/2}(\tau)\sqrt{\widetilde{\Eb}_{\leq |\alpha|+2}(\tau,\ub)}d\tau\\
&+\delta^{-1/2}\mu^{-b_{|\alpha|+2}}_{m}(t)\sqrt{\int_{0}^{\ub}\widetilde{\Fb}_{\leq|\alpha|+2}(t,\ub')d\ub'}.
\end{align*}
By the monotonicity of $ \widetilde{\Eb}_{\leq |\alpha|+2}(t,\ub)$ in $t$, we have
\begin{align}\label{final top chib}
\|\mu\slashed{d}R^{\alpha}_{i}\text{tr}\chib'\|_{L^{2}(\Sigma_{t}^{\ub})}
&\lesssim\delta^{1/2}\mu_{m}^{-b_{|\alpha|+2}}(t)\sqrt{\widetilde{E}_{\leq|\alpha|+2}(t,\ub)}\\\notag &+\delta^{1/2}\sqrt{\widetilde{\Eb}_{\leq |\alpha|+2}(t,\ub)}\int_{-r_{0}}^{t}\mu_{m}^{-b_{|\alpha|+2}-1/2}(t')dt'\\\notag
&+\delta^{-1/2}\sqrt{\int_{0}^{\ub}\widetilde{\Fb}_{\leq|\alpha|+2}(t,\ub')d\ub'}\mu_{m}^{-b_{|\alpha|+2}}(t)
\end{align}
Without loss of generality, here we assume that there is a $t_{0}\in[-r_{0},t^{*})$ such that $\mu_{m}(t_{0})=\frac{1}{10}$ and $\mu_{m}(t')\geq \frac{1}{10}$ for $t\leq t_{0}$. If there is no such $t_{0}$ in $[-r_{0},t^{*})$, then $\mu_{m}(t)$ has an absolute positive lower bound for all $[-r_{0},t^{*})$ and it is clear to see that the following argument simplifies and also works in this case. In view of part (1') and part (2) in Lemma \ref{lemma on mu to power a} and the fact $\mu_{m}(t')\geq \frac{1}{10}$ for $t'\in[-r_{0},t_{0}]$, we have

\begin{align*}
\int_{-r_{0}}^{t_{0}}\mu_{m}^{-b_{|\alpha|+2}-1/2}(t')dt'\lesssim&\mu_{m}^{-b_{|\alpha|+2}+1/2}(t_{0})\leq \mu_{m}^{-b_{|\alpha|+2}+1/2}(t),\\
\int_{t_{0}}^{t}\mu_{m}^{-b_{|\alpha|+2}-1/2}(t')dt'\lesssim&\frac{1}{\left(b_{|\alpha|+2}-1/2\right)}\mu_{m}^{-b_{|\alpha|+2}+1/2}(t).
\end{align*}

Therefore the second term in \eqref{final top chib} are bounded by:
\begin{align*}
\delta^{1/2}\mu_{m}^{-b_{|\alpha|+2}+1/2}(t)\sqrt{\widetilde{\Eb}_{\leq |\alpha|+2}(t,\ub)}
\end{align*}

Substituting this in \eqref{spacetime K_0 chib}, and using the fact that $|T\psi|\lesssim\delta^{-1/2}$, we see that the spacetime integral \eqref{spacetime K_0 chib} is bounded by:
\begin{align*}
&\int_{-r_{0}}^{t}\mu^{-b_{|\alpha|+2}-1}_{m}(t')\sqrt{\widetilde{E}_{\leq|\alpha|+2}(t',\ub)}\|LR^{\alpha+1}_{i}\psi\|_{L^{2}(\Sigma_{t'}^{\ub})}dt'\\&+\int_{-r_{0}}^{t}\mu^{-b_{|\alpha|+2}-1/2}_{m}(t')\sqrt{\widetilde{\Eb}_{\leq|\alpha|+2}(t',\ub)}\|LR^{\alpha+1}_{i}\psi\|_{L^{2}(\Sigma_{t'}^{\ub})}dt'\\\notag
&+\int_{-r_{0}}^{t}\delta^{-1}\mu^{-b_{|\alpha|+2}-1}_{m}(t')\sqrt{\int_{0}^{\ub}\widetilde{\Fb}_{\leq|\alpha|+2}(t',\ub')d\ub'}\|LR^{\alpha+1}_{i}\psi\|_{L^{2}(\Sigma_{t'}^{\ub})}dt'
\end{align*}
For the factor $\|LR^{\alpha+1}_{i}\psi\|_{L^{2}(\Sigma_{t}^{\ub})}$, we bound it as:
\begin{align*}
\|LR^{\alpha+1}_{i}\psi\|_{L^{2}(\Sigma_{t}^{\ub})}\leq \sqrt{E_{|\alpha|+2}(t,\ub)}\leq \mu_{m}^{-b_{|\alpha|+2}}(t)\sqrt{\widetilde{E}_{\leq|\alpha|+2}(t,\ub)}
\end{align*}
Then the spacetime integral \eqref{spacetime K_0 chib} is bounded by:
\begin{align*}
&\int_{-r_{0}}^{t}\mu^{-2b_{|\alpha|+2}-1}_{m}(t')\widetilde{E}_{\leq|\alpha|+2}(t',\ub)dt'\\
&+\int_{-r_{0}}^{t}\mu^{-2b_{|\alpha|+2}-1/2}_{m}(t')\sqrt{\widetilde{\Eb}_{\leq|\alpha|+2}(t',\ub)}\sqrt{\widetilde{E}_{\leq|\alpha|+2}(t',\ub)}dt'\\
&+\delta^{-1}\int_{-r_{0}}^{t}\mu^{-2b_{|\alpha|+2}-1}_{m}(t')\sqrt{\int_{0}^{\ub}\widetilde{\Fb}_{\leq|\alpha|+2}(t',\ub')d\ub'}\sqrt{\widetilde{E}_{\leq|\alpha|+2}(t',\ub)}dt'
\end{align*}

Splitting the above integrals as $\int_{-r_{0}}^{t_{0}}+\int_{t_{0}}^{t}$, we see that
the ``non-shock" part $\int_{-r_{0}}^{t_{0}}$ is bounded by:
\begin{align}\label{top gronwall 1}
\int_{-r_{0}}^{t_{0}}\mu^{-2b_{|\alpha|+2}}_{m}(t')\Big(\widetilde{E}_{\leq|\alpha|+2}(t',\ub)+\widetilde{\Eb}_{\leq|\alpha|+2}(t',\ub)+\delta^{-2}\int_{0}^{\ub}\widetilde{\Fb}_{\leq|\alpha|+2}(t',\ub')d\ub'\Big)dt'
\end{align}
using part (2) of Lemma \ref{lemma on mu to power a}.
\begin{align}\label{top optical 1}
\boxed{\frac{C}{2b_{|\alpha|+2}}\mu_{m}^{-2b_{|\alpha|+2}}(t)\widetilde{E}_{\leq|\alpha|+2}(t,\ub)}&+\frac{C}{\big(2b_{|\alpha|+2}-1/2\big)}\mu_{m}^{-2b_{|\alpha|+2}+1/2}(t)\widetilde{\Eb}_{\leq |\alpha|+2}(t,\ub)\\\notag&+\delta^{-2}\frac{C}{\big(2b_{|\alpha|+2}\big)}\mu^{-2b_{|\alpha|+2}}_{m}(t)\int_{0}^{\ub}\widetilde{\Fb}_{\leq|\alpha|+2}(t,\ub')d\ub'
\end{align}
using part (1') of Lemma \ref{lemma on mu to power a}.\\

\begin{remark}\label{a dependence}
The boxed term is from the estimates for the top order term $F_{\alpha}$. In view of \eqref{transport equation for F alpha for chib}, the number of top order terms contributed by the variations is independent of $\delta$ and $|\alpha|$, so is the constant $C$ in the boxed term. Later on in the top order energy estimates we will choose $b_{|\alpha|+2}$ in such a way that $\frac{C}{b_{top}}\leq \frac{1}{10}$. (The purpose of doing this is to make sure that this term can be absorbed by the left hand side the energy inequality.)  Therefore we can choose $b_{top}=\{10, \frac{C}{20}\}$. In particular $b_{top}$ is independent of $\delta$. 
\end{remark}

Next we consider the spacetime integral:
\begin{align}\label{spacetime K0 mu}
&\delta^{2l+2}\int_{W^{t}_{\ub}}\frac{1}{c}|R^{\alpha'}_{i}T^{l}\slashed{\Delta}\mu||T\psi||LR^{\alpha'}T^{l+1}\psi|dt'd\ub'd\mu_{\slashed{g}}\lesssim\\\notag
&\delta^{2l+2}\int_{-r_{0}}^{t}\sup_{\Sigma_{t'}^{\ub}}\big(\mu^{-1}|T\psi|\big)\|\mu R^{\alpha'}_{i}T^{l}\slashed{\Delta}\mu\|_{L^{2}(\Sigma_{t'}^{\ub})}\|L R^{\alpha'}T^{l+1}\psi\|_{L^{2}(\Sigma_{t'}^{\ub})}dt'
\end{align}
By \eqref{top order mu} and the monotonicity of $\widetilde{E}_{\leq|\alpha|+2}(t,\ub)$, $\widetilde{\Eb}_{\leq|\alpha|+2}(t,\ub)$ and $\widetilde{\Fb}_{\leq|\alpha|+2}(t,\ub)$ in $t$, we have:
\begin{align}\label{first bound for K0 mu}
\delta^{l+1}\|\mu R^{\alpha'}_{i}T^{l}\slashed{\Delta}\mu\|_{L^{2}(\Sigma_{t'}^{\ub})}&\leq C\delta^{l+1}\|F_{\alpha,l}\|_{L^{2}(\Sigma_{-r_{0}}^{\ub})}\\\notag
&+\delta^{1/2}\mu^{-b_{|\alpha|+2}}_{m}(t)\sqrt{\widetilde{E}_{\leq|\alpha|+2}(t,\ub)}+\delta^{1/2}\mu^{-b_{|\alpha|+2}}_{m}(t)\sqrt{\widetilde{\Eb}_{\leq|\alpha|+2}(t,\ub)} \\\notag
&+\int_{-r_{0}}^{t}\Big(\delta^{1/2}\mu_{m}^{-b_{|\alpha|+2}}(t')\sqrt{\widetilde{E}_{\leq|\alpha|+2}(t',\ub)}\\\notag 
&+\delta^{1/2}\mu_{m}^{-b_{|\alpha|+2}-1/2}(t')\sqrt{\widetilde{\Eb}_{\leq|\alpha|+2}(t',\ub)}\Big)dt'\\\notag
&+\delta^{1/2}\int_{-r_{0}}^{t}\mu_{m}^{-b_{|\alpha|+2}}(t')\sqrt{\int_{0}^{\ub}
\widetilde{\Fb}_{\leq|\alpha|+2}(t',\ub')d\ub'}dt'
\end{align}
As before, we have:
\begin{align*}
\int_{-r_{0}}^{t}\mu_{m}^{-b_{|\alpha|+2}}(t')dt'\leq C\mu_{m}^{-b_{|\alpha|+2}+1}(t),\quad \int_{-r_{0}}^{t}\mu^{-b_{|\alpha|+2}-1/2}_{m}(t')dt'\leq C\mu^{-b_{|\alpha|+2}+1/2}_{m}(t)
\end{align*}
Therefore
\begin{align}\label{top mu final}
\delta^{l+1}\|\mu R^{\alpha'}_{i}T^{l}\slashed{\Delta}\mu\|_{L^{2}(\Sigma_{t'}^{\ub})}&\leq C\delta^{l+1}\|F_{\alpha,l}\|_{L^{2}(\Sigma_{-r_{0}}^{\ub})}\\\notag
&+C\delta^{1/2}\mu_{m}^{-b_{|\alpha|+2}}(t)\sqrt{\widetilde{E}_{\leq|\alpha|+2}(t,\ub)}\\\notag&+C\delta^{1/2}\mu_{m}^{-b_{|\alpha|+2}}(t)\sqrt{\widetilde{\Eb}_{\leq|\alpha|+2}(t,\ub)}\\\notag
&+C\delta^{1/2}\mu_{m}^{-b_{|\alpha|+2}+1}(t)\sqrt{\int_{0}^{\ub}\widetilde{\Fb}_{\leq|\alpha|+2}(t,\ub')d\ub'}
\end{align}
Again, we estimate the other factor $\|LR^{\alpha'}_{i}T^{l+1}\psi\|_{L^{2}(\Sigma_{t}^{\ub})}$ by:
\begin{align*}
\delta^{l+1}\|LR^{\alpha'}_{i}T^{l+1}\psi\|_{L^{2}(\Sigma_{t}^{\ub})}\leq \sqrt{E_{\leq|\alpha|+2}(t,\ub)}\leq \mu^{-b_{|\alpha|+2}}_{m}(t)\sqrt{\widetilde{E}_{\leq|\alpha|+2}(t,\ub)}
\end{align*}
Here we also split the spacetime integral \eqref{spacetime K0 mu} into "shock part $\int_{t_{0}}^{t}$" and "non-shock part $\int_{-r_{0}}^{t_{0}}$". We first estimate the "shock part". 
Due to the estimate $|T\psi|\lesssim\delta^{-1/2}$, the contribution of the first term in \eqref{first bound for K0 mu} to the spacetime integral \eqref{spacetime K0 mu} is bounded by:
\begin{align*}
&\int_{t_{0}}^{t}\mu_{m}^{-b_{|\alpha|+2}-1}(t')\sqrt{\widetilde{E}_{\leq|\alpha|+2}(t,\ub)}\|LR^{\alpha'}_{i}T^{l+1}\psi\|_{L^{2}(\Sigma_{t'}^{\ub})}dt'\\
&\leq C\int_{t_{0}}^{t}\mu_{m}^{-2b_{|\alpha|+2}-1}(t')\widetilde{E}_{\leq|\alpha|+2}(t',\ub)dt'\leq C\widetilde{E}_{\leq|\alpha|+2}(t,\ub)\int_{t_{0}}^{t}\mu^{-2b_{|\alpha|+2}-1}_{m}(t')dt'\\
&\leq\frac{C}{2b_{|\alpha|+2}}\mu_{m}^{-2b_{|\alpha|+2}}(t)\widetilde{E}_{\leq|\alpha|+2}(t,\ub)
\end{align*}
Therefore, the contributions from the first two terms in \eqref{first bound for K0 mu} are bounded by:
\begin{align*}
\frac{C}{2b_{|\alpha|+2}}\mu_{m}^{-2b_{|\alpha|+2}}(t)\big[\widetilde{E}_{\leq|\alpha|+2}(t,\ub)+\widetilde{\Eb}_{\leq|\alpha|+2}(t,\ub)\big]
\end{align*}
and the contributions from the rest terms in \eqref{first bound for K0 mu} are bounded by:
\begin{align*}
&\frac{C}{\big(2b_{|\alpha|+2}-1/2\big)}\mu^{-2b_{|\alpha|+2}+1/2}_{m}(t)\widetilde{\Eb}_{\leq|\alpha|+2}(t,\ub)\\&+\frac{C}{\big(2b_{|\alpha|+2}-1\big)}\mu_{m}^{-2b_{|\alpha|+2}+1}(t)\big[\widetilde{E}_{\leq|\alpha|+2}(t,\ub)+\int_{0}^{\ub}
\widetilde{\Fb}_{\leq|\alpha|+2}(t,\ub')d\ub'\big]
\end{align*}
Therefore "shock-part" of the spacetime integral \eqref{spacetime K0 mu} is bounded by:
\begin{align}\label{top optical K0 mu-shock}
\frac{C}{2b_{|\alpha|+2}}\mu_{m}^{-2b_{|\alpha|+2}}(t)\big[\widetilde{E}_{\leq|\alpha|+2}(t,\ub)+\widetilde{\Eb}_{\leq|\alpha|+2}(t,\ub)\big]
+\frac{C}{\left(2b_{|\alpha|+2}-1\right)}\mu_{m}^{-2b_{|\alpha|+2}+1}(t)\int_{0}^{\ub}\widetilde{\Fb}_{\leq|\alpha|+2}(t,\ub')d\ub'
\end{align}
As before, for the "non-shock part", which mean $t'\in[-r_{0},t_{0}]$,  we have a positive lower bound for $\mu_{m}(t')$, then the spacetime integral \eqref{spacetime K0 mu} is bounded by:
\begin{align}\label{top optical K0 mu-non-shock}
\int_{-r_{0}}^{t_{0}}\mu^{-2b_{|\alpha|+2}}_{m}(t')\big[\widetilde{E}_{\leq|\alpha|+2}(t,\ub)+\widetilde{\Eb}_{\leq|\alpha|+2}(t',\ub)+\int_{0}^{\ub}\widetilde{\Fb}_{\leq|\alpha|+2}(t',\ub')d\ub'\big]dt'
\end{align}
which will be treated by Gronwall. 

Finally, using the inequality $ab\leq\frac{1}{2}b^{2}+\frac{1}{2}b^{2}$, the initial contribution $\|F_{\alpha,l}\|_{L^{2}(\Sigma_{-r_{0}})}$ is bounded by 
\begin{align}\label{initial top mu}
\delta^{2l+2}\|F_{\alpha,l}\|^{2}_{L^{2}(\Sigma_{-r_{0}})}+\int_{-r_{0}}^{t}\mu^{-2b_{|\alpha|+2}-1}_{m}(t')\widetilde{E}_{\leq|\alpha|+2}(t',\ub)dt'.
\end{align}
The second term above has already been estimated.

\subsubsection{Contribution of $K_{1}$}
In this subsection we estimate the contributions of top order optical terms associated to $K_{1}$. We start with the following absolute value of a spacetime integral:
\begin{align}\label{spacetime K1 chib}
\Big|\int_{W^{t}_{\ub}}\frac{1}{c}(R^{\alpha+1}_{i}\text{tr}\chib')\cdot
(T\psi)\cdot(\Lb R^{\alpha+1}_{i}\psi)dt'd\ub' d\mu_{\slashed{g}}\Big|
\end{align}
Since
\begin{align*}
\tilde{\slashed{g}}=\frac{1}{c}\slashed{g},\quad d\mu_{\tilde{\slashed{g}}}=\frac{1}{c}d\mu_{\slashed{g}}
\end{align*}
the above spacetime integral can be written as:
\begin{align}\label{spacetime integral K1}
\int_{W^{t}_{\ub}}(R^{\alpha+1}_{i}\text{tr}\chib')\cdot
(T\psi)\cdot(\Lb R^{\alpha+1}_{i}\psi)dt'd\ub' d\mu_{\tilde{\slashed{g}}}
\end{align}
Let us set:
\begin{align*}
F(t,\ub)=\int_{S_{t,\ub}}fd\mu_{\tilde{\slashed{g}}}
\end{align*}
then
\begin{align*}
\frac{\partial F}{\partial t}=\int_{S_{t,\ub}}\big(\Lb f+\widetilde{\text{tr}}\chib f\big)d\mu_{\tilde{\slashed{g}}}
\end{align*}
So we may convert the spacetime integral into two hypersurface integrals:
\begin{align*}
\int_{W^{t}_{\ub}}\big(\Lb f+\widetilde{\text{tr}}\chib f\big)d\mu_{\tilde{\slashed{g}}}d\ub'dt'
=\int_{\Sigma_{t}^{\ub}}f
d\mu_{\tilde{\slashed{g}}}d\ub'
-\int_{\Sigma_{-r_{0}}^{\ub}}f
d\mu_{\tilde{\slashed{g}}}d\ub'
\end{align*}
So we write the spacetime integral \eqref{spacetime integral K1} as:
\begin{align*}
&\int_{W^{t}_{\ub}}(R^{\alpha+1}_{i}\text{tr}\chib')\cdot
(T\psi)\cdot(\Lb R^{\alpha+1}_{i}\psi
+\widetilde{\text{tr}}\chib R^{\alpha+1}_{i}\psi)dt'd\ub' d\mu_{\tilde{\slashed{g}}}\\
&-\int_{W^{t}_{\ub}}\widetilde{\text{tr}}
\chib(R^{\alpha+1}_{i}\text{tr}\chib')
\cdot(T\psi)\cdot (R^{\alpha+1}_{i}\psi)dt'd\ub' d\mu_{\tilde{\slashed{g}}}\\
&=\int_{W^{t}_{\ub}}\big(\Lb
+\widetilde{\text{tr}}\chib\big)\Big[
(R^{\alpha+1}_{i}\text{tr}\chib')
\cdot(T\psi)\cdot(R^{\alpha+1}_{i}\psi)\Big]
dt'd\ub'd\mu_{\tilde{\slashed{g}}}\\&-
\int_{W^{t}_{\ub}}(\Lb+\widetilde{\text{tr}}\chib)
\Big[(R^{\alpha+1}_{i}\text{tr}\chib')
\cdot(T\psi)\Big]\cdot (R^{\alpha+1}_{i}\psi)dt'd\ub' d\mu_{\tilde{\slashed{g}}}
\end{align*}
We conclude that \eqref{spacetime integral K1} equals:
\begin{align*}
&\int_{\Sigma_{t}^{\ub}}(R^{\alpha+1}_{i}\text{tr}\chib')
\cdot(T\psi)\cdot(R^{\alpha+1}_{i}\psi)d\ub'
d\mu_{\tilde{\slashed{g}}}\\
&-\int_{\Sigma_{-r_{0}}^{\ub}}(R^{\alpha+1}_{i}\text{tr}\chib')
\cdot(T\psi)\cdot(R^{\alpha+1}_{i}\psi)d\ub'
d\mu_{\tilde{\slashed{g}}}\\
&-\int_{W^{t}_{\ub}}(\Lb+\widetilde{\text{tr}}\chib)
\Big[(R^{\alpha+1}_{i}\text{tr}\chib')
\cdot(T\psi)\Big]\cdot (R^{\alpha+1}_{i}\psi)dt'd\ub' d\mu_{\tilde{\slashed{g}}}
\end{align*}
We first consider the hypersurface integral which, integrating by parts, equals:
\begin{align*}
-H_{0}-H_{1}-H_{2}
\end{align*}
where:
\begin{align*}
H_{0}&=\int_{\Sigma_{t}^{\ub}}(R^{\alpha}_{i}\text{tr}\chib)\cdot(T\psi)
\cdot(R^{\alpha+2}_{i}\psi)
d\ub'd\mu_{\tilde{\slashed{g}}}\\
H_{1}&=\int_{\Sigma_{t}^{\ub}}(R^{\alpha}_{i}\text{tr}\chib')
\cdot(R_{i}T\psi)
\cdot(R^{\alpha+1}_{i}\psi)
d\ub'd\mu_{\tilde{\slashed{g}}}\\
H_{2}&=\int_{\Sigma_{t}^{\ub}}(T\psi)\cdot(R^{\alpha+1}_{i}\psi)
\cdot(R^{\alpha}\text{tr}\chib')(\frac{1}{2}\text{tr}\leftexp{(R_{i})}{\tilde{\slashed{\pi}}})
d\ub'd\mu_{\tilde{\slashed{g}}}
\end{align*}
Since we shall bound both $T\psi$ and $R_{i}T\psi$ in $L^{\infty}$ norm, compared to $H_{0}$, $H_{1}$ is a lower order term with respect to the order of derivatives. While for $H_{2}$, we use the estimate:
\begin{align*}
|\text{tr}\leftexp{(R_{i})}{\tilde{\slashed{\pi}}}|\leq C\delta
\end{align*}
to see that it is a lower order term with respect to both the behavior of $\delta$ and the order of derivatives compared to $H_{0}$. This analysis tells us that we only need to estimate $H_{0}$.
\begin{align*}
|H_{0}|\leq \int_{\Sigma_{t}^{\ub}}|T\psi||R^{\alpha}_{i}\text{tr}\chib'||\ds R^{\alpha+1}_{i}\psi|d\ub'd\mu_{\tilde{\slashed{g}}}
\leq C\delta^{-1/2}\|R^{\alpha}_{i}\text{tr}\chib'\|_{L^{2}(\Sigma_{t}^{\ub})}\|\slashed{d}R^{\alpha+1}_{i}\psi\|_{L^{2}(\Sigma_{t}^{\ub})}\\
\leq C\int_{-r_{0}}^{t}\Big(\mu_{m}^{-1/2}(t')\sqrt{\underline{E}_{|\alpha|+2}(t',\ub)}\Big)dt'\cdot\mu^{-1/2}_{m}(t)\sqrt{\underline{E}_{|\alpha|+2}(t,\ub)}
\end{align*}
by Proposition 7.4. Then in terms of modified energies, we have:
\begin{align*}
|H_{0}|\leq C\int_{-r_{0}}^{t}\mu_{m}^{-b_{|\alpha|+2}-1/2}(t')\sqrt{\widetilde{\Eb}_{\leq|\alpha|+2}(t',\ub)}dt'\cdot\mu^{-b_{|\alpha|+2}-1/2}_{m}(t)\sqrt{\widetilde{\Eb}_{\leq |\alpha|+2}(t,\ub)}
\end{align*}
Again, we consider the ``shock part $\int_{-r_{0}}^{t_{0}}$" and ``non-shock part $\int_{t_{0}}^{t}$", which are denoted by $H^{S}_{0}$ and $H^{N}_{0}$, separately.

In the ``shock part", by the monotonicity of $\widetilde{\Eb}_{\leq |\alpha|+2}(t)$, we have:
\begin{align*}
|H^{S}_{0}|&\leq C\sqrt{\widetilde{\Eb}_{\leq |\alpha|+2}(t,\ub)}\int_{t_{0}}^{t}\mu^{-b_{|\alpha|+2}-1/2}_{m}(t')dt'\cdot\mu_{m}^{-b_{|\alpha|+2}-1/2}(t)\sqrt{\widetilde{\Eb}_{\leq |\alpha|+2}(t,\ub)}\\
&\leq C\widetilde{\Eb}_{\leq |\alpha|+2}(t,\ub)\mu_{m}^{-b_{|\alpha|+2}-1/2}(t)\int_{t_{0}}^{t}\mu_{m}^{-b_{|\alpha|+2}-1/2}(t')dt'\\
&\leq \frac{C}{\big(b_{|\alpha|+2}-1/2\big)}\mu_{m}^{-b_{|\alpha|+2}-1/2}(t)\widetilde{\Eb}_{\leq |\alpha|+2}(t,\ub)\cdot\mu_{m}^{-b_{|\alpha|+2}+1/2}(t)\\&= \frac{C}{\big(b_{|\alpha|+2}-1/2\big)}\mu_{m}^{-2b_{|\alpha|+2}}(t)\widetilde{\Eb}_{\leq |\alpha|+2}(t,\ub)
\end{align*}

The estimates for the ``non-shock part" is more delicate.
\begin{align*}
|H_{0}^{N}|\leq C\int_{-r_{0}}^{t_{0}}\mu_{m}^{-b_{|\alpha|+2}-1/2}(t')\sqrt{\widetilde{\Eb}_{\leq|\alpha|+2}(t',\ub)}dt'\cdot\mu_{m}^{-b_{|\alpha|+2}-1/2}(t)\sqrt{\widetilde{\Eb}_{\leq |\alpha|+2}(t,\ub)}
\end{align*}
Now for $t'\in[-r_{0},t_{0}]$, $\mu_{m}^{-1}(t')\leq 10$. This means we have the following estimate for the above integral:
\begin{align*}
&\int_{-r_{0}}^{t_{0}}\mu_{m}^{-b_{|\alpha|+2}-1/2}(t')\sqrt{\widetilde{\Eb}_{\leq|\alpha|+2}(t',\ub)}dt'=\int_{-r_{0}}^{t_{0}}\mu_{m}^{-1}(t')\mu_{m}^{-b_{|\alpha|+2}+1/2}(t')\sqrt{\widetilde{\Eb}_{\leq|\alpha|+2}(t',\ub)}dt'\\
&\leq C\int_{-r_{0}}^{t_{0}}\mu_{m}^{-b_{|\alpha|+2}+1/2}(t')\sqrt{\widetilde{\Eb}_{\leq|\alpha|+2}(t',\ub)}dt'\leq C\int_{-r_{0}}^{t_{0}}\sqrt{\widetilde{\Eb}_{\leq|\alpha|+2}(t',\ub)}dt'\cdot\mu^{-b_{|\alpha|+2}+1/2}_{m}(t)
\end{align*}
where in the last step, we have used part (2) of Lemma \ref{lemma on mu to power a}.

Therefore
\begin{align*}
|H^{N}_{0}|&\leq C\int_{-r_{0}}^{t_{0}}\sqrt{\widetilde{\Eb}_{\leq|\alpha|+2}(t',\ub)}dt'\cdot \mu^{-2b_{|\alpha|+2}}_{m}(t)\sqrt{\widetilde{\Eb}_{\leq |\alpha|+2}(t,\ub)}\\
&\leq \epsilon\mu^{-2b_{|\alpha|+2}}_{m}(t)\widetilde{\Eb}_{\leq |\alpha|+2}(t,\ub)+C_{\epsilon}\mu^{-2b_{|\alpha|+2}}_{m}(t)\int_{-r_{0}}^{t_{0}}\widetilde{\Eb}_{\leq|\alpha|+2}(t',\ub)dt'
\end{align*}

Here $\epsilon$ is a small absolute constant to be determined later.

We obtain the following estimate for $|H_{0}|$:
\begin{align}\label{estimates for H0}
|H_{0}|&\leq  \frac{C}{\big(b_{|\alpha|+2}-1/2\big)}\mu_{m}^{-2b_{|\alpha|+2}}(t)\widetilde{\Eb}_{\leq |\alpha|+2}(t,\ub)\\\notag&+C_{\epsilon}\mu^{-2b_{|\alpha|+2}}_{m}(t)\int_{-r_{0}}^{t_{0}}\widetilde{\Eb}_{\leq|\alpha|+2}(t',\ub)dt'+\epsilon\mu^{-2b_{|\alpha|+2}}_{m}(t)\widetilde{\Eb}_{\leq |\alpha|+2}(t,\ub)
\end{align}

Next we consider the spacetime integral:

\begin{align*}
\int_{W^{t}_{\ub}}(\Lb+\widetilde{\text{tr}}\chib)
\Big[(R^{\alpha+1}_{i}\text{tr}\chib)\cdot
(T\psi)\Big]\cdot(R^{\alpha+1}_{i}\psi)
dt'd\ub'd\mu_{\tilde{\slashed{g}}}\\
=\int_{W^{t}_{\ub}}\big((\Lb
+\widetilde{\text{tr}}\chib)(R^{\alpha+1}_{i}\text{tr}\chib')\big)
\cdot(T\psi)\cdot(R^{\alpha+1}_{i}\psi)dt'
d\ub'd\mu_{\tilde{\slashed{g}}}\\
+\int_{W^{t}_{\ub}}(R^{\alpha+1}_{i}\text{tr}\chib')
\cdot\Lb(T\psi)\cdot(R^{\alpha+1}_{i}\psi)
dt'd\ub'd\mu_{\tilde{\slashed{g}}}:=I+II
\end{align*}
For the second term in the above sum, by the fact that:
\begin{align*}
|\Lb(T\psi)|\leq C\delta^{-1/2}
\end{align*}
we have:
\begin{align*}
|II|\leq C\delta^{-1/2}\int_{-r_{0}}^{t}\|\ds R^{\alpha}_{i}\text{tr}\chib'\|_{L^{2}(\Sigma_{t'}^{\ub})}\|R^{\alpha+1}_{i}\psi\|_{L^{2}(\Sigma_{t'}^{\ub})}dt'
\end{align*}
On the other hand, by Lemma 7.3, we have:
\begin{align*}
\|R^{\alpha+1}_{i}\psi\|_{L^{2}(\Sigma_{t'}^{\ub})}\leq C\delta\big(E_{|\alpha|+2}(t',\ub)+\underline{E}_{|\alpha|+2}(t',\ub)\big)
\end{align*}
which gives:
\begin{align*}
|II|\leq C\delta^{1/2}\int_{-r_{0}}^{t}\|\ds R^{\alpha}_{i}\text{tr}\chib'\|_{L^{2}(\Sigma_{t'}^{\ub})}\big(E_{|\alpha|+2}(t',\ub)+\underline{E}_{|\alpha|+2}(t',\ub)\big)dt'
\end{align*}
This has a similar form as \eqref{spacetime K_0 chib}, and moreover, it has an extra $\delta$, which is consistent with the order of $\Eb_{|\alpha|+2}(t,\ub)$. (We already took into account the effect of $\Lb(T\psi)$.) So this term is already handled.

To estimate $|I|$, we first rewrite:
\begin{align*}
(\Lb+\widetilde{\text{tr}}\chib)(R^{\alpha+1}_{i}\text{tr}\chib')=R_{i}(\Lb+\widetilde{\text{tr}}\chib)
(R^{\alpha}_{i}\text{tr}\chib')
+\leftexp{(R_{i})}{\Zb}R^{\alpha}_{i}\text{tr}\chib'-R_{i}(\text{tr}\chib')R^{\alpha}_{i}\text{tr}\chib'+\textrm{l.o.t.}
\end{align*}
The contribution of the second term is:
\begin{align*}
\int_{W^{t}_{\ub}}|T\psi||\leftexp{(R_{i})}{\Zb}||\ds R^{\alpha}_{i}\text{tr}\chib'||R^{\alpha+1}_{i}\psi|dt'd\ub'd\mu_{\tilde{\slashed{g}}}
\end{align*}
By the estimates:
\begin{align*}
|\leftexp{(R_{i})}{\Zb}|\leq C\delta,\quad |T\psi|\leq C\delta^{-1/2}
\end{align*}
this contribution is bounded by:
\begin{align*}
\delta^{1/2}\int_{-r_{0}}^{t}\|\ds R^{\alpha}_{i}\text{tr}\chib'\|_{L^{2}(\Sigma_{t'}^{\ub})}\|R^{\alpha+1}_{i}\psi\|_{L^{2}(\Sigma_{t'}^{\ub})}dt'
\end{align*}
which is similar to the estimate for $|II|$ and has an extra $\delta$, so this is a lower order term.

By the estimate
\begin{align*}
|R_{i}\text{tr}\chib|\leq C\delta,
\end{align*}
the contribution of the third term is bounded by:
\begin{align*}
C\delta\int_{W^{t}_{\ub}}|R^{\alpha}_{i}\text{tr}\chib'||T\psi||R^{\alpha+1}_{i}\psi|dt'd\ub'd\mu_{\tilde{\slashed{g}}}\leq C\delta^{1/2}\int_{-r_{0}}^{t}\|R^{\alpha}_{i}\text{tr}\chib'\|_{L^{2}(\Sigma_{t'}^{\ub})}\|R^{\alpha+1}_{i}\psi\|_{L^{2}(\Sigma_{t'}^{\ub})}dt'
\end{align*}
By proposition 7.4 and Lemma 7.3, this is bounded in terms of modified energies by:
\begin{align}\label{spacetime II K1 lower order}
&\delta^{2}\int_{-r_{0}}^{t}\mu^{-b_{|\alpha|+2}-1/2}_{m}(t')\sqrt{\widetilde{\Eb}_{\leq|\alpha|+2}(t',\ub)}\cdot\mu^{-b_{|\alpha|+2}}_{m}(t')\big(\sqrt{\widetilde{E}_{\leq|\alpha|+2}(t',\ub)}+\sqrt{\widetilde{\Eb}_{\leq|\alpha|+2}(t',\ub)}\big)dt'\\\notag
&\leq C\delta^{2}\big(\widetilde{\Eb}_{\leq |\alpha|+2}(t,\ub)+\widetilde{E}_{\leq|\alpha|+2}(t,\ub)\big)\int_{-r_{0}}^{t}\mu_{m}^{-2a-1/2}(t')dt'\\\notag
&\leq C\delta^{2}\mu^{-2b_{|\alpha|+2}+1/2}_{m}(t)\big(\widetilde{\Eb}_{\leq |\alpha|+2}(t,\ub)+\widetilde{E}_{\leq|\alpha|+2}(t,\ub)\big)
\end{align}
Again, we used Lemma \ref{lemma on mu to power a} in the last step.

Now we are left with the spacetime integral:
\begin{align*}
\int_{W^{t}_{\ub}}\Big(R_{i}(\Lb+\widetilde{\text{tr}}\chib)(R^{\alpha}_{i}\text{tr}\chib')\Big)\cdot(T\psi)
\cdot(R^{\alpha+1}_{i}\psi)dt'd\ub'
d\mu_{\tilde{\slashed{g}}}
\end{align*}
Integrating by parts, this equals:
\begin{align*}
-\int_{W^{t}_{\ub}}\Big((\Lb
+\widetilde{\text{tr}}\chib)(R^{\alpha}_{i}\text{tr}\chib')\Big)
\cdot(T\psi)\cdot(R^{\alpha+2}_{i}\psi)
dt'd\ub'd\mu_{\tilde{\slashed{g}}}\\
-\int_{W^{t}_{\ub}}\Big((\Lb
+\widetilde{\text{tr}}\chib)(R^{\alpha}_{i}\text{tr}\chib')\Big)
\cdot(R_{i}T\psi+\frac{1}{2}\text{tr}\leftexp{(R_{i})}{\tilde{\slashed{\pi}}})\cdot(R^{\alpha+1}_{i}\psi)
dt'd\ub'd\mu_{\tilde{\slashed{g}}}\\
:=-V_{1}-V_{2}
\end{align*}
By the estimate:
\begin{align*}
|\text{tr}\leftexp{(R_{i})}{\tilde{\slashed{\pi}}}|\leq C\delta
\end{align*}
we see that:
\begin{align*}
|R_{i}T\psi+\frac{1}{2}\text{tr}\leftexp{(R_{i})}{\tilde{\slashed{\pi}}}|\leq C\delta^{-1/2},\quad |T\psi|\leq C\delta^{-1/2}
\end{align*}
So compared to $V_{1}$, $V_{2}$ is a lower order term, and we use Lemma 7.3 to estimate:
\begin{align*}
\|R^{\alpha+1}_{i}\psi\|_{L^{2}(\Sigma_{t}^{\ub})}\leq C\delta\Big(\sqrt{E_{|\alpha|+2}(t,\ub)}+\sqrt{\underline{E}_{|\alpha|+2}(t,\ub)}\Big)
\end{align*}
So we only need to estimate $|V_{1}|$, which is bounded by:
\begin{align*}
&\int_{W^{t}_{\ub}}\Big|(\Lb+\frac{2}{t-\ub})(R^{\alpha}_{i}\text{tr}\chib')\Big||T\psi||R^{\alpha+2}_{i}\psi|dt'd\ub'd\mu_{\tilde{\slashed{g}}}\\
&+\int_{W^{t}_{\ub}}|\widetilde{\text{tr}}\chib'||R^{\alpha}_{i}\text{tr}\chib'||T\psi||R^{\alpha+2}_{i}\psi|dt'd\ub'd\mu_{\tilde{\slashed{g}}}:
=V_{11}+V_{12}
\end{align*}
By the estimates:
\begin{align*}
|\widetilde{\text{tr}}\chib'|\leq C\delta, \quad |T\psi|\leq C\delta^{-1/2}
\end{align*}
and proposition 7.4, $V_{12}$ is bounded in terms of modified energies by:
\begin{align}\label{V12}
&\delta^{1/2}\int_{-r_{0}}^{t}\|R^{\alpha}_{i}\text{tr}\chib'\|_{L^{2}(\Sigma_{t'}^{\ub})}\|\ds R^{\alpha+1}_{i}\psi\|_{L^{2}(\Sigma_{t'}^{\ub})}dt'\\\notag
&\leq C\delta\int_{-r_{0}}^{t}\mu_{m}^{-b_{|\alpha|+2}-1/2}(t')\sqrt{\widetilde{\Eb}_{\leq|\alpha|+2}(t',\ub)}\mu^{-b_{|\alpha|+2}-1/2}_{m}(t')\sqrt{\widetilde{\Eb}_{\leq|\alpha|+2}(t',\ub)}dt'\\\notag
&\leq C\delta\widetilde{\Eb}_{\leq |\alpha|+2}(t,\ub)\int_{-r_{0}}^{t}\mu^{-2b_{|\alpha|+2}-1}_{m}(t')dt'\leq C\delta\mu_{m}^{-2b_{|\alpha|+2}}(t)\widetilde{\Eb}_{\leq |\alpha|+2}(t,\ub).
\end{align}
In the last step we used Lemma \ref{lemma on mu to power a}.

To estimate $V_{11}$, we recall the propagation equation for $\text{tr}\chib'$:
\begin{align*}
\Lb\text{tr}\chib'+\frac{2}{t-\ub}\text{tr}\chib'=e\text{tr}\chib'-|\chib'|^{2}+\frac{2e}{t-\ub}-\text{tr}\alphab':=\rho_{0}
\end{align*}
Applying $R^{\alpha}_{i}$ to this equation gives:
\begin{align*}
\Lb R^{\alpha}_{i}\text{tr}\chib'+\frac{2}{t-\ub}R^{\alpha}_{i}\text{tr}\chib'=
\sum_{|\beta|\leq|\alpha|}R^{\beta}_{i}\leftexp{(R_{i})}{\Zb}R^{\alpha-\beta-1}_{i}\text{tr}\chib'
+R^{\alpha}_{i}\rho_{0}
\end{align*}
Again, by the estimates
\begin{align*}
|\leftexp{(R_{i})}{\Zb}|\leq C\delta,\quad |T\psi|\leq C\delta^{-1/2}
\end{align*}
as well as Proposition 7.4, the contribution of the first term on the right hand side is bounded in terms of modified energies by:
\begin{align}\label{V11 lower order}
\begin{split}
&\delta^{1/2}\int_{-r_{0}}^{t}\|R^{\alpha}_{i}\text{tr}\chib'\|_{L^{2}(\Sigma_{t'}^{\ub})}\|R^{\alpha+2}_{i}\psi\|_{L^{2}(\Sigma_{t'}^{\ub})}dt'\\
&\leq C\delta\int_{-r_{0}}^{t}\mu^{-2b_{|\alpha|+2}-1}_{m}(t')\widetilde{\Eb}_{\leq|\alpha|+2}(t',\ub)dt'\leq C\delta\mu_{m}^{-2b_{|\alpha|+2}}(t)\widetilde{\Eb}_{\leq |\alpha|+2}(t,\ub).
\end{split}
\end{align}
The following terms in $R^{\alpha}\rho_{0}$ also enjoy the above estimates:
\begin{align*}
R^{\alpha}_{i}\big(|\chib'|^{2}\big), \quad (R^{\beta}_{i}e)\cdot(R^{\alpha-\beta}_{i}
\text{tr}\chib')\quad \text{where}\quad |\beta|\leq |\alpha|-|\beta|
\end{align*}
While the other contributions from $R^{\alpha}_{i}e\cdot\text{tr}\chib'$, $\dfrac{2e}{t-\ub}$ and the lower order terms in $\text{tr}\alphab'$ can be bounded by:
\begin{align}\label{V11 lower order 1}
C\mu^{-2b_{|\alpha|+2}+1/2}_{m}(t)\Big(\delta^{3/2}\widetilde{E}_{|\alpha|+2}(t,\ub)+\delta^{3/2}\widetilde{\Eb}_{\leq |\alpha|+2}(t,\ub)\Big)
\end{align}
in view of Lemma 7.3.

Now we estimate the contribution from the principal term in $\text{tr}\alphab'$, which is:
\begin{align*}
\frac{dc^{2}}{d\rho}\psi_{0}
\slashed{\Delta}R^{\alpha}_{i}\psi_{0}
\end{align*}
In view of the estimates:
\begin{align*}
|\psi_{0}|\leq C\delta^{1/2},\quad |T\psi|\leq C\delta^{-1/2}
\end{align*}
this contribution is bounded by:
\begin{align*}
\int_{-r_{0}}^{t}\|\ds R^{\alpha+1}_{i}\psi_{0}\|_{L^{2}(\Sigma_{t'}^{\ub})}\|R^{\alpha+2}_{i}\psi\|_{L^{2}(\Sigma_{t'}^{\ub})}dt'
&\leq C\int_{-r_{0}}^{t}\|\ds R^{\alpha+1}_{i}\psi\|_{L^{2}(\Sigma_{t'}^{\ub})}^{2}dt'\\\notag
&\leq C\int_{-r_{0}}^{t}\mu^{-2b_{|\alpha|+2}-1}_{m}(t')\widetilde{\Eb}_{\leq|\alpha|+2}(t',\ub)dt'
\end{align*}
Again, considering the``shock part" and ``non-shock part" in regard to this integral, we obtain that it is bounded by:
\begin{align}\label{V11}
\frac{C}{2b_{|\alpha|+2}}\mu^{-2b_{|\alpha|+2}}_{m}(t)\widetilde{\Eb}_{\leq |\alpha|+2}(t,\ub)+\int_{-r_{0}}^{t_{0}}\mu_{m}^{-2b_{|\alpha|+2}}(t')\widetilde{\Eb}_{\leq|\alpha|+2}(t',\ub)dt'
\end{align}
This completes the estimates for the spacetime integral \eqref{spacetime K1 chib}.\vspace{3mm}

Next we consider the top order optical contribution of the variation $R^{\alpha'}T^{l+1}\psi$, where $|\alpha'|+l+1=|\alpha|+1$, which is the following spacetime integral:
\begin{align}\label{top optical K1 mu}
\delta^{2l+2}\Big|\int_{W^{t}_{\ub}}(T\psi)\cdot(R^{\alpha'}_{i}T^{l}
\slashed{\Delta}\mu)\cdot(\Lb R^{\alpha'}_{i}
T^{l+1}\psi)dt'd\ub'
d\mu_{\tilde{\slashed{g}}}\Big|
\end{align}
Again, we rewrite the above spacetime integral as:
\begin{align*}
&\delta^{2l+2}\int_{W^{t}_{\ub}}(T\psi)\cdot(R^{\alpha'}_{i}T^{l}
\slashed{\Delta}\mu)\cdot
\big((\Lb+\widetilde{\text{tr}}\chib)(R^{\alpha'}_{i}T^{l+1}\psi)\big)
dt'd\ub'd\mu_{\tilde{\slashed{g}}}
\\
&-\delta^{2l+2}\int_{W^{t}_{\ub}}(T\psi)\cdot(R^{\alpha'}_{i}T^{l}
\slashed{\Delta}\mu)\cdot
\big(\widetilde{\text{tr}}
\chib(R^{\alpha'}_{i}T^{l+1}\psi)\big)
dt'd\ub'd\mu_{\tilde{\slashed{g}}}
\end{align*}
which is:
\begin{align*}
&\delta^{2l+2}\int_{W^{t}_{\ub}}(\Lb+\widetilde{\text{tr}}\chib)
\Big((T\psi)(R^{\alpha'}_{i}T^{l}\slashed{\Delta}\mu)(R^{\alpha'}_{i}T^{l+1}\psi)\Big)dt' d\ub'  d\mu_{\tilde{\slashed{g}}}\\
&-\delta^{2l+2}\int_{W^{t}_{\ub}}(\Lb T\psi)(R^{\alpha'}_{i}T^{l}\slashed{\Delta}\mu)(R^{\alpha'}_{i}T^{l+1}\psi)dt'd\ub'
d\mu_{\tilde{\slashed{g}}}\\
&-\delta^{2l+2}\int_{W^{t}_{\ub}}(T\psi)\big((\Lb+
\widetilde{\text{tr}}\chib)(R^{\alpha'}_{i}T^{l}\slashed{\Delta}\mu)
\big)(R^{\alpha'}_{i}T^{l+1}\psi)dt'd\ub'
d\mu_{\tilde{\slashed{g}}}
\end{align*}
As before, the spacetime integral in the first line above can be written as:
\begin{align*}
\delta^{2l+2}\int_{\Sigma_{t}^{\ub}}(T\psi)(R^{\alpha'}_{i}T^{l}\slashed{\Delta}\mu)(R^{\alpha'}_{i}T^{l+1}\psi)d\ub'
d\mu_{\tilde{\slashed{g}}}-
\delta^{2l+2}\int_{\Sigma_{-r_{0}}^{\ub}}(T\psi)(R^{\alpha'}_{i}T^{l}\slashed{\Delta}\mu)(R^{\alpha'}_{i}T^{l+1}\psi)d\ub'
d\mu_{\tilde{\slashed{g}}}
\end{align*}
We shall only estimate the integral on $\Sigma_{t}^{\ub}$, which can be written as:
\begin{align*}
&-\delta^{2l+2}\int_{\Sigma_{t}^{\ub}}(T\psi)(R^{\alpha'-1}_{i}T^{l}\slashed{\Delta}
\mu)(R^{\alpha'+1}_{i}T^{l+1}\psi)d\ub'
d\mu_{\tilde{\slashed{g}}}
\\
&-\delta^{2l+2}\int_{\Sigma_{t}^{\ub}}\big((R_{i}T\psi)+\frac{1}{2}\text{tr}\leftexp{(R_{i})}{\tilde{\slashed{\pi}}}\big)(R^{\alpha'-1}_{i}T^{l}\slashed{\Delta}\mu)(R^{\alpha'}_{i}T^{l+1}\psi)d\ub'
d\mu_{\tilde{\slashed{g}}}\\
&:=-H'_{0}-H'_{1}
\end{align*}
By the estimates:
\begin{align*}
|R_{i}T\psi|\leq C\delta^{-1/2},\quad |\text{tr}\leftexp{(R_{i})}{\tilde{\slashed{\pi}}}|\leq C\delta
\end{align*}
we see that compared to $H'_{0}$, $H'_{1}$ is a lower order term, so here we only give the estimates for $H'_{0}$:
\begin{align*}
|H'_{0}|\leq C\delta^{-1/2+2l+2}\|R^{\alpha'-1}T^{l}\slashed{\Delta}\mu\|_{L^{2}(\Sigma_{t}^{\ub})}\|\ds R^{\alpha'}_{i}T^{l+1}\psi\|_{L^{2}(\Sigma_{t}^{\ub})}
\end{align*}
In view of Proposition \ref{Proposition lower order L2 mu}, this is bounded in terms of modified energies by:
\begin{align*}
|H'_{0}|&\leq C\int_{-r_{0}}^{t}\delta\Big(\mu_{m}^{-b_{|\alpha|+2}-1/2}(t')\sqrt{\widetilde{\Eb}_{\leq|\alpha|+2}(t',\ub)}+\mu_{m}^{-b_{|\alpha|+2}}(t')\sqrt{\widetilde{E}_{\leq|\alpha|+2}(t',\ub)}\Big)dt'\\
&\cdot\mu_{m}^{-b_{|\alpha|+2}-1/2}(t)\sqrt{\widetilde{\Eb}_{\leq|\alpha|+2}(t,\ub)}
\end{align*}
Again, we need to consider the ``shock part $H^{\prime S}_{0}$" and the ``non-shock part $H^{\prime N}_{0}$" separately.

For $H^{\prime S}_{0}$, we have:
\begin{align*}
|H^{\prime S}_{0}|&\leq C\mu^{-b_{|\alpha|+2}-1/2}_{m}(t)\sqrt{\widetilde{\Eb}_{\leq|\alpha|+2}(t,\ub)}\cdot\\
&\delta\Big(\sqrt{\widetilde{\Eb}_{\leq|\alpha|+2}(t,\ub)}\int_{t_{0}}^{t}\mu^{-b_{|\alpha|+2}-1/2}_{m}(t')dt'+\sqrt{\widetilde{E}_{\leq|\alpha|+2}(t,\ub)}\int_{t_{0}}^{t}\mu^{-b_{|\alpha|+2}}_{m}(t')dt'\Big)\\
&\leq \frac{C\delta}{\big(b_{|\alpha|+2}-1/2\big)}\mu^{-2b_{|\alpha|+2}}_{m}(t)\widetilde{\Eb}_{\leq|\alpha|+2}(t,\ub)\\
&+\frac{C\delta}{\big(b_{|\alpha|+2}-1\big)}\mu^{-2b_{|\alpha|+2}+1/2}_{m}(t)\sqrt{\widetilde{E}_{\leq|\alpha|+2}(t,\ub)}\sqrt{\widetilde{\Eb}_{\leq|\alpha|+2}(t,\ub)}
\end{align*}

For $H^{\prime N}_{0}$, we use the same
argument as we did in estimating \eqref{spacetime integral K1}. Since $\mu_{m}^{-1}(t')\leq C$ for $t'\in[-r_{0},t_{0}]$ with an absolute constant $C$, we have, by \ref{mu is decreasing}:
\begin{align*}
\int_{-r_{0}}^{t_{0}}\mu_{m}^{-b_{|\alpha|+2}-1/2}(t')\sqrt{\widetilde{\Eb}_{\leq|\alpha|+2}(t',\ub)}&\leq C\int_{-r_{0}}^{t_{0}}\mu_{m}^{-b_{|\alpha|+2}+1/2}(t')\sqrt{\widetilde{\Eb}_{\leq|\alpha|+2}(t',\ub)}dt'\\
&\leq C\int_{-r_{0}}^{t_{0}}\sqrt{\widetilde{\Eb}_{\leq|\alpha|+2}(t',\ub)}dt'\cdot\mu_{m}^{-b_{|\alpha|+2}+1/2}(t)
\end{align*}
The same argument applies to the integral involving $\delta\sqrt{\widetilde{E}_{\leq|\alpha|+2}(t,\ub)}$, so finally we obtain:
\begin{align*}
|H^{\prime N}_{0}|&\leq C\delta\int_{-r_{0}}^{t_{0}}\sqrt{\widetilde{\Eb}_{\leq|\alpha|+2}(t',\ub)}dt'\cdot\mu^{-2b_{|\alpha|+2}}_{m}(t)\sqrt{\widetilde{\Eb}_{\leq|\alpha|+2}(t,\ub)}\\
&+C\int_{-r_{0}}^{t_{0}}\delta\sqrt{\widetilde{E}_{\leq|\alpha|+2}(t',\ub)}dt'\cdot\mu^{-2b_{|\alpha|+2}}_{m}(t)\sqrt{\widetilde{\Eb}_{\leq|\alpha|+2}(t,\ub)}\\
&\leq C\delta\mu_{m}^{-2b_{|\alpha|+2}}(t)\Big(\widetilde{\Eb}_{\leq|\alpha|+2}(t,\ub)+\sqrt{\widetilde{\Eb}_{\leq|\alpha|+2}(t,\ub)}\sqrt{E_{\leq|\alpha|+2}(t,\ub)}\Big)
\end{align*}
We finally obtain the estimates for $|H_{0}'|$:
\begin{align}\label{H'0}
|H'_{0}|\leq C\delta\mu_{m}^{-2b_{|\alpha|+2}}(t)\Big(\widetilde{\Eb}_{\leq|\alpha|+2}(t,\ub)+\sqrt{\widetilde{\Eb}_{\leq|\alpha|+2}(t,\ub)}\sqrt{E_{\leq|\alpha|+2}(t,\ub)}\Big)
\end{align}

Finally, we estimate the spacetime integrals:
\begin{align*}
&-\delta^{2l+2}\int_{W^{t}_{\ub}}(\Lb T\psi)(R^{\alpha'}_{i}T^{l}\slashed{\Delta}\mu)(R^{\alpha'}_{i}T^{l+1}\psi)dt'd\ub'
d\mu_{\tilde{\slashed{g}}}\\
&-\delta^{2l+2}\int_{W^{t}_{\ub}}(T\psi)\big((\Lb+\frac{2}{t-\ub})(R^{\alpha'}_{i}T^{l}\slashed{\Delta}\mu)
\big)
(R^{\alpha'}_{i}T^{l+1}\psi)dt'd\ub'
d\mu_{\tilde{\slashed{g}}}\\
&-\delta^{2l+2}\int_{W^{t}_{\ub}}(T\psi)\cdot\widetilde{\text{tr}}\chib'
(R^{\alpha'}_{i}T^{l}\slashed{\Delta}\mu)\cdot
(R^{\alpha'}_{i}T^{l+1}\psi)dt'
d\ub'd\mu_{\tilde{\slashed{g}}}\\
&:=-V'_{1}-V'_{2}-V'_{3}
\end{align*}
By the estimates:
\begin{align*}
|T\psi|\leq C\delta^{-1/2},\quad |\Lb T\psi|\leq C\delta^{-1/2},\quad |\widetilde{\text{tr}}\chib'|\leq C\delta
\end{align*}
we see that, compared to $V'_{1}$, $V'_{3}$ is a lower order term. By Lemma 7.3 and \eqref{top mu final}, $V'_{1}$ is bounded by:
\begin{align*}
|V'_{1}|&\leq C\delta^{-1/2}\int_{-r_{0}}^{t}\sup_{\Sigma_{t'}^{\ub}}(\mu^{-1})
\delta^{l+1}\|\mu R^{\alpha'}_{i}T^{l}\slashed{\Delta}\mu\|_{L^{2}(\Sigma_{t'}^{\ub})}\\
&\cdot\delta\Big(\mu^{-b_{|\alpha|+2}}_{m}(t')\sqrt{\widetilde{E}_{\leq|\alpha|+2}(t,\ub)}+\mu^{-b_{|\alpha|+2}}_{m}(t')\sqrt{\widetilde{\Eb}_{\leq |\alpha|+2}(t',\ub)}\Big)dt'\\
&\leq C\int_{-r_{0}}^{t}\mu^{-2b_{|\alpha|+2}-1}_{m}(t')\Big(\sqrt{\widetilde{\Eb}_{\leq|\alpha|+2}(t',\ub)}+\sqrt{\widetilde{E}_{\leq|\alpha|+2}(t)}\Big)\\
&\cdot\delta\Big(\sqrt{\widetilde{E}_{\leq|\alpha|+2}(t,\ub)}+\sqrt{\widetilde{\Eb}_{\leq |\alpha|+2}(t',\ub)}\Big)dt'\\
&+C\int_{-r_{0}}^{t}\mu^{-2b_{|\alpha|+2}}_{m}(t')\sqrt{\int_{0}^{\ub}\widetilde{\Fb}_{\leq|\alpha|+2}(t,\ub')d\ub'}\Big(\sqrt{\widetilde{\Eb}_{\leq |\alpha|+2}(t',\ub)}+\sqrt{\widetilde{E}_{\leq|\alpha|+2}(t,\ub)}\Big)dt'
\end{align*}
Again, we consider the ``shock part" and ``non-shock part" separately. When $t'\in[-r_{0},t_{0}]$, since $\mu^{-1}_{m}(t')\leq C$, the above integrals are bounded by:
\begin{align*}
C\delta\int_{-r_{0}}^{t_{0}}\mu_{m}^{-2b_{|\alpha|+2}}(t')\big(\widetilde{E}_{\leq|\alpha|+2}(t,\ub)+\widetilde{\Eb}_{\leq |\alpha|+2}(t',\ub)\big)dt'\\
+C\int_{-r_{0}}^{t_{0}}\delta^{-1}\mu_{m}^{-2b_{|\alpha|+2}}(t')\big(\int_{0}^{\ub}
\widetilde{\Fb}_{\leq|\alpha|+2}(t',\ub')d\ub'\big)dt'
\end{align*}
While when $t'\in[t_{0},t]$, we have the following estimates:
\begin{align*}
&C\delta\big(\widetilde{\Eb}_{\leq|\alpha|+2}(t,\ub)+\widetilde{E}_{\leq|\alpha|+2}(t,\ub)\big)\int_{t_{0}}^{t}\mu^{-2b_{|\alpha|+2}-1}_{m}(t')dt'\\
&+\delta^{-1}\big(\int_{0}^{\ub}
\widetilde{\Fb}_{\leq|\alpha|+2}(t,\ub')d\ub'\big)\int_{t_{0}}^{t}\mu^{-2b_{|\alpha|+2}}_{m}(t')dt'\\
&\leq \frac{C\delta}{2b_{|\alpha|+2}}\mu^{-2b_{|\alpha|+2}}_{m}(t)\big(\widetilde{\Eb}_{\leq|\alpha|+2}(t,\ub)+\widetilde{E}_{\leq|\alpha|+2}(t,\ub)\big)\\
&+\frac{C\delta^{-1}}{\big(2b_{|\alpha|+2}-1\big)}\mu^{-2b_{|\alpha|+2}+1}_{m}(t)\int_{0}^{\ub}\widetilde{\Fb}_{\leq|\alpha|+2}(t,\ub')d\ub'
\end{align*}
Therefore $V'_{1}$ is bounded by:
\begin{align}\label{V'1}
&\frac{C\delta}{2b_{|\alpha|+2}}\mu^{-2b_{|\alpha|+2}}_{m}(t)\big(\widetilde{\Eb}_{\leq|\alpha|+2}(t,\ub)+\widetilde{E}_{\leq|\alpha|+2}(t,\ub)\big)\\\notag
&+\frac{C\delta^{-1}}{\big(2b_{|\alpha|+2}-1\big)}\mu^{-2b_{|\alpha|+2}+1}_{m}(t)\int_{0}^{\ub}\widetilde{\Fb}_{\leq|\alpha|+2}(t,\ub')d\ub'\\\notag
&+C\delta\int_{-r_{0}}^{t_{0}}\mu_{m}^{-2b_{|\alpha|+2}}(t')\big(\widetilde{E}_{\leq|\alpha|+2}(t,\ub)+\widetilde{\Eb}_{\leq |\alpha|+2}(t',\ub)\big)dt'\\\notag
&+C\int_{-r_{0}}^{t_{0}}\delta^{-1}\mu^{-2b_{|\alpha|+2}}_{m}(t')\big(\int_{0}^{\ub}
\widetilde{\Fb}_{\leq|\alpha|+2}(t',\ub')d\ub'\big)dt'
\end{align}

Now we estimate $V'_{2}$. First we write $V'_{2}$ as:
\begin{align*}
&\delta^{2l+2}\int_{W^{t}_{\ub}}(T\psi)\big(R_{i}(\Lb+\frac{2}{t-\ub})R^{\alpha'-1}_{i}T^{l}
\slashed{\Delta}\mu)\big)(R^{\alpha'}_{i}T^{l+1}\psi)dt'd\ub'
d\mu_{\tilde{\slashed{g}}}\\
&+\delta^{2l+2}\int_{W^{t}_{\ub}}(T\psi)(\leftexp{(R_{i})}{\Zb}R^{\alpha'-1}_{i}T^{l}
\slashed{\Delta}\mu)(R^{\alpha'}_{i}T^{l+1}\psi)dt'd\ub'
d\mu_{\tilde{\slashed{g}}}:=V'_{21}
+V'_{22}
\end{align*}
By the estimate:
\begin{align*}
|\leftexp{(R_{i})}{\Zb}|\leq C\delta
\end{align*}
$V'_{22}$ has the identical structure with $V'_{3}$, therefore is a lower order term compared to $V'_{1}$.

While for $V'_{21}$, integrating by parts, we have:
\begin{align*}
V'_{21}&=-\delta^{2l+2}\int_{W^{t}_{\ub}}
\big((R_{i}T\psi)+\frac{1}{2}\text{tr}\leftexp{(R_{i})}{\tilde{\slashed{\pi}}}\big)\big((\Lb+\frac{2}{t-\ub})R^{\alpha'-1}_{i}
T^{l}\slashed{\Delta}\mu)\big)(R^{\alpha'}_{i}T^{l+1}\psi)dt'd\ub'
d\mu_{\tilde{\slashed{g}}}\\
&-\delta^{2l+2}\int_{W^{t}_{\ub}}(T\psi)\big((\Lb+\frac{2}{t-\ub})R^{\alpha'-1}_{i}
T^{l}\slashed{\Delta}\mu)\big)(R^{\alpha'+1}_{i}T^{l+1}\psi)
dt'd\ub'd\mu_{\tilde{\slashed{g}}}:=-V'_{211}-V'_{212}
\end{align*}
By the estimates:
\begin{align*}
|T\psi|\leq C\delta^{-1/2},\quad |R_{i}T\psi+\frac{1}{2}\leftexp{(R_{i})}{\text{tr}\tilde{\slashed{\pi}}}|\leq C\delta^{-1/2}
\end{align*}
as well as Lemma 7.3, $V'_{211}$ is a lower order term compared to $V'_{212}$. To estimate $V'_{212}$, we first consider:
\begin{align}\label{V'212 1}
\delta^{2l+2}\int_{W^{t}_{\ub}}\frac{2}{t-\ub}(T\psi)(R^{\alpha'-1}_{i}T^{l}
\slashed{\Delta}\mu)(R^{\alpha'+1}_{i}T^{l+1}\psi)
dt'd\ub'd\mu_{\tilde{\slashed{g}}}
\end{align}
By proposition 7.5, this is bounded by:
\begin{align*}
C\int_{-r_{0}}^{t}\delta\Big(\mu_{m}^{-b_{|\alpha|+2}-1/2}(t')\sqrt{\widetilde{\Eb}_{\leq|\alpha|+2}(t',\ub)}+\mu_{m}^{-b_{|\alpha|+2}}(t')\sqrt{\widetilde{E}_{\leq|\alpha|+2}(t',\ub)}\Big)\\
\cdot\Big(\mu^{-b_{|\alpha|+2}-1/2}_{m}(t')\sqrt{\widetilde{\Eb}_{\leq |\alpha|+2}(t',\ub)}\Big)dt'
\end{align*}
Again, we consider the ``shock part" and the ``non-shock part" separately. When $t'\in[-r_{0},t_{0}]$, the above integrals are bounded by:
\begin{align*}
C\int_{-r_{0}}^{t_{0}}\mu_{m}^{-2b_{|\alpha|+2}}(t')\big(\delta^{2}\widetilde{E}_{\leq|\alpha|+2}(t',\ub)+\widetilde{\Eb}_{\leq |\alpha|+2}(t',\ub)\big)dt'
\end{align*}
While when $t'\in[t_{0},t]$, the above integrals are bounded by:
\begin{align*}
&C\widetilde{\Eb}_{\leq|\alpha|+2}(t,\ub)\int_{t_{0}}^{t}\mu^{-2b_{|\alpha|+2}-1}_{m}(t')dt'+C\delta^{2}\widetilde{E}_{\leq|\alpha|+2}(t,\ub)\int_{t_{0}}^{t}\mu^{-2b_{|\alpha|+2}}_{m}(t')dt'\\
&\leq \frac{C}{2b_{|\alpha|+2}}\mu^{-2b_{|\alpha|+2}}_{m}(t)\widetilde{\Eb}_{\leq|\alpha|+2}(t,\ub)+\frac{C}{\big(2b_{|\alpha|+2}-1\big)}\mu^{-2b_{|\alpha|+2}+1}_{m}(t)\delta^{2}\widetilde{E}_{\leq|\alpha|+2}(t,\ub)
\end{align*}
Therefore the spacetime integral \eqref{V'212 1} is bounded by:
\begin{align}\label{estimates for V'212 1}
&\frac{C}{2b_{|\alpha|+2}}\mu^{-2b_{|\alpha|+2}}_{m}(t)\widetilde{\Eb}_{\leq|\alpha|+2}(t,\ub)+\frac{C}{\big(2b_{|\alpha|+2}-1\big)}\mu^{-2b_{|\alpha|+2}+1}_{m}(t)\delta^{2}\widetilde{E}_{\leq|\alpha|+2}(t,\ub)\\\notag
&+C\int_{-r_{0}}^{t_{0}}\mu_{m}^{-2b_{|\alpha|+2}}(t')\big(\delta^{2}\widetilde{E}_{\leq|\alpha|+2}(t',\ub)+\widetilde{\Eb}_{\leq |\alpha|+2}(t',\ub)\big)dt'
\end{align}

We end this section by estimating the spacetime integral:
\begin{align}\label{V'212 2}
\delta^{2l+2}\int_{W^{t}_{\ub}}(T\psi)(\Lb R^{\alpha'-1}_{i}T^{l}\slashed{\Delta}
\mu)(R^{\alpha'+1}_{i}T^{l+1}\psi)
dt'd\ub'd\mu_{\tilde{\slashed{g}}}
\end{align}
which is bounded by:
\begin{align*}
C\delta^{-1/2}\int_{-r_{0}}^{t}\delta^{l+1}\|\Lb R^{\alpha'-1}_{i}T^{l}\slashed{\Delta}\mu
\|_{L^{2}(\Sigma_{t'}^{\ub})}\delta^{l+1}\|R^{\alpha'+1}_{i}T^{l+1}\psi\|_{L^{2}(\Sigma_{t'}^{\ub})}dt'
\end{align*}
By the propagation equation:
\begin{align*}
\Lb \mu=m+\mu e
\end{align*}
we have:
\begin{align*}
\Lb R^{\alpha'-1}_{i}T^{l}
\slashed{\Delta}\mu=-\frac{dc^{2}}{d\rho}\psi_{0}
TR^{\alpha'-1}_{i}T^{l}
\slashed{\Delta}\psi_{0}+\mu\psi_{0}\Lb
R^{\alpha'-1}_{i}T^{l}\slashed{\Delta}
\psi_{0}+\lot
\end{align*}
Here the lower order terms $\lot$ can be
bounded in the same fashion as \eqref{estimates for V'212 1}. Now we are going to bound $\delta^{l+1}\|TR^{\alpha'-1}_{i}T^{l}
\slashed{\Delta}\psi_{0}\|_{L^{2}(\Sigma_{t'}^{\ub})}$ in terms of $\sqrt{E_{\leq|\alpha|+2}(t')}$ and $\delta^{l+1}\|\mu\Lb R^{\alpha'-1}_{i}T^{l}
\slashed{\Delta}\psi_{0}\|_{L^{2}(\Sigma_{t'}^{\ub})}$ in terms of $\sqrt{\underline{E}_{\leq|\alpha|+2}(t')}$, so the latter is a lower order term with respect to the behavior of $\delta$. We only estimate the contribution from the former one.

In view of the estimate:
\begin{align*}
|\psi_{0}|\leq C\delta^{1/2}
\end{align*}
This contribution to \eqref{V'212 2} is bounded by:
\begin{align*}
&C\int_{-r_{0}}^{t}\delta\sqrt{E_{\leq|\alpha|+2}(t',\ub)}\mu^{-1/2}_{m}(t')\sqrt{\underline{E}_{\leq|\alpha|+2}(t',\ub)}dt'\\
&\leq C\int_{-r_{0}}^{t}\mu^{-2a-1/2}_{m}(t')\delta\sqrt{\widetilde{E}_{\leq|\alpha|+2}(t',\ub)}\sqrt{\widetilde{\Eb}_{\leq |\alpha|+2}(t',\ub)}dt'
\end{align*}
Again, considering the ``shock part" and ``non-shock part" separately, we have the finally estimates for \eqref{V'212 2}:
\begin{align}\label{estimates for V'212 2}
&C\int_{-r_{0}}^{t_{0}}\mu^{-2b_{|\alpha|+2}}_{m}(t')\big(\delta^{2}\widetilde{E}_{\leq|\alpha|+2}(t',\ub)+\widetilde{\Eb}_{\leq |\alpha|+2}(t',\ub)\big)dt'\\\notag
&+\frac{C}{\big(2b_{|\alpha|+2}-1/2\big)}\mu_{m}^{-2b_{|\alpha|+2}+1/2}(t)\big(\delta^{2}\widetilde{E}_{\leq|\alpha|+2}(t,\ub)+\widetilde{\Eb}_{\leq|\alpha|+2}(t,\ub)\big)
\end{align}
This completes the error estimates for the top order optical terms. As we mentioned at the beginning of this subsection, although we only considered the variations $R^{\alpha+1}_{i}\psi$ and $R^{\alpha'}_{i}T^{l+1}\psi$ with $|\alpha'|+l=|\alpha|$, all the estimates in this subsection are also true for the variations $Z^{\alpha+1}_{i}\psi$ and $Z^{\alpha'}_{i}T^{l+1}\psi$ respectively. Here $Z_{i}$ is either $R_{i}$ or $Q$.

\section{Top Order Energy Estimates}
With the estimates for the contributions from top order optical terms as well as lower order optical terms, we are ready to complete the top order energy estimates, namely, the energy estimates for the variations of order up to $|\alpha|+2$. As we have pointed out, we allow the top order energies to blow up as shocks form. So in this section, we shall prove that the modified energies $\widetilde{E}_{\leq|\alpha|+2}(t,\ub)$, $\widetilde{\Eb}_{\leq |\alpha|+2}(t,\ub)$ and $\widetilde{\Fb}_{\leq|\alpha|+2}(t,\ub)$, $\widetilde{F}_{\leq |\alpha|+2}(t,\ub)$ are bounded by initial data. Therefore we obtain a rate for the possible blow up of the top order energies.

\subsection{Estimates associated to $K_{1}$}
We start with the energy inequality for $Z^{\alpha'+1}_{i}\psi$ as we obtained in Section 6. Here $Z_{i}$ is any one of $R_{i}$ $Q$ and $T$.
\begin{align*}
\sum_{|\alpha'|\leq |\alpha|}\delta^{2l'}\Big(\Eb[Z^{\alpha'+1}_{i}\psi](t,\ub)+\Fb[Z^{\alpha'+1}_{i}\psi](t,\ub)+K[Z^{\alpha'+1}_{i}\psi](t,\ub)\Big)\\\leq C\sum_{|\alpha'|\leq|\alpha|}\delta^{2l'}\Eb[Z^{\alpha'+1}_{i}\psi](-r_{0},\ub)+C\sum_{|\alpha'|\leq |\alpha|}\delta^{2l'}\int_{W^{t}_{\ub}}c^{-2}
\widetilde{Q}_{1,|\alpha'|+2}
\end{align*}
where $l'$ is the number of $T$s' appearing in the string of $Z^{\alpha'}_{i}$.
In the spacetime integral $\int_{W^{t}_{\ub}}c^{-2}\widetilde{Q}_{1,\alpha'+2}$ we have the contributions from the deformation tensor of $K_{1}$,
which have been investigated in Section 6. Actually, if we choose $N_{\text{top}}$ to be large enough, then we can bound $\|\slashed{d}Z^{\beta}_{i}\mu\|_{L^{\infty}(\Sigma_{t}^{\ub})}$ in terms of initial data by using the same argument as in Section 4.2 for $|\beta|\leq N_{\infty}+1$.

Another contribution of the spacetime integral $\int_{W^{t}_{\ub}}c^{-2}\widetilde{Q}_{1,|\alpha'|+2}$ comes from $\int_{W^{t}_{\ub}}\dfrac{1}{c}\widetilde{\rho}_{|\alpha'|+2}\cdot \Lb Z^{\alpha'+1}_{i}\psi$, namely, the deformation tensor of commutators, which has been studied intensively in the
last section. We first consider the lower order optical contributions, which are bounded by (See \eqref{lower order optical terms separately K1 a}):
\begin{align}\label{K1 RHS lower order}
&C\int_{-r_{0}}^{t}\delta^{2} E_{\leq|\alpha|+2}(t',\ub)dt'+C\delta^{-1/2}\int_{0}^{\ub}\Fb_{\leq|\alpha|+2}(t,\ub')d\ub'+C\delta^{1/2} K_{\leq|\alpha|+2}(t,\ub)\\\notag
&\leq C\mu^{-2b_{|\alpha|+2}}_{m}(t)\left(\int_{-r_{0}}^{t}\delta^{2}\widetilde{E}_{\leq|\alpha|+2}(t',\ub)dt'+\delta^{-1/2}\int_{0}^{\ub}\widetilde{\Fb}_{\leq|\alpha|+2}(t,\ub')d\ub'+\delta^{1/2}\widetilde{K}_{\leq|\alpha|+2}(t,\ub)\right).
\end{align}
Here we define the following non-decreasing quantity in $t$:
\begin{align*}
\widetilde{K}_{\leq|\alpha|+2}(t,\ub):=\sup_{t'\in[-r_{0},t]}\{\mu_{m}^{2b_{|\alpha|+2}}(t')K_{\leq|\alpha|+2}(t',\ub)\}
\end{align*}

By \eqref{estimates for H0}, \eqref{spacetime II K1 lower order}, \eqref{V12}, \eqref{V11}, \eqref{V11 lower order}, \eqref{V11 lower order 1}, \eqref{H'0}, \eqref{V'1}, \eqref{estimates for V'212 1} and \eqref{estimates for V'212 2} the contribution of top order optical terms are bounded as (provided that $\delta$ is sufficiently small.):
\begin{align}\label{K1 RHS top order}
&\frac{C}{\big(b_{|\alpha|+2}-1/2\big)}\mu_{m}^{-2b_{|\alpha|+2}}(t)\widetilde{\Eb}_{\leq |\alpha|+2}(t,\ub)+C_{\epsilon}\mu^{-2b_{|\alpha|+2}}_{m}(t)\int_{-r_{0}}^{t_{0}}\widetilde{\Eb}_{\leq|\alpha|+2}(t',\ub)dt'\\\notag&+\epsilon\mu^{-2b_{|\alpha|+2}}_{m}(t)\widetilde{\Eb}_{\leq |\alpha|+2}(t,\ub)
+\frac{C\delta^{-1}}{\big(2b_{|\alpha|+2}-1\big)}\mu_{m}^{-2b_{|\alpha|+2}+1}(t)\int_{0}^{\ub}\widetilde{\Fb}_{\leq|\alpha|+2}(t,\ub')d\ub'\\\notag
&+C\delta\int_{-r_{0}}^{t_{0}}\mu_{m}^{-2b_{|\alpha|+2}}(t')\Big(\widetilde{E}_{\leq|\alpha|+2}(t',\ub)+\delta^{-2}\int_{0}^{\ub}
\widetilde{\Fb}_{\leq|\alpha|+2}(t',\ub')d\ub'\Big)dt'\\\notag
&+C\delta\mu_{m}^{-2b_{|\alpha|+2}}(t)\widetilde{\Eb}_{\leq |\alpha|+2}(t,\ub)+C\mu^{-2b_{|\alpha|+2}+1/2}_{m}(t)\Big(\delta^{3/2}\widetilde{E}_{\leq|\alpha|+2}(t,\ub)+\delta^{3/2}\widetilde{\Eb}_{\leq |\alpha|+2}(t,\ub)\Big)\\\notag
&+\frac{C}{2b_{|\alpha|+2}}\mu^{-2b_{|\alpha|+2}}_{m}(t)\widetilde{\Eb}_{\leq |\alpha|+2}(t,\ub)+\frac{C\delta}{2b_{|\alpha|+2}}\mu_{m}^{-2b_{|\alpha|+2}}(t)\widetilde{E}_{\leq|\alpha|+2}(t,\ub)\\\notag
\end{align}
Substituting these contributions into the energy inequality, and use the fact that $\mu_{m}(t)\leq 1$, we obtain:
\begin{align*}
&\mu_{m}^{2b_{|\alpha|+2}}(t)\sum_{|\alpha'|\leq |\alpha|}\delta^{2l'}\Big(\Eb[Z^{\alpha'+1}_{i}\psi](t,\ub)+\Fb[Z^{\alpha'+1}_{i}\psi](t,\ub)+K[Z^{\alpha'+1}_{i}\psi](t,\ub)\Big)\\
&\leq C\mu^{2b_{|\alpha|+2}}_{m}(t)\sum_{|\alpha'|\leq|\alpha|}\delta^{2l'}\Eb[Z^{\alpha'+1}_{i}\psi](-r_{0},\ub)
+\frac{C}{\big(b_{|\alpha|+2}-1/2\big)}\widetilde{\Eb}_{\leq |\alpha|+2}(t,\ub)\\&+C_{\epsilon}\int_{-r_{0}}^{t}\widetilde{\Eb}_{\leq|\alpha|+2}(t',\ub)dt'
+\epsilon\widetilde{\Eb}_{\leq |\alpha|+2}(t,\ub)
+\frac{C\delta^{-1}}{\big(2b_{|\alpha|+2}-1\big)}\int_{0}^{\ub}\widetilde{\Fb}_{\leq|\alpha|+2}(t,\ub')d\ub'\\\notag
&+C\delta\int_{-r_{0}}^{t}\Big(\widetilde{E}_{\leq|\alpha|+2}(t',\ub)+\delta^{-2}\int_{0}^{\ub}
\widetilde{\Fb}_{\leq|\alpha|+2}(t',\ub')d\ub'\Big)dt'
+C\delta\widetilde{\Eb}_{\leq |\alpha|+2}(t,\ub)\\
&+C\Big(\delta^{3/2}\widetilde{E}_{\leq|\alpha|+2}(t,\ub)+\delta^{3/2}\widetilde{\Eb}_{\leq |\alpha|+2}(t,\ub)\Big)
+\frac{C}{2b_{|\alpha|+2}}\widetilde{\Eb}_{\leq |\alpha|+2}(t,\ub)\\
&+\frac{C\delta}{2b_{|\alpha|+2}}\widetilde{E}_{\leq|\alpha|+2}(t,\ub)+C\delta\widetilde{K}_{\leq|\alpha|+2}(t,\ub)\\\notag
\end{align*}
Now the right hand side of the above inequality is non-decreasing in $t$, so the above inequality is also valid if we replace ``$t$" by any $t'\in[-r_{0},t]$ on the left hand side:
\begin{align*}
&\mu_{m}^{2b_{|\alpha|+2}}(t')\sum_{|\alpha'|\leq |\alpha|}\Big(\Eb[Z^{\alpha'+1}_{i}\psi](t',\ub)+\Fb[Z^{\alpha'+1}_{i}\psi](t',\ub)+K[Z^{\alpha'+1}_{i}\psi](t',\ub)\Big)\\&\leq C\sum_{|\alpha'|\leq|\alpha|}\delta^{2l'}\Eb[Z^{\alpha'+1}_{i}\psi](-r_{0},\ub)
+\frac{C}{\big(b_{|\alpha|+2}-1/2\big)}\widetilde{\Eb}_{\leq |\alpha|+2}(t,\ub)\\
&+C\int_{-r_{0}}^{t}\widetilde{\Eb}_{\leq|\alpha|+2}(t',\ub)dt'+\epsilon\widetilde{\Eb}_{\leq |\alpha|+2}(t,\ub)
+\frac{C\delta^{-1}}{\big(2b_{|\alpha|+2}-1\big)}\int_{0}^{\ub}\widetilde{\Fb}_{\leq|\alpha|+2}(t,\ub')d\ub'\\\notag
&+C\delta\int_{-r_{0}}^{t}\Big(\widetilde{E}_{\leq|\alpha|+2}(t',\ub)+\delta^{-2}\int_{0}^{\ub}
\widetilde{\Fb}_{\leq|\alpha|+2}(t',\ub')d\ub'\Big)dt'
+C\delta\widetilde{\Eb}_{\leq |\alpha|+2}(t,\ub)\\&+C\Big(\delta^{3/2}\widetilde{E}_{\leq|\alpha|+2}(t,\ub)+\delta^{3/2}\widetilde{\Eb}_{\leq |\alpha|+2}(t,\ub)\Big)
+\frac{C}{2b_{|\alpha|+2}}\widetilde{\Eb}_{\leq |\alpha|+2}(t,\ub)\\
&+\frac{C\delta}{2b_{|\alpha|+2}}\widetilde{E}_{\leq|\alpha|+2}(t,\ub)+C\delta\widetilde{K}_{\leq|\alpha|+2}(t,\ub)\\\notag
\end{align*}
For each term in the sum on the left hand side of above inequality, we keep it on the left hand side and drop all the other terms. Then taking supremum of the term we kept with respect to $t'\in[-r_{0},t]$. Repeat this process for all the terms on the left hand side, we finally obtain:
\begin{align*}
&\widetilde{\Eb}_{\leq |\alpha|+2}(t,\ub)+\widetilde{\Fb}_{\leq|\alpha|+2}(t,\ub)+\widetilde{K}_{\leq|\alpha|+2}(t,\ub)\\&\leq C\sum_{|\alpha'|\leq|\alpha|}\delta^{2l'}\Eb[Z^{\alpha'+1}_{i}\psi](-r_{0},\ub)
+\frac{C}{\big(b_{|\alpha|+2}-1/2\big)}\widetilde{\Eb}_{\leq |\alpha|+2}(t,\ub)\\
&+C_{\epsilon}\int_{-r_{0}}^{t}\widetilde{\Eb}_{\leq|\alpha|+2}(t',\ub)dt'+\epsilon\widetilde{\Eb}_{\leq |\alpha|+2}(t,\ub)
+\frac{C\delta^{-1}}{\big(2b_{|\alpha|+2}-1\big)}\int_{0}^{\ub}\widetilde{\Fb}_{\leq|\alpha|+2}(t,\ub')d\ub'\\\notag
&+C\delta\int_{-r_{0}}^{t}\Big(\widetilde{E}_{\leq|\alpha|+2}(t',\ub)+\delta^{-2}\int_{0}^{\ub}
\widetilde{\Fb}_{\leq|\alpha|+2}(t',\ub')d\ub'\Big)dt'
+C\delta\widetilde{\Eb}_{\leq |\alpha|+2}(t,\ub)\\
&+C\Big(\delta^{3/2}\widetilde{E}_{\leq |\alpha|+2}(t,\ub)+\delta^{3/2}\widetilde{\Eb}_{\leq |\alpha|+2}(t,
ub)\Big)\\
&+\boxed{\frac{C}{2b_{|\alpha|+2}}\widetilde{\Eb}_{\leq |\alpha|+2}(t,\ub)}+\frac{C\delta}{2b_{|\alpha|+2}}\widetilde{E}_{\leq|\alpha|+2}(t,\ub)+C\delta\widetilde{K}_{\leq|\alpha|+2}(t,\ub)\\\notag
\end{align*}
The control on the boxed term relies on Remark \ref{a dependence}--since $\frac{C}{b_{|\alpha|+2}}$ is suitably small, the boxed term can be absorbed by the left hand side.
So if we choose $\epsilon$ and $\delta$ small enough, we obtain:
\begin{align}\label{G1 Gronwall}
&\widetilde{\Eb}_{\leq |\alpha|+2}(t,\ub)+\widetilde{\Fb}_{\leq|\alpha|+2}(t,\ub)+\widetilde{K}_{\leq|\alpha|+2}(t,\ub)\\\notag
&\leq C\sum_{|\alpha'|\leq|\alpha|}\delta^{2l'}\Eb[Z^{\alpha+1}_{i}\psi](-r_{0},\ub)
+C_{\epsilon}\int_{-r_{0}}^{t}\widetilde{\Eb}_{\leq|\alpha|+2}(t',\ub)dt'\\\notag
&+C\delta^{-1}\int_{0}^{\ub}\widetilde{\Fb}_{\leq|\alpha|+2}(t,\ub')d\ub'+C\delta\int_{-r_{0}}^{t}\widetilde{E}_{\leq|\alpha|+2}(t',\ub)dt'
+C\delta^{3/2}\widetilde{E}_{\leq|\alpha|+2}(t,\ub)
\end{align}
Now we only keep $\widetilde{\Eb}_{\leq |\alpha|+2}(t,\ub)$ on the left hand side of \eqref{G1 Gronwall}
\begin{align*}
\widetilde{\Eb}_{\leq |\alpha|+2}(t,\ub)&\leq C\sum_{|\alpha'|\leq|\alpha|}\delta^{2l'}\Eb[Z^{\alpha'+1}_{i}\psi](-r_{0},\ub)
+C_{\epsilon}\int_{-r_{0}}^{t}\widetilde{\Eb}_{\leq|\alpha|+2}(t',\ub)dt'\\\notag&+C\delta^{-1}\int_{0}^{\ub}\widetilde{\Fb}_{\leq|\alpha|+2}(t,\ub')d\ub'+C\delta\int_{-r_{0}}^{t}\widetilde{E}_{\leq|\alpha|+2}(t',\ub)dt'
+C\delta^{3/2}\widetilde{E}_{\leq|\alpha|+2}(t,\ub)
\end{align*}
Then by using Gronwall we have:
\begin{align}\label{G1 Gronwall 1}
\widetilde{\Eb}_{\leq |\alpha|+2}(t,\ub)&\leq C\sum_{|\alpha'|\leq|\alpha|}\delta^{2l'}\Eb[Z^{\alpha'+1}_{i}\psi](-r_{0},\ub)+C\delta^{-1}\int_{0}^{\ub}\widetilde{\Fb}_{\leq|\alpha|+2}(t,\ub')d\ub'\\\notag&+C\delta\int_{-r_{0}}^{t}\widetilde{E}_{\leq|\alpha|+2}(t',\ub)dt'
+C\delta^{3/2}\widetilde{E}_{\leq|\alpha|+2}(t,\ub)
\end{align}
Keeping only $\widetilde{\Fb}_{\leq|\alpha|+2}(t,\ub)$ on the left hand side of \eqref{G1 Gronwall} and substituting the above estimates for $\widetilde{\Eb}_{\leq |\alpha|+2}(t,\ub)$ gives us:
\begin{align*}
\widetilde{\Fb}_{\leq|\alpha|+2}(t,\ub)&\leq C\sum_{|\alpha'|\leq|\alpha|}\delta^{2l'}\Eb[Z^{\alpha'+1}_{i}\psi](-r_{0},\ub)
+C\delta^{-1}\int_{0}^{\ub}\widetilde{\Fb}_{\leq|\alpha|+2}(t,\ub')d\ub'\\\notag&+C\delta\int_{-r_{0}}^{t}\widetilde{E}_{\leq|\alpha|+2}(t',\ub)dt'
+C\delta^{3/2}\widetilde{E}_{\leq|\alpha|+2}(t,\ub)
\end{align*}
Then again by using Gronwall we obtain:
\begin{align}\label{Final H1}
\widetilde{\Fb}_{\leq|\alpha|+2}(t,\ub)&\leq C\sum_{|\alpha'|\leq|\alpha|}\delta^{2l'}\Eb[Z^{\alpha'+1}_{i}\psi](-r_{0},\ub)\\\notag
&+C\delta\int_{-r_{0}}^{t}\widetilde{E}_{\leq|\alpha|+2}(t',\ub)dt'
+C\delta^{3/2}\widetilde{E}_{\leq|\alpha|+2}(t,\ub)
\end{align}
Note that since $0\leq \ub\leq \delta$, all the constants $C$ on the above do not depend on $\delta$. Since the right hand side of \eqref{Final H1} is increasing in $\ub$, substituting this in \eqref{G1 Gronwall 1} gives us:
\begin{align}\label{Final G1}
\widetilde{\Eb}_{\leq |\alpha|+2}(t,\ub)&\leq C\sum_{|\alpha'|\leq|\alpha|}\delta^{2l'}\Eb[Z^{\alpha'+1}_{i}\psi](-r_{0},\ub)\\\notag
&+C\delta\int_{-r_{0}}^{t}\widetilde{E}_{\leq|\alpha|+2}(t',\ub)dt'
+C\delta^{3/2}\widetilde{E}_{\leq|\alpha|+2}(t,\ub)
\end{align}
Substituting \eqref{Final H1} and \eqref{Final G1} in \eqref{G1 Gronwall} gives us:
\begin{align}\label{K1 top final}
&\widetilde{\Eb}_{\leq |\alpha|+2}(t,\ub)+\widetilde{\Fb}_{\leq|\alpha|+2}(t,\ub)+\widetilde{K}_{\leq|\alpha|+2}(t,\ub)\\\notag
&\leq C\sum_{|\alpha'|\leq|\alpha|}\delta^{2l'}\Eb[Z^{\alpha'+1}_{i}\psi](-r_{0},\ub)
+C\delta\int_{-r_{0}}^{t}\widetilde{E}_{\leq|\alpha|+2}(t',\ub)dt'
+C\delta^{3/2}\widetilde{E}_{\leq|\alpha|+2}(t,\ub)
\end{align}
This completes the top order energy estimates associated to $K_{1}$.

\subsection{Estimates associated to $K_{0}$}
Now we turn to the top order energy estimates for $K_{0}$.

We first start with the energy identity for $Z^{\alpha'+1}_{i}\psi$, where $Z_{i}$ is any one of $R_{i}$, $Q$ and $T$:
\begin{align*}
&\sum_{|\alpha'|\leq|\alpha|}\delta^{2l'}\left(E[Z^{\alpha'+1}_{i}\psi](t,\ub)+F[Z^{\alpha'+1}_{i}\psi](t,\ub)\right)\\&\leq C\sum_{|\alpha'|\leq |\alpha|}\delta^{2l'}E[Z^{\alpha'+1}_{i}\psi](-r_{0},\ub)+C\sum_{|\alpha'|\leq |\alpha|}\int_{W^{t}_{\ub}}c^{-2}
\widetilde{Q}_{0,|\alpha'|+2}
\end{align*}
Again, $l'$ is the number of $T$s' in the string of $Z^{\alpha'+1}_{i}$.

In the spacetime integral $\int_{W^{t}_{\ub}}c^{-2}\widetilde{Q}_{0,\alpha'+2}$ we have the contributions from the 
deformation tensor of $K_{0}$, which have been investigated in section 6 and also the contribution of the spacetime integral 
from $\int_{W^{t}_{\ub}}\dfrac{1}{c}\widetilde{\rho}_{|\alpha'|+2}\cdot LZ^{\alpha'+1}_{i}\psi$, namely, the deformation tensor of commutators, which has been studied intensively in the
last section. We first consider the lower order optical contributions, which are bounded by (See \eqref{lower order optical terms separately K0 a} and \eqref{K1 top final} and provided that $\delta$ is sufficiently small):
\begin{align}\label{K0 RHS lower order optical}
&C\int_{-r_{0}}^{t}\delta^{1/2}E_{\leq|\alpha|+2}(t',\ub)dt'+C\delta^{1/2} K_{\leq|\alpha|+2}(t,\ub)+\delta^{-1/2}\int_{0}^{\ub}\Fb_{\leq|\alpha|+2}(t,\ub')d\ub'\\\notag
&\leq C\Eb_{\leq|\alpha|+2}(-r_{0},\ub)+C\delta^{1/2}\mu^{-2b_{|\alpha|+2}}_{m}(t)\int_{-r_{0}}^{t}\widetilde{E}_{\leq|\alpha|+2}(t',\ub)dt'.
\end{align}

By \eqref{top gronwall 1}, \eqref{top optical 1}, \eqref{K1 top final}, \eqref{top optical K0 mu-shock} and \eqref{top optical K0 mu-non-shock}, the top order optical contributions are bounded by (provided that $\delta$ is sufficiently small and $b_{|\alpha|+2}$ is large enough):
\begin{align}\label{K0 RHS top optical}
&\int_{-r_{0}}^{t}\mu_{m}^{-2b_{|\alpha|+2}}(t')\widetilde{E}_{\leq|\alpha|+2}(t',\ub)dt'+\delta\mu^{-2b_{|\alpha|+2}}_{m}(t)\widetilde{E}_{\leq|\alpha|+2}(t,\ub)\\\notag
&+\frac{C}{2b_{|\alpha|+2}}\mu_{m}^{-2b_{|\alpha|+2}}(t)\widetilde{E}_{\leq|\alpha|+2}(t,\ub)\\\notag&+\frac{C}{\big(2b_{|\alpha|+2}-1\big)}\mu^{-2b_{|\alpha|+2}+1}_{m}(t)\Big(\int_{-r_{0}}^{t}\widetilde{E}_{\leq|\alpha|+2}(t',\ub)dt'+\widetilde{E}_{\leq|\alpha|+2}(t,\ub)\Big)\\\notag
&+C\delta^{-1+2l'}\sum_{|\alpha'|\leq|\alpha|}\Eb[Z^{\alpha'+1}_{i}\psi](-r_{0},\ub)
\end{align}
Substituting \eqref{K0 RHS lower order optical} and \eqref{K0 RHS top optical} into the energy inequality, and use the fact that $\mu_{m}(t)\leq 1$, we obtain:
\begin{align*}
&\sum_{|\alpha'|\leq|\alpha|}\mu_{m}^{2b_{|\alpha|+2}}(t')\delta^{2l'}\Big(E[Z^{\alpha'+1}_{i}\psi](t,\ub)+F[Z^{\alpha'+1}_{i}\psi](t,\ub)\Big)\leq\\& C\sum_{|\alpha'|\leq|\alpha|}\delta^{2l'}E[Z^{\alpha'+1}_{i}\psi](-r_{0},\ub)
+C\sum_{|\alpha'|\leq|\alpha|}\delta^{-1+2l'}\Eb[Z^{\alpha'+1}_{i}\psi](-r_{0},\ub)+\\ &C\int_{-r_{0}}^{t}\widetilde{E}_{\leq|\alpha|+2}(t',\ub)dt'+\delta\widetilde{E}_{\leq|\alpha|+2}(t,\ub)
+\frac{C}{2b_{|\alpha|+2}}\widetilde{E}_{\leq|\alpha|+2}(t,\ub)
\end{align*}
Since the right hand side of the above is non-decreasing in $t$, the inequality is true if we replace $t$ by any $t'\in[-r_{0},t]$ on the left hand side:
\begin{align*}
&\sum_{|\alpha'|\leq|\alpha|}\delta^{2l'}\mu_{m}^{2b_{|\alpha|+2}}(t')\Big(E[Z^{\alpha'+1}_{i}\psi](t',\ub)+F[Z^{\alpha'+1}_{i}\psi](t',\ub)\Big)\leq \\\notag &C\sum_{|\alpha'|\leq|\alpha|}\delta^{2l'}E[Z^{\alpha'+1}_{i}\psi](-r_{0},\ub)+C\delta^{-1+2l'}\sum_{|\alpha'|\leq|\alpha|}\Eb[Z^{\alpha'+1}_{i}\psi](-r_{0},\ub)\\
&\leq C\int_{-r_{0}}^{t}\widetilde{E}_{\leq|\alpha|+2}(t',\ub)dt'+\delta\widetilde{E}_{\leq|\alpha|+2}(t,\ub)
+\frac{C}{2b_{|\alpha|+2}}\widetilde{E}_{\leq|\alpha|+2}(t,\ub)
\end{align*}
As before, taking supremum on the left hand side with respect to $t'\in[-r_{0},t]$, we obtain:
\begin{align*}
&\widetilde{E}_{\leq|\alpha|+2}(t,\ub)+\widetilde{F}_{\leq|\alpha|+2}(t,\ub)\\&\leq C
\sum_{|\alpha'|\leq|\alpha|}\delta^{2l'}E[Z^{\alpha'+1}_{i}\psi](-r_{0},\ub)+C\delta^{-1+2l'}\sum_{|\alpha'|\leq|\alpha|}\Eb[Z^{\alpha'+1}_{i}\psi](-r_{0},\ub)\\
&+ C\int_{-r_{0}}^{t}\widetilde{E}_{\leq|\alpha|+2}(t',\ub)dt'+\delta\widetilde{E}_{\leq|\alpha|+2}(t)
+\boxed{\frac{C}{2b_{|\alpha|+2}}\widetilde{E}_{\leq|\alpha|+2}(t,\ub)}
\end{align*}
Similarly, the control on the boxed term relies on Remark \ref{a dependence}--since $\frac{C}{b_{|\alpha|+2}}$ is suitably small, the boxed term can be absorbed by the left hand side. By choosing $\delta$ sufficiently small, we have:
\begin{align*}
\widetilde{E}_{\leq|\alpha|+2}(t)+\widetilde{F}_{\leq|\alpha|+2}(t,\ub)&\leq C
\sum_{|\alpha'|\leq|\alpha|}\delta^{2l'}E[Z^{\alpha'+1}_{i}\psi](-r_{0},\ub)
\\&+C\delta^{-1+2l'}\sum_{|\alpha'|\leq|\alpha|}\Eb[Z^{\alpha'+1}_{i}\psi](-r_{0},\ub)+C\int_{-r_{0}}^{t}\widetilde{E}_{\leq|\alpha|+2}(t',\ub)dt'
\end{align*}
Then keeping only $\widetilde{E}_{\leq|\alpha|+2}(t)$ on the left hand side gives us:
\begin{align*}
\widetilde{E}_{\leq|\alpha|+2}(t)&\leq C\sum_{|\alpha'|\leq|\alpha|}\delta^{2l'}E[Z^{\alpha'+1}_{i}\psi](-r_{0},\ub)
+\\&C\delta^{-1+2l'}\sum_{|\alpha'|\leq|\alpha|}\Eb[Z^{\alpha'+1}_{i}\psi](-r_{0},\ub)
+ C\int_{-r_{0}}^{t}\widetilde{E}_{\leq|\alpha|+2}(t',\ub)dt'
\end{align*}
Then using Gronwall, we have:
\begin{align*}
\widetilde{E}_{\leq|\alpha|+2}(t)\leq C\sum_{|\alpha'|\leq|\alpha|}\delta^{2l'}E[Z^{\alpha'+1}_{i}\psi](-r_{0},\ub)+C\delta^{-1+2l'}\sum_{|\alpha'|\leq|\alpha|}\Eb[Z^{\alpha'+1}_{i}\psi](-r_{0},\ub)
\end{align*}
Therefore
\begin{align}\label{K0 top final}
&\widetilde{E}_{\leq|\alpha|+2}(t,\ub)+\widetilde{F}_{\leq|\alpha|+2}(t,\ub)\leq \\\notag &C\sum_{|\alpha'|\leq|\alpha|}\delta^{2l'}E[Z^{\alpha'+1}_{i}\psi](-r_{0},\ub)+C\delta^{-1+2l'}\sum_{|\alpha'|\leq|\alpha|}\Eb[Z^{\alpha'+1}_{i}\psi](-r_{0},\ub)
\end{align}
Now we substitute this to \eqref{K1 top final} for $\widetilde{E}_{\leq|\alpha|+2}(t)$:
\begin{align}
&\widetilde{\Eb}_{\leq |\alpha|+2}(t,\ub)+\widetilde{\Fb}_{\leq|\alpha|+2}(t,\ub)+\widetilde{K}_{\leq|\alpha|+2}(t,\ub)\\\notag
&\leq C\sum_{|\alpha'|\leq|\alpha|}\delta^{2l'}\Eb[Z^{\alpha'+1}_{i}\psi](-r_{0},\ub)+C \sum_{|\alpha'|\leq|\alpha|}\delta^{2l'+1}E[Z^{\alpha'+1}_{i}\psi](-r_{0},\ub)
\end{align}
If we denote:
\begin{align*}
\mathcal{D}^{\ub}_{|\alpha|+2}:=&\sum_{|\alpha'|+l'\leq|\alpha|}\delta^{2l'}E[Z^{\alpha'+1}_{i}\psi](-r_{0},\ub)+\delta^{-1+2l'}\sum_{|\alpha'|+l'\leq|\alpha|}\Eb[Z^{\alpha'+1}_{i}\psi](-r_{0},\ub)\\
+&\delta^{2l+2}\|F_{\alpha,l}\|_{L^{2}(\Sigma_{-r_{0}}^{\ub})}+\sum_{|\alpha'|+l'\leq|\alpha|+1}\delta^{2l'}\|Z^{\alpha'}_{i}T^{l'}\mu\|_{L^{2}(\Sigma_{-r_{0}}^{\ub})},\quad Z_{i}=R_{i}, Q.
\end{align*}
we can write the final top order energy estimates as:
\begin{align}\label{top energy final}
\widetilde{\Eb}_{\leq |\alpha|+2}(t,\ub)+\widetilde{\Fb}_{\leq|\alpha|+2}(t,\ub)+\widetilde{K}_{\leq|\alpha|+2}(t,\ub)&\leq C\delta\mathcal{D}^{\ub}_{|\alpha|+2}\\\notag
\widetilde{E}_{\leq|\alpha|+2}(t,\ub)+\widetilde{F}_{\leq|\alpha|+2}(t,\ub)&\leq C\mathcal{D}^{\ub}_{|\alpha|+2}
\end{align}

\section{Descent Scheme}
In the previous section, we have shown that the modified energies $\widetilde{E}_{\leq|\alpha|+2}(t), \widetilde{\Eb}_{\leq |\alpha|+2}(t)$ for the top order variations are bounded by the initial energies $\mathcal{D}^{\ub}_{|\alpha|+2}$. According to the definition, the modified energies go to zero when $\mu_{m}(t)$ goes to zero. This means the energy estimates obtained in the last section are not sufficient for us to close the argument when shock forms. However, based on those estimates, we shall show in this section, that if the order of derivative decreases, the power of $\mu_{m}(t)$ needed in the definition of modified energies also decreases. The key point is that after several steps, this power could be zero and finally we can bound the energies without any weights.

\subsection{Next-to-top order error estimates}
We first investigate the estimates associated to $K_{1}$. To improve the energy estimates for the next-to-the-top variations, we consider the spacetime integral (Keep in mind that the top order quantities are of order $|\alpha|+2$):
\begin{align}\label{K1 next to top spacetime}
&\int_{W^{t}_{\ub}}|T\psi||Z^{\alpha}_{i}\text{tr}\chib'||\Lb Z^{\alpha}_{i}\psi|dt'd\ub'd\mu_{\tilde{g}}\\\notag
&\leq C\delta^{-1/2}\int_{W^{t}_{\ub}}|Z^{\alpha}_{i}\text{tr}\chib'||\Lb Z^{\alpha}_{i}\psi|dt'd\ub'd\mu_{\tilde{g}}\\\notag
&\leq C\delta^{-1/2}\Big(\int_{W^{t}_{\ub}}|Z^{\alpha}_{i}\text{tr}\chib'|^{2}dt'd\ub'
d\mu_{\tilde{\slashed{g}}}\Big)^{1/2}
\cdot\Big(\int_{W^{t}_{\ub}}|\Lb Z^{\alpha}_{i}\psi|^{2}dt'd\ub'd\mu_{\tilde{\slashed{g}}}
\Big)^{1/2}\\\notag
&\leq C\delta^{-1/2}\Big(\int_{-r_{0}}^{t}\|Z^{\alpha}_{i}\text{tr}\chib'\|^{2}_{L^{2}(\Sigma_{t'}^{\ub})}dt'\Big)^{1/2}\cdot
\Big(\int_{0}^{\ub}\Fb[Z^{\alpha}_{i}\psi](t,\ub')d\ub'\Big)^{1/2}
\end{align}
Throughout this subsection, $Z_{i}$ is either $R_{i}$ or $Q$.
By Proposition 7.4:
\begin{align*}
\|Z^{\alpha}_{i}\text{tr}\chib'\|_{L^{2}(\Sigma_{t}^{\ub})}&\leq C\delta^{1/2}\int_{-r_{0}}^{t}\mu^{-1/2}_{m}(t')\sqrt{\underline{E}_{|\alpha|+2}(t',\ub)}dt'\\
&\leq C\delta^{1/2}\int_{-r_{0}}^{t}\mu^{-1/2-b_{|\alpha|+2}}_{m}(t')\sqrt{\widetilde{\Eb}_{\leq|\alpha|+2}(t',\ub)}dt'\\
&\leq C\delta^{1/2}\sqrt{\widetilde{\Eb}_{\leq |\alpha|+2}(t,\ub)}\int_{-r_{0}}^{t}\mu_{m}^{-b_{|\alpha|+2}-1/2}(t')dt'\\
&\leq C\delta^{1/2}\mu^{-b_{|\alpha|+2}+1/2}_{m}(t)\sqrt{\widetilde{\Eb}_{\leq |\alpha|+2}(t,\ub)}
\end{align*}
Then by the top order energy estimates obtained in the last section, the integral in the first factor of \eqref{K1 next to top spacetime} is bounded by:
\begin{align*}
C\delta\int_{-r_{0}}^{t}\mu^{-2b_{|\alpha|+2}+1}_{m}(t')\widetilde{\Eb}_{\leq|\alpha|+2}(t',\ub)dt'&\leq C\delta\widetilde{\Eb}_{\leq |\alpha|+2}(t,\ub)\int_{-r_{0}}^{t}\mu^{-2b_{|\alpha|+2}+1}_{m}(t')dt'\\
&\leq C\delta^{2}\mu^{-2b_{|\alpha|+2}+2}_{m}(t)\mathcal{D}^{\ub}_{|\alpha|+2}
\end{align*}
On the other hand, the second factor in \eqref{K1 next to top spacetime} is bounded by:
\begin{align*}
&\int_{0}^{\ub}\Fb[Z^{\alpha}_{i}\psi](t,\ub')d\ub'\leq \mu^{-2b_{|\alpha|+1}}_{m}(t)\int_{0}^{\ub}\sup_{t'\in[-r_{0},t]}\{\mu_{m}^{2b_{|\alpha|+1}}(t')\Fb[Z^{\alpha}_{i}\psi](t',\ub')\}d\ub'
\end{align*}
where $b_{|\alpha|+1}=b_{|\alpha|+2}-1$. Therefore \eqref{K1 next to top spacetime} is bounded by:
\begin{align}\label{K1 next to top estimates}
&C\delta^{1/2}\mu^{-2b_{|\alpha|+1}}_{m}(t)\sqrt{\mathcal{D}^{\ub}_{|\alpha|+2}}\sqrt{\int_{0}^{\ub}
\widetilde{\Fb}_{\leq|\alpha|+1}(t,\ub')d\ub'}\\\notag
\leq &C\delta^{2}\mu^{-2b_{|\alpha|+1}}_{m}(t)\mathcal{D}^{\ub}_{|\alpha|+2}+C\delta^{-1}\mu_{m}^{-2b_{|\alpha|+1}}(t)\int_{0}^{\ub}\widetilde{\Fb}_{\leq|\alpha|+1}(t,\ub')d\ub'
\end{align}
Next we consider the spacetime integral:
\begin{align}\label{K1T next to top spacetime}
&\delta^{2l'+2}\int_{W^{t}_{\ub}}|T\psi||Z^{\alpha'}_{i}T^{l'}\slashed{\Delta}\mu||\Lb Z^{\alpha'}_{i}T^{l'+1}\psi|dt'd\ub'd\mu_{\tilde{\slashed{g}}}\\\notag
&\leq C\delta^{-1/2}\Big(\int_{-r_{0}}^{t}\delta^{l'+1}\|Z^{\alpha'}_{i}T^{l'}\slashed{\Delta}\mu\|^{2}_{L^{2}(\Sigma_{t'}^{\ub})}\Big)^{1/2}
\Big(\int_{0}^{\ub}\delta^{l'+1}
\Fb[Z^{\alpha'}_{i}T^{l'+1}\psi](t,\ub')d\ub'\Big)^{1/2}
\end{align}
with $|\alpha'|+l'\leq |\alpha|-1$.

By Proposition 7.5:
\begin{align*}
&\delta^{l'+1}\|Z_{i}^{\alpha'}T^{l'}\slashed{\Delta}\mu\|_{L^{2}(\Sigma_{t}^{\ub})}\\
&\leq C\delta^{1/2}\int_{-r_{0}}^{t}\sqrt{E_{\leq|\alpha|+2}(t',\ub)}+\mu_{m}^{-1/2}(t')\sqrt{\underline{E}_{\leq|\alpha|+2}(t',\ub)}dt'\\
&\leq C\delta^{1/2}\int_{-r_{0}}^{t}\mu^{-b_{|\alpha|+2}}_{m}(t')\sqrt{\widetilde{E}_{\leq|\alpha|+2}(t',\ub)}+\mu_{m}^{-b_{|\alpha|+2}-1/2}(t')\sqrt{\widetilde{\Eb}_{\leq|\alpha|+2}(t',\ub)}dt'\\
&\leq C\delta^{1/2}\Big(\sqrt{\widetilde{E}_{\leq|\alpha|+2}(t,\ub)}\int_{-r_{0}}^{t}\mu_{m}^{-b_{|\alpha|+2}}(t')dt'+\sqrt{\widetilde{\Eb}_{\leq|\alpha|+2}(t,\ub)}\int_{-r_{0}}^{t}\mu^{-b_{|\alpha|+2}-1/2}_{m}(t')dt'\Big)\\
&\leq C\delta^{1/2}\mu^{-b_{|\alpha|+2}+1/2}_{m}(t)\sqrt{\widetilde{\Eb}_{\leq|\alpha|+2}(t,\ub)}+C\delta^{1/2}\mu_{m}^{-b_{|\alpha|+2}+1}(t)\sqrt{\widetilde{E}_{\leq|\alpha|+2}(t,\ub)}
\end{align*}
Then by the top order energy estimates obtained in the last section, the integral in the first factor of \eqref{K1T next to top spacetime} is bounded by ($\mu_{m}(t)\leq1$):
\begin{align*}
&C\delta\int_{-r_{0}}^{t}\mu^{-2b_{|\alpha|+2}+1}_{m}(t')\Big(\widetilde{E}_{\leq|\alpha|+2}(t',\ub)+\widetilde{\Eb}_{\leq|\alpha|+2}(t',\ub)\Big)dt'\\
&\leq C\delta\Big(\widetilde{E}_{\leq|\alpha|+2}(t)+\widetilde{\Eb}_{\leq|\alpha|+2}(t,\ub)\Big)\int_{-r_{0}}^{t}\mu^{-2b_{|\alpha|+2}+1}_{m}(t')dt'\\
&\leq C\delta\mu^{-2b_{|\alpha+2}+2}_{m}(t)\Big(\widetilde{E}_{\leq|\alpha|+2}(t,\ub)+\widetilde{\Eb}_{\leq|\alpha|+2}(t,\ub)\Big)\\
&\leq C\delta\mu^{-2b_{|\alpha+2}+2}_{m}(t)\mathcal{D}^{\ub}_{|\alpha|+2}
\end{align*}
Then again, with $b_{|\alpha|+1}=b_{|\alpha|+2}-1$, the spacetime integral \eqref{K1T next to top spacetime} is bounded by:
\begin{align}\label{K1T next to top estimates}
&C\mu^{-2b_{|\alpha|+1}}_{m}(t)\sqrt{\mathcal{D}^{\ub}_{|\alpha|+2}}\sqrt{\int_{0}^{\ub}\widetilde{\Fb}_{\leq|\alpha|+1}(t,\ub')d\ub'}\\\notag
&\leq C\delta\mu_{m}^{-2b_{|\alpha|+1}}(t)\mathcal{D}^{\ub}_{|\alpha|+2}+C\delta^{-1}\mu^{-2b_{|\alpha|+1}}_{m}(t)\int_{0}^{\ub}\widetilde{\Fb}_{\leq|\alpha|+1}(t,\ub')d\ub'
\end{align}

We proceed to consider the spacetime error integral associated to $K_{0}$. We first consider the spacetime integral:
\begin{align*}
&\int_{W^{t}_{\ub}}|T\psi||Z^{\alpha}_{i}\text{tr}\chib'||L Z^{\alpha}_{i}\psi|dt'd\ub'd\mu_{\tilde{\slashed{g}}}\\
&\leq C\delta^{-1/2}\int_{-r_{0}}^{t}\|Z^{\alpha}_{i}\text{tr}\chib'\|_{L^{2}(\Sigma_{t'}^{\ub})}\|LZ^{\alpha}_{i}\psi\|_{L^{2}(\Sigma_{t'}^{\ub})}dt'
\end{align*}
Substituting the estimates:
\begin{align*}
\|Z^{\alpha}_{i}\text{tr}\chib'\|_{L^{2}(\Sigma_{t'}^{\ub})}&\leq C\delta^{1/2}\mu^{-b_{|\alpha|+2}+1/2}_{m}(t')\sqrt{\widetilde{\Eb}_{\leq|\alpha|+2}(t',\ub)}\\&\leq C\delta\mu^{-b_{|\alpha|+2}+1/2}_{m}(t)\sqrt{\mathcal{D}^{\ub}_{|\alpha|+2}},\\
\|L Z^{\alpha}_{i}\psi\|_{L^{2}(\Sigma_{t'}^{\ub})}&\leq C\mu^{-b_{|\alpha|+1}}_{m}(t')\sqrt{\widetilde{E}_{\leq|\alpha|+1}(t',\ub)}
\end{align*}
with $b_{|\alpha|+1}=b_{|\alpha|+2}-1$, and using the fact that $\widetilde{E}_{\leq|\alpha|+1}(t)$ are non-decreasing in $t$, we see that the spacetime integral is bounded by ($\mu_{m}(t)\leq1$):
\begin{align}\label{K0 next to top estimates}
&C\delta^{1/2}\sqrt{\mathcal{D}^{\ub}_{|\alpha|+2}}\sqrt{\widetilde{E}_{\leq|\alpha|+1}(t,\ub)}\int_{-r_{0}}^{t}\mu^{-2b_{|\alpha|+1}-1/2}_{m}(t')dt'\\\notag
&\leq C\delta^{1/2}\mu^{-2b_{|\alpha|+1}+1/2}_{m}(t)\sqrt{\mathcal{D}^{\ub}_{|\alpha|+2}}\sqrt{\widetilde{E}_{\leq|\alpha|+1}(t,\ub)}\\\notag
&\leq C\mu_{m}^{-2b_{|\alpha|+1}}(t)\mathcal{D}^{\ub}_{|\alpha|+2}+C\delta\mu^{-2b_{|\alpha|+1}}_{m}(t)\widetilde{E}_{\leq|\alpha|+1}(t,\ub).
\end{align}

Finally, we consider the spacetime integral:
\begin{align}\label{K0T next to top spacetime}
&\delta^{2l'+2}\int_{W^{t}_{\ub}}|Z^{\alpha'}_{i}T^{l'}\slashed{\Delta}\mu||T\psi||LZ^{\alpha'}_{i}T^{l'+1}\psi|dt'd\ub'd\mu_{\tilde{\slashed{g}}}\\\notag
&\leq C\delta^{2l'+2-1/2}\int_{-r_{0}}^{t}\|Z^{\alpha'}_{i}T^{l'}\slashed{\Delta}\mu\|_{L^{2}(\Sigma_{t'}^{\ub})}\|LZ^{\alpha'}_{i}T^{l'+1}\psi\|_{L^{2}(\Sigma_{t'}^{\ub})}dt'
\end{align}
for $|\alpha'|+l'\leq |\alpha|-1$.
Again, substituting the estimates ($\mu_{m}(t)\leq1$):
\begin{align*}
\delta^{l'+1}\|Z^{\alpha'}_{i}T^{l'}\slashed{\Delta}\mu\|_{L^{2}(\Sigma_{t'}^{\ub})}\leq C\delta^{1/2}\mu^{-b_{|\alpha|+2}+1/2}_{m}(t')\Big(\sqrt{\widetilde{E}_{\leq|\alpha|+2}(t',\ub)}+\sqrt{\widetilde{\Eb}_{\leq|\alpha|+2}(t',\ub)}\Big)
\end{align*}
with $b_{|\alpha|+1}=b_{|\alpha|+2}-1$, the same argument implies that the spacetime integral is bounded by:
\begin{align}\label{K0T next to top estimates}
&C\sqrt{\mathcal{D}^{\ub}_{|\alpha|+2}}\sqrt{\widetilde{E}_{\leq|\alpha|+1}(t,\ub)}\int_{-r_{0}}^{t}\mu^{-2b_{|\alpha|+1}-1/2}_{m}(t')dt'\\\notag
&\leq C\mu^{-2b_{|\alpha|+1}+1/2}_{m}(t)\sqrt{\mathcal{D}^{\ub}_{|\alpha|+2,l'}}\sqrt{\widetilde{E}_{\leq|\alpha|+1}(t,\ub)}\\\notag
&\leq C_{\epsilon}\mu^{-2b_{|\alpha|+1}}_{m}(t)\mathcal{D}^{\ub}_{|\alpha|+2}+\epsilon\widetilde{E}_{\leq|\alpha|+1}(t,\ub)
\end{align}
Here $\epsilon$ is a small absolute positive constant.
\subsection{Energy estimates for next-to-top order}
Throughout this subsection $Z_{i}$ could be $R_{i}, Q$ and $T$. Now we consider the other contributions from the spacetime error integrals associated to $K_{1}$. For the variations $Z^{\alpha'}_{i}\psi$ where $|\alpha'|\leq |\alpha|$, the other contributions are bounded by (see \eqref{lower order optical terms separately K1 a}):
\begin{align}\label{lower optical K1 descent}
&C\delta^{2}\int_{-r_{0}}^{t}E_{\leq|\alpha|+1}(t',\ub)dt'+C\delta^{-1/2}\int_{0}^{\ub}\Fb_{\leq|\alpha|+1}(t,\ub')d\ub'+C\delta^{1/2} K_{\leq|\alpha|+1}(t,\ub)\\\notag
&\leq C\delta^{2}\int_{-r_{0}}^{t}\mu^{-2b_{|\alpha|+1}}_{m}(t',\ub)\widetilde{E}_{\leq|\alpha|+1}(t',\ub)dt'\\\notag
&+C\delta^{-1/2}\int_{0}^{\ub}\mu_{m}^{-2b_{|\alpha|+1}}(t)\widetilde{\Fb}_{\leq|\alpha|+1}(t,\ub')d\ub'
+C\delta^{1/2}\mu^{-2b_{|\alpha|+1}}_{m}(t)\widetilde{K}_{\leq|\alpha|+1}(t,\ub)
\end{align}
In view of \eqref{K1 next to top estimates}, \eqref{K1T next to top estimates}, \eqref{lower optical K1 descent} and multiplying $\mu_{m}^{2b_{|\alpha|+1}}(t)$ on both sides of the energy inequality associated to $K_{1}$ for $Z^{\alpha'}_{i}\psi$ with $|\alpha'|\leq |\alpha|$ 
give us:
\begin{align*}
&\sum_{|\alpha'|\leq |\alpha|}\mu^{2b_{|\alpha|+1}}_{m}(t)\delta^{2l'}\Big(\Eb[Z^{\alpha'}_{i}\psi](t,\ub)+\Fb[Z^{\alpha'}_{i}\psi](t,\ub)+K[Z^{\alpha'}_{i}\psi](t,\ub)\Big)\\
&\leq C\sum_{|\alpha'|\leq |\alpha|}\delta^{2l'}\Eb[Z^{\alpha'}_{i}\psi](-r_{0},\ub)
+C\delta^{-1}\int_{0}^{\ub}\widetilde{\Fb}_{\leq|\alpha|+1}(t,\ub')d\ub'\\
&+C\delta^{2}\int_{-r_{0}}^{t}\widetilde{E}_{\leq|\alpha|+1}(t',\ub)dt'
+C\delta^{1/2}\widetilde{K}_{\leq|\alpha|+1}(t,\ub)+C\delta\mathcal{D}^{\ub}_{|\alpha|+2}
\end{align*}
Since the right hand side of the above is non-decreasing in $t$, the above inequality is still true if we substitute $t$ by any $t'\in[-r_{0},t]$ on the left hand side:
\begin{align*}
&\sum_{|\alpha'|\leq |\alpha|}\mu^{2b_{|\alpha|+1}}_{m}(t')\delta^{2l'}\Big(\Eb[Z^{\alpha'}_{i}\psi](t',\ub)+\Fb[Z^{\alpha'}_{i}\psi](t',\ub)+K[Z^{\alpha'}_{i}\psi](t',\ub)\Big)\\
&\leq C\sum_{|\alpha'|\leq |\alpha|}\delta^{2l'}\Eb[Z^{\alpha'}_{i}\psi](-r_{0},\ub)
+C\delta^{-1}\int_{0}^{\ub}\widetilde{\Fb}_{\leq|\alpha|+1}(t,\ub')d\ub'\\
&+C\delta^{2}\int_{-r_{0}}^{t}\widetilde{E}_{\leq|\alpha|+1}(t',\ub)dt'
+C\delta\mathcal{D}^{\ub}_{|\alpha|+2}+C\delta^{1/2}\widetilde{K}_{\leq|\alpha|+1}(t,\ub)
\end{align*}
As in the previous section, taking the supremum with respect to $t'\in[-r_{0},t]$ we obtain:
\begin{align*}
&\widetilde{\Eb}_{\leq|\alpha|+1}(t,\ub)+\widetilde{\Fb}_{\leq|\alpha|+1}(t,\ub)+\widetilde{K}_{\leq|\alpha|+1}(t,\ub)\\
&\leq C\delta\mathcal{D}^{\ub}_{|\alpha|+2}
+C\delta^{-1}\int_{0}^{\ub}\widetilde{\Fb}_{\leq|\alpha|+1}(t,\ub')d\ub'\\
&+C\delta^{2}\int_{-r_{0}}^{t}\widetilde{E}_{\leq|\alpha|+1}(t',\ub)dt'+C\delta^{1/2}\widetilde{K}_{\leq|\alpha|+1}(t,\ub)
\end{align*}
Choosing $\delta$ sufficiently small, we have:
\begin{align*}
&\widetilde{\Eb}_{\leq|\alpha|+1}(t,\ub)+\widetilde{\Fb}_{\leq|\alpha|+1}(t,\ub)+\widetilde{K}_{\leq|\alpha|+1}(t,\ub)\leq C\delta\mathcal{D}^{\ub}_{|\alpha|+2}\\
&+C\delta^{-1}\int_{0}^{\ub}\widetilde{\Fb}_{\leq|\alpha|+1}(t,\ub')d\ub'+C\delta^{2}\int_{-r_{0}}^{t}\widetilde{E}_{\leq|\alpha|+1}(t',\ub)dt'
\end{align*}
Keeping only $\widetilde{\Fb}_{\leq|\alpha|+1}(t,\ub)$ and we have:
\begin{align*}
&\widetilde{\Fb}_{\leq|\alpha|+1}(t,\ub)
\leq C\delta\mathcal{D}^{\ub}_{|\alpha|+2}
+C\delta^{2}\int_{-r_{0}}^{t}\widetilde{E}_{\leq|\alpha|+1}(t',\ub)dt'+C\delta^{-1}\int_{0}^{\ub}\widetilde{\Fb}_{\leq|\alpha|+1}(t,\ub')d\ub'
\end{align*}
By using Gronwall, we obtain:
\begin{align*}
\widetilde{\Fb}_{\leq|\alpha|+1}(t,\ub)
\leq C\delta\mathcal{D}^{\ub}_{|\alpha|+2}
+C\delta^{2}\int_{-r_{0}}^{t}\widetilde{E}_{\leq|\alpha|+1}(t',\ub)dt'
\end{align*}
This together with the fact that $\widetilde{E}_{\leq|\alpha|+1}(t), \mathcal{D}^{\ub}_{|\alpha|+2}$ are non-decreasing in $\ub$ implies:
\begin{align}\label{K1 descent}
\widetilde{\Eb}_{\leq|\alpha|+1}(t)+\widetilde{\Fb}_{\leq|\alpha|+1}(t,\ub)+\widetilde{K}_{\leq|\alpha|+1}(t,\ub)\leq C\delta\mathcal{D}^{\ub}_{|\alpha|+2}
+C\delta^{2}\int_{-r_{0}}^{t}\widetilde{E}_{\leq|\alpha|+1}(t',\ub)dt'
\end{align}

Next we consider the energy estimates associated to $K_{0}$. We start with the variation $Z^{\alpha'}_{i}\psi$ with $|\alpha'|\leq |\alpha|$. The other contributions to the spacetime error integral is bounded by (see \eqref{lower order optical terms separately K0 a} and \eqref{K1 descent}):
\begin{align}\label{lower optical K0 descent}
&C\delta^{1/2}\int_{-r_{0}}^{t}E_{\leq|\alpha|+1}(t',\ub)dt'+C\delta^{1/2} K_{\leq|\alpha|+1}(t,\ub)+C\delta^{-1/2}\int_{0}^{\ub}\Fb_{\leq|\alpha|+1}(t,\ub')d\ub'\\\notag
&\leq C\delta^{1/2}\int_{-r_{0}}^{t}\mu^{-2b_{|\alpha|+1}}_{m}(t')\widetilde{E}_{\leq|\alpha|+1}(t',\ub)dt'+C\delta^{1/2}\mu^{-2b_{|\alpha|+1}}_{m}(t)\mathcal{D}^{\ub}_{|\alpha|+2}
\end{align}
Without loss of generality, we can choose $\epsilon\geq \delta$. Then in view of this and \eqref{K0 next to top estimates} and \eqref{K0T next to top estimates}, we have the following energy inequality:
\begin{align*}
&\sum_{|\alpha'|\leq |\alpha|}\mu_{m}^{2b_{|\alpha|+1}}(t)\delta^{2l'}\Big(E[Z^{\alpha'}_{i}\psi](t,\ub)+F[Z^{\alpha'}_{i}\psi](t,\ub)\Big)\\
&\leq C\mathcal{D}^{\ub}_{|\alpha|+2}+C\epsilon\widetilde{E}_{\leq|\alpha|+1}(t,\ub)+C\delta^{1/2}\int_{-r_{0}}^{t}\widetilde{E}_{\leq|\alpha|+1}(t',\ub)dt'
\end{align*}
Then similar as before, substituting $t$ by $t'\in[-r_{0},t]$ on the left hand side and taking the supremum with respect to $t'\in[-r_{0},t]$, we obtain:
\begin{align*}
&\widetilde{E}_{\leq|\alpha|+1}(t,\ub)+\widetilde{F}_{\leq|\alpha|+1}(t,\ub)\\
&\leq C\mathcal{D}^{\ub}_{|\alpha|+2}
+C\epsilon\widetilde{E}_{\leq|\alpha|+1}(t,\ub)+C\delta^{1/2}\int_{-r_{0}}^{t}\widetilde{E}_{\leq|\alpha|+1}(t',\ub)dt'
\end{align*}
Choosing $\epsilon$ sufficiently small and using Gronwall, we finally have:
\begin{align}\label{K0 descent}
\widetilde{E}_{\leq|\alpha|+1}(t,\ub)+\widetilde{F}_{\leq|\alpha|+1}(t,\ub)\leq C\mathcal{D}^{\ub}_{|\alpha|+2}
\end{align}

Now substituting \eqref{K0 descent} to the right hand side of \eqref{K1 descent}, we have:
\begin{align*}
\widetilde{\Eb}_{\leq|\alpha|+1}(t,\ub)+\widetilde{\Fb}_{\leq|\alpha|+1}(t,\ub)+\widetilde{K}_{\leq|\alpha|+1}(t,\ub)\leq C\delta\mathcal{D}^{\ub}_{|\alpha|+2}
\end{align*}

Summarizing, we have:
\begin{align}\label{energy estimates next to top}
\widetilde{\Eb}_{\leq|\alpha|+1}(t,\ub)+\widetilde{\Fb}_{\leq|\alpha|+1}(t,\ub)+\widetilde{K}_{\leq|\alpha|+1}(t,\ub)\leq C\delta\mathcal{D}^{\ub}_{|\alpha|+2}\\\notag
\widetilde{E}_{\leq|\alpha|+1}(t,\ub)+\widetilde{F}_{\leq|\alpha|+1}(t,\ub)\leq
C\mathcal{D}^{\ub}_{|\alpha|+2}
\end{align}

\subsection{Descent scheme}
We proceed in this way taking at the $n$th step:
\begin{align*}
b_{|\alpha|+2-n}=b_{|\alpha|+2}-n,\quad b_{|\alpha|+1-n}=b_{|\alpha|+2}-n-1
\end{align*}
in the role of $b_{|\alpha|+2}$ and $b_{|\alpha|+1}$ respectively, the argument beginning in the paragraph containing \eqref{K1 next to top spacetime} and concluding with \eqref{energy estimates next to top} being step $0$. The $n$th step is exactly the same as the $0$th step as above, as long as $b_{|\alpha|+1-n}>0$, that is, as long as $n\leq [b_{|\alpha|+2}]-1$. Here we choose $b_{|\alpha|+2}$ as:
\begin{align*}
b_{|\alpha|+2}=[b_{|\alpha|+2}]+\frac{3}{4}
\end{align*}
where $[b_{|\alpha|+2}]$ is the integer part of $b_{|\alpha|+2}$.
 For each of such $n$, we need to estimate the integrals:
\begin{align*}
\int_{-r_{0}}^{t}\mu_{m}^{-b_{|\alpha|+2-n}-1/2}(t')dt',\quad \int_{-r_{0}}^{t}\mu_{m}^{-2b_{|\alpha|+2-n}+1}(t')dt'
\end{align*}
As in the last section, we consider two different cases: $t'\in[-r_{0},t_{0}]$ and $t'\in[t_{0},t]$, where $\mu_{m}(t_{0})=\dfrac{1}{10}$. If $t'\in[-r_{0},t_{0}]$, we have:
\begin{align*}
\int_{-r_{0}}^{t_{0}}\mu^{-b_{|\alpha|+2-n}-1/2}_{m}(t')dt'&\leq C\int_{-r_{0}}^{t_{0}}\mu^{-b_{|\alpha|+2-n}+1/2}_{m}(t')dt'\leq C\mu_{m}^{-b_{|\alpha|+2-n}+1/2}(t)\\
\int_{-r_{0}}^{t_{0}}\mu^{-2b_{|\alpha|+2-n}+1}_{m}(t')dt'&\leq C
\int_{-r_{0}}^{t_{0}}\mu_{m}^{-2b_{|\alpha|+2-n}+2}(t')dt'\leq C
\mu_{m}^{-2b_{|\alpha|+2-n}+2}(t)
\end{align*}
Here we have used the fact that $\mu_{m}(t')\geq\dfrac{1}{10}$ for $t'\in[-r_{0},t_{0}]$. In regard to the estimate for $t'\in[t_{0},t]$, since
\begin{align*}
b_{|\alpha|+2-n}=[b_{|\alpha|+2-n}]+\frac{3}{4}\geq 1+\frac{3}{4}=\frac{7}{4},
\end{align*}
by Lemma 8.1, we have:
\begin{align*}
\int_{t_{0}}^{t}\mu^{-b_{|\alpha|+2-n}-1/2}_{m}(t')dt'&\leq C\mu^{-b_{|\alpha|+2-n}+1/2}_{m}(t)\\
\int_{t_{0}}^{t}\mu^{-2b_{|\alpha|+2-n}+1}_{m}(t')dt'&\leq C\mu^{-2b_{|\alpha|+2-n}+2}_{m}(t)
\end{align*}
So indeed, we can repeat the process of $0$th step for $n=1,...,[b_{|\alpha|+2}]-1$. Therefore we have the following estimates:
\begin{align}\label{descent energy estimates}
\widetilde{E}_{\leq|\alpha|+1-n}(t,\ub)+\widetilde{F}_{\leq|\alpha|+1-n}(t,\ub)&\leq C\mathcal{D}^{\ub}_{|\alpha|+2}\\\notag
\widetilde{\Eb}_{\leq|\alpha|+1-n}(t,\ub)+\widetilde{\Fb}_{\leq|\alpha|+1-n}(t,\ub)+\widetilde{K}_{\leq |\alpha|+1-n}(t,\ub)&\leq C\delta\mathcal{D}^{\ub}_{|\alpha|+2}
\end{align}
We now consider the final step $n=[b_{|\alpha|+2}]$.  In this case we have $b_{|\alpha|+2-n}=\frac{3}{4}$. Using the same process as in $0$th step, the contributions of the optical terms are bounded by:
\begin{align*}
\|Z^{\alpha'}\text{tr}\chib'\|_{L^{2}(\Sigma_{t}^{\ub})}&\leq C\delta\mu^{-1/4}_{m}(t)\sqrt{\mathcal{D}^{\ub}_{|\alpha|+2}}\quad\text{with}\quad |\alpha'|+2\leq |\alpha|+1-[b_{|\alpha|+2}]\\
\|Z^{\alpha'}_{i}T^{l'}\slashed{\Delta}\mu\|_{L^{2}(\Sigma_{t}^{\ub})}&\leq C\delta^{1/2}\mu^{-1/4}_{m}(t)\sqrt{\mathcal{D}^{\ub}_{|\alpha|+2}}\quad\text{with}\quad |\alpha'|+l'+2\leq |\alpha|+1-[b_{|\alpha|+2}]
\end{align*}
with $Z_{i}=R_{i}$ or $Q$.
As before, in order to bound the corresponding integrals:
\begin{align*}
\int_{-r_{0}}^{t}\|Z^{\alpha'}_{i}\text{tr}\chib'\|^{2}_{L^{2}(\Sigma_{t'}^{\ub})}dt',\quad\int_{-r_{0}}^{t}\|Z^{\alpha'}_{i}T^{l'}\slashed{\Delta}\mu\|^{2}_{L^{2}(\Sigma_{t'}^{\ub})}dt'
\end{align*}
we need to consider the integral:
\begin{align*}
\int_{-r_{0}}^{t}\mu^{-1/2}_{m}(t')dt'\leq \int_{-r_{0}}^{t_{0}}\mu^{-1/2}_{m}(t')dt'+\int_{t_{0}}^{t}\mu^{-1/2}_{m}(t')dt'\quad\text{with}\quad \mu_{m}(t_{0})=\frac{1}{10}
\end{align*}
For the ``non-shock part $\int_{-r_{0}}^{t_{0}}$", since $\mu_{m}(t_{0})\geq\dfrac{1}{10}$,
\begin{align*}
\int_{-r_{0}}^{t_{0}}\mu_{m}^{-1/2}(t')dt'\leq C
\end{align*}
For the ``shock part $\int_{t_{0}}^{t}$, as in the proof for Lemma 8.1,
\begin{align*}
\int_{t_{0}}^{t}\mu_{m}^{-1/2}(t')dt'\leq C\mu^{1/2}_{m}(t)\leq C
\end{align*}
So we have the following bounds:
\begin{align*}
\Big(\int_{-r_{0}}^{t}\|Z^{\alpha'}\text{tr}\chib'\|^{2}_{L^{2}(\Sigma_{t'}^{\ub})}dt'\Big)^{1/2}&\leq C\delta\sqrt{\mathcal{D}^{\ub}_{|\alpha|+2}}\quad\text{with}\quad |\alpha'|+2\leq |\alpha|+1-[b_{|\alpha|+2}]\\
\Big(\int_{-r_{0}}^{t}\|Z^{\alpha'}_{i}T^{l'}\slashed{\Delta}\mu\|_{L^{2}(\Sigma_{t'}^{\ub})}dt'\Big)^{1/2}&\leq C\delta^{1/2}\sqrt{\mathcal{D}^{\ub}_{|\alpha|+2}}\quad\text{with}\quad |\alpha'|+l'+2\leq |\alpha|+1-[b_{|\alpha|+2}]
\end{align*}
Therefore we can set:
\begin{align*}
b_{|\alpha|+1-n}=b_{|\alpha|+1-[b_{|\alpha|+2}]}=0
\end{align*}
in this step. Then we can proceed exactly the same as in the preceding steps. We thus arrive at the estimates:
\begin{align}\label{descent estimates final}
\widetilde{E}_{\leq|\alpha|+1-[b_{|\alpha|+2}]}(t,\ub)+\widetilde{F}_{\leq|\alpha|+1-[b_{|\alpha|+2}]}(t,\ub)&\leq C\mathcal{D}^{\ub}_{|\alpha|+2}\\\notag
\widetilde{\Eb}_{\leq|\alpha|+1-[b_{|\alpha|+2}]}(t,\ub)+\widetilde{\Fb}_{\leq|\alpha|+1-[b_{|\alpha|+2}]}(t,\ub)+\widetilde{K}_{\leq |\alpha|+1-[b_{|\alpha|+2}]}(t,\ub)&\leq C\delta\mathcal{D}^{\ub}_{|\alpha|+2}
\end{align}
These are the desired estimates, because from the definitions:
\begin{align}\label{definitions for finite energy}
\widetilde{\Eb}_{\leq|\alpha|+1-[b_{|\alpha|+2}]}(t,\ub):&=\sup_{t'\in[-r_{0},t]}\{\underline{E}_{\leq|\alpha|+1-[b_{|\alpha|+2}]}(t',\ub)\}\\\notag
\widetilde{E}_{\leq|\alpha|+1-[b_{|\alpha|+2}]}(t,\ub):&=\sup_{t'\in[-r_{0},t]}\{E_{\leq|\alpha|+1-[b_{|\alpha|+2}]}(t',\ub)\}\\\notag
\widetilde{\Fb}_{\leq|\alpha|+1-[b_{|\alpha|+2}]}(t,\ub):&=\sup_{t'\in[-r_{0},t]}\{\Fb_{\leq|\alpha|+1-[b_{|\alpha|+2}]}(t',\ub)\}\\\notag
\widetilde{F}_{\leq|\alpha|+1-[b_{|\alpha|+2}]}(t,\ub):&=\sup_{t'\in[-r_{0},t]}\{F_{\leq|\alpha|+1-[b_{|\alpha|+2}]}(t',\ub)\}\\\notag
\widetilde{K}_{\leq|\alpha|+1-[b_{|\alpha|+2}]}(t,\ub):&=\sup_{t'\in[-r_{0},t]}\{K_{\leq|\alpha|+1-[b_{|\alpha|+2}]}(t',\ub)\}
\end{align}
the weight $\mu_{m}(t')$ has been eliminated.

\section{Completion of Proof}
Let us define:
\begin{align*}
\mathcal{S}_{2}[\phi]:=\int_{S_{t,\ub}}\Big(|\phi|^{2}+|R_{i_{1}}\phi|^{2}+|R_{i_{1}}R_{i_{2}}\phi|^{2}\Big)d\mu_{\slashed{g}}
\end{align*}
And also let us denote by $\mathcal{S}_{n}(t,\ub)$ the integral on $S_{t,\ub}$ (with respect to $d\mu_{\slashed{g}}$) of the sum of the square of all the variation $\psi=\delta^{l'}Z^{\alpha'}_{i}\psi_{\gamma}$ up to order $|\alpha|+1-[b_{|\alpha|+2}]$, where $l'$ is the number of $T$'s in the string of $Z^{\alpha'}_{i}$ and $\gamma=0,1,2,3$. Then by Lemma 7.3 we have:
\begin{align*}
\mathcal{S}_{|\alpha|-[b_{|\alpha|+2}]}(t,\ub)\leq C\delta\left( E_{|\alpha|+1-[b_{|\alpha|+2}]}(t)+\Eb_{|\alpha|+1-[b_{|\alpha|+2}]}(t,\ub)\right) \quad\text{for all}\quad (t,\ub)\in[-2,t^{*})\times [0,\delta].
\end{align*}
Hence, in view of \eqref{descent estimates final} and \eqref{definitions for finite energy},
\begin{align}\label{surface integral bounded by initial data}
\mathcal{S}_{|\alpha|-[b_{|\alpha|+2}]}(t,\ub)\leq C\delta\mathcal{D}^{\ub}_{|\alpha|+2} \quad\text{for all}\quad (t,\ub)\in[-2,t^{*})\times [0,\delta].
\end{align}
Then for any variations $\psi$ of order up to $|\alpha|-2-[b_{|\alpha|+2}]$ we have:
\begin{align}\label{transaction of surface integral}
\mathcal{S}_{2}[\psi]\leq \mathcal{S}_{|\alpha|-[b_{|\alpha|+2}]}(t,\ub)
\end{align}
Then by the Sobolev inequality introduced in \eqref{Sobolev}, \eqref{surface integral bounded by initial data} and \eqref{transaction of surface integral}, we have:
\begin{align}\label{Linfty on surface}
\delta^{l'}\sup_{S_{t,\ub}}|Z^{\alpha'}_{i}\psi_{\alpha}|=\sup_{S_{t,\ub}}|\psi|\leq C\delta^{1/2}\sqrt{\mathcal{D}^{\ub}_{|\alpha|+2}}\leq C_{0}\delta^{1/2}
\end{align}
where $C_{0}$ depends on the initial energy $\mathcal{D}^{\ub}_{|\alpha|+2}$, the constant in the isoperimetric inequality and the constant in Lemma 7.3 as well as the constants in \eqref{descent estimates final}, which are absolute constants.

If we choose $|\alpha|$ large enough such that
\begin{align*}
\Big[\frac{|\alpha|+1}{2}\Big]+3\leq |\alpha|-2-[b_{|\alpha|+2}]
\end{align*}
then \eqref{Linfty on surface} recovers the bootstrap assumption (B.1) for $(t,\ub)\in[-2,t^{*})\times[0,\delta]$. 

\bigskip

To complete the proof of Theorem 3.1, it remains to show that the smooth solution exists for $t\in[-2,s^{*})$, i.e. $t^{*}=s^{*}$. More precisely, we will prove that either $\mu_{m}(t^{*})=0$ if shock forms before $t=-1$ or otherwise $t^{*}=-1$.

If $t^{*}<s^{*}$, then $\mu$ would be positive on $\Sigma_{t^{*}}^{\delta}$. In particular $\mu$ has a positive lower bound on $\Sigma_{t^{*}}^{\delta}$. Therefore by Remark \ref{geometric meaning of mu}, the Jacobian $\triangle$ of the transformation from optical coordinates to rectangular coordinates has a positive lower bound on $\Sigma_{t^{*}}^{\delta}$. This implies that the inverse transformation from rectangular coordinates to optical coordinates is regular. On the other hand, in the course of recovering bootstrap assumption we have proved that all the derivatives of the first order variations $\psi_{\alpha}$ extend smoothly in optical coordinates to $\Sigma_{t^{*}}^{\delta}$. Since the inverse transformation is regular, $\psi_{\alpha}$ also extend smoothly to $\Sigma_{t^{*}}^{\delta}$ in rectangular coordinates. Once $\psi_{\alpha}$ extend to functions of rectangular coordinates on $\Sigma_{t^{*}}^{\delta}$ belonging to some Sobolev space $H^{3}$, then the standard local existence theorem (which is stated and proved in rectangular coordinates) applies and we obtain an extension of the solution to a development containing an extension of all null hypersurface $\Cb_{\ub}$ for $\ub\in[0,\delta]$, up to a value $t_{1}$ of $t$ for some $t_{1}>t^{*}$, 
which contradicts with the definition of $t^{*}$ and therefore $t^{*}=s^{*}$. 
This completes the proof of Theorem 3.1 and the main theorem of the paper.

\section*{Acknowledgement}
\noindent The authors are grateful to three anonymous referees, who carefully read a previous version of this paper and suggested many valuable improvements and corrections. S. Miao is supported by NSF grant DMS-1253149 to The University of Michigan. P. Yu is supported by  NSFC 11101235 and NSFC 11271219. The research of S. Miao was in its initial phase supported by ERC Advanced Grant 246574 ``\emph{Partial Differential Equations of Classical Physics}".


\end{document}